\documentclass[twoside,11pt,reqno]{amsart}
\usepackage{amsmath,amssymb,amscd,mathrsfs,epic,wasysym,latexsym,tikz,mathrsfs,cite,hyperref}
\usepackage{stmaryrd}
\usepackage{pb-diagram}
\usepackage[matrix,arrow]{xy}

\makeatletter

\hfuzz 3pt
\vfuzz 2pt

\textheight 218mm
\textwidth 140mm

\raggedbottom

\synctex=1
\numberwithin{equation}{section}

\newtheorem{Proposition}[equation]{Proposition}
\newtheorem{Lemma}[equation]{Lemma}
\newtheorem{Theorem}[equation]{Theorem}
\newtheorem{Corollary}[equation]{Corollary}
\theoremstyle{definition}  

\newtheorem{Remark}[equation]{Remark}

\newtheorem{MainTheorem}{Theorem}

\let\<\langle
\let\>\rangle

\newcommand\Comment[2][\relax]{\space\par\medskip\noindent%
   \fbox{\begin{minipage}{\textwidth}\textbf{Comment\ifx\relax#1\else---#1\fi}\newline%
        #2\end{minipage}}\medskip
}


\def\bl{\text{\boldmath$l$}}
\def\bp{\text{\boldmath$p$}}
\def\bq{\text{\boldmath$q$}}
\def\bs{\text{\boldmath$s$}}

\def\bt{\text{\boldmath$t$}}
\def\bc{\text{\boldmath$c$}}
\def\br{\text{\boldmath$r$}}
\def\b1{\text{\boldmath$1$}}

\def\ba{\text{\boldmath$a$}}
\def\bb{\text{\boldmath$b$}}

\def\bu{\text{\boldmath$u$}}
\def\bv{\text{\boldmath$v$}}
\def\bx{\text{\boldmath$x$}}
\def\by{\text{\boldmath$y$}}
\def\bbe{\text{\boldmath$e$}}

\def\bsi{\text{\boldmath$\sigma$}}
\def\btau{\text{\boldmath$\tau$}}

\def\fa{\mathfrak{a}}
\def\a{\mathfrak{a}}
\def\c{\mathfrak{c}}
\def\z{\mathfrak{z}}

\newcommand{\Hom}{\operatorname{Hom}}

\newcommand{\End}{\operatorname{End}}

\newcommand{\id}{\operatorname{id}}

\def\sgn{\mathtt{sgn}}
\newcommand{\res}{\operatorname{res}}

\newcommand{\infl}{\operatorname{infl}}

\newcommand{\Z}{\mathbb{Z}}
\newcommand{\K}{\mathbb{K}}
\newcommand{\F}{\mathbb{F}}

\newcommand{\0}{{\bar 0}}
\renewcommand{\1}{{\bar 1}}
\def\eps{{\varepsilon}}
\def\phi{{\varphi}}

\newcommand{\Triple}{{\mathcal T}}

\newcommand{\ga}{\gamma}

\newcommand{\la}{\lambda}
\newcommand{\La}{\Lambda}
\newcommand{\al}{\alpha}
\newcommand{\be}{\beta}

\def\Si{\mathfrak{S}}
\newcommand{\si}{\sigma}
\newcommand{\om}{\omega}
\newcommand{\Om}{\Omega}

\newcommand{\de}{\delta}

\newcommand{\ka}{\kappa}

\newcommand{\Conv}{\mathop{\scalebox{1.5}{\raisebox{-0.2ex}{$\ast$}}}}



\def\id{\mathop{\mathrm {id}}\nolimits}

\def\rank{\mathop{\mathrm{ rank}}\nolimits}

\newcommand{\tr}{{\mathsf {t}}}

\newcommand{\ZZ}{{\mathbb Z}}

\newcommand{\D}{{\mathscr D}}

\renewcommand{\mod}{\bmod \,}

\def\K{\mathbb K}

\newcommand{\EZig}{Z}

\def\ttb{{\mathtt b}}

\def\col{{\operatorname{col}}}

\def\b{\mathfrak{b}}
\def\k{\Bbbk}

\def\spa{\operatorname{span}}

\def\mod#1{#1\!\operatorname{-mod}}

\def\iso{\stackrel{\sim}{\longrightarrow}}

\def\ch{\operatorname{ch}}
\def\lan{\langle}
\def\ran{\rangle}


\def\Seq{\operatorname{Tri}}

\def\ttb{{\mathtt b}}

\def\TE{{\operatorname{E}}}

\newcommand{\Star}{\operatorname{\mathtt{Star}}}

\def\bla{\text{\boldmath$\lambda$}}
\def\bmu{\text{\boldmath$\mu$}}
\def\bnu{\text{\boldmath$\nu$}}

{\catcode`\|=\active
  \gdef\set#1{\mathinner{\lbrace\,{\mathcode`\|"8000%
  \let|\midvert #1}\,\rbrace}}
}
\def\midvert{\egroup\mid\bgroup}

\colorlet{darkgreen}{green!50!black}
\tikzset{dots/.style={very thick,loosely dotted},
         greendot/.style={fill,circle,color=darkgreen,inner sep=1.5pt,outer sep=0},
         blackdot/.style={fill,circle,color=black,inner sep=1.5pt,outer sep=0},
         graydot/.style={fill,circle,color=gray,inner sep=1.1pt,outer sep=0}
}
\def\greendot(#1,#2){\node[greendot] at(#1,#2){}}
\def\blackdot(#1,#2){\node[blackdot] at(#1,#2){}}
\def\graydot(#1,#2){\node[graydot] at(#1,#2){}}

\newenvironment{braid}{
  \begin{tikzpicture}[baseline=6mm,black,line width=1pt, scale=0.32,
                      draw/.append style={rounded corners},
                      every node/.append style={font=\fontsize{5}{5}\selectfont}]%
  }{\end{tikzpicture}
}

\def\Grid(#1,#2){
  \draw[very thin,gray,step=2mm] (0,0)grid(#1,#2);
  \draw[very thin,darkgreen,step=10mm] (0,0)grid(#1,#2);
}

\newcommand\Tableau[2][\relax]{
  \begin{tikzpicture}[scale=0.5,draw/.append style={thick,black}]
    \ifx\relax#1\relax%
    \else 
      \foreach\box in {#1} { \filldraw[blue!30]\box+(-.5,-.5)rectangle++(.5,.5); }
    \fi
    \newcount\row\newcount\col
    \row=0
    \foreach \Row in {#2} {
       \col=1
       \foreach\k in \Row {
          \draw(\the\col,\the\row)+(-.5,-.5)rectangle++(.5,.5);
          \draw(\the\col,\the\row)node{\k};
          \global\advance\col by 1
       }
       \global\advance\row by -1
    }
  \end{tikzpicture}
}

\newcommand\YoungDiagram[2][\relax]{
  \begin{tikzpicture}[scale=0.5,draw/.append style={thick,black}]
    \ifx\relax#1\relax%
    \else 
    \foreach\box in {#1} {
      \filldraw[blue!30]\box rectangle ++(1,1);
    }
    \fi
    \newcount\row
    \row=0
    \foreach \col in {#2} {
       \draw(1,\the\row)grid ++(\col,1);
       \global\advance\row by -1
    }
  \end{tikzpicture}
}

\begin{document}

\title[Generalized Schur algebras]{{\bf Generalized Schur algebras}}

\author{\sc Alexander Kleshchev}
\address{Department of Mathematics\\ University of Oregon\\
Eugene\\ OR 97403, USA}
\email{klesh@uoregon.edu}

\author{\sc Robert Muth}
\address{Department of Mathematics\\ Washington \& Jefferson College
\\
Washington\\ PA 15301, USA}
\email{rmuth@washjeff.edu}

\subjclass[2010]{16G30, 20C20}

\thanks{The first author was supported by the NSF grant No. DMS-1700905 and the DFG Mercator program through the University of Stuttgart. This work was also supported by the NSF under grant No. DMS-1440140 while both authors were in residence at the MSRI during the Spring 2018 semester.}

\begin{abstract}
We define and study a new class of bialgebras, which generalize certain Turner double algebras related to generic blocks of symmetric groups. Bases and generators of these algebras are given. We investigate when the algebras are symmetric, which is relevant to block theory of finite groups. We then establish a double centralizer property related to blocks of Schur algebras.  
\end{abstract}

\maketitle

\section{Introduction}
Let $\k$ be a commutative domain of characteristic $0$ and 
$A$ be a unital $\k$-superalgebra, which is free as a $\k$-supermodule. Let $\a$ be a unital subalgebra of the even part $A_\0$, which is a direct summand of $A_\0$ as a $\k$-module. Some of the unitality conditions will be relaxed in the main body of the paper but in this introduction we will consider a special case.

We define and study {\em generalized Schur (super)algebras} $$T^A_\a(n,d)\subseteq S^A(n,d).$$ 
The algebra $S^A(n,d)$ is defined as the algebra of invariants $(M_n(A)^{\otimes d})^{\Si_d}$, and so in the case $A=\k$ we get that $S^\k(n,d)$ is the classical Schur algebra. If $\a=A_\0$, then $T^A_\a(n,d)= S^A(n,d)$, but in general the subalgebra $T^A_\a(n,d)\subseteq S^A(n,d)$ is proper, although it is always a full sublattice in $S^A(n,d)$. Thus extending scalars to a field $\K$ of characteristic $0$ produces the same algebras: $T^A_\a(n,d)_\K= S^A(n,d)_\K$. However, importantly, extending scalars to a field $\F$ of positive characteristic will in general yield {\em non-isomorphic} algebras $T^A_\a(n,d)_\F$ and $S^A(n,d)_\F$ of the same dimension. It turns out that in many situations it is the more subtly defined  algebra $T^A_\a(n,d)_\F$ that plays an important role. 

As a special case of our construction, we recover the {\em Turner double algebras} $D^A(n,d)$  studied in \cite{T, T2, T3,EK1}. In fact, we show in \S\ref{SSTE} that 
$$D^A(n,d)\cong T^{\TE(A)}_{A_\0}(n,d),$$ 
where $\TE(A)$ is the trivial extension algebra of $A$. Turner double algebras are important because of their connection to generic blocks of symmetric groups via Turner's conjecture, recently proved in \cite{EK2}. To be more precise, for an appropriate zigzag algebra  $\bar Z$ and a subalgebra $\bar \z\subseteq \bar Z$, the generalized Schur algebra $T^{\bar Z}_{\bar \z}(n,d)$ is Morita equivalent to weight $d$ RoCK blocks of  symmetric groups. In this way, $T^{\bar Z}_{\bar \z}(n,d)$ can be considered as a `local'  object replacing wreath products of Brauer tree algebras in the context of the Brou{\'e} abelian defect group conjecture for blocks of symmetric groups with {\em non-abelian}  defect groups. 
 
 However, it is known that Turner doubles cannot provide a similar `local' description for blocks of classical Schur algebras because the former are always symmetric algebras while the latter in general are not. We believe that our more general construction of $T^A_\a(n,d)$ fixes the problem. In \cite{KM}, we formulate an explicit conjecture for RoCK blocks of classical Schur algebras in terms of the generalized Schur algebras $T^Z_\z(n,d)$, where $Z$ is an  {\em extended}\, zigzag algebra. 

Furthermore, we will prove in \cite{KM} that, under reasonable  additional assumptions on $\a$, the algebra  $T^A_\a(n,d)$ is quasi-hereditary if $A$ is quasi-hereditary. 
This provides us with a method to produce new interesting quasi-hereditary algebras from old. In particular, the algebra $T^Z_\z(n,d)$ from the previous paragraph is quasi-hereditary, as should be expected if it is to be Morita equivalent to a block of the Schur algebra. 
 
We now describe the contents of the paper in more detail.  Section~\ref{SPrelim} is preliminary.
In Section~\ref{SSA}, given a basis \(B\) for \(A\) which extends a basis for \(\a\), we describe a natural basis for \(S^A(n,d)\) in terms of certain elements \(\xi^{\bb}_{\br, \bs}\), where \(\bb \in B^d\), \(\br, \bs \in [1,n]^d\). This is an analogue of Schur's basis of the classical Schur algebra. 
By rescaling this natural basis using certain products of factorals defined in Section~\ref{SPrelim}, we define the full sublattice \(T^A_\a(n,d) \subseteq S^A(n,d)\). Our first main result is:

\begin{MainTheorem}\label{T1}
{\em We have that $T^A_\a(n,d) \subseteq S^A(n,d)$ is a unital subalgebra. 
}
\end{MainTheorem}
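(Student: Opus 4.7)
The plan is to establish closure under multiplication by an explicit analysis of products of basis elements, and to verify unitality by identifying the identity of \(S^A(n,d)\) as a sum of basis elements on which no nontrivial rescaling occurs. Write \(\eta^\bb_{\br,\bs} = c(\bb)\,\xi^\bb_{\br,\bs}\) for the rescaled basis of \(T^A_\a(n,d)\), where \(c(\bb)\) is the product of factorials from Section~\ref{SPrelim}; naturally one expects this to be a product of \(m_b(\bb)!\) over basis vectors \(b \in B \setminus (B \cap \a)\), where \(m_b(\bb)\) is the multiplicity of \(b\) in \(\bb\), so that \(c(\bb)=1\) whenever all entries of \(\bb\) lie in \(\a\).

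The first step is to derive a Schur-type product formula for \(\xi^\bb_{\br,\bs}\cdot \xi^\bc_{\bt,\bu}\) in \(S^A(n,d)\), extending the classical Schur formula to this super/\(A\)-coefficient setting. The product expands into a sum of basis elements \(\xi^\bd_{\bp,\bq}\) indexed by ways of merging the pairs \((\br,\bs)\) and \((\bt,\bu)\) into a single tuple of data \((\bd,\bp,\bq)\), weighted by a sign (for the super swap), by a product of elements of \(A\) coming from internal multiplications at the merged positions, and by an integer combinatorial factor counting merging configurations with the same \((\bd,\bp,\bq)\). This formula should follow from the definition of \(S^A(n,d)\) as an invariant subalgebra by a direct computation using a transversal of \(\Si_d\)-orbits.

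The crux of the argument is then the purely combinatorial claim that, for every \((\bd,\bp,\bq)\) appearing in the expansion of \(\eta^\bb_{\br,\bs}\cdot\eta^\bc_{\bt,\bu}\), the coefficient
\[
\frac{c(\bb)\,c(\bc)}{c(\bd)}\cdot(\text{combinatorial factor in the Schur product})
\]
lies in \(\k\). I expect this to reduce, basis vector by basis vector in \(B\setminus(B \cap \a)\), to a divisibility statement: the multinomial coefficients counting how copies of a fixed \(b\) split between those consumed by the merge and those passing through must be divisible by the ratio \(m_b(\bd)!/(m_b(\bb)!\,m_b(\bc)!)\). Since in any merge one has \(m_b(\bd) \leq m_b(\bb) + m_b(\bc)\) for each \(b\), this divisibility is an elementary identity. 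The main obstacle is the bookkeeping needed to factor the combinatorial coefficient so that each basis vector can be treated independently; once that factorization is set up, the divisibility itself is routine.

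For unitality, observe that the identity of \(S^A(n,d)\) can be written as a sum of basis elements of the form \(\xi^{\b1}_{\br,\br}\) where \(\b1\) is the tuple \((1_A,\dots,1_A)\). Since \(1_A \in \a\), every such \(\bb = \b1\) has \(c(\b1) = 1\), so each summand already coincides with its rescaled version and thus belongs to \(T^A_\a(n,d)\). Combined with closure under multiplication, this shows that \(T^A_\a(n,d)\) is a unital subalgebra of \(S^A(n,d)\).
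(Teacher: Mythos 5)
Your overall architecture matches the paper's: a Green-type product rule for the $\xi$'s, a divisibility statement for the rescaled coefficients, and unitality from $1_A\in\a$ (the unitality step is exactly the paper's argument and is fine). However, the crux of your proof --- the divisibility claim --- rests on a false premise and omits the two structural facts that make the statement true. First, the inequality you invoke, $m_b(\bd)\leq m_b(\bb)+m_b(\bc)$ for each $b\in B\setminus B_\a$, fails in general: a letter $b\in B_\c$ in the output word arises from internal products $a'c'=\sum\kappa^b_{a',c'}b+\cdots$ with $a',c'$ typically \emph{different} from $b$, so the output can contain many copies of $b$ while the inputs contain none (e.g.\ in the zigzag algebra $c_j=a_{j,j+1}a_{j+1,j}$, or in a trivial extension where products of odd elements land in $\c$). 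Hence the ratio $c(\bd)/(c(\bb)c(\bc))$ is not controlled by any multinomial identity of the kind you describe, and treating the divisibility as ``purely combinatorial'' cannot work: if $\a$ were an arbitrary $\k$-submodule rather than a subalgebra, the span of the $\eta$'s would \emph{not} be closed under multiplication, so any correct proof must use the algebra structure of $\a$ somewhere.

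The paper's proof supplies exactly the two missing inputs. Writing $m^{a',b,c'}_{r,s,t}$ for the number of positions realizing a given product configuration, the index $[\Si_{\bb,\br,\bs}:\Si_{\bb,\br,\bs}\cap\Si_{\ba',\bc',\bt}]$ contributes, for each output datum $(b,r,s)$, the integer $[\bb,\br,\bs]^b_{r,s}!/\prod_{a',c',t}m^{a',b,c'}_{r,s,t}!$; to recover the full factorial $[\bb,\br,\bs]^b_{r,s}!$ for $b\in B_\c$ one must cancel the denominators $m^{a',b,c'}_{r,s,t}!$ against the \emph{input} factors $[\ba,\bp,\bq]^!_\c\,[\bc,\bu,\bv]^!_\c$. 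This works because of the key claim: if $b\in B_\c$ and $m^{a',b,c'}_{r,s,t}>1$, then $a'\in B_\c$ or $c'\in B_\c$. The proof of that claim uses (i) that odd letters occur at most once in any triple of $\Seq^B(n,d)$, forcing $a',c'\in B_\0$ when the multiplicity exceeds $1$, and (ii) that $\a$ is closed under multiplication, so $\kappa^b_{a',c'}=0$ for $a',c'\in B_\a$ and $b\in B_\c$. Neither fact appears in your proposal. A smaller but relevant imprecision: the rescaling factor is $\prod_{b,r,s}[\bb,\br,\bs]^b_{r,s}!$ over $b\in B_\c$, counting multiplicities of the full triples $(b_k,r_k,s_k)$, not of the letters $b_k$ in $\bb$ alone; with your definition even the basis statement for $T^A_\a(n,d)$ would be off.
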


There is another description of $T^A_\a(n,d)$ as a subalgebra of $S^A(n,d)$, which shows in particular that \(T^A_\a(n,d)\) is independent of the choice of basis \(B\) above:

\begin{MainTheorem}\label{T3}
{\em
We have that \(T^A_\a(n,d)\) is the subalgebra of $S^A(n,d)$ generated by \(S^\a(n,d)\) and the elements of the form
$$
\sum_{e=0}^{d-1}
1^{\otimes d-1-e} \otimes \xi\otimes 1^{\otimes e},
$$
where $\xi\in M_n(A)$ and $1:=1_{M_n(A)}$. 
}
\end{MainTheorem}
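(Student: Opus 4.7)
The plan is to prove the two inclusions separately. Write $T'$ for the subalgebra of $S^A(n,d)$ generated by $S^\a(n,d)$ together with the symmetrized elements $[\xi] := \sum_{e=0}^{d-1} 1^{\otimes d-1-e} \otimes \xi \otimes 1^{\otimes e}$ for $\xi \in M_n(A)$.

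For the easy inclusion $T' \subseteq T^A_\a(n,d)$: by Theorem~\ref{T1}, $T^A_\a(n,d)$ is a subalgebra, so it suffices to check that each generator of $T'$ lies in $T^A_\a(n,d)$. For $S^\a(n,d)$, this is essentially immediate from the definition: a natural basis element $\xi^\bb_{\br,\bs}$ with $\bb\in B_\a^d$ has no letters from $B\setminus B_\a$, and so the factorial rescaling introduced in Section~\ref{SSA} is trivial, making $\xi^\bb_{\br,\bs}$ already a basis element of $T^A_\a(n,d)$. For $[\xi]$, I would expand in the natural basis of $S^A(n,d)$ and observe that every contributing tuple $\bb$ has the form $(1,\dots,1,b,1,\dots,1)$ with at most one non-$\a$ letter appearing with multiplicity $1$, so again the factorial rescaling is trivial and $[\xi]\in T^A_\a(n,d)$.

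For the hard inclusion $T^A_\a(n,d) \subseteq T'$: I would show that every rescaled basis element of $T^A_\a(n,d)$ can be built as a product of generators in $T'$. Fix a basis element indexed by $(\bb,\br,\bs)$ and split the positions $\{1,\dots,d\}$ according to whether $b_j \in B_\a$ or $b_j \in B\setminus B_\a$. The $\a$-positions can be realized as part of a suitable element of $S^\a(n,d)$, while for each non-$\a$ letter $b$ appearing at positions $j_1,\dots,j_m$, the corresponding contribution should come from multiplying together $m$ generators of the form $[b \otimes E_{r_{j_l},s_{j_l}}]$. The argument is naturally organized as an induction on the number of non-$\a$ positions in $\bb$: multiplying an element already expressed in $T'$ by one further generator $[\xi]$ introduces at most one new non-$\a$ position, and expanding the product modulo lower-weight terms (which are handled by the inductive hypothesis) should give the desired rescaled basis element up to a unit.

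The main obstacle will be the explicit combinatorial tracking in the multiplication step. When a product $[\xi_1]\cdots[\xi_k]$ is expanded inside $(M_n(A)^{\otimes d})^{\Si_d}$, each summand corresponds to a placement of the $\xi_i$'s into distinct tensor slots with $1$'s filling the remainder, followed by $\Si_d$-symmetrization; when several $\xi_i$ coincide, this produces exactly the factorials that were absorbed into the definition of the $T^A_\a(n,d)$-basis element $\eta^\bb_{\br,\bs}$. Verifying that these combinatorial factors match precisely the rescaling factors from Section~\ref{SSA}, and that the leading term of the product with respect to the non-$\a$-weight filtration is indeed $\eta^\bb_{\br,\bs}$ with coefficient a unit in $\k$, will be the crux of the argument.
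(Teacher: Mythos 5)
Your easy inclusion is fine and matches the paper: $S^\a(n,d)\subseteq T^A_\a(n,d)$ and $1^{\otimes d-1}*\xi\in T^A_\a(n,d)$ follow from Lemma~\ref{L140117} (the rescaling factor $[\bb,\br,\bs]^!_\c$ is trivial when every $B_\c$-letter occurs with multiplicity at most one), and then Theorem 1 closes the argument.

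The hard inclusion as you set it up has a genuine gap in the inductive structure, beyond the combinatorial crux you flag. You induct on the number of non-$\a$ positions and claim the cross terms are of strictly lower weight. They are not: when the new generator $\xi^b_{r,s}$ with $b\in B_\c$ lands in a tensor slot already occupied by some $a\in B_\a$ (which happens whenever that slot's column index equals $r$), the product contributes $\xi^{ab}_{\cdot,s}$, and since $\a$ is only a subalgebra of $A_\0$ --- not an ideal --- the element $ab\in\a\c$ generally has a nonzero component in $\c$. That component turns an $\a$-slot into a non-$\a$-slot, producing a basis element with exactly as many non-$\a$ positions as your target, which is neither the leading term nor covered by your inductive hypothesis; the argument becomes circular. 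The paper's proof of Theorem~\ref{TGen} avoids this by filtering differently: it inducts on the number $e$ of non-identity slots, proving that all elements $\eta^\bb_{\br,\bs}*1^{\otimes(d-e)}$ with $(\bb,\br,\bs)\in\Seq^B(n,e)$ lie in the generated subalgebra. In that filtration the collision terms genuinely drop a level, because the bialgebra compatibility formula of Lemma~\ref{Lstfo} organizes the product $(\eta^\bb_{\br,\bs}*1^{\otimes(d-e+1)})(1^{\otimes(d-1)}*\xi^b_{r,s})$ into exactly two pieces: the target $\pm\,\eta^\bb_{\br,\bs}*\xi^b_{r,s}*1^{\otimes(d-e)}$ and a single correction term $(\eta^\bb_{\br,\bs})_{(1)}*(\eta^\bb_{\br,\bs})_{(2)}\xi^b_{r,s}*1^{\otimes(d-e+1)}$ lying in $T^A_\a(n,e-1)*1^{\otimes(d-e+1)}$, to which the (level-$(e-1)$, all-basis-elements) hypothesis applies. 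The same machinery also disposes of your "unit coefficient" worry for free: by Lemma~\ref{xistarproducts}(iii) and formula (\ref{etastar}) one has $\eta^{\bb b}_{\br r,\bs s}=\pm\,\eta^{\bb}_{\br,\bs}*\xi^b_{r,s}$ on the nose, so the factorials absorbed into the $\eta$'s match the multiplicities produced by the shuffle product exactly, with coefficient $\pm1$. If you want to salvage your outline, you should replace your filtration by the paper's (pad with identity slots and induct on the padded length, via Lemma~\ref{L020218}) and carry out the expansion through the $*$-product and coproduct rather than by direct slot-by-slot bookkeeping.
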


A slightly stronger result appears as Theorem~\ref{TGen}. In order to prove this result, we first investigate some coproducts and $*$-products. Recall that $\bigoplus_{d \geq 0}M_n(A)^{\otimes d}$ has a natural coproduct $\nabla$, see \S\ref{SSCoproduct}. We then prove 

\begin{MainTheorem}\label{T2}
{\em The coproduct \(\nabla\) restricts to coproducts on 
$$S^A(n):= \bigoplus_{d \geq 0}S^A(n,d)\quad \text{and} \quad T^A_\a(n):=\bigoplus_{d \geq 0}T^A_\a(n,d).
$$
}
\end{MainTheorem}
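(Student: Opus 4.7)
The plan is to treat the two restrictions separately. For $S^A(n)$, fix a decomposition $d = d_1 + d_2$ and consider the corresponding component
$$
\nabla_{d_1,d_2} \colon M_n(A)^{\otimes d} \longrightarrow M_n(A)^{\otimes d_1} \otimes M_n(A)^{\otimes d_2}
$$
of $\nabla$, which regroups the $d$ tensor factors into a block of $d_1$ and a block of $d_2$. This map is equivariant for the parabolic embedding $\Si_{d_1} \times \Si_{d_2} \hookrightarrow \Si_d$. Hence $\nabla_{d_1,d_2}$ sends $\Si_d$-invariants into $\Si_{d_1} \times \Si_{d_2}$-invariants of the target. Since $A$ is free as a $\k$-supermodule, the standard identity $(V \otimes W)^{G \times H} = V^G \otimes W^H$ for free $\k$-modules applies, giving $\bigl(M_n(A)^{\otimes d_1} \otimes M_n(A)^{\otimes d_2}\bigr)^{\Si_{d_1} \times \Si_{d_2}} = S^A(n,d_1) \otimes S^A(n,d_2)$, which delivers the required restriction to $S^A(n)$.

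For $T^A_\a(n)$, I would work directly with the Schur-type basis $\xi^\bb_{\br,\bs}$ of $S^A(n,d)$ from Section~\ref{SSA} and its $T$-rescaling defined via the factorial factors $\gamma_{\bb,\br,\bs}$ from Section~\ref{SPrelim}. Writing $\xi^\bb_{\br,\bs}$ as the orbit sum under $\Si_d$ of a pure tensor $\bx_1 \otimes \cdots \otimes \bx_d$ whose factors encode the entries of $\bb, \br, \bs$, and applying $\nabla_{d_1,d_2}$ term by term, the image should reorganize as
$$
\nabla_{d_1,d_2}(\xi^\bb_{\br,\bs}) = \sum_{I \sqcup J = [1,d],\, |I| = d_1} \mu_{I,J}\, \xi^{\bb|_I}_{\br|_I,\,\bs|_I} \otimes \xi^{\bb|_J}_{\br|_J,\,\bs|_J},
$$
where the sum ranges over ordered decompositions of $[1,d]$ into blocks of sizes $d_1$ and $d_2$, and $\mu_{I,J}$ is a (possibly super-signed) combinatorial coefficient recording the overcounting of orbit representatives.

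Rewriting both sides in terms of the rescaled $T$-basis then reduces the statement to the arithmetic claim that $\gamma_{\bb,\br,\bs}\,\mu_{I,J}/(\gamma_{\bb|_I,\br|_I,\bs|_I}\,\gamma_{\bb|_J,\br|_J,\bs|_J})$ is a signed integer for each decomposition. Because each $\gamma$ is a product $\prod_k m_k!$ over multiplicities of tensor slots whose $A$-entry lies outside $\a$, and these multiplicities split as $m_k = m_k^I + m_k^J$ under the chosen decomposition, this ratio telescopes into a product of binomial coefficients $\prod_k \binom{m_k}{m_k^I}$ and hence is integral. The main obstacle is essentially bookkeeping: setting up the indexing so that $\mu_{I,J}$ is correctly identified with the claimed combinatorial coefficient, and verifying that the super-signs arising from permuting odd tensor factors are consistent on both sides of the identity. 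Once this is in place, the integrality of the coefficients in the $T$-basis follows immediately, and $\nabla$ restricts to $T^A_\a(n)$ as claimed.
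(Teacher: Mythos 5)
Your proposal is correct, and the two halves compare differently with the paper. For $S^A(n)$ you argue softly: $\nabla_{d_1,d_2}$ is equivariant for the parabolic $\Si_{d_1}\times\Si_{d_2}\hookrightarrow\Si_d$, so it carries $\Si_d$-invariants into $(M_n(A)^{\otimes d_1}\otimes M_n(A)^{\otimes d_2})^{\Si_{d_1}\times\Si_{d_2}}=S^A(n,d_1)\otimes S^A(n,d_2)$. This works (the identification of invariants with the tensor product of invariants needs flatness of the factors, which is supplied by the freeness of $S^A(n,d_1)$ from Lemma~\ref{LBasis'}), and it is genuinely shorter than the paper's route, which instead proves the explicit split formula of Lemma~\ref{coprodxi} via the coset-representative bijection (\ref{E110217}) and the sign identity (\ref{E110217_2}). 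What the soft argument loses is that explicit formula, which the paper reuses repeatedly (Corollary~\ref{coprodeta}, Lemma~\ref{coprodchar}, Theorem~\ref{TGen}, Lemma~\ref{etadual}) — and indeed you end up needing a version of it anyway for the $T$ half. For $T^A_\a(n)$ your argument is essentially the paper's Corollary~\ref{coprodeta}: express $\nabla(\xi^\bb_{\br,\bs})$ in the $\xi$-basis of the target, rescale to the $\eta$-basis, and observe that the resulting coefficient involves the ratio $[\Triple]^!_\c/([\Triple^1]^!_\c[\Triple^2]^!_\c)$, which is a product of binomial coefficients because the $\c$-multiplicities are additive under splitting. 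The bookkeeping you defer (identifying $\mu_{I,J}$, i.e.\ collecting the ordered decompositions $I\sqcup J$ into $\Si_d$-orbit representatives with the right super-signs) is exactly the content of the paper's proof of Lemma~\ref{coprodxi}; since whatever integer $\mu_{I,J}$ turns out to be, integrality of the factorial ratio is all that is needed, there is no gap.
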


In Section~\ref{Ssuperbi}, we show that the \(*\)-product (or shuffle product) on $$\bigoplus_{d \geq 0}M_n(A)^{\otimes d}$$ restricts to a product on \(S^A(n)\) and \(T^A_\a(n)\), which, together with \(\nabla\), gives these objects a superbialgebra structure. 
We then prove that \(T^A_\a(n)\) is generated under the \(*\)-product by \(S^\a(n)\) and \(M_n(A)\). This allows us to prove Theorem 2.

In Section~\ref{SExamples} we first discuss some properties of idempotents and idempotent truncations in \(T^A_\a(n,d)\). Given an idempotent $e\in\a$, we define an idempotent $\xi^e\in T^A_\a(n,d)$ and prove in Lemma~\ref{LTruncation} that
$$
\xi^e T^A_\a(n,d)\xi^e\cong T^{eAe}_{e\a e}(n,d).
$$
 Section~\ref{SExamples} is completed with some important examples of generalized Schur algebras. We discuss how \(T^A_\a(n,d)\) generalizes the Turner double construction and look at the case where \(A\) is the extended zigzag algebra. 
 
In Section~\ref{Ssym} we study the symmetricity of $T^A_\a(n,d)$.  This is important since blocks of finite groups are symmetric algebras and, inspired by \cite{EK2}, we hope that in some situations $T^A_\a(n,d)$ could provide a local description of some interesting blocks. As the example $A=\k$ shows, it is certainly not enough to assume that $A$ is symmetric to guarantee that so is $T^A_\a(n,d)$. A natural assumption we have to make is that the symmetrizing form $\tr$ is {\em $(A,\a)$-symmetrizing}, i.e. $(\a,\a)_\tr=0$ and the $\k$-complement $\c$ of $\a$ in $A_\0$ can be chosen so that the restriction of $(\cdot,\cdot)_\tr$ to $\a\times\c$ is a perfect pairing. Then we construct an explicit symmetrizing form $\tr^T$ on $T^A_\a(n,d)$ and prove in Corollary~\ref{Tsym}:

\begin{MainTheorem}\label{T3.5}
{\em 
If $\tr$ is an $(A,\a)$-symmetrizing form on $A$, then the algebra $T^A_\a(n,d)$ is symmetric, with symmetrizing form $\tr^T$.
}
\end{MainTheorem}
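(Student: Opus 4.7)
My plan is to build $\tr^T$ from the given symmetrizing form $\tr$ on $A$ by first going up to the matrix algebra and then to the tensor power. Specifically, I would extend $\tr$ to $\tr_n\colon M_n(A)\to \k$ via the matrix trace $\tr_n(X)=\sum_i \tr(X_{ii})$, noting that if $\tr$ is $(A,\a)$-symmetrizing then $\tr_n$ is $(M_n(A),M_n(\a))$-symmetrizing in the analogous sense, with complement $M_n(\c)$. The $d$-fold tensor product $\tr_n^{\otimes d}$ is then a supersymmetric, $\Si_d$-invariant form on $M_n(A)^{\otimes d}$, so its restriction to the invariant subalgebra $S^A(n,d)$ is supersymmetric. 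I would define $\tr^T$ on $T^A_\a(n,d)\subseteq S^A(n,d)$ to be this restriction rescaled by a combinatorial factor designed to make it take values in $\k$ and produce a Kronecker-delta dual basis below.

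Supersymmetry of $\tr^T$ --- that is, $\tr^T(xy)=(-1)^{|x||y|}\tr^T(yx)$ --- reduces immediately to the analogous property of $\tr_n^{\otimes d}$ on $M_n(A)^{\otimes d}$, which in turn follows from the supersymmetry of $\tr_n$ by the usual sign rules for symmetrizing forms on tensor products of superalgebras. Since $T^A_\a(n,d)$ sits inside $M_n(A)^{\otimes d}$ and $\tr^T$ is (up to a scalar) the restriction of $\tr_n^{\otimes d}$, the property passes to $\tr^T$.

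The main task is to exhibit a dual basis of $T^A_\a(n,d)$ under the pairing $(x,y)\mapsto \tr^T(xy)$. Writing $A = \a \oplus \c \oplus A_\1$ and choosing a basis $B = B_\a \sqcup B_\c \sqcup B_\1$ adapted to this decomposition, the $(A,\a)$-symmetrizing hypothesis provides a $\tr$-dual basis $\{b^* : b\in B\}$ with the crucial property that $B_\a^* = B_\c$ and $B_\c^* = B_\a$; in particular, the dual basis is again contained in a basis of the same shape. I would then show that the $\tr^T$-dual of the $T$-basis element indexed by $(\bb,\br,\bs)$ is, up to an explicit sign, the $T$-basis element indexed by $(\bb^*,\bs,\br)$, where $\bb^*:=(b_1^*,\ldots,b_d^*)$. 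Since each $b_i^*$ lies in $B$ by the previous observation, this purported dual is a genuine $T$-basis element, and non-degeneracy will follow.

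The technical heart of the argument --- and the main obstacle --- lies in this dual-basis computation, where three combinatorial contributions must be reconciled: the $\Si_d$-stabilizer of the triple $(\bb,\br,\bs)$, which determines the factorials rescaling the $S$-basis to the $T$-basis; the signs coming from supercommuting the odd tensor factors when evaluating $\tr_n^{\otimes d}$ on a product; and the matrix-unit orthogonality in $M_n(A)^{\otimes d}$ which forces the transposition $\br \leftrightarrow \bs$ in the dual. The factorial rescaling in the definition of $T^A_\a(n,d)$ is precisely calibrated so that these three contributions combine into $\pm 1$, and once this alignment is checked in detail, Corollary~\ref{Tsym} follows.
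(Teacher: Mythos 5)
Your proposal follows the paper's own proof: there too $\tr^T$ is (up to the factor $d!$) the restriction to $T^A_\a(n,d)$ of the tensor power of the matrix trace built from $\tr$, and the perfect pairing is established by showing $\eta^{\bb}_{\br,\bs}$ pairs to $\pm1$ with $\eta^{\bb^*}_{\bs,\br}$ and to $0$ with all other basis elements, with exactly the calibration $B_\a^*=B_\c$, $B_\c^*=B_\a$ making the stabilizer factorials in $[\bb,\br,\bs]^!_\c\,[\bb^*,\bs,\br]^!_\c$ combine to cancel $d!$. One small correction: because $\tr$ is even and honestly central on $A$, the induced form satisfies $\tr^T(xy)=\tr^T(yx)$ with no Koszul sign even when $x,y$ are both odd (the two signs arising from reordering tensor factors cancel), which is the notion of symmetric form the theorem requires.
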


In Section~\ref{SDCP} we investigate double centralizer properties. Let $S$ be a $\k$-algebra and  $e\in S$ be an idempotent. We say that $e$ is a {\em double centralizer idempotent for $S$} if the natural map $
S \to \End_{eSe}(Se)
$ is an isomorphism. Given $e\in\a$, which is a double centralizer idempotent for $A$, it is not in general true that $\xi^e$ is a double centralizer idempotent for $T^A_\a(n,d)$, see Remark~\ref{RCounterExample}. However, in Theorem~\ref{Tk}, we prove the following positive result:

\begin{MainTheorem}\label{T4}
{\em 
Let $e \in A$ be a double centralizer idempotent for $A$ and $d\leq n$. Then $\xi^e$ is a double centralizer idempotent for $S^A(n,d)$. In particular, if $\K$ is the quotient field of $\k$, then $\xi^e$  is a double centralizer idempotent for $S^A(n,d)_\K=T^A_\a(n,d)_\K$. 
}
\end{MainTheorem}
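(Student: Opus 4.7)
The plan is to reduce Theorem~\ref{T4} to Green's classical double centralizer theorem for Schur algebras, via a Morita-type upgrade, a tensor-power step, and a descent to $\Si_d$-invariants. First I would make the setup concrete by writing $M_n(A) = M_n(\k) \otimes_\k A$, so that the idempotent $\xi^e$ corresponds to $(I_n \otimes e)^{\otimes d} \in M_n(A)^{\otimes d}$; this element is $\Si_d$-invariant and hence lies in $S^A(n,d)$. A routine Peirce calculation (in the same spirit as Lemma~\ref{LTruncation}) identifies
$$\xi^e S^A(n,d)\xi^e \cong S^{eAe}(n,d), \qquad S^A(n,d)\xi^e \cong \bigl(M_n(Ae)^{\otimes d}\bigr)^{\Si_d},$$
the latter as a right $S^{eAe}(n,d)$-module by right matrix multiplication on each tensor factor.

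Next, the DCP hypothesis $A \cong \End_{eAe}(Ae)$, tensored with $M_n(\k)$, immediately upgrades to $M_n(A) \cong \End_{M_n(eAe)}(M_n(Ae))$. Since $Ae$ is a direct summand of $A$ as a right $eAe$-module (via the Peirce decomposition $A = Ae \oplus A(1-e)$), it is projective over $eAe$, and the standard fact that $\End_R$ of a finitely generated projective module commutes with tensor products lifts the preceding identification to
$$M_n(A)^{\otimes d} \cong \End_{M_n(eAe)^{\otimes d}}\!\bigl(M_n(Ae)^{\otimes d}\bigr),$$
equivariantly for the natural diagonal action of $\Si_d$ on both sides. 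No hypothesis on $d$ has entered yet.

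The main obstacle is the descent to $\Si_d$-invariants: one must show that the induced map
$$S^A(n,d) \longrightarrow \End_{S^{eAe}(n,d)}\!\bigl(S^A(n,d)\xi^e\bigr)$$
is an isomorphism. Injectivity is immediate from the previous step. For surjectivity, given an $(M_n(eAe)^{\otimes d})^{\Si_d}$-linear endomorphism $\phi$ of $(M_n(Ae)^{\otimes d})^{\Si_d}$, one has to lift $\phi$ to a $\Si_d$-equivariant, $M_n(eAe)^{\otimes d}$-linear endomorphism of the full tensor space $M_n(Ae)^{\otimes d}$ and then read off the corresponding element of $M_n(A)^{\otimes d}$ via the Morita--tensor isomorphism. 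This is precisely where $d \leq n$ enters, through the classical Schur--Weyl saturation: with $d$ distinct indices available in $[1,n]$, one can construct explicit ``regular'' tensors such as $e E_{1,1} \otimes \cdots \otimes e E_{d,d} \in M_n(Ae)^{\otimes d}$ whose $\Si_d$-orbits span a free $\k[\Si_d]$-submodule, so that $\phi$ is both determined by, and reconstructible from, its action on the symmetrizations of these orbits by an averaging argument. I expect this symmetric-invariants step to be the core technical content of the proof.

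The final assertion about $S^A(n,d)_\K = T^A_\a(n,d)_\K$ then follows automatically: base change to the quotient field $\K$ of $\k$ identifies these two lattices, as noted in the introduction, and the first part of the theorem applied to the $\K$-algebra $S^A(n,d)_\K$ with idempotent $\xi^e$ yields that $\xi^e$ is a double centralizer idempotent there.
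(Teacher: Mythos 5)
Your overall route is the same as the paper's: identify $\xi^e S^A(n,d)\xi^e$ with $S^{eAe}(n,d)$ and $S^A(n,d)\xi^e$ with $(M_n(Ae)^{\otimes d})^{\Si_d}$, establish $M_n(A)\cong\End_{M_n(eAe)}(M_n(Ae))$, pass to $d$-th tensor powers $\Si_d$-equivariantly, and then descend to invariants; and you have correctly located the only place where $d\le n$ enters (the paper realizes your ``lift and average'' step by an explicit mutually inverse pair $\res$ and $\infl$, with $g_{\infl}(E^{\bx}_{\br,\bs}):=g(\xi^{\bx}_{\br,\omega})E^{\bbe}_{\omega,\bs}$ for $\omega=(1,\dots,d)$, which needs $d$ distinct column indices in $[1,n]$). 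That last step is the technical heart of the argument and you only sketch it, but the sketch points in the right direction.

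There is, however, one genuinely false assertion in your tensor-power step. You claim that $Ae$ is projective over $eAe$ because it is a direct summand of $A$ as a right $eAe$-module. A direct summand of $A$ is projective only if $A$ itself is projective over $eAe$, which is not part of the hypotheses; and in the motivating example it fails outright: for the extended zigzag algebra $Z$ with $e=e_0+\dots+e_{\ell-1}$ one has $Ze=\bar Z\oplus e_\ell Ze$, and $e_\ell Ze=\spa(a_{\ell,\ell-1})$ is a one-dimensional (hence non-projective) module over the self-injective zigzag algebra $\bar Z$. So you cannot invoke ``$\End_R$ of a finitely generated projective commutes with tensor products.'' The isomorphism $\End_{M_n(\bar A)^{\otimes d}}(M_n(Ae)^{\otimes d})\cong\End_{M_n(\bar A)}(M_n(Ae))^{\otimes d}$ you need is nevertheless true, but for a different reason: since $\k$ is a domain and all modules in sight are free of finite rank over $\k$, one has $\End_{R\otimes S}(M\otimes N)=\End_R(M)\otimes_\k\End_S(N)$ in general (write an $R\otimes S$-linear map inside $\End_\k(M)\otimes\End_\k(N)$ with one tensor leg expressed in $\k$-linearly independent terms and pass to the fraction field to kill the commutators), or one simply reruns the matrix-unit computation of the paper's step (\ref{DCP1}) directly at the level of $M_n(A)^{\otimes d}$. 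So the step must be repaired, not merely cited.

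The final reduction over $\K$ is fine and is exactly Corollary~\ref{TK} of the paper.
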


Finally, in Theorem~\ref{TDCPZ}, we deal with the all-important zigzag case over the arbitrary $\k$:   

\begin{MainTheorem}\label{T5}
{\em 
Let $Z$ be the extended zigzag algebra with the standard idempotents $e_0,e_1,\dots, e_\ell$. We set $e:=e_0+\dots+e_{\ell-1}$, so that $eZe$ is the zigzag algebra.  Then $e$ is a double centralizer idempotent for $Z$, and $\xi^e$ is a double centralizer idempotent for $T^Z_\z(n,d)$ provided $d\leq n$. 
}
\end{MainTheorem}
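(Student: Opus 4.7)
My plan splits the theorem into two independent parts: (a) showing that $e$ is a double centralizer idempotent for $Z$ itself, and (b) bootstrapping this to the claim about $\xi^e$ inside $T^Z_\z(n,d)$.

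For (a), I would compute $\End_{eZe}(Ze)$ explicitly by hand using the quiver-with-relations presentation of the extended zigzag algebra. Decomposing $Ze = \bigoplus_{i=0}^{\ell-1} Ze_i$ as a right $eZe$-module, I would fix an integral path basis of $Z$, read off bases of each summand $Ze_i$ as right $eZe$-modules, and compute $\Hom_{eZe}(Ze_i, Ze_j)$ by direct inspection. The natural map $Z \to \End_{eZe}(Ze)$ given by left multiplication sends the path basis of $Z$ bijectively onto a basis of the right-hand side, so it is a $\k$-algebra isomorphism. The structural point is that the information carried by the omitted idempotent $e_\ell$ (i.e.\ the paths passing through vertex $\ell$) is recoverable from the $eZe$-module structure on $Ze_{\ell-1}$ through its radical layers, so no extra endomorphisms appear.

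For (b), I would combine Part (a) with Theorem~\ref{T4} (which applies since $d \leq n$) to get that $\xi^e$ is a double centralizer idempotent for $S^Z(n,d)$. Let $\K$ denote the fraction field of $\k$. Since $T^Z_\z(n,d)_\K = S^Z(n,d)_\K$ and $\xi^e$ lies in both, the natural map
\[
\Psi_\K\colon T^Z_\z(n,d)_\K \longrightarrow \End_{\xi^e T^Z_\z(n,d)_\K \xi^e}\!\bigl(T^Z_\z(n,d)_\K\, \xi^e\bigr)
\]
is an isomorphism. Writing $\Psi$ for its integral analogue, injectivity of $\Psi$ follows immediately since $T^Z_\z(n,d)$ is a full sublattice of its $\K$-form. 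For surjectivity, given $\phi$ in the target of $\Psi$, extend to $\phi_\K$ and obtain $a \in T^Z_\z(n,d)_\K$ with $\phi_\K$ equal to left multiplication by $a$; it then remains to show $a \in T^Z_\z(n,d)$. Using Lemma~\ref{LTruncation} to identify $\xi^e T^Z_\z(n,d) \xi^e \cong T^{eZe}_{e\z e}(n,d)$, and the explicit descriptions of the $T$-bases from Section~\ref{SSA}, I would translate the lattice-preservation condition $a \cdot T^Z_\z(n,d) \xi^e \subseteq T^Z_\z(n,d) \xi^e$ into integrality of the $T$-basis coefficients of $a$.

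The central obstacle is this last step: verifying that the sublattice condition is rigid enough to single out $T^Z_\z(n,d)$ inside $T^Z_\z(n,d)_\K$, rather than some intermediate lattice sitting between $T^Z_\z(n,d)$ and $S^Z(n,d)$. This calls for careful bookkeeping of the factorial rescaling factors distinguishing the $T$-basis from the $S$-basis against the action of left multiplication followed by idempotent truncation. The hypothesis $d \leq n$ enters crucially here, exactly as in Theorem~\ref{T4}, by guaranteeing enough distinct indices among $[1,n]^d$ to separate basis elements and thereby disambiguate the coefficients of $a$.
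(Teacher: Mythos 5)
Your overall strategy coincides with the paper's: part (a) by direct computation with a path basis, and part (b) by combining Theorem~\ref{T4} (the double centralizer property over $\K$, where $T^Z_\z(n,d)_\K=S^Z(n,d)_\K$) with an integrality argument showing that left multiplication preserves the lattice $T^Z_\z(n,d)$. One minor issue in (a): the summands $Ze_i$ are not right $eZe$-submodules (right multiplication by $a_{i,i\pm1}$ moves you out of $Ze_i$), so the decomposition you should use is $Ze=\bigoplus_{j=0}^{\ell}e_jZe$; the delicate summand is $e_\ell Ze$, for which one must check case by case that $\la|_{e_jZe_\ell}\colon e_jZe_\ell\to\Hom_{eZe}(e_\ell Ze,e_jZe)$ is bijective. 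Your heuristic that ``no extra endomorphisms appear'' is precisely what has to be verified there; it is a finite computation, so this part is fixable.

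The serious problem is that part (b) stops exactly where the real work begins. You correctly reduce to showing: if $a\in T^Z_\z(n,d)_\K$ satisfies $a\cdot T^Z_\z(n,d)\xi^e\subseteq T^Z_\z(n,d)\xi^e$, then $a\in T^Z_\z(n,d)$ --- equivalently, in the paper's language, that $\xi^e$ is a \emph{sound} idempotent (Lemma~\ref{LK}). But you offer no argument for this beyond the remark that it ``calls for careful bookkeeping.'' This step is the entire content of the paper's proof of Theorem~\ref{TDCPZ} and occupies a dedicated subsection of computations. The paper writes an indivisible integral representative $x$ of $a$ in the special basis (\ref{E280318_8}), indexed by the multiplicities $\ka$ of $a_{\ell-1,\ell},\,e_\ell,\,a_{\ell,\ell-1},\,c_{\ell-1}$, then multiplies by carefully chosen test elements $\eta^{a_{\ell,\ell-1}^{k}}_{\br^{\mu+\nu},\bt}*\xi^{e^{d-k}}\in T\xi^e$, where $\bt$ consists of $k$ pairwise distinct letters disjoint from the relevant $\bs^3,\bs^4$ (this is where $d\leq n$ enters), and shows via Lemmas~\ref{LProdZ} and~\ref{L270318} and Remark~\ref{R2} that the resulting products are linear combinations of \emph{linearly independent} basis elements whose coefficients are, up to sign, exactly the coefficients of $x$ --- with no stray factorial factors. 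Divisibility of the product then forces divisibility of those coefficients, a contradiction. Without specifying which elements of $T\xi^e$ to test against, and why the outputs separate the coefficients of $a$ with unit multipliers, the proposal does not establish the theorem; identifying the obstacle is not the same as overcoming it.
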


\section{Preliminaries}\label{SPrelim}
Throughout the paper $\k$ is always a commutative domain of characteristic~$0$. 

\subsection{Superalgebras and supermodules}
Let $V$ be a {\em $\k$-supermodule}, i.e. $V$ is endowed with a $\k$-module decomposition  
$V=V_\0\oplus V_\1
$ 
(the superstructure could be trivial, i.e. we could have $V=V_\0$).  
If $\eps\in\Z/2$ and $v\in V_\eps$, we call $v$ {\em homogeneous} and write $\bar v:=\eps$. 
For a set $S$ of homogeneous elements of $V$ and $\eps\in\Z/2$ 
we denote 
\begin{equation}\label{EH}
S_\eps:=S\cap  V_\eps.
\end{equation}
A map $f:V\to W$ of $\k$-supermodules is called {\em homogeneous} if $f(V_\eps)\subseteq W_\eps$ for all $\eps$. 
A $\k$-supermodule $V$ is {\em free} 
if so is each $V_\eps$. Let $V$ be a free  $\k$-supermodule. 
A {\em homogeneous basis} of $V$ is a $\k$-basis all of whose elements are homogeneous. 
A (not necessarily unital) $\k$-algebra $A$ is called a {\em $\k$-superalgebra}, if $A$ is a $\k$-supermodule and $A_{\eps}A_{\de}\subseteq A_{\eps+\de}$ for all $\eps,\de$.  

Throughout the paper we will work with a fixed superalgebra $A$ which is free 
as a $\k$-supermodule (not necessarily of finite rank). Moreover, we fix a $\k$-subalgebra $\a\subseteq A_\0$ such that \(\a\) and \(A/\a\) are both free as \(\k\)-modules.
 Such a pair $(A,\a)$ will be called a {\em good pair}. It is called a {\em unital good pair} if both $A$ and $\a$ are unital and $1_\a=1_A$. 

For our fixed good pair $(A,\a)$, we pick a $\k$-module complement $\c$ for $\a$ in $A_\0$ and \(\k\)-bases  $B_{\fa}$, $B_\c$, $B_\1$  for $\a$, $\c$, $A_\1$, respectively, so that  
\begin{align}\label{AaBasis}
B= B_{\fa}\sqcup B_{\c} \sqcup B_{\bar 1}
\end{align}
is a homogeneous basis for \(A\). We call such a basis an {\em \((A, \fa)\)-basis}.

Define the structure constants $\kappa^b_{a,c}$ of $A$ from
\begin{equation}\label{EStrConst}
ac=\sum_{b\in B}\kappa^b_{a,c} b
\qquad(a,c\in A).
\end{equation}
More generally, for 
$$\bb=(b_1,\dots, b_d)\in B^d\quad \text{and}\quad \ba=(a_1,\dots, a_d),\,\bc=(c_1,\dots, c_d)\in A^d,$$ 
we define 
\begin{equation}\label{ESCB}
\ka^\bb_{\ba,\bc}:=\ka^{b_1}_{a_1,c_1}\dots \ka^{b_d}_{a_d,c_d}.
\end{equation}
Finally, we denote by $H$ the set of all non-zero homogeneous elements of $A$. 

The matrix algebra $M_n(A)$ is naturally a superalgebra. For $1\leq r,s\leq n$ and $a\in A$, we denote
\begin{equation}\label{EXirs}
\xi_{r,s}^a:=a E_{r,s}\in M_n(A). 
\end{equation}
Then 
\begin{equation}\label{EBasisM_n(A)}
\{\xi_{r,s}^b\mid 1\leq r,s\leq n,\ b\in B\}
\end{equation}
 is a homogeneous basis of $M_n(A)$, and by (\ref{EStrConst}) we have 
\begin{equation}\label{EXiProduct}
\xi_{r,s}^{a}\xi_{t,u}^{c}=\de_{s,t}\sum_{b\in B}\kappa^b_{a,c}\xi_{r,u}^{b}
\qquad(a,c\in A,\ 1\leq r,s,t,u\leq n).
\end{equation}

\subsection{Combinatorics}\label{SComb}
For $r,s\in\Z$ we denote $[r,s]:=\{t\in\Z\mid r\leq t\leq s\}$. 
We fix $n\in\Z_{>0}$ and $d\in\Z_{\geq 0}$. 
For a set $X$, the elements of $X^d$ are referred to as {\em words} (of length $d$) with letters in the alphabet $X$. 
The words are usually written as 
$
x_1x_2\cdots x_d\in X^d.
$
For $\bx\in X^d$ and $\bx'\in X^{d'}$ we denote by $\bx\bx'\in X^{d+d'}$ the concatenation of $\bx$ and $\bx'$. For 
$x\in X$, we denote $x^d:=x\cdots x\in X^d$.

The symmetric group $\Si_d$ acts on the right on $X^d$ by place permutations:
$$
(x_1\cdots x_d)\si=x_{\si 1}\cdots x_{\si d}.
$$
For $\bx,\bx'\in X^d$, we write $\bx\sim\bx'$ if $\bx\si=\bx'$ for some $\si\in \Si_d$. If $X_1,\dots,X_N$ are sets, then $\Si_d$ acts on $X_1^d\times\dots\times X_N^d$ diagonally:
$$
(\bx^1,\dots,\bx^N)\si=(\bx^1\si,\dots,\bx^N\si).
$$
The set of the corresponding orbits is denoted 
$
(X_1^d\times\dots\times X_N^d)/\Si_d,
$ 
and the orbit of $(\bx^1,\dots,\bx^N)$ is denoted $[\bx^1,\dots,\bx^N]$. 
We write 
$$(\bx^1,\dots,\bx^N)\sim (\by^1,\dots,\by^N)$$ 
if $[\bx^1,\dots,\bx^N]= [\by^1,\dots,\by^N]$.  

Let $P$ be a set of homogeneous elements of $A$. Our main examples will be $P=B$ and $P= H$ (the set of all non-zero homogeneous elements of $A$). We have $P=P_\0\sqcup P_\1$. Define $\Seq^P (n,d)$ to be the set of all triples 
$$
(\bp,\br, \bs) = ( p_1\cdots p_d,\, r_1\cdots r_d,\, s_1\cdots s_d ) \in  P^d\times[1,n]^d\times[1,n]^d
$$
such that for any $1\leq k\neq l\leq d$ we have 
$(p_k,r_k,s_k)=(p_l,r_l, s_l)$ 
only if $p_k\in P_\0$. Then $\Seq^P (n,d)\subseteq P^d\times[1,n]^d\times[1,n]^d$ is $\Si_d$-invariant and so we have the orbit set $\Seq^P (n,d)/\Si_d$. 

For $(\bp,\br,\bs)\in\Seq^P(n,d)$, we consider the stabilizer 
$$\Si_{\bp,\br,\bs}:=\{\si\in \Si_d\mid (\bp,\br,\bs)\si=(\bp,\br,\bs)\},
$$
and denote by\, ${}^{\bp,\br,\bs}\D$ a set of the shortest coset representatives for $\Si_{\bp,\br,\bs}\backslash\Si_n$. Then $
\{(\bp,\br,\bs)\si\mid \si\in {}^{\bp,\br,\bs}\D\} 
$ 
is the set of distinct elements in the orbit $[\bp,\br,\bs]$.

We fix a total order `$<$' on 
$P\times[1,n]\times[1,n]$. 
Then we also have a total order on $\Seq^P(n,d)$ defined as follows: $(\bp,\br,  \bs)< (\bp',\br',  \bs')$ if and only if there exists $l\in[1,d]$ such that $(p_k,r_k,s_k)=(p_k',r_k',s_k')$ for all $k<l$ and $(p_l,r_l,s_l)<(p_l',r_l',s_l')$. Denote
\begin{equation}\label{ESeq0}
\Seq^P_0(n,d)=\{(\bp, \br, \bs) \in \Seq^P(n,d)\mid (\bp, \br, \bs)\leq (\bp, \br, \bs) \sigma\ \text{for all}\ \sigma \in \mathfrak{S}_d\}.
\end{equation}
We have a bijection
$$
\Seq^P_0(n,d)\iso \Seq^P(n,d)/\Si_d,\ (\bp, \br, \bs)\mapsto [\bp, \br, \bs].
$$

For $(\bp,\br, \bs) \in \Seq^P(n,d)$, $\bp' \in P^d$ and $\si\in\Si_d$, 
we define
\begin{align*}
\lan\bp, \br,  \bs\ran
&:=\sharp\{(k,l)\in[1,d]^2\mid k<l,\ p_k,p_l\in P_\1,\ (p_k,r_k,s_k)> (p_l,r_l, s_l)\},
\\
\lan \bp, \bp'\ran
&:=\sharp\{(k,l)\in[1,d]^2\mid k>l,\  p_k,p_l'\in P_\1\}.
\\
\lan \si;\bp\ran&:=\sharp\{(k,l)\in[1,d]^2\mid k<l,\  \si^{-1}k>\si^{-1}l,\ p_k,p_l\in P_\1\}.
\end{align*}
Note that
\begin{equation}\label{E171116}
(-1)^{\lan\bp,\br,  \bs\ran+\lan\bp\si,\br\si,  \bs\si\ran}=(-1)^{\lan \si;\bp\ran}.
\end{equation}

Let us now specialize to the case $P=B$.

\begin{Lemma} \label{Lsigneq} 
Let \((\ba, \br, \bt), (\bc, \bt, \bu) \in \Seq^B(n,d)\). Assume that, for some \(1 \leq k < d\), either \(\bar a_k = \bar c_k\) or \(\bar a_{k+1} = \bar c_{k+1}\). Then 
 \begin{align*}
 (-1)^{\langle \ba, \br, \bt \rangle + \langle \bc, \bt, \bu \rangle + \langle \ba, \bc \rangle}
 =
  (-1)^{\langle \ba s_k, \br s_k, \bt s_k \rangle + \langle \bc s_k, \bt s_k, \bu s_k \rangle + \langle \ba s_k, \bc s_k \rangle}. 
 \end{align*}
\end{Lemma}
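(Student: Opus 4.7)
The plan is to reduce the identity to a simple parity calculation in $\Z/2$. The two ``three-index'' brackets on each side are handled by identity~\eqref{E171116}, while the bilinear bracket $\langle\cdot,\cdot\rangle$ piece is handled by a direct change of variables.

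First, I would apply (\ref{E171116}) with $\sigma = s_k$ to each of the triples $(\ba, \br, \bt)$ and $(\bc, \bt, \bu)$, obtaining
$$
(-1)^{\langle \ba, \br, \bt\rangle + \langle \ba s_k, \br s_k, \bt s_k\rangle} = (-1)^{\langle s_k; \ba\rangle}, \qquad (-1)^{\langle \bc, \bt, \bu\rangle + \langle \bc s_k, \bt s_k, \bu s_k\rangle} = (-1)^{\langle s_k; \bc\rangle}.
$$
A direct inspection of the definition of $\langle \sigma; \bp\rangle$ shows that the only ordered pair $(i, j)$ with $i < j$ and $s_k^{-1} i > s_k^{-1} j$ is $(k, k+1)$, so $\langle s_k; \ba\rangle \equiv \bar a_k \bar a_{k+1}$ and $\langle s_k; \bc\rangle \equiv \bar c_k \bar c_{k+1}$ modulo $2$.

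Next, I would evaluate $\langle \ba s_k, \bc s_k\rangle - \langle \ba, \bc\rangle$ directly. Writing $\langle \ba, \bc\rangle = \sum_{i > j} \bar a_i \bar c_j$ and performing the substitution $(i,j) \mapsto (s_k i, s_k j)$ inside $\langle \ba s_k, \bc s_k\rangle$, one sees that the pairs with $\{i,j\}\cap\{k,k+1\}=\emptyset$ or $|\{i,j\}\cap\{k,k+1\}|=1$ all cancel, so only the pairs with $\{i,j\} = \{k, k+1\}$ contribute, producing
$$
\langle \ba s_k, \bc s_k\rangle - \langle \ba, \bc\rangle \equiv \bar a_k \bar c_{k+1} + \bar a_{k+1} \bar c_k \pmod{2}.
$$

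Combining these three contributions, the sign ratio of the two sides of the claim is $(-1)^X$, where
$$
X \equiv \bar a_k \bar a_{k+1} + \bar c_k \bar c_{k+1} + \bar a_k \bar c_{k+1} + \bar a_{k+1} \bar c_k = (\bar a_k + \bar c_k)(\bar a_{k+1} + \bar c_{k+1}) \pmod{2}.
$$
The hypothesis that either $\bar a_k = \bar c_k$ or $\bar a_{k+1} = \bar c_{k+1}$ is precisely the statement that one of the two factors on the right vanishes in $\Z/2$, so $X \equiv 0$ and the lemma follows. The only step requiring any care is the change-of-variables computation for $\langle \ba s_k, \bc s_k\rangle$; the remainder is routine parity bookkeeping.
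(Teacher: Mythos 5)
Your proof is correct. It is also genuinely streamlined compared with the paper's argument: the paper proves this lemma by a three-way case analysis on how many of $a_k,c_k,a_{k+1},c_{k+1}$ are even, verifying in each case how the three brackets change under $s_k$, whereas you invoke the identity (\ref{E171116}) to convert the two triple brackets into the explicit inversion counts $\langle s_k;\ba\rangle\equiv\bar a_k\bar a_{k+1}$ and $\langle s_k;\bc\rangle\equiv\bar c_k\bar c_{k+1}$, compute $\langle\ba s_k,\bc s_k\rangle-\langle\ba,\bc\rangle\equiv\bar a_k\bar c_{k+1}+\bar a_{k+1}\bar c_k$ by reindexing, and package the total discrepancy as the product $(\bar a_k+\bar c_k)(\bar a_{k+1}+\bar c_{k+1})$ in $\Z/2$. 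Your change-of-variables step is sound (pairs meeting $\{k,k+1\}$ in at most one index contribute identically on both sides, and only $(k,k+1)$ versus $(k+1,k)$ differ), and the factorization makes transparent exactly why the hypothesis ``$\bar a_k=\bar c_k$ or $\bar a_{k+1}=\bar c_{k+1}$'' is what is needed --- something the paper's case analysis verifies but does not explain. The trade-off is that you lean on (\ref{E171116}), which the paper states without proof; your argument is therefore only as self-contained as that identity, while the paper's case check is entirely hands-on. Both are complete proofs.
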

\begin{proof}
We consider three cases:
 
 {\em Case 1: at least two of \(a_k, c_k, a_{k+1}, c_{k+1}\) are even.} In this case  \(s_k\) does not exchange the positions of two odd elements in \(\ba\) or \(\bc\), so 
$\langle \ba, \br, \bt \rangle =\langle \ba s_k, \br s_k, \bt s_k \rangle
$ 
and
$
\langle \bc, \bt, \bu \rangle =\langle \bc s_k, \bt s_k, \bu s_k \rangle. 
$ 
We also note that \(a_{k+1}\) and \(c_k\) cannot both be odd, and \(a_k\) and \(c_{k+1}\) cannot both be odd, so 
$\langle \ba, \bc \rangle =\langle \ba s_k, \bc s_k \rangle.$ 

 {\em Case 2: Exactly one of \(a_k, c_k, a_{k+1}, c_{k+1}\) is even.} By symmetry we may assume that \(a_k, a_{k+1}\) are odd and 
one of \(c_k\), \(c_{k+1}\) is even. Then   we have 
\begin{align*}
 (-1)^{\langle \ba, \br, \bt \rangle} &= -(-1)^{\langle \ba s_k, \br s_k, \bt s_k \rangle},
\\
(-1)^{\langle \bc, \bt, \bu \rangle}
& =
  (-1)^{\langle \bc s_k, \bt s_k, \bu s_k \rangle},
\\
(-1)^{\langle \ba, \bc \rangle} &=-(-1)^{\langle \ba s_k, \bc s_k \rangle}. 
\end{align*}

{\em Case 3: \(a_k, c_k, a_{k+1}, c_{k+1}\) are all odd.} Then  we have 
\begin{align*}
(-1)^{\langle \ba, \br, \bt \rangle} &= -(-1)^{\langle \ba s_k, \br s_k, \bt s_k \rangle}, 
\\
   (-1)^{\langle \bc, \bt, \bu \rangle}
 &=
  -(-1)^{\langle \bc s_k, \bt s_k, \bu s_k \rangle},
\\
(-1)^{\langle \ba, \bc \rangle} &=(-1)^{\langle \ba s_k, \bc s_k \rangle}.
\end{align*} 
\end{proof}

Let $(\bb,\br, \bs)\in\Seq^B(n,d)$. 
For $b\in B$ and $r,s\in [1,n]$, we denote 
\begin{equation}\label{E190617}
[\bb,\br, \bs]^b_{r,s}:=\sharp\{k\in[1,d]\mid  (b_k,r_k,s_k)=(b,r,s)\},
\end{equation}
and define
\begin{align}
[\bb,\br, \bs]^! &:=\prod_{b\in B,\, r,s\in [1,n]} [\bb,\br, \bs]^b_{r,s}!=\prod_{b\in B_\0,\, r,s\in [1,n]} [\bb,\br, \bs]^b_{r,s}!
\end{align}
(if $B$ is infinite, these are infinite products but all but finitely many factors are $1$). 
Note that
\begin{equation}
\label{subalg1}
|\Si_{\bb, \br, \bs}| = [\bb, \br, \bs]^!. 
\end{equation}
Moreover, we define
\begin{align}
[\bb,\br, \bs]^!_{\a} &:=\prod_{ b\in B_\a,\ r,s\in [1,n]}[\bb,\br, \bs]^b_{r,s}!,\\
\label{subalg2}
[\bb,\br, \bs]^!_{\c} &:=\prod_{ b\in B_\c,\ r,s\in [1,n]}[\bb,\br, \bs]^b_{r,s}!.
\end{align}

\section{Generalized Schur algebras}\label{SSA}
Throughout the section, $(A,\a)$ is a fixed good pair with an $(A,\a)$-basis $B= B_{\fa}\sqcup B_{\c} \sqcup B_{\bar 1}$ as in (\ref{AaBasis}). Recall that $H$ denotes the set of all non-zero homogeneous elements of $A$. We also fix $n\in\Z_{>0}$ and $d\in\Z_{\geq 0}$.

In this section, we will construct generalized Schur algebras $T^A_\a(n,d)\subseteq S^A(n,d)$. The definition of the algebra $S^A(n,d)$ is straightforward, while $T^A_\fa(n,d)$ is obtained by making a subtle choice of a full-rank sublattice in $S^A(n,d)$ which depends on $\a$. If $\a=A_\0$, then $T^A_\fa(n,d)= S^A(n,d)$, but in general the algebras are different. In \S\ref{SSCoproduct}, we 
investigate a natural coproduct on $S^A(n):=\bigoplus_{d\in\Z_{\geq 0}}S^A(n,d)$ and show that $$T^A_\a(n):=\bigoplus_{d\in\Z_{\geq 0}}T^A_\a(n,d)\subseteq S^A(n)$$ is a subcoalgebra.


\subsection{The algebra\, $S^A(n,d)$}
Let $M_n(A)$ be the $\k$-(super)algebra of $n\times n$ matrices with entries in $A$ and recall the notation (\ref{EXirs}). 
There is a right action of $\Si_d$ on $M_n(A)^{\otimes d}$ with (super)algebra automorphisms, such that for all $a_1,\dots,a_d\in H$, $r_1,s_1,\dots,r_d,s_d\in [1,n]$ and $\si\in \Si_d$, we have 
$$(\xi_{r_1,s_1}^{a_1}\otimes\dots\otimes \xi_{r_d,s_d}^{a_d})^\si=
(-1)^{\lan\si;\ba\ran} \xi_{r_{\si1},s_{\si1}}^{a_{\si1}}\otimes\dots\otimes \xi_{r_{\si d},s_{\si d}}^{a_{\si d}}.
$$
The algebra $S^A(n,d)$ is defined as the corresponding algebra of invariants 
$$
S^A(n,d):=(M_n(A)^{\otimes d})^{\Si_d}.
$$
Note that $S^A(n,d)$ is unital if and only if so is $A$. 

For $(\ba,\br,\bs)\in\Seq^H(n,d)$, we define elements 
\begin{equation}\label{EXiDef}
\begin{split}
\xi_{\br,\bs}^\ba&:= \sum_{\si\in{}^{\ba,\br,\bs}\D} 
(\xi_{r_1,s_1}^{a_1} \otimes \cdots \otimes \xi_{r_d,s_d}^{a_d})^\si
\\
&= \sum_{(\ba',\br',\bs')\sim (\ba,\br,\bs)} 
(-1)^{\lan\ba,\br,  \bs\ran+\lan\ba',\br',  \bs'\ran}\,
\xi_{r_1',s_1'}^{a_1'} \otimes \cdots \otimes \xi_{r_d',s_d'}^{a_d'}.
\end{split}
\end{equation}
in $S^A(n,d)$, 
where we have used  (\ref{E171116}) to obtain the last equality. 
The following is clear (as noted in \cite[Lemma 6.10]{EK1}):

\begin{Lemma}   \label{LBasis'} 
We have that 
$
\{\xi_{\br,\bs}^\bb\mid [\bb,\br,\bs]\in\Seq^B(n,d)/\Si_d\}
$ is a basis of $S^A(n,d)$. 
\end{Lemma}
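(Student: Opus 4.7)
The plan is to view $M_n(A)^{\otimes d}$ as a signed $\k\Si_d$-permutation module via its standard basis and then compute invariants one $\Si_d$-orbit at a time. The set
$$\{e_{\bb,\br,\bs}:=\xi_{r_1,s_1}^{b_1}\otimes\cdots\otimes\xi_{r_d,s_d}^{b_d}\mid(\bb,\br,\bs)\in B^d\times[1,n]^d\times[1,n]^d\}$$
is a $\k$-basis of $M_n(A)^{\otimes d}$ which the $\Si_d$-action permutes up to signs $\pm 1$. Consequently, $M_n(A)^{\otimes d}$ splits as a direct sum of $\k\Si_d$-submodules $V_{\mathcal O}$, one for each unsigned orbit $\mathcal O\subseteq B^d\times[1,n]^d\times[1,n]^d$, and $S^A(n,d)=\bigoplus_{\mathcal O} V_{\mathcal O}^{\Si_d}$. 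It therefore suffices to compute $V_{\mathcal O}^{\Si_d}$ for each individual orbit.

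Fix an orbit $\mathcal O=[\bb,\br,\bs]$ with representative $x_0=(\bb,\br,\bs)$. Since $\lan\tau;\bb\ran$ depends only on $\tau$ and $\bb$, the map $\tau\mapsto(-1)^{\lan\tau;\bb\ran}$ restricts to a linear character $\chi$ of the stabilizer $\Si_{\bb,\br,\bs}$. Writing a general element of $V_{\mathcal O}$ as $v=\sum_y d_y\,e_y$ and imposing $v^\tau=v$ for $\tau\in\Si_{\bb,\br,\bs}$ yields $d_{x_0}(1-\chi(\tau))=0$; since $\k$ has characteristic zero, this forces $d_{x_0}=0$ whenever $\chi$ is nontrivial, and running the same argument at each point of $\mathcal O$ (whose stabilizers carry conjugate characters) forces every $d_y=0$, so $V_{\mathcal O}^{\Si_d}=0$. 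When $\chi$ is trivial, the signed orbit sum on the right-hand side of (\ref{EXiDef}), namely $\xi_{\br,\bs}^{\bb}$, is directly checked to be $\Si_d$-invariant: for any $\si\in\Si_d$ and $\rho\in{}^{\bb,\br,\bs}\D$, one writes $\rho\si=\tau\rho'$ with $\tau\in\Si_{\bb,\br,\bs}$ and $\rho'\in{}^{\bb,\br,\bs}\D$, and $\chi(\tau)=1$ yields $(e_{x_0})^{\rho\si}=(e_{x_0})^{\rho'}$. Because $\xi_{\br,\bs}^{\bb}$ expresses as a sum of distinct standard basis vectors with coefficients $\pm 1$, it is a $\k$-free generator of the invariant submodule, so $V_{\mathcal O}^{\Si_d}=\k\,\xi_{\br,\bs}^{\bb}$.

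It remains to identify when $\chi$ is trivial on $\Si_{\bb,\br,\bs}$. Every $\tau\in\Si_{\bb,\br,\bs}$ separately permutes the positions within each constant block $\{k:(b_k,r_k,s_k)=(b,r,s)\}$, and by its definition $\lan\tau;\bb\ran$ counts only inversions between pairs of positions carrying odd letters. Blocks with $b\in B_\0$ therefore contribute $+1$, while any block with $b\in B_\1$ of size $\geq 2$ admits a transposition on which $\chi=-1$. Hence $\chi$ is trivial exactly when no triple $(b,r,s)$ with $b\in B_\1$ is repeated, i.e.\ precisely when $(\bb,\br,\bs)\in\Seq^B(n,d)$. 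Combining this with the bijection $\Seq^B_0(n,d)\iso\Seq^B(n,d)/\Si_d$ gives the claimed basis. The only substantive calculation is this block decomposition of $\lan\tau;\bb\ran$, and it is immediate once $\tau$ is restricted to the stabilizer.
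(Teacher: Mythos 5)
Your argument is correct. The paper itself gives no proof here (it records the statement as clear, citing [EK${}_1$, Lemma 6.10]), and what you have written is precisely the standard argument that citation stands in for: decompose $M_n(A)^{\otimes d}$ into signed permutation modules $V_{\mathcal O}$ over the $\Si_d$-orbits of the standard basis, observe that $V_{\mathcal O}^{\Si_d}$ is zero when the sign character $\chi:\tau\mapsto(-1)^{\langle\tau;\bb\rangle}$ is nontrivial on the stabilizer and is freely generated by the signed orbit sum $\xi^{\bb}_{\br,\bs}$ when it is trivial, and identify the trivial case with the condition $(\bb,\br,\bs)\in\Seq^B(n,d)$. The only step you compress is the verification that a (possibly non-adjacent) transposition of two positions in a repeated odd block has odd $\langle\tau;\bb\rangle$ — the inversions $(k,j)$, $(j,l)$ for $k<j<l$ pair off, leaving the single inversion $(k,l)$ — but this is the routine computation you flag, so the proof is complete.
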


\begin{Lemma} \label{LXiZero} 
If $(\ba',\br',\bs') \sim (\ba,\br,\bs)$ are elements of  $\Seq^H(n,d)$, then $$\xi_{\br',\bs'}^{\ba'}=(-1)^{\lan\ba,\br,  \bs\ran+\lan\ba',\br',  \bs'\ran}\xi_{\br,\bs}^\ba. 
$$
\end{Lemma}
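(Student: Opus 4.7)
The plan is to exploit the second expression for $\xi^{\ba}_{\br,\bs}$ given in (\ref{EXiDef}), where the sum ranges over all triples $(\ba'',\br'',\bs'')\sim(\ba,\br,\bs)$ in the full orbit, rather than over a set of coset representatives. This formulation is more symmetric and makes the dependence on the representative $(\ba,\br,\bs)$ explicit through the single sign factor $(-1)^{\langle\ba,\br,\bs\rangle}$ outside the sum, which is exactly the factor we need to keep track of.

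First I would observe that the hypothesis $(\ba',\br',\bs')\sim(\ba,\br,\bs)$ means that the two orbits $[\ba,\br,\bs]$ and $[\ba',\br',\bs']$ coincide. Consequently, when we write out $\xi^{\ba'}_{\br',\bs'}$ using the second line of (\ref{EXiDef}), the range of summation is precisely the same set of triples $(\ba'',\br'',\bs'')\sim(\ba,\br,\bs)$ that appears in $\xi^{\ba}_{\br,\bs}$. So term by term the two sums involve the same tensors $\xi_{r_1'',s_1''}^{a_1''}\otimes\cdots\otimes \xi_{r_d'',s_d''}^{a_d''}$.

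Next I would compare the signs. The summand for $(\ba'',\br'',\bs'')$ in $\xi^{\ba}_{\br,\bs}$ carries the sign $(-1)^{\langle\ba,\br,\bs\rangle+\langle\ba'',\br'',\bs''\rangle}$, while in $\xi^{\ba'}_{\br',\bs'}$ it carries $(-1)^{\langle\ba',\br',\bs'\rangle+\langle\ba'',\br'',\bs''\rangle}$. The ratio of these is $(-1)^{\langle\ba,\br,\bs\rangle+\langle\ba',\br',\bs'\rangle}$, which does not depend on the dummy index $(\ba'',\br'',\bs'')$ and can therefore be factored out of the sum. This immediately yields the claimed equality.

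There is no real obstacle here: the entire argument is just bookkeeping of signs, and the symmetric expression in (\ref{EXiDef}) was set up using (\ref{E171116}) precisely to make this manipulation transparent. If one instead tried to work directly with the first (coset-representative) form of (\ref{EXiDef}), one would have to verify that passing from ${}^{\ba,\br,\bs}\D$ to ${}^{\ba',\br',\bs'}\D$ reshuffles the summands compatibly with the signs, which is less direct.
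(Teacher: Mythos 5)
Your proof is correct and is essentially the paper's argument: the paper's proof of this lemma is simply ``This follows from (\ref{EXiDef}),'' and your careful expansion — same orbit, same summands, constant sign ratio $(-1)^{\lan\ba,\br,\bs\ran+\lan\ba',\br',\bs'\ran}$ factored out — is exactly the intended reasoning.
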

\begin{proof}
This follows from (\ref{EXiDef}).
\end{proof}

For $(\ba,\bp,  \bq),\, (\bc,\bu,  \bv) \in \Seq^H(n,d)$ and $(\bb,\br,  \bs) \in \Seq^B(n,d)$, define the structure constants $f_{\ba,\bp, \bq;\bc,  \bu,\bv}^{\bb,\br,\bs}$ from
\begin{equation}\label{EFBABBBCNew}
 \xi^\ba_{\bp,  \bq}\, \xi^\bc_{\bu,  \bv}=\sum_{[\bb,\br,\bs]\in\Seq^B(n,d)/\Si_d} f_{\ba,\bp, \bq;\bc,  \bu,\bv}^{\bb,\br,\bs}\,  \xi^\bb_{\br,  \bs}.
\end{equation}
Note by Lemma~\ref{LXiZero} that if $(\bb',\br',\bs') \sim (\bb,\br,\bs)$ then 
$$
f_{\ba,\bp, \bq;\bc,  \bu,\bv}^{\bb,\br,\bs}=(-1)^{\lan\bb,\br,  \bs\ran+\lan\bb',\br',  \bs'\ran}f_{\ba,\bp, \bq;\bc,  \bu,\bv}^{\bb',\br',\bs'}.
$$

Recalling the notation (\ref{ESCB}), the following generalization of Green's product rule \cite[(2.3b)]{Green} follows from  \cite[(6.14)]{EK1}. 

\begin{Proposition} \label{CPR} 
Let $(\ba,\bp,  \bq),\, (\bc,\bu,  \bv) \in \Seq^H(n,d)$ and $(\bb,\br,  \bs) \in \Seq^B(n,d)$. Then 
$$
f_{\ba,\bp, \bq;\bc,  \bu,\bv}^{\bb,\br,\bs}= \sum_{\ba', \bc',\bt} 
(-1)^{\lan\ba,\bp,  \bq\ran 
+\lan\bc,\bu,  \bv\ran
+ \lan\ba',\br,  \bt\ran
+\lan\bc',\bt, \bs\ran+\lan\ba',\bc'\ran} \, 
\kappa_{\ba',\bc'}^{\bb},
$$
where the sum is over all 
$\ba', \bc'\in H^d$ and $\bt\in[1,n]$  
such that $(\ba',\br,  \bt) \sim (\ba,\bp,  \bq)$ and $(\bc',\bt,  \bs)\sim (\bc,\bu,\bv)$. 
\end{Proposition}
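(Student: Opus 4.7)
The plan is to expand the product directly in the tensor basis of $M_n(A)^{\otimes d}$ and read off the coefficient of a single tensor monomial. By Lemma~\ref{LBasis'}, the elements $\xi^{\bb^0}_{\br^0,\bs^0}$ with $[\bb^0,\br^0,\bs^0]\in\Seq^B_0(n,d)$ form a basis of $S^A(n,d)$, and the sign convention recorded after (\ref{EFBABBBCNew}), combined with the expansion (\ref{EXiDef}), implies that the coefficient of $\xi^{b_1}_{r_1,s_1}\otimes\cdots\otimes\xi^{b_d}_{r_d,s_d}$ in $\xi^\ba_{\bp,\bq}\,\xi^\bc_{\bu,\bv}$, viewed inside $M_n(A)^{\otimes d}$, is exactly $f^{\bb,\br,\bs}_{\ba,\bp,\bq;\bc,\bu,\bv}$ for any representative $(\bb,\br,\bs)\in\Seq^B(n,d)$. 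Thus the proposition reduces to computing one such coefficient.

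First I would expand each factor via the second formula in (\ref{EXiDef}), producing sums over $(\ba',\bp',\bq')\sim(\ba,\bp,\bq)$ and $(\bc',\bu',\bv')\sim(\bc,\bu,\bv)$, each carrying its own sign. Their tensor product is evaluated by the super algebra rule
$$(x_1\otimes\cdots\otimes x_d)(y_1\otimes\cdots\otimes y_d)=(-1)^{\sum_{k<l}\bar y_k\bar x_l}\,(x_1y_1)\otimes\cdots\otimes(x_d y_d),$$
applied with $x_j=\xi^{a'_j}_{p'_j,q'_j}$ and $y_j=\xi^{c'_j}_{u'_j,v'_j}$; then (\ref{EXiProduct}) evaluates each matrix-unit product as $\delta_{q'_j,u'_j}\sum_{b_j\in B}\kappa^{b_j}_{a'_j,c'_j}\xi^{b_j}_{p'_j,v'_j}$. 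The Kronecker deltas force $\bq'=\bu'$; setting $\br:=\bp'$, $\bt:=\bq'=\bu'$, $\bs:=\bv'$ and reading off $\bb$ from the chosen tensor monomial, the remaining summation ranges exactly over $(\ba',\bc',\bt)$ satisfying $(\ba',\br,\bt)\sim(\ba,\bp,\bq)$ and $(\bc',\bt,\bs)\sim(\bc,\bu,\bv)$, with $\kappa^\bb_{\ba',\bc'}$ emerging from (\ref{ESCB}).

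The main obstacle is consolidating the signs. Four contributions appear: the two expansions via (\ref{EXiDef}) contribute $\lan\ba,\bp,\bq\ran+\lan\ba',\br,\bt\ran+\lan\bc,\bu,\bv\ran+\lan\bc',\bt,\bs\ran$; the super-braiding produces $\sum_{k<l}\bar c'_k\bar a'_l$; and the orbit-representative compatibility of $f^{\bb,\br,\bs}$ recorded after (\ref{EFBABBBCNew}) (and reflected in Lemma~\ref{LXiZero}) must be respected so that the formula holds for an arbitrary representative and not just the $\Seq^B_0$ one. The crucial identification is that swapping $(k,l)\leftrightarrow(l,k)$ in $\sum_{k<l}\bar c'_k\bar a'_l$ converts it, modulo $2$, into $\lan\ba',\bc'\ran$ as defined in \S\ref{SComb}. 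Combining these contributions produces precisely the exponent in the stated formula, completing the proof.
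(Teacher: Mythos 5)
Your argument is correct. The paper itself gives no internal proof of Proposition~\ref{CPR} --- it simply cites \cite[(6.14)]{EK1} --- so your direct computation supplies a self-contained derivation, and it is the natural one: expand both factors via the second line of (\ref{EXiDef}) over the \emph{distinct} elements of each orbit, multiply componentwise with the super sign $(-1)^{\sum_{k<l}\bar y_k\bar x_l}$, apply (\ref{EXiProduct}) to force $\bq'=\bu'=\bt$, and read off the coefficient of a single tensor monomial. The two points that need care both check out. First, the coefficient of $\xi^{b_1}_{r_1,s_1}\otimes\cdots\otimes\xi^{b_d}_{r_d,s_d}$ in the product really is $f^{\bb,\br,\bs}_{\ba,\bp,\bq;\bc,\bu,\bv}$ for an \emph{arbitrary} representative $(\bb,\br,\bs)$ of its orbit: the coefficient of that monomial in $\xi^{\bb^0}_{\br^0,\bs^0}$ is $(-1)^{\lan\bb^0,\br^0,\bs^0\ran+\lan\bb,\br,\bs\ran}$ by (\ref{EXiDef}), and this cancels against the transformation rule for $f$ recorded after (\ref{EFBABBBCNew}), exactly as you say. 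Second, the braiding sign $\sum_{k<l}\bar c'_k\bar a'_l$ equals $\sum_{k>l}\bar a'_k\bar c'_l=\lan\ba',\bc'\ran$ modulo $2$ by the definition of $\lan\bp,\bp'\ran$ in \S\ref{SComb}, which is the key identification. (The summation variable $\bt$ ranges over $[1,n]^d$, not $[1,n]$; that is a typo in the statement which you have implicitly corrected.) What your route buys is transparency of the sign bookkeeping at the cost of redoing what \cite{EK1} already established; the paper's citation is shorter but opaque.
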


We can collect some of the equal terms in the formula above to rewrite it in the following form:

\begin{Corollary} \label{CSigns} 
Let $(\ba,\bp,  \bq),\, (\bc,\bu,  \bv) \in \Seq^H(n,d)$, $(\bb,\br,  \bs) \in \Seq^B(n,d)$, and let $X$ be the set of all \((\ba',\bc',\bt) \in H^d \times H^d \times [1,n]\) such that $(\ba',\br,  \bt) \sim (\ba,\bp,  \bq)$, $(\bc',\bt,  \bs)\sim (\bc,\bu,\bv)$, and \(\bar a_k' + \bar c_k' = \bar b_k\) for all \(k \in [1,d]\). We have:
\begin{enumerate}
\item[{\rm (i)}] If \((\ba', \bc', \bt) \in X\) then $(\ba', \bc', \bt)\si\in X$ for any $\si\in \Si_{\bb, \br, \bs}$. Let $\llbracket\ba', \bc', \bt\rrbracket:=\{(\ba', \bc', \bt)\si\mid \si\in \Si_{\bb, \br, \bs}\}\subseteq X$ denote the corresponding $\Si_{\bb, \br, \bs}$-orbit.
\item[{\rm (ii)}] $\kappa^{\bb}_{\ba', \bc'}$ and the parity of $\lan\ba',\br,  \bt\ran
+\lan\bc',\bt, \bs\ran+\lan\ba',\bc'\ran$ depend only on the orbit $\llbracket\ba', \bc', \bt\rrbracket$.
\item[{\rm (iii)}] The structure constant $f_{\ba,\bp, \bq;\bc,  \bu,\bv}^{\bb,\br,\bs}$ equals 
$$
\sum_{\llbracket \ba', \bc',\bt\rrbracket \in X/\mathfrak{S}_{\bb, \br, \bs}} 
\hspace{-7mm}
(-1)^{\lan\ba,\bp,  \bq\ran 
+\lan\bc,\bu,  \bv\ran
+ \lan\ba',\br,  \bt\ran
+\lan\bc',\bt, \bs\ran+\lan\ba',\bc'\ran}\,
[\mathfrak{S}_{\bb, \br, \bs} : \mathfrak{S}_{\bb, \br, \bs} \cap \mathfrak{S}_{\ba', \bc', \bt}]
 \, 
\kappa^{\bb}_{\ba', \bc'}.
$$
\end{enumerate}
\end{Corollary}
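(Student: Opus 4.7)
The plan is to derive the corollary from Proposition~\ref{CPR} by collecting its summands into $\Si_{\bb,\br,\bs}$-orbits.

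Part~(i) is a direct check. Since $\si\in\Si_{\bb,\br,\bs}$ satisfies $\br\si=\br$ and $\bs\si=\bs$, we have $(\ba'\si,\br,\bt\si)=(\ba',\br,\bt)\si\sim(\ba,\bp,\bq)$ and similarly $(\bc'\si,\bt\si,\bs)\sim(\bc,\bu,\bv)$, while the parity condition $\bar a'_k+\bar c'_k=\bar b_k$ is preserved because $\bb\si=\bb$.

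Part~(ii) is the heart of the argument. The factor $\ka^\bb_{\ba',\bc'}=\prod_k \ka^{b_k}_{a'_k,c'_k}$ is manifestly $\si$-invariant, since $\bb\si=\bb$ merely reindexes the product. For the sign, applying~(\ref{E171116}) to the first two $\lan\cdot,\cdot,\cdot\ran$-terms (using $\br\si=\br$ and $\bs\si=\bs$) reduces the claim to
$$
\lan\si;\ba'\ran+\lan\si;\bc'\ran+\lan\ba'\si,\bc'\si\ran-\lan\ba',\bc'\ran\equiv 0\pmod 2.
$$
The key structural input is that the definition of $\Seq^B(n,d)$ forbids distinct positions from sharing an odd $(b,r,s)$-label, so every equivalence class under $(\bb,\br,\bs)$ of size $\geq 2$ has $b$-label in $B_\0$. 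Hence $\si$ fixes every position $k$ with $b_k\in B_\1$, and no inversion $(k,l)$ of $\si$ can have both $\bar b_k=\bar b_l=\bar 1$. Rewriting $\lan\ba'\si,\bc'\si\ran-\lan\ba',\bc'\ran$ modulo $2$ via the involution $(k,l)\leftrightarrow(l,k)$, the left-hand side above becomes a sum over inversions $(k,l)$ of $\si$ whose contribution at each inversion is the number of the four parity patterns $\bar a'_k=\bar a'_l=\bar 1$, $\bar c'_k=\bar c'_l=\bar 1$, $\bar a'_k=\bar c'_l=\bar 1$, $\bar a'_l=\bar c'_k=\bar 1$ that hold. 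A short case analysis on $(\bar b_k,\bar b_l)\in\{(\bar 0,\bar 0),(\bar 0,\bar 1),(\bar 1,\bar 0)\}$, using $\bar a'_j+\bar c'_j=\bar b_j$, shows this number is always even.

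For~(iii), start from Proposition~\ref{CPR}. The factor $\ka^\bb_{\ba',\bc'}$ vanishes unless $\bar a'_k+\bar c'_k=\bar b_k$ for every $k$ (by the $\Z/2$-grading of $A$), so the sum is effectively over $X$. By~(i) and~(ii), the summand is constant on each $\Si_{\bb,\br,\bs}$-orbit in $X$, and the orbit-stabilizer theorem supplies the multiplicity $[\Si_{\bb,\br,\bs}:\Si_{\bb,\br,\bs}\cap\Si_{\ba',\bc',\bt}]$. The main obstacle is the sign invariance in~(ii); the rest is routine orbit bookkeeping. Lemma~\ref{Lsigneq} handles the adjacent-transposition case, but $\Si_{\bb,\br,\bs}$ need not be generated by adjacent transpositions lying within it (its equivalence classes may be non-contiguous), so the direct parity count above is cleaner.
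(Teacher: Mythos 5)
Your proposal is correct. Parts (i) and (iii) coincide with the paper's argument (direct check of stability; vanishing of $\ka^{b_k}_{a_k',c_k'}$ unless $\bar a_k'+\bar c_k'=\bar b_k$; orbit--stabilizer bookkeeping). For the sign invariance in (ii), however, you take a genuinely different route. The paper writes $\si\in\Si_{\bb,\br,\bs}$ as a reduced product $s_{l_1}\cdots s_{l_m}$ of simple transpositions of $\Si_d$ (not of the stabilizer), observes that no step ever exchanges two positions with odd $\bb$-entries, checks that the hypothesis of Lemma~\ref{Lsigneq} therefore holds at every intermediate stage, and iterates that lemma $m$ times. You instead reduce, via (\ref{E171116}) and $\br\si=\br$, $\bs\si=\bs$, to the single congruence $\lan\si;\ba'\ran+\lan\si;\bc'\ran+\lan\ba'\si,\bc'\si\ran+\lan\ba',\bc'\ran\equiv 0$, rewrite the last two terms as a sum over inversions of $\si$ of the four odd--odd indicator patterns, and close with a three-case parity check on $(\bar b_k,\bar b_l)$ using $\bar a'_j+\bar c'_j=\bar b_j$ and the exclusion of $(\bar 1,\bar 1)$. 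Your version is self-contained and global: it bypasses Lemma~\ref{Lsigneq}, reduced expressions, and the need to track intermediate triples that may leave $\Seq^B(n,d)$; your closing remark correctly identifies why a naive generation-by-adjacent-transpositions argument inside $\Si_{\bb,\br,\bs}$ would not work and why the paper must route through all of $\Si_d$. What the paper's approach buys in exchange is locality (each step is an adjacent swap handled by an already-stated lemma), at the cost of the bookkeeping you avoid.
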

\begin{proof}
Let $\si\in \Si_{\bb, \br, \bs}$ and  \((\ba', \bc', \bt) \in X\). 

(i) To show that $(\ba'\si, \bc'\si,\bt\si) \in X$, note that 
$$(\ba'\si,\br,\bt\si)=(\ba'\si,\br\si,\bt\si)\sim(\ba,\bp,\bq)$$ and similarly $(\bc'\si,\bt\si,\bs)\sim(\bc,\bu,\bv)$. Finally, we have $\bar a'_{\si k}+\bar c'_{\si k}
=\bar b_{\si k}=\bar b_k$ for all $k$. 

(ii) We have $\kappa^{\bb}_{\ba'\si, \bc'\si}=\kappa^{\bb\si}_{\ba'\si, \bc'\si}=\kappa^{\bb}_{\ba', \bc'}$, giving the first statement of (ii). To complete the proof of (ii), we now show that 
  \begin{align*}
 (-1)^{\langle \ba', \br, \bt\rangle + \langle \bc', \bt, \bs \rangle + \langle \ba', \bc' \rangle}
 =
 (-1)^{\langle \ba'\si, \br, \bt\si \rangle + \langle \bc'\si, \bt\si, \bs \rangle + \langle \ba'\si, \bc'\si \rangle} 
 \end{align*}

Write \(\sigma\) as a reduced product of simple transpositions \(\sigma = s_{l_1} \cdots s_{l_m}\) (it is not in general true that \(s_{l_1}, \ldots, s_{l_m} \in \mathfrak{S}_{\bb, \br, \bs}\)).
Since \((\bb, \br, \bs) \in \Seq(n,d)\), we have \(\sigma k= k\) for all \(k\) such that \(b_k\) is odd. Therefore for all \(1 \leq j \leq m\),  at least one of \((\bb s_{l_1} \cdots s_{l_{j-1}})_{l_j}, (\bb s_{l_1} \cdots s_{l_{j-1}})_{l_j+1}\) is even---i.e., no two odd elements are ever exchanged by the simple transpositions that comprise \(\sigma\).
 
For \(1 \leq j \leq m\), either \((\ba's_{l_1} \cdots s_{l_{j-1}})_{l_j}\) and \((\bc's_{l_1} \cdots s_{l_{j-1}})_{l_j}\) are of the same parity, or  \((\ba's_{l_1} \cdots s_{l_{j-1}})_{l_j+1}\) and \((\bc's_{l_1} \cdots s_{l_{j-1}})_{l_j+1}\) are of the same parity, by the above paragraph and the fact that \(\overline{a}'_k + \overline{c}'_k = \overline{b}_k\) for all \(k\). Therefore we may repeatedly apply Lemma~\ref{Lsigneq} to get:
 \begin{align*}
 &(-1)^{\langle \ba', \br, \bt \rangle + \langle \bc', \bt, \bs \rangle + \langle \ba', \bc' \rangle}
 \\
 =\,
 & (-1)^{\langle \ba's_{l_1}, \br s_{l_1}, \bt s_{l_1} \rangle + \langle \bc' s_{l_1}, \bt s_{l_1}, \bs s_{l_1} \rangle + \langle \ba' s_{l_1}, \bc' s_{l_1} \rangle}\\
  =\,&
  (-1)^{\langle \ba's_{l_1}s_{l_2}, \br s_{l_1}s_{l_2}, \bt s_{l_1}s_{l_2} \rangle + \langle \bc' s_{l_1}s_{l_2}, \bt s_{l_1}s_{l_2}, \bs s_{l_1}s_{l_2} \rangle + \langle \ba' s_{l_1}s_{l_2}, \bc' s_{l_1}s_{l_2} \rangle}\\
  =\,& \cdots \\
  =\,& (-1)^{\langle \ba' \sigma, \br \sigma, \bt \sigma \rangle + \langle \bc' \sigma, \bt \sigma, \bs \sigma \rangle + \langle \ba' \sigma, \bc' \sigma \rangle}\\
   =\,& (-1)^{\langle \ba' \sigma, \br, \bt \sigma \rangle + \langle \bc' \sigma, \bt \sigma, \bs  \rangle + \langle \ba' \sigma, \bc' \sigma \rangle},
 \end{align*}
  completing the proof of (ii).
 
 (iii) As $\kappa^{b_k}_{a_k',c_k'}=0$ unless $\bar a_k' + \bar c_k' = \bar b_k$, we may assume that the summation in Proposition~\ref{CPR} is over all $(\ba',\bc',\bt)\in X$. 
 By (i), (ii) and Proposition~\ref{CPR}, we have
 \begin{align*}
f_{\ba,\bp, \bq;\bc,  \bu,\bv}^{\bb,\br,\bs}= 
\sum_{\llbracket\ba', \bc',\bt\rrbracket \in X/\mathfrak{S}_{\bb, \br, \bs}} 
\hspace{-8mm}
\#\llbracket \ba', \bc', \bt \rrbracket\,(-1)^{\lan\ba,\bp,  \bq\ran 
+\lan\bc,\bu,  \bv\ran
+ \lan\ba',\br,  \bt\ran
+\lan\bc',\bt, \bs\ran+\lan\ba',\bc'\ran} \, 
\kappa^{\bb}_{\ba', \bc'}.
 \end{align*}
It remains to note that
$
\#\llbracket\ba', \bc', \bt\rrbracket = |\mathfrak{S}_{\bb, \br, \bs} / \mathfrak{S}_{\bb, \br, \bs} \cap \mathfrak{S}_{\ba', \bc', \bt}|
$. 
\end{proof}

Let $\tau$ be a homogeneous anti-involution on $A$.   
Then $\tau$ induces a homogeneous anti-involution 
$$\tau_n:M_n(A)\to M_n(A),\ \xi^a_{r,s}\mapsto \xi^{\tau(a)}_{s,r},
$$
which in turn induces an anti-involution
\begin{equation}\label{TauND}
\tau_{n,d}:S^A(n,d)\to S^A(n,d),\ \xi^{\ba}_{\br,\bs}\mapsto \xi^{\ba^\tau}_{\bs,\br},
\end{equation}
where for $\ba=a_1\cdots a_d\in H^d$, we have denoted
$
\ba^\tau:=\tau(a_1)\cdots \tau(a_d)\in H^d.
$

\subsection{The algebra $T^A_{\fa}(n,d)$} Recalling the notation (\ref{subalg2}), for $(\bb,\br,  \bs) \in \Seq^B(n,d)$, we set 
\begin{equation}\label{E080717}
\eta^\bb_{\br,\bs}:=[\bb,\br,  \bs]^!_{\c}\, \xi^\bb_{\br,  \bs}. 
\end{equation}
Define the \(\k\)-submodule \(T^A_\fa(n,d) \subseteq {} S^A(n,d)\) to be
$$
T^A_\fa(n,d):=\spa\big(\,\eta^\bb_{\br,\bs}\mid (\bb,\br,\bs)\in\Seq^B(n,d)\,\big).
$$ 
It will turn out that $T^A_\fa(n,d)$ depends only on $\a$ but not on 
$\c$ or $B$, see Proposition~\ref{AaInd}.

\begin{Lemma} \label{LBasis} 
We have that $\big\{\,\eta^\bb_{\br,\bs}\mid (\bb,\br,\bs)\in\Seq^B(n,d)/\Si_d\,\big\}$ is a basis of $T^A_\fa(n,d)$. 
\end{Lemma}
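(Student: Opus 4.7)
The plan is to deduce this lemma directly from Lemma~\ref{LBasis'} together with the fact that the factorials $[\bb,\br,\bs]^!_\c$ are nonzero in $\k$.

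First I would verify that the $\Si_d$-relation behaves well under the rescaling. If $(\bb',\br',\bs')\sim(\bb,\br,\bs)$ in $\Seq^B(n,d)$, then the multiset of triples $(b_k,r_k,s_k)$ is the same on both sides, so by~(\ref{E190617}) the multiplicities $[\bb,\br,\bs]^b_{r,s}$ coincide with $[\bb',\br',\bs']^b_{r,s}$, and hence $[\bb,\br,\bs]^!_\c=[\bb',\br',\bs']^!_\c$. Combining this with Lemma~\ref{LXiZero} yields
\[
\eta^{\bb'}_{\br',\bs'}=(-1)^{\lan\bb,\br,\bs\ran+\lan\bb',\br',\bs'\ran}\,\eta^{\bb}_{\br,\bs}.
\]
Consequently the defining spanning set of $T^A_\a(n,d)$, indexed by $\Seq^B(n,d)$, reduces (up to signs) to the orbit-indexed set $\{\eta^\bb_{\br,\bs}\mid(\bb,\br,\bs)\in\Seq^B(n,d)/\Si_d\}$, which therefore spans.

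For linear independence, suppose $\sum c_{\bb,\br,\bs}\,\eta^\bb_{\br,\bs}=0$, with the sum taken over a set of orbit representatives. By definition~(\ref{E080717}) this equals $\sum c_{\bb,\br,\bs}\,[\bb,\br,\bs]^!_\c\,\xi^\bb_{\br,\bs}$. By Lemma~\ref{LBasis'}, the elements $\xi^\bb_{\br,\bs}$ over these representatives are $\k$-linearly independent, so $c_{\bb,\br,\bs}\,[\bb,\br,\bs]^!_\c=0$ for each index. Since $\k$ is a commutative domain of characteristic $0$ and $[\bb,\br,\bs]^!_\c$ is a (finite) product of positive integer factorials, it is a nonzero element of $\k$; hence $c_{\bb,\br,\bs}=0$.

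There is no real obstacle here: the only point requiring a moment of care is the compatibility of the factor $[\bb,\br,\bs]^!_\c$ with the $\Si_d$-action, which is immediate from the permutation-invariance of the multiplicities. Everything else is a direct transport from Lemma~\ref{LBasis'} via multiplication by nonzero scalars.
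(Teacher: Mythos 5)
Your proof is correct and follows exactly the route the paper intends: its own proof of this lemma is just the one-line remark that it ``follows from the definition and Lemma~\ref{LBasis'}'', and you have simply spelled out the two details involved (the $\Si_d$-invariance of $[\bb,\br,\bs]^!_\c$, and the fact that these factorials are nonzero in the characteristic-zero domain $\k$, so rescaling the basis of Lemma~\ref{LBasis'} preserves linear independence). Nothing further is needed.
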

\begin{proof}
Follows from the definition and Lemma~\ref{LBasis'}. 
\end{proof}

\begin{Lemma} \label{L140117} 
Let $a_1,\dots,a_d\in \fa\cup A_\1$ and $\br,\bs\in[1,n]^d$. Then $\xi^\ba_{\br,\bs}\in T^A_\fa(n,d)$. 
\end{Lemma}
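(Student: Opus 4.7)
My plan is to exploit the fact that each $a_i \in \fa \cup A_\1$ is homogeneous and therefore expands in the $(A,\fa)$-basis $B = B_\fa \sqcup B_\c \sqcup B_\1$ without any $B_\c$-component: if $a_i \in \fa$ then $a_i$ lies in the $\k$-span of $B_\fa$, and if $a_i \in A_\1$ then $a_i$ lies in the $\k$-span of $B_\1$. Coupled with the observation that $[\bb,\br',\bs']^!_\c = 1$ (an empty product) whenever $\bb \in (B_\fa \cup B_\1)^d$, so that $\eta^\bb_{\br',\bs'} = \xi^\bb_{\br',\bs'}$ by (\ref{E080717}), this reduces the lemma to showing that only $\xi^\bb_{\br',\bs'}$ with $\bb \in (B_\fa \cup B_\1)^d$ can appear in the basis expansion of $\xi^\ba_{\br,\bs}$ from Lemma~\ref{LBasis'}.

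First I would view the elementary tensor
$$y(\ba,\br,\bs) := \xi^{a_1}_{r_1,s_1} \otimes \cdots \otimes \xi^{a_d}_{r_d,s_d} \in M_n(A)^{\otimes d}$$
as a $\k$-multilinear function of the entries of $\ba$ and expand each $a_i$ in the basis $B_\fa \cup B_\1$, obtaining
$$y(\ba,\br,\bs) = \sum_{\bb \in (B_\fa \cup B_\1)^d} \lambda^{(1)}_{b_1} \cdots \lambda^{(d)}_{b_d}\, y(\bb,\br,\bs).$$
Since $(-1)^{\lan\sigma;\ba\ran}$ depends only on the parities of the entries of $\ba$, which agree with those of the $b_i$'s appearing, applying the $\Si_d$-action and summing over $\sigma \in {}^{\ba,\br,\bs}\D$ as in (\ref{EXiDef}) presents $\xi^\ba_{\br,\bs}$ as a $\k$-linear combination of elementary tensors in $M_n(A)^{\otimes d}$ whose tensor factors all lie in $\{\xi^b_{r,s} : b \in B_\fa \cup B_\1,\ r,s \in [1,n]\}$.

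Next I would write $\xi^\ba_{\br,\bs} = \sum c_{[\bb,\br',\bs']}\, \xi^\bb_{\br',\bs'}$ uniquely in the basis of Lemma~\ref{LBasis'}. Each $\xi^\bb_{\br',\bs'}$ is, by (\ref{EXiDef}), a signed sum of distinct monomials $y(\bb'',\br'',\bs'')$ indexed by the $\Si_d$-orbit of $(\bb,\br',\bs')$; since distinct orbits give disjoint sets of such monomials and since the orbit preserves the multiset of $b_i$'s, any $\bb$ with some $b_i \in B_\c$ would contribute a monomial with a $B_\c$-factor to the tensor expansion of $\xi^\ba_{\br,\bs}$, contradicting the previous paragraph. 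Hence $c_{[\bb,\br',\bs']} = 0$ unless $\bb \in (B_\fa \cup B_\1)^d$, in which case $\xi^\bb_{\br',\bs'} = \eta^\bb_{\br',\bs'} \in T^A_\fa(n,d)$, yielding the lemma. The only delicate point is checking that $(-1)^{\lan\sigma;\ba\ran}$ is unchanged when $\ba$ is replaced term-by-term by a same-parity basis sequence---which is immediate from its definition as an inversion count among odd entries---so that the multilinear expansion survives the symmetrization in spite of the $\ba$-dependence of ${}^{\ba,\br,\bs}\D$.
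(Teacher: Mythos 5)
Your proposal is correct and follows the same route as the paper: expand each $a_l$ in $B_\fa$ (if $a_l\in\fa$) or $B_\1$ (if $a_l\in A_\1$), conclude that $\xi^\ba_{\br,\bs}$ is a $\k$-linear combination of $\xi^\bb_{\br,\bs}$ with $\bb\in(B_\fa\cup B_\1)^d$, and note that for such $\bb$ one has $[\bb,\br,\bs]^!_\c=1$, hence $\xi^\bb_{\br,\bs}=\eta^\bb_{\br,\bs}\in T^A_\fa(n,d)$. The paper states the multilinearity step without comment; your extra care about the $\ba$-dependence of the signs and of ${}^{\ba,\br,\bs}\D$ is a legitimate (and correctly resolved) fine point, not a different argument.
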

\begin{proof}
By assumption, for $1\leq l\leq d$, either $a_l=\sum_{b\in \fa}c_{l,b}b$ or $a_l=\sum_{b\in B_{\1}}c_{l,b}b$, with $c_{l,b}\in\k$. It follows that $\xi^\ba_{\br,\bs}$ is a linear combination of the elements $\xi^\bb_{\br,\bs}$ such that  $\bb$ is of the form $b_1\cdots b_d$ with $b_l\in B_{\fa}\cup B_\1$ for all $l=1,\dots,d$. But for such $\bb$, we  have $\xi^\bb_{\br,\bs}=\eta^\bb_{\br,\bs}\in T^A_\fa(n,d)$. 
\end{proof}

\begin{Proposition} \label{PSubalgebra} 
We have that $T^A_\fa(n,d)\subseteq S^A(n,d)$ is a $\k$-subalgebra. It is a unital subalgebra if \((A,\a)\) is a unital good pair.
\end{Proposition}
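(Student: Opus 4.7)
The plan is to verify closure on the basis $\{\eta^\bb_{\br,\bs}\}$ of Lemma~\ref{LBasis}. By bilinearity fix $(\ba,\bp,\bq),(\bc,\bu,\bv)\in\Seq^B(n,d)$. Writing
\[
\eta^\ba_{\bp,\bq}\,\eta^\bc_{\bu,\bv}=[\ba,\bp,\bq]^!_\c\,[\bc,\bu,\bv]^!_\c\,\xi^\ba_{\bp,\bq}\,\xi^\bc_{\bu,\bv}
\]
and expanding via (\ref{EFBABBBCNew}) in the basis of Lemma~\ref{LBasis'}, the coefficient of $\xi^\bb_{\br,\bs}$ becomes $c_{\bb,\br,\bs}:=[\ba,\bp,\bq]^!_\c\,[\bc,\bu,\bv]^!_\c\,f^{\bb,\br,\bs}_{\ba,\bp,\bq;\bc,\bu,\bv}$. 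Since $\eta^\bb_{\br,\bs}=[\bb,\br,\bs]^!_\c\,\xi^\bb_{\br,\bs}$, it suffices to show that $[\bb,\br,\bs]^!_\c$ divides $c_{\bb,\br,\bs}$ (as integers) for every orbit representative $[\bb,\br,\bs]\in\Seq^B(n,d)/\Si_d$.

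A key preliminary is that $\Si_{\bb,\br,\bs}$ decomposes as an internal direct product $\Si^{(\a)}_{\bb,\br,\bs}\times\Si^{(\c)}_{\bb,\br,\bs}$, where $\Si^{(\a)}_{\bb,\br,\bs}$ (resp.\ $\Si^{(\c)}_{\bb,\br,\bs}$) consists of permutations lying within $B_\a$-position classes (resp.\ $B_\c$-position classes) of $(\bb,\br,\bs)$; this uses that $B_\1$-positions are fixed by any element of $\Si_{\bb,\br,\bs}$ because $(\bb,\br,\bs)\in\Seq^B$. The factor orders are $[\bb,\br,\bs]^!_\a$ and $[\bb,\br,\bs]^!_\c$. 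Applying Corollary~\ref{CSigns}(iii) and cancelling the factor $[\bb,\br,\bs]^!_\c$ from $|\Si_{\bb,\br,\bs}|$, the required divisibility reduces term by term over orbits $\llbracket\ba',\bc',\bt\rrbracket\in X/\Si_{\bb,\br,\bs}$ to the combinatorial claim
\[
|\Si_{\bb,\br,\bs}\cap\Si_{\ba',\bc',\bt}|\ \text{divides}\ [\ba,\bp,\bq]^!_\c\,[\bc,\bu,\bv]^!_\c\,[\bb,\br,\bs]^!_\a.
\]

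The main obstacle is this divisibility. Because the $\a$- and $\c$-parts act on disjoint position sets, the intersection splits as $H_\a\times H_\c$ with $H_\a\leq\Si^{(\a)}_{\bb,\br,\bs}$ (so trivially $|H_\a|$ divides $[\bb,\br,\bs]^!_\a$) and $H_\c\leq\Si^{(\c)}_{\bb,\br,\bs}$. For $H_\c$ I would construct an injective group homomorphism
\[
\phi\colon H_\c\longrightarrow\Si^{(\c)}_{\ba',\br,\bt}\times\Si^{(\c)}_{\bc',\bt,\bs},\qquad\sigma\longmapsto(\sigma^a,\sigma^c),
\]
where $\sigma^a$ (resp.\ $\sigma^c$) agrees with $\sigma$ on positions $k$ with $a'_k\in B_\c$ (resp.\ $c'_k\in B_\c$) and is the identity on all other positions. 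Because $(\ba',\br,\bt)\sim(\ba,\bp,\bq)$ and $(\bc',\bt,\bs)\sim(\bc,\bu,\bv)$, the target has order $[\ba,\bp,\bq]^!_\c\,[\bc,\bu,\bv]^!_\c$. Injectivity is where the argument bites: if $\sigma\in H_\c$ moves a $B_\c$-position $k$ of $\bb$ with $a'_k,c'_k\notin B_\c$, then $\bar a'_k+\bar c'_k=\0$ forces equal parity; both odd contradicts $(\ba',\br,\bt)\in\Seq^B$, while both even in $B_\a$ gives $a'_kc'_k\in\a$ and hence $\kappa^{b_k}_{a'_k,c'_k}=0$. In the latter case the whole term of Corollary~\ref{CSigns}(iii) vanishes and its divisibility is automatic, so we may assume the map is injective, yielding $|H_\c|\mid[\ba,\bp,\bq]^!_\c\,[\bc,\bu,\bv]^!_\c$ as required.

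For the unital claim, choose a basis $B$ with $1_A=1_\a\in B_\a$. Then $1_{S^A(n,d)}$ equals the sum of $\xi^{1_A^d}_{\br,\br}$ over $\Si_d$-orbit representatives of $\br\in[1,n]^d$; for each such term $[1_A^d,\br,\br]^!_\c=1$ since no letter lies in $B_\c$, so $\xi^{1_A^d}_{\br,\br}=\eta^{1_A^d}_{\br,\br}\in T^A_\a(n,d)$, establishing $1_{S^A(n,d)}\in T^A_\a(n,d)$.
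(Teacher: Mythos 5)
Your closure argument is correct and reaches the paper's key divisibility by a genuinely different route. Both proofs reduce, via Corollary~\ref{CSigns}(iii), to showing that $[\bb,\br,\bs]^!_\c$ divides $[\ba,\bp,\bq]^!_\c\,[\bc,\bu,\bv]^!_\c\,[\Si_{\bb,\br,\bs}:\Si_{\bb,\br,\bs}\cap\Si_{\ba',\bc',\bt}]$ for each orbit $\llbracket\ba',\bc',\bt\rrbracket$ with $\kappa^{\bb}_{\ba',\bc'}\neq 0$. The paper does this by writing everything in terms of the multiplicities $m^{a',b,c'}_{r,s,t}$ and exhibiting the quotient explicitly as a product of multinomial coefficients (the quantities $y^b_{r,s}$ and $C$); you instead cancel $[\bb,\br,\bs]^!_\c$ against $|\Si_{\bb,\br,\bs}|=[\bb,\br,\bs]^!_\a\,[\bb,\br,\bs]^!_\c$, split $\Si_{\bb,\br,\bs}\cap\Si_{\ba',\bc',\bt}=H_\a\times H_\c$, and handle the two factors by Lagrange --- $H_\a$ inside $\Si^{(\a)}_{\bb,\br,\bs}$, and $H_\c$ via the embedding $\phi$. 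The pivotal algebraic fact is the same in both proofs (a position that is genuinely permuted and carries $b_k\in B_\c$ must have $a'_k\in B_\c$ or $c'_k\in B_\c$, because odd letters cannot repeat and $\a$ is multiplicatively closed), so your version buys a cleaner, more conceptual packaging of the same combinatorics. One step deserves to be spelled out: ``both odd contradicts $(\ba',\br,\bt)\in\Seq^B$'' is only a contradiction because $\si k=l\neq k$ forces $(a'_k,r_k,t_k)=(a'_l,r_l,t_l)$, i.e.\ it is the repetition of the odd letter, not its mere presence, that violates the definition of $\Seq^B(n,d)$.

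The unital claim as written has a small gap. At this point $T^A_\a(n,d)$ is defined relative to a fixed $(A,\a)$-basis $B$ (its independence of $B$ is only Proposition~\ref{AaInd}, proved later), and it is not justified that $B_\a$ can be chosen to contain $1_A$: over a general commutative domain $\k$ a given element of a free module need not be part of a basis. The repair is immediate and is what the paper does: since $1_A\in\a$, expand $1_A=\sum_{b\in B_\a}c_b b$ and invoke Lemma~\ref{L140117}, which shows that $1_A^{\otimes d}=\sum_{\la}\xi^{1_A^d}_{\bl^\la,\bl^\la}$ is a $\k$-linear combination of elements $\xi^{\bb}_{\br,\bs}=\eta^{\bb}_{\br,\bs}$ with $\bb\in B_\a^d$.
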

\begin{proof}
By Lemma~\ref{L140117}, if \(1_A \in \fa\), then the identity $1_A\otimes\dots\otimes 1_A\in S^A(n,d)$ belongs to $T^A_\fa(n,d)$, so we only have to prove the first statement of the lemma. 

We now fix \((\ba,\bp,\bq),(\bc,\bu,\bv),(\bb, \br, \bs) \in \Seq^B(n,d)\) and apply Corollary~\ref{CSigns}. Using the notation as in the corollary, assume that \((\ba',\bc',\bt) \in X\) is such that \(\kappa^{\bb}_{\ba', \bc'} \neq 0\). In view of Corollary~\ref{CSigns}(iii), 
it suffices to prove that the integer 
\begin{align*}
M:=[\ba, \bp, \bq]^!_{\c} \cdot [\bc, \bu, \bv]^!_{\c} \cdot |\mathfrak{S}_{\bb, \br, \bs} / \mathfrak{S}_{\bb, \br, \bs} \cap \mathfrak{S}_{\ba', \bc', \bt}|
\end{align*}
is divisible by \([\bb, \br, \bs]^!_{\c}\).
For $b,a',c'\in B$ and $r,s,t\in [1,n]$, define 
\begin{align*}
m^{a', b,c'}_{r, s,t} := \#\{k \in [1,d] \mid a'_k = a', b_k = b, c'_k = c',  r_k = r, s_k=s, t_k = t\}.
\end{align*}
Then, using that $(\ba',\br,  \bt) \sim (\ba,\bp,  \bq)$, $(\bc',\bt,  \bs)\sim (\bc,\bu,\bv)$, we obtain: 
\begin{align}
|\Si_{\bb, \br, \bs} \cap \Si_{\ba', \bc', \bt}| &= \displaystyle \prod_{ a', b,c' \in B,\,\, r,s,t \in [1,n]} m^{a',b, c'}_{r, s,t}! \label{subalg3}
\\
[\bb, \br, \bs]^b_{r,s} &= \displaystyle \sum_{a', c' \in B, \,\, t \in [1,n]} m_{r, s,t}^{a', b, c'}
\qquad(b\in B,\ r,s\in[1,n]), 
\label{subalg4a}
\\
[\ba, \bp, \bq]^a_{p, q} &= \displaystyle \sum_{ b, c' \in B,\,\, t \in [1,n]} m^{a, b, c'}_{p, t,q}
\qquad(a\in B,\ p,q\in[1,n]), 
\label{subalg4b}
\\ 
[\bc, \bu, \bv]^c_{u, v} &= \displaystyle \sum_{a',b \in B,\,\, t \in [1,n]} m^{a', b, c}_{t, v,u},
\qquad(c\in B,\ u,v\in[1,n]),  \label{subalg4c}
\end{align}

By (\ref{subalg4a}), for every \(b \in B\) and \(r,s \in [1,n]\), we have that 
$$
y^b_{r,s}:=\frac{[\bb, \br, \bs]^b_{r, s}!}{\prod_{a', c' \in B,\,\, t \in [1,n]     } m^{a',b,c'}_{r, s,t}!}\in\Z.
$$
So 
$$
C := \prod_{b \in B_\fa \cup B_{\1}, \,\, r,s \in [1,n]}
y^b_{r,s}
 \in \Z,
$$
and by  (\ref{subalg3}), we have 
\begin{align*}
|\Si_{\bb, \br, \bs} / \Si_{\bb, \br, \bs} \cap \Si_{\ba', \bc', \bt}|\,=\,
\prod_{b \in B,\,\, r,s \in [1,n] }
y^b_{r,s}
\,=\,
C\cdot
\prod_{b \in B_{\c},\,\, r,s \in [1,n]}
y^b_{r,s}.
\end{align*}

Now we claim that \(b \in B_{\c}\) and \(m^{a',b,c'}_{r, s,t} >1\) imply \(a' \in B_{\c}\) or \(c' \in B_{\c}\). Indeed, if \(a' \in B_{\1}\), then using (\ref{subalg4b}), we get \(m^{a',b,c'}_{r, s,t}\leq [\ba, \bp, \bq]^{a'}_{r, t} \leq 1$ as \((\ba, \bp, \bq) \in  \Seq^B(n,d)\),  which is a contradiction. Thus \(a' \in B_{\0}\). Similarly, \(c' \in B_{\0}\). If \(a', c' \in B_{\fa}\), then \(\kappa^{b}_{a', c'} = 0\) since \(\fa\) is closed under multiplication. Since \(m^{a',b,c'}_{r, s,t} >0\), this implies \(\kappa^{\bb}_{\ba', \bc'} = 0\), 
which contradicts our choice of \((\ba', \bc', \bt)\), proving the claim. 

By the claim, for $b\in B_{\c}$ and $r,s\in[1,n]$, we may write
\begin{align*}
y^b_{r,s}=
\frac{
[\bb, \br, \bs]^b_{r,s}!
}{
\Big(\prod_{a' \in B_{\c},\, c' \in B,\, t \in [1,n]} m^{a',b,c'}_{r, s,t}!
\Big)
\Big(
\prod_{a' \in B_\fa \cup B_{\bar 1},\, c' \in B_{\c},\, t \in [1,n]} m^{a',b,c'}_{r, s,t}!
\Big)}.
\end{align*}
So $M$ equals 
\begin{align*}
&\Bigg(\prod_{\substack{  a' \in B_{\c}  \\ r,t \in [1,n]  }} \hspace{-2mm}[\ba, \bp, \bq]^{a'}_{r, t}!
\Bigg)\cdot 
\Bigg(
\prod_{\substack{  c' \in B_{\c}  \\ t,s \in [1,n]  }} \hspace{-2mm}[\bc, \bu, \bv]^{c'}_{t, s}!
\Bigg)
\cdot C\cdot 
\prod_{b \in B_{\c},\, r,s \in [1,n]}
y^b_{r,s}
\\
=\,\,&
\Bigg(\prod_{\substack{a' \in B_{\c}\\ r,t \in [1,n]}}
\frac{
[\ba, \bp, \bq]^{a'}_{r, t}!
}
{
\prod_{c' \in B,\, b \in B_{\c}, \, s \in [1,n]} m^{a', b,c'}_{r, s,t}! 
}
\Bigg)
\Bigg(\prod_{\substack{c' \in B_{\c}\\ t,s \in [1,n]}}
\frac{
[\bc, \bu, \bv]^{c'}_{t, s}!
}
{
\prod_{a' \in B_\fa \cup B_{\1},\, b \in B_{\c}, \, r \in [1,n]} m^{a', b,c'}_{r, s,t}! 
}
\Bigg)
\\
&\times C\cdot
\prod_{b \in B_{\c}, \, r,s \in [1,n]} [\bb, \br, \bs]^b_{r,s}!.
\end{align*}
Note that the first factor is an integer by  (\ref{subalg4b}), and the second factor is an integer by  (\ref{subalg4c}). We have thus proved that $M$ 
is divisible by  
$$\prod_{b \in B_{\c}, \, r,s \in [1,n]} [\bb, \br, \bs]^b_{r,s}!=[\bb, \br, \bs]^!_{\c},$$ 
completing the proof.
\end{proof}

\subsection{Coproduct on generalized Schur algebras}\label{SSCoproduct}
 In this subsection, it will be convenient to use the following notation. Let $\Triple=(\bb,\br,\bs)\in\Seq^B(n,d)$. We write 
$$
\xi_\Triple:=\xi^\bb_{\br,\bs},\ \,\,\eta_\Triple:=\eta^\bb_{\br,\bs}, \,\,{}^\Triple\D:={}^{\bb,\br,\bs}\D,\ \,\,[\Triple]^!_{\c}:=[\bb,\br,\bs]^!_{\c},\ \,\,\Triple\si:=(\bb,\br,\bs)\si,\ \,\,\text{etc.}
$$
If $d=d_1+d_2$, $\Triple^1=(\bb^1, \br^1, \bs^1)\in\Seq^B(n,d_1)$ and $\Triple^2=(\bb^2, \br^2, \bs^2)\in\Seq^B(n,d_2)$, we denote 
$$
\Triple^1\Triple^2:=(\bb^1\bb^2, \br^1\br^2, \bs^1\bs^2)\in B^d\times[1,n]^d\times [1,n]^d.
$$
In general $\Triple^1\Triple^2$ does not need to be an element of $\Seq^B(n,d)$. 

Recall the notation (\ref{ESeq0}). For \(\Triple \in \Seq^B_0(n,d)\) and \(0 \leq l \leq d\), define the sets of {\em $l$-splits of $\Triple$} and {\em splits of $\Triple$} as
\begin{align*}
\textup{Spl}_l(\Triple)&:=\big\{(\Triple^1, \Triple^2)\in \Seq^B_0(n,l) \times \Seq^B_0(n,d-l) \mid 
\Triple^1\Triple^2\sim \Triple\big\},
\\
\textup{Spl}(\Triple)&:=\bigsqcup_{0\leq l\leq d}\textup{Spl}_l(\Triple).
\end{align*}
For $(\Triple^1,\Triple^2) \in \textup{Spl}_l(\Triple)$, let \(\si^{\Triple}_{\Triple^1,\Triple^2}\) be the unique element of ${}^{\Triple}\mathscr{D}$ such that 
$$
\Triple\si^{\Triple}_{\Triple^1,\Triple^2}  = \Triple^1\Triple^2.
$$ 

Let \(\iota_l : \mathfrak{S}_l \times \mathfrak{S}_{d-l} \to \mathfrak{S}_d\) be the standard inclusion. Let \(\Triple \in \Seq^B_0(n,d)\). 
Note that for every \(\sigma \in {}^{\Triple}\mathscr{D}\) there exist unique $(\Triple^1,\Triple^2) \in \textup{Spl}_l(\Triple)$,  \(\sigma_1 \in {}^{\Triple^1}\mathscr{D}\) and \(\sigma_2 \in {}^{\Triple^2}\mathscr{D}\) such that 
 \(\si=\si^{\Triple}_{\Triple^1,\Triple^2}\iota_l(\sigma_1, \sigma_2)\). In other words, the map 
\begin{equation}\label{E110217}
\bigsqcup_{\substack{ (\Triple^1,\Triple^2)\in \textup{Spl}_l(\Triple)}} 
{}^{\Triple^1}\mathscr{D} \times
{}^{\Triple^2}\mathscr{D}
\to 
 {}^{\Triple}\mathscr{D}
\end{equation}
sending \((\sigma_1, \sigma_2) \in {}^{\Triple^1}\mathscr{D} \times
{}^{\Triple^2}\mathscr{D} \) to \(\si^{\Triple}_{\Triple^1,\Triple^2}\iota_l(\sigma_1, \sigma_2)\), is a bijection.

Note that for $(\sigma_1, \sigma_2) \in {}^{\Triple^1}\mathscr{D} \times
{}^{\Triple^2}\mathscr{D}$ we have 
\begin{equation}\label{E110217_2}
\langle \si^{\Triple}_{\Triple^1,\Triple^2}\iota_l(\sigma_1, \sigma_2) ; \bb\rangle = \langle \si^{\Triple}_{\Triple^1,\Triple^2}; \bb \rangle + \langle \sigma_1; \bb^1 \rangle + \langle \sigma_2 ; \bb^2 \rangle.
\end{equation}

Recall from \cite[\S3.3]{EK1}, that \(\bigoplus_{d\geq 0} M_n(A)^{\otimes d}\) is a supercoalgebra with the coproduct \(\nabla\) defined by
\begin{align*}
\nabla\,:\,\, M_n(A)^{\otimes d}\,\, &\to \,\,\bigoplus_{l=0}^d M_n(A)^{\otimes l} \otimes M_n(A)^{\otimes (d-l)}\\
\xi_1 \otimes \cdots \otimes \xi_d\,\,&\mapsto\,\, \sum_{l=0}^d (\xi_1 \otimes \cdots \otimes \xi_l) \otimes (\xi_{l+1} \otimes \cdots \otimes \xi_d).
\end{align*}
Let 
\begin{equation}\label{ES(n)}
S^A(n) := \bigoplus_{d\geq 0} S^A(n,d)\quad \text{and}\quad T^A_\fa(n) := \bigoplus_{d\geq 0} T^A_\fa(n,d).
\end{equation}
We next prove that 
these are sub-supercoalgebras of \(\bigoplus_{d\geq 0} M_n(A)^{\otimes d}\).  The following result is actually contained in \cite{EK1}, but we give a proof using our current notation for reader's convenience.

\begin{Lemma}\label{coprodxi} {\rm \cite[(6.12)]{EK1}} 
If \(\Triple=(\bb,\br,\bs) \in \Seq^B_0(n,d)\) then 
\begin{align*}
\nabla(\xi_\Triple) =
 \sum_{(\Triple^1, \Triple^2) \in \textup{Spl}(\Triple)} 
(-1)^{\langle \si^{\Triple}_{\Triple^1,\Triple^2}; \bb \rangle}
\xi_{\Triple^1} \otimes \xi_{\Triple^2}.
\end{align*}
In particular, $S^A(n)$ is a sub-supercoalgebra of \(\bigoplus_{d\geq 0} M_n(A)^{\otimes d}\). 
\end{Lemma}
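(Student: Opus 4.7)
The plan is to unfold $\xi_\Triple$ using its definition (\ref{EXiDef}), apply $\nabla$ termwise, and then reindex the resulting double sum via the bijection (\ref{E110217}), with the signs being pushed through by identity (\ref{E110217_2}).

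For brevity I would write $\xi^\Triple_0:=\xi^{b_1}_{r_1,s_1}\otimes\cdots\otimes\xi^{b_d}_{r_d,s_d}$ for the unsymmetrized pure tensor attached to a triple $\Triple=(\bb,\br,\bs)$, and $\Triple^{[i,j]}$ for its truncation to positions $i,\ldots,j$. Then (\ref{EXiDef}) together with the sign-law for the $\Si_d$-action on pure tensors gives
$$\xi_\Triple \;=\; \sum_{\si\in{}^\Triple\D}(-1)^{\lan\si;\bb\ran}\,\xi^{\Triple\si}_0.$$
Since $\nabla$ sends a pure tensor to the sum over all splits after each position, this yields
$$\nabla(\xi_\Triple)\;=\;\sum_{l=0}^d\sum_{\si\in{}^\Triple\D}(-1)^{\lan\si;\bb\ran}\,\xi^{(\Triple\si)^{[1,l]}}_0\otimes\xi^{(\Triple\si)^{[l+1,d]}}_0.$$

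Next I would apply the bijection (\ref{E110217}): fix $l$; then every $\si\in{}^\Triple\D$ is uniquely $\si^\Triple_{\Triple^1,\Triple^2}\iota_l(\si_1,\si_2)$ with $(\Triple^1,\Triple^2)\in\textup{Spl}_l(\Triple)$ and $\si_i\in{}^{\Triple^i}\D$. Because $\Triple\cdot\si^\Triple_{\Triple^1,\Triple^2}=\Triple^1\Triple^2$ and $\iota_l(\si_1,\si_2)$ permutes the first $l$ and last $d-l$ positions separately, we have $(\Triple\si)^{[1,l]}=\Triple^1\si_1$ and $(\Triple\si)^{[l+1,d]}=\Triple^2\si_2$. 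Using (\ref{E110217_2}) to split the sign as $\lan\si;\bb\ran=\lan\si^\Triple_{\Triple^1,\Triple^2};\bb\ran+\lan\si_1;\bb^1\ran+\lan\si_2;\bb^2\ran$, the $l$-th contribution to $\nabla(\xi_\Triple)$ rewrites as
$$\sum_{(\Triple^1,\Triple^2)\in\textup{Spl}_l(\Triple)}\hspace{-3mm}(-1)^{\lan\si^\Triple_{\Triple^1,\Triple^2};\bb\ran}\Bigl(\sum_{\si_1\in{}^{\Triple^1}\D}(-1)^{\lan\si_1;\bb^1\ran}\xi^{\Triple^1\si_1}_0\Bigr)\otimes\Bigl(\sum_{\si_2\in{}^{\Triple^2}\D}(-1)^{\lan\si_2;\bb^2\ran}\xi^{\Triple^2\si_2}_0\Bigr),$$
and the two parenthesized factors are exactly $\xi_{\Triple^1}$ and $\xi_{\Triple^2}$ by the defining formula above. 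Summing over $l$ gives the stated identity, and the sub-supercoalgebra assertion is immediate since each $\xi_{\Triple^1}\otimes\xi_{\Triple^2}\in S^A(n,l)\otimes S^A(n,d-l)$.

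The only delicate point is the sign bookkeeping, but this is completely controlled by (\ref{E110217_2}): that identity is precisely what makes the three sign contributions from $\si^\Triple_{\Triple^1,\Triple^2}$, $\si_1$ and $\si_2$ factor cleanly, so that the $\si_i$-sums reassemble $\xi_{\Triple^1}$ and $\xi_{\Triple^2}$ without any residual factor.
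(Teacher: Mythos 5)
Your proposal is correct and follows essentially the same route as the paper's proof: expand $\xi_\Triple$ over ${}^{\Triple}\mathscr{D}$, apply $\nabla$ termwise, and reindex via the bijection (\ref{E110217}) with the sign identity (\ref{E110217_2}) ensuring the inner sums over $\si_1,\si_2$ reassemble into $\xi_{\Triple^1}\otimes\xi_{\Triple^2}$. The only cosmetic difference is that you make the sign factorization explicit in a separate display, whereas the paper absorbs the $\si_1,\si_2$ signs into the superscript action notation $(\cdots)^{\si_i}$.
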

\begin{proof}
Writing $\displaystyle\sum_{\small\textup{Spl}_l(\Triple)}$ for the sum over all $(\Triple^1,\Triple^2) \in \textup{Spl}_l(\Triple)$ with $\Triple^1=(\bb^1, \br^1, \bs^1)$ and $\Triple^2=(\bb^2, \br^2, \bs^2)$, we have that $\nabla(\xi^\bb_{\br,\bs})$ equals
\begin{align*}
&
\sum_{\sigma \in {}^{\Triple}\mathscr{D}} (-1)^{\langle \sigma ; \bb\rangle}\nabla(\xi_{r_{\sigma 1}, s_{\sigma 1}}^{b_{\sigma 1}} \otimes \cdots \otimes \xi_{r_{\sigma d}, s_{\sigma d}}^{b_{\sigma d}})\\
=& 
\sum_{\sigma \in {}^{\Triple}\mathscr{D}} (-1)^{\langle \sigma ; \bb\rangle}
\sum_{l=0}^d
(\xi_{r_{\sigma 1}, s_{\sigma 1}}^{b_{\sigma 1}} \otimes \cdots \otimes \xi_{r_{\sigma l}, s_{\sigma l}}^{b_{\sigma l}}) \otimes ( \xi_{r_{\sigma (l+1)}, s_{\sigma (l+1)}}^{b_{\sigma (l+1)}}\otimes \cdots \otimes \xi_{r_{\sigma d}, s_{\sigma d}}^{b_{\sigma d}})\\
=&
\sum_{l=0}^d
 \sum_{\small\textup{Spl}_l(\Triple)} 
\sum_{\substack{ \sigma_1 \in {}^{\Triple^1}\mathscr{D} \\ \sigma_2 \in {}^{\Triple^2}\mathscr{D}}}
(-1)^{ \langle \si^{\Triple}_{\Triple^1,\Triple^2}; \bb \rangle}
(\xi^{b^1_1}_{r^1_1, s^1_1}  \otimes \cdots \otimes \xi^{b^1_l}_{r^1_l, s^1_l})^{\sigma_1} 
\\
&\hspace{7.5cm}\otimes 
(\xi^{b^2_1}_{r^2_1, s^2_1}  \otimes \cdots \otimes \xi^{b^2_{d-l}}_{r^2_{d-l}, s^2_{d-l}})^{\sigma_2}
\\
= &\sum_{l = 0}^d
\sum_{\small\textup{Spl}_l(\bb, \br, \bs)}
(-1)^{\langle \si^{\Triple}_{\Triple^1,\Triple^2}; \bb \rangle}
\xi_{\Triple^1} \otimes \xi_{\Triple^2},
\end{align*}
where we have used the bijection (\ref{E110217}) and the sign identity (\ref{E110217_2}) for the second equality above. 
\end{proof}

\begin{Corollary}\label{coprodeta}
If \(\Triple=(\bb,\br,\bs) \in \Seq^B_0(n,d)\) then 
\begin{align*}
\nabla(\eta_\Triple)=\sum_{(\Triple^1,\Triple^2) \in \textup{Spl}(\Triple)} 
(-1)^{\langle \si^{\Triple}_{\Triple^1,\Triple^2}; \bb \rangle}
{\small \frac{[\Triple]^!_{\c}}{[\Triple^1]^!_{\c}[\Triple^2]^!_{\c}}}
\eta_{\Triple^1} \otimes \eta_{\Triple^2},
\end{align*}
with $\displaystyle{\small \frac{[\Triple]^!_{\c}}{[\Triple^1]^!_{\c}[\Triple^2]^!_{\c}}}\in\Z$. In particular, $T^A_\fa(n)$ is a sub-supercoalgebra of \(\bigoplus_{d\geq 0} M_n(A)^{\otimes d}\). 
\end{Corollary}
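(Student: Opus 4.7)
The plan is to derive the formula by applying $\nabla$ directly to $\eta_\Triple = [\Triple]^!_\c\,\xi_\Triple$ and invoking Lemma~\ref{coprodxi}. After factoring $\xi_{\Triple^i} = \eta_{\Triple^i}/[\Triple^i]^!_\c$ out on the right-hand side, the claimed identity drops out formally (working over the quotient field of $\k$ if one wants to divide with impunity). So the only real content is (a) verifying that the rescaling coefficient is an integer and (b) deducing the coalgebra closure statement.

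The main (and really only) step is the integrality of $[\Triple]^!_{\c}/\bigl([\Triple^1]^!_{\c}[\Triple^2]^!_{\c}\bigr)$. I would argue this combinatorially. For any $(\Triple^1,\Triple^2) \in \textup{Spl}(\Triple)$, the condition $\Triple^1\Triple^2 \sim \Triple$ means the multiset of letters $(b_k,r_k,s_k)$ in $\Triple$ is the disjoint union of those in $\Triple^1$ and those in $\Triple^2$. Consequently, using the notation in~(\ref{E190617}),
\[
[\Triple]^b_{r,s} \,=\, [\Triple^1]^b_{r,s} + [\Triple^2]^b_{r,s}
\qquad (b \in B,\ r,s \in [1,n]).
\]
Restricting the product defining $[\cdot]^!_\c$ to indices $b \in B_\c$ now gives
\[
\frac{[\Triple]^!_{\c}}{[\Triple^1]^!_{\c}\,[\Triple^2]^!_{\c}} \;=\; \prod_{b\in B_\c,\, r,s\in[1,n]} \binom{[\Triple]^b_{r,s}}{[\Triple^1]^b_{r,s}} \,\in\, \Z,
\]
which is the integrality we need.

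For the final assertion, I would combine Lemma~\ref{LBasis} with the formula just obtained: the former says that $\{\eta_{\Triple'} \mid \Triple' \in \Seq^B_0(n,e)\}$ is a $\k$-basis of $T^A_\fa(n,e)$ for each $e$, while the latter expresses $\nabla(\eta_\Triple)$ as a $\k$-linear combination of pure tensors $\eta_{\Triple^1}\otimes\eta_{\Triple^2}$ with $\Triple^i \in \Seq^B_0(n,d_i)$. Hence $\nabla$ sends $T^A_\fa(n)$ into $T^A_\fa(n)\otimes T^A_\fa(n)$, making it a sub-supercoalgebra of $\bigoplus_{d\geq 0} M_n(A)^{\otimes d}$. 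I do not anticipate any serious obstacle: the only spot where one must be careful is lining up the splitting $\Triple^1\Triple^2 \sim \Triple$ with the multiplicities $[\Triple]^b_{r,s}$, and this is precisely what makes the multinomial coefficient argument work cleanly.
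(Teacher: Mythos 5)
Your argument matches the paper's: both reduce via Lemma~\ref{coprodxi} to the divisibility of $[\Triple]^!_{\c}$ by $[\Triple^1]^!_{\c}[\Triple^2]^!_{\c}$, which follows from the additivity $[\Triple]^b_{r,s}=[\Triple^1]^b_{r,s}+[\Triple^2]^b_{r,s}$ (your binomial-coefficient phrasing is just the explicit form of this). The proposal is correct and essentially identical to the paper's proof.
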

\begin{proof}
By Lemma~\ref{coprodxi}, we just have to check that \([\Triple^1]^!_{\c}[\Triple^2]^!_{\c}  \) divides \([\Triple]^!_{\c} \) whenever \((\Triple^1, \Triple^2) \in \textup{Spl}_l(\Triple)\). But in this situation we have that \([\Triple]^b_{r,s} = [\Triple^1]^b_{r,s} + [\Triple^2]^b_{r,s}\) for all $b\in B$ and $1\leq r,s\leq n$, which implies the required divisibility.
\end{proof}

\section{Superbialgebra structure}\label{Ssuperbi}
Recall the definition of $S^A(n)$ and $T^A_\a(n)$ from (\ref{ES(n)}). 
In this section we study the star-product on $S^A(n)$ and $T^A_\a(n)$ which together with the coproduct $\nabla$ from \S\ref{SSCoproduct} makes them into superbialgbras. For $S^A(n)$ this is well-known, see for example \cite[Lemma 3.12]{EK1}.

\subsection{Star-product}
For $d,e\in Z_{\geq 0}$, let ${}^{(d,e)}\D$ be the set of the shortest coset representatives for $(\Si_d\times \Si_e)\backslash\Si_d$. Given $\xi_1\in M_n(A)^{\otimes d}$ and $\xi_2\in M_n(A)^{\otimes e}$, we define 
\begin{equation}\label{EStarNotation}
\xi_1* \xi_2:=\sum_{\si\in{}^{(d,e)}\D}(\xi_1\otimes \xi_2)^\si.
\end{equation}
It is well-known that this $*$-product  makes  $\bigoplus_{d\geq 0}M_n(A)^{\otimes d}$ into an associative supercommutative superalgebra.

\begin{Lemma}\label{xistarproducts} For \((\bb, \br, \bs)\in\Seq^B(n,d)\) and \((\bc, \bt, \bu) \in \Seq^B(n,e)\), we have
\begin{enumerate}
\item \(\xi_{r_1,s_1}^{b_1} * \cdots * \xi_{r_d,s_d}^{b_d} = [\bb, \br, \bs]^!\, \xi_{\br, \bs}^{\bb}\).
\item \(\displaystyle\xi^{\bb}_{\br, \bs} * \xi^{\bc}_{\bt, \bu} = \frac{[\bb\bc, \br\bt, \bs\bu]^!}{[\bb, \br, \bs]^! [\bc, \bt, \bu]^!}\xi_{\br\bt, \bs\bu}^{\bb\bc}\).
\item \(\displaystyle\eta^{\bb}_{\br, \bs} * \eta^{\bc}_{\bt, \bu} = \frac{[\bb\bc, \br\bt, \bs\bu]_\a^!}{[\bb, \br, \bs]_\a^! [\bc, \bt, \bu]_\a^!}\eta_{\br\bt, \bs\bu}^{\bb\bc},\)
\end{enumerate}
where \(\frac{[\bb\bc, \br\bt, \bs\bu]^!}{[\bb, \br, \bs]^! [\bc, \bt, \bu]^!}\) and \(\frac{[\bb\bc, \br\bt, \bs\bu]_\a^!}{[\bb, \br, \bs]_\a^! [\bc, \bt, \bu]_\a^!}\) are integers, and the right hand sides of \textup{(ii)} and \textup{(iii)} are taken to be zero when \((\bb\bc, \br\bt, \bs\bu) \notin \Seq^B(n,d+e)\).
\end{Lemma}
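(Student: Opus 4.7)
\medskip
\noindent\textbf{Proof proposal.}
The plan is to establish (i) first directly from the definition of the $*$-product, then derive (ii) by associativity, and finally deduce (iii) by the rescaling $\eta = [\cdot]^!_\c\,\xi$.

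\emph{Step 1 (Part (i)).} Write $\xi_k := \xi^{b_k}_{r_k,s_k}\in M_n(A)$. An easy induction on $d$, using the definition (\ref{EStarNotation}) and the fact that all groups $\Si_1\times\cdots\times \Si_1$ arising in the iterated cosets are trivial, gives
$$
\xi_1 * \cdots * \xi_d \;=\; \sum_{\sigma\in\Si_d}(\xi_1\otimes\cdots\otimes\xi_d)^{\sigma}.
$$
Breaking $\Si_d$ into $\Si_{\bb,\br,\bs}$-cosets and comparing with the defining formula (\ref{EXiDef}) for $\xi^\bb_{\br,\bs}$ shows that each $\Si_{\bb,\br,\bs}$-coset contributes the same summand as in (\ref{EXiDef}). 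Hence $\xi_1*\cdots*\xi_d = |\Si_{\bb,\br,\bs}|\,\xi^\bb_{\br,\bs} = [\bb,\br,\bs]^!\,\xi^\bb_{\br,\bs}$, using (\ref{subalg1}). This proves (i).

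\emph{Step 2 (Part (ii)).} By (i),
$$
\xi^\bb_{\br,\bs}*\xi^\bc_{\bt,\bu}
\;=\;\frac{1}{[\bb,\br,\bs]^!\,[\bc,\bt,\bu]^!}\bigl(\xi^{b_1}_{r_1,s_1}*\cdots*\xi^{b_d}_{r_d,s_d}\bigr)*\bigl(\xi^{c_1}_{t_1,u_1}*\cdots*\xi^{c_e}_{t_e,u_e}\bigr).
$$
The $*$-product is associative, so the right hand side is the star-product of all $d+e$ rank-one factors. If $(\bb\bc,\br\bt,\bs\bu)\in\Seq^B(n,d+e)$, applying (i) to the $(d+e)$-fold product yields $\frac{[\bb\bc,\br\bt,\bs\bu]^!}{[\bb,\br,\bs]^![\bc,\bt,\bu]^!}\,\xi^{\bb\bc}_{\br\bt,\bs\bu}$. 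If not, there is an odd $b\in B_{\1}$ and indices with identical triples, i.e.\ some $\xi^b_{r,s}$ with $b$ odd occurs at least twice among the factors; but a direct computation from (\ref{EStarNotation}), using $(-1)^{\lan s_1;bb\ran}=-1$, gives $\xi^b_{r,s}*\xi^b_{r,s}=0$, so the star-product vanishes. (Equivalently, appeal to supercommutativity of $*$.)

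\emph{Step 3 (Part (iii)).} Substitute $\eta^\bb_{\br,\bs} = [\bb,\br,\bs]^!_\c\,\xi^\bb_{\br,\bs}$ into (ii); using $[\bb,\br,\bs]^! = [\bb,\br,\bs]^!_\a\,[\bb,\br,\bs]^!_\c$ (which holds because the $B_{\1}$-factors are all $1!$), the $\c$-parts cancel and one of the $\a$-parts of the numerator pairs with $[\bb\bc,\br\bt,\bs\bu]^!_\c\,\xi^{\bb\bc}_{\br\bt,\bs\bu} = \eta^{\bb\bc}_{\br\bt,\bs\bu}$, yielding the claimed formula.

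\emph{Step 4 (Integrality).} For every $b\in B$ and $r,s\in[1,n]$, concatenation of words gives
$[\bb\bc,\br\bt,\bs\bu]^b_{r,s} = [\bb,\br,\bs]^b_{r,s} + [\bc,\bt,\bu]^b_{r,s}$, so each ratio
$\frac{[\bb\bc,\br\bt,\bs\bu]^b_{r,s}!}{[\bb,\br,\bs]^b_{r,s}!\,[\bc,\bt,\bu]^b_{r,s}!}$ is a binomial coefficient and hence in $\Z$. Taking the product over the relevant indices handles both the full and $\a$-truncated versions.

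The only non-routine point is the vanishing argument in Step 2 when $(\bb\bc,\br\bt,\bs\bu)\notin\Seq^B(n,d+e)$; the sign bookkeeping there is what forces the result to be zero, and it is exactly the feature that makes the formulas in (ii) and (iii) well-defined on the $\xi$- and $\eta$-bases.
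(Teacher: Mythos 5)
Your proof is correct and follows essentially the same route as the paper: (i) by expanding the iterated star product over all of $\Si_d$ and collapsing $\Si_{\bb,\br,\bs}$-cosets, (ii) by associativity together with (i), (iii) by the rescaling $\eta=[\cdot]^!_\c\,\xi$, and integrality from the additivity $[\bb\bc,\br\bt,\bs\bu]^b_{r,s}=[\bb,\br,\bs]^b_{r,s}+[\bc,\bt,\bu]^b_{r,s}$. Your explicit verification that the product vanishes when $(\bb\bc,\br\bt,\bs\bu)\notin\Seq^B(n,d+e)$ (via $\xi^b_{r,s}*\xi^b_{r,s}=0$ for odd $b$) is a detail the paper leaves implicit, and it is argued correctly.
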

\begin{proof}
We have that \(\xi_{r_1,s_1}^{b_1} * \cdots * \xi_{r_d,s_d}^{b_d}\) is equal to 
\begin{align*}
\sum_{ \sigma \in \mathfrak{S}_d}( \xi_{r_1, s_1}^{b_1} \otimes \cdots \otimes \xi_{r_d, s_d}^{b_d})^\sigma
&= \sum_{ \sigma \in {}^{\bb, \br, \bs} \mathscr{D}}  \sum_{\si' \in \mathfrak{S}_{\bb, \br, \bs}}( \xi_{r_1, s_1}^{b_1} \otimes \cdots \otimes \xi_{r_d, s_d}^{b_d})^{\si'\sigma}\\
&= [\bb, \br, \bs]^!\sum_{ \sigma \in {}^{\bb, \br, \bs} \mathscr{D}}( \xi_{r_1, s_1}^{b_1} \otimes \cdots \otimes \xi_{r_d, s_d}^{b_d})^{\sigma}
\\
&= [\bb, \br, \bs]^!\, \xi_{\br, \bs}^{\bb},
\end{align*}
proving (i). Thus 
\begin{align*}
[\bb, \br, \bs]^! [\bc, \bt, \bu]^!\,\xi^{\bb}_{\br, \bs} * \xi^{\bc}_{\bt, \bu} &= (\xi_{r_1, s_1}^{b_1} * \cdots * \xi_{r_d,s_d}^{b_d})*(\xi_{t_1, u_1}^{c_1} * \cdots * \xi_{t_e,u_e}^{c_e})\\
&= \xi_{r_1, s_1}^{b_1} * \cdots * \xi_{r_d,s_d}^{b_d}*\xi_{t_1, u_1}^{c_1} * \cdots * \xi_{t_e,u_e}^{c_e}\\
&=[\bb\bc, \br\bt, \bs\bu]^! \,\xi_{\br\bt, \bs\bu}^{\bb\bc},
\end{align*}
where the last line is interpreted as $0$ if \((\bb\bc, \br\bt, \bs\bu) \notin \Seq^B(n,d+e)\). 
Therefore
\begin{align*}
[\bb, \br, \bs]^!_\a [\bc, \bt, \bu]^!_\a\,\eta^{\bb}_{\br, \bs} * \eta^{\bc}_{\bt, \bu} &= [\bb, \br, \bs]^! [\bc, \bt, \bu]^!\, \xi^{\bb}_{\br, \bs} * \xi^{\bc}_{\bt, \bu}\\
&=[\bb\bc, \br\bt, \bs\bu]^!\, \xi_{\br\bt, \bs\bu}^{\bb\bc}\\
&=[\bb\bc, \br\bt, \bs\bu]^!_\a\,\eta_{\br\bt, \bs\bu}^{\bb\bc}.
\end{align*}
Now (ii) and (iii) follow by noting that 
$$[\bb\bc, \br\bt, \bs\bu]^b_{r,s} = [\bb, \br, \bs]^b_{r,s}+ [\bc, \bt, \bu]^b_{r,s}
$$ 
for all \(b,r,s\).
\end{proof}


\begin{Corollary} \label{L250217_3} 
$S^A(n)$ and $T^A_\a(n)$ are subsuperalgebras of $\bigoplus_{d\geq 0}M_n(A)^{\otimes d}$ with respect to the $*$-product. 
\end{Corollary}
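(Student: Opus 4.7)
The plan is to deduce this corollary directly from Lemma~\ref{xistarproducts}, which has already done the hard structural work. The goal is to show, for any $d,e \geq 0$, that the $*$-product restricts to bilinear maps
\[
S^A(n,d) \otimes S^A(n,e) \to S^A(n,d+e) \quad\text{and}\quad T^A_\a(n,d) \otimes T^A_\a(n,e) \to T^A_\a(n,d+e).
\]
By Lemma~\ref{LBasis'} and Lemma~\ref{LBasis}, it suffices to check closure on the bases $\{\xi^\bb_{\br,\bs}\}$ and $\{\eta^\bb_{\br,\bs}\}$ indexed by $\Seq^B(n,d)/\Si_d$ (resp.\ $\Seq^B(n,e)/\Si_e$).

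For the $S^A(n)$ statement, I would simply invoke Lemma~\ref{xistarproducts}(ii): the product $\xi^\bb_{\br,\bs}*\xi^\bc_{\bt,\bu}$ is either $0$ (when $(\bb\bc,\br\bt,\bs\bu)\notin \Seq^B(n,d+e)$) or an integer multiple of $\xi^{\bb\bc}_{\br\bt,\bs\bu}$, which by definition lies in $S^A(n,d+e)$. So closure is immediate. Analogously, for $T^A_\a(n)$, Lemma~\ref{xistarproducts}(iii) shows that $\eta^\bb_{\br,\bs}*\eta^\bc_{\bt,\bu}$ is an integer multiple of $\eta^{\bb\bc}_{\br\bt,\bs\bu}\in T^A_\a(n,d+e)$ (or zero), so $T^A_\a(n)$ is closed as well.

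There is essentially no obstacle: the integrality of the multinomial-type coefficients $\frac{[\bb\bc,\br\bt,\bs\bu]^!}{[\bb,\br,\bs]^![\bc,\bt,\bu]^!}$ and its $\a$-analogue was already asserted in Lemma~\ref{xistarproducts}, and follows from the identity $[\bb\bc,\br\bt,\bs\bu]^b_{r,s}=[\bb,\br,\bs]^b_{r,s}+[\bc,\bt,\bu]^b_{r,s}$ (which reduces the ratios to products of ordinary binomial coefficients). Since $\bigoplus_{d\geq 0}M_n(A)^{\otimes d}$ is already known to be a supercommutative superalgebra under $*$, the inherited structures on $S^A(n)$ and $T^A_\a(n)$ are automatically associative and supercommutative, completing the proof.
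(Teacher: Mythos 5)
Your proof is correct and follows exactly the route the paper intends: the paper states this Corollary with no separate proof precisely because it is immediate from Lemma~\ref{xistarproducts}(ii),(iii) together with Lemmas~\ref{LBasis'} and~\ref{LBasis}, which is what you do. Nothing is missing.
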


Corollary~\ref{L250217_3} together with \cite[Lemma 3.12]{EK1} now imply

\begin{Corollary} \label{subsuperbi}
With respect to the coproduct $\nabla$ and the product $*$, $S^A(n)$ and $T^A_\a(n)$ are superbialgebras. 
\end{Corollary}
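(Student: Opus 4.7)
The plan is to observe that this corollary is almost a formal consequence of what has already been assembled, so the proof should be a short verification rather than a new computation. The superbialgebra axioms amount to three things: (a) $S^A(n)$ and $T^A_\a(n)$ are superalgebras under $*$; (b) they are supercoalgebras under $\nabla$; (c) the compatibility axiom holds, namely $\nabla$ is a homomorphism of superalgebras (equivalently, $*$ is a homomorphism of supercoalgebras), where the tensor-square carries its usual twisted superalgebra structure.

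Parts (a) and (b) are already in hand: part (a) is precisely Corollary~\ref{L250217_3}, and part (b) is the combination of Lemma~\ref{coprodxi} and Corollary~\ref{coprodeta}, where we established that $\nabla$ restricts to coproducts on $S^A(n)$ and $T^A_\a(n)$. So the only content that remains is the compatibility (c), and this is where I would appeal to \cite[Lemma 3.12]{EK1}, which states exactly that the ambient object $\bigoplus_{d\geq 0} M_n(A)^{\otimes d}$ is a superbialgebra with respect to $\nabla$ and $*$. In particular, the compatibility axiom holds identically in the ambient algebra.

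Given compatibility in the ambient, compatibility automatically descends to any sub-object that is simultaneously closed under $*$ and under $\nabla$: for $x,y$ lying in $S^A(n)$ (respectively $T^A_\a(n)$), the identity $\nabla(x*y) = \nabla(x)*\nabla(y)$ holds in $\bigoplus_d M_n(A)^{\otimes d}\otimes \bigoplus_d M_n(A)^{\otimes d}$, and both sides are elements of $S^A(n)\otimes S^A(n)$ (respectively $T^A_\a(n)\otimes T^A_\a(n)$) by parts (a) and (b). Similarly the counit inherits its compatibility from the ambient structure (it is just the projection onto the degree-zero component, which lies in $\a\subseteq T^A_\a(n,0)=\k$). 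Thus both $S^A(n)$ and $T^A_\a(n)$ are sub-superbialgebras.

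Since essentially nothing new needs to be computed, there is no real obstacle; the only subtlety is making sure the reader is not confused about where each ingredient enters. I would therefore present the proof as a one-line assembly: invoke Corollary~\ref{L250217_3} for closure under $*$, invoke Lemma~\ref{coprodxi} and Corollary~\ref{coprodeta} for closure under $\nabla$, and cite \cite[Lemma 3.12]{EK1} for the bialgebra compatibility in the ambient $\bigoplus_{d\geq 0} M_n(A)^{\otimes d}$, which then restricts to the stated sub-superbialgebras.
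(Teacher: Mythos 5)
Your proposal is correct and matches the paper's argument: the paper derives the corollary from Corollary~\ref{L250217_3} (closure under $*$), the earlier results on $\nabla$ (Lemma~\ref{coprodxi} and Corollary~\ref{coprodeta}), and \cite[Lemma 3.12]{EK1} for the ambient superbialgebra compatibility, exactly as you do. No issues.
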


We will also need the following result, where the Sweedler notation $\nabla(x)=\sum x_{(1)}\otimes x_{(2)}$ is used:

\begin{Lemma}\label{Lstfo} \cite[Lemma 4.2]{EK1}
Let $x,y,z,u\in S^A(n,d).$ Then 
\[
(x*y) (z*u) = \sum (-1)^{s}
  (x_{(1)} z_{(1)}) *  (y_{(1)}z_{(2)}) * (x_{(2)} u_{(1)}) * (y_{(2)} u_{(2)}),
\]
where $s=(\bar x_{(2)}+\bar y_{(2)}) \bar z + \bar y_{(1)} (\bar x_{(2)} +  \bar z_{(1)}) 
+ \bar y_{(2)} \bar u_{(1)}$. 
\end{Lemma}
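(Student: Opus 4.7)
\emph{Proof plan.} The identity is super-$\k$-linear in each of the four arguments, so I reduce immediately to the case in which
\[
x=\xi_1\otimes\cdots\otimes\xi_p,\ y=\eta_1\otimes\cdots\otimes\eta_q,\ z=\zeta_1\otimes\cdots\otimes\zeta_r,\ u=\theta_1\otimes\cdots\otimes\theta_s
\]
are simple homogeneous tensors in $M_n(A)^{\otimes p}$, $M_n(A)^{\otimes q}$, $M_n(A)^{\otimes r}$, $M_n(A)^{\otimes s}$. The ordinary product $(x*y)(z*u)$ vanishes unless $p+q=r+s=d$, so I assume this throughout. On simple tensors, the Sweedler decompositions $\nabla(x)=\sum x_{(1)}\otimes x_{(2)}$, $\nabla(y)=\sum y_{(1)}\otimes y_{(2)}$, etc., reduce by Lemma~\ref{coprodxi} (applied in the special case of length-one words) to splits of $x,y,z,u$ into initial and terminal tensor factors of all possible lengths, with coefficients $\pm 1$.

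Next I expand both sides and match terms. By \eqref{EStarNotation}, $x*y=\sum_\sigma\pm(x\otimes y)^\sigma$ summed over $(p,q)$-shuffles $\sigma$, and analogously for $z*u$. Multiplying position-by-position, each pair $(\sigma,\tau)$ induces a labeling of the $d$ positions by the four symbols $xz,xu,yz,yu$, recording which of $x$ or $y$ meets which of $z$ or $u$ at each position. Fix such a labeling with $|xz|=i$, $|yz|=j$; then necessarily $|xu|=p-i$, $|yu|=q-j$, and $i+j=r$. The joint contribution to the left-hand side is (up to sign) a canonical tensor whose $xz$-entries are position-wise products $\xi_a\zeta_b$, and similarly at the other three types. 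On the right-hand side, the four-fold $*$-product $(x_{(1)}z_{(1)})*(y_{(1)}z_{(2)})*(x_{(2)}u_{(1)})*(y_{(2)}u_{(2)})$, taken in the Sweedler degree corresponding to $(i,j)$ (lengths $i,j,p-i,q-j$ summing to $d$), is itself a sum over shuffles of four blocks, and each such shuffle is precisely one of the above position-labelings; the tensor content of the block $x_{(1)}z_{(1)}$ (an ordinary product in $M_n(A)^{\otimes i}$) matches the $xz$-contribution on the left. So, ignoring signs, the two sides agree term by term.

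The remaining task is to verify the signs. These arise from: (i) the shuffles $\sigma$ and $\tau$ used to expand $x*y$ and $z*u$; (ii) the super-transpositions needed, inside each tensor position, to bring every $x$- or $y$-factor next to its partnered $z$- or $u$-factor when reading the answer in the ``block'' order of the right-hand side; and (iii) the shuffle signs of the four-fold $*$-product on the right. Tracking each source explicitly, the contribution from moving $u_{(1)}$ past $y_{(2)}$ contributes $\bar y_{(2)}\bar u_{(1)}$; moving $z$ past the second halves $x_{(2)},y_{(2)}$ contributes $(\bar x_{(2)}+\bar y_{(2)})\bar z$; and separating $y_{(1)}$ past $x_{(2)}$ and past $z_{(1)}$ contributes $\bar y_{(1)}(\bar x_{(2)}+\bar z_{(1)})$, which sum to the asserted $s$.

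The main obstacle is the sign bookkeeping, which is purely mechanical but combinatorially dense; the content-matching in paragraph two is essentially a reformulation of the statement, so once a canonical order for the two expansions is pinned down every sign is attributable to an explicit element of the relevant symmetric group and the stated formula for $s$ drops out.
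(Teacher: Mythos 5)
The paper itself does not prove this lemma --- it is imported verbatim from \cite[Lemma 4.2]{EK1} --- so there is no in-house argument to compare with; I can only judge your plan against what the statement actually requires. Your overall strategy is the right one: expand both shuffle products, index the terms of $(x*y)(z*u)$ by the labelling of the $d$ positions with the four symbols $xz,xu,yz,yu$, match these labellings with the four-block shuffles on the right, and identify $s$ as the Koszul sign of the reordering $x_{(1)}x_{(2)}y_{(1)}y_{(2)}z_{(1)}z_{(2)}u_{(1)}u_{(2)}\mapsto x_{(1)}z_{(1)}y_{(1)}z_{(2)}x_{(2)}u_{(1)}y_{(2)}u_{(2)}$; your accounting of the three sources of $s$ is consistent with that sign.

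The genuine gap is the very first step: the identity is \emph{false} on $M_n(A)^{\otimes d}$ and holds only on the invariant subalgebra, so the hypothesis $x,y,z,u\in S^A(n,d)$ cannot be discharged by multilinearity and a reduction to simple tensors. In a left-hand term indexed by a pair of shuffles, the letters of $x$ sitting at the $xz$-positions form an \emph{arbitrary} subsequence of $\xi_1,\dots,\xi_p$, paired with an \emph{arbitrary} subsequence of $\zeta_1,\dots,\zeta_r$; whereas every right-hand term, because $\nabla$ is deconcatenation, multiplies the \emph{initial} segment of $x$ with the \emph{initial} segment of $z$ position by position. For instance (within your own generality $p+q=r+s$), take $x=\xi_1\otimes\xi_2$, $y$ the empty tensor, $z=\zeta$, $u=\theta$, all even; then
\[
(x*y)(z*u)=\xi_1\zeta\otimes\xi_2\theta+\xi_1\theta\otimes\xi_2\zeta,
\qquad
\text{RHS}=(\xi_1\zeta)*(\xi_2\theta)=\xi_1\zeta\otimes\xi_2\theta+\xi_2\theta\otimes\xi_1\zeta,
\]
and with $\xi_1=\zeta=E_{1,1}$, $\xi_2=E_{1,2}$, $\theta=E_{2,2}$ in $M_2(\k)$ these equal $E_{1,1}\otimes E_{1,2}$ and $E_{1,1}\otimes E_{1,2}+E_{1,2}\otimes E_{1,1}$ respectively. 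The two sides agree only after symmetrizing: one must use $x^\si=x$ (and likewise for $y,z,u$) to re-sort, within the positions carrying letters of $x$, the letters meeting $z$ ahead of those meeting $u$, and similarly for the other three words; only then does each left-hand term become literally a right-hand term. This use of invariance is the one non-mechanical ingredient of the proof, and it is exactly what your plan omits. (Relatedly, Lemma~\ref{coprodxi} is not needed to compute $\nabla$ of a simple tensor --- that is deconcatenation by definition; it concerns the orbit sums $\xi^{\ba}_{\br,\bs}$, and working with those, rather than with simple tensors, is precisely where the required invariance would enter.)
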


\subsection{Separation}
Let $q\in\Z_{>0}$ and $\de=(d_1,\dots,d_q)\in\Z_{\geq 0}^q$ with $d_1+\dots+d_q=d$. Then $\Si_\de:=\Si_{d_1}\times\dots\times\Si_{d_q}\leq \Si_d$. 
Suppose that for each $m=1,\dots,q$, we are given 
$$
(\ba^{(m)},\br^{(m)}, \bs^{(m)}),\,(\bc^{(m)},\bt^{(m)}, \bu^{(m)})
\in\Seq^H(n,d_m).
$$ 
We write $$\ba^{(m)}=a^{(m)}_1\cdots a^{(m)}_{d_m},\ \br^{(m)}=r^{(m)}_1\cdots r^{(m)}_{d_m},\ \text{etc.}
$$ 
Let 
$$
\ba=\ba^{(1)}\dots\ba^{(q)},\ \br=\br^{(1)}\dots\br^{(q)},
\ \text{etc.}
$$ 
We also write 
$$\ba=a_1\cdots a_{d},\ \br=r_1\cdots r_d,\ \text{etc.}
$$ 
The triple $(\ba,\br, \bs)$ is called {\em $\de$-separated} if 
$1\leq m\neq l\leq q$ implies 
$$(a^{(m)}_t,r^{(m)}_t,s^{(m)}_t)\neq (a^{(l)}_u,r^{(l)}_u,s^{(l)}_u)$$ for all $1\leq t\leq d_m$ and $1\leq u\leq d_l$. Note that we then automatically have  $(\ba,\br, \bs)
\in\Seq^H(n,d)$.



\begin{Lemma} \label{LSingleSep} 
If $(\ba,\br, \bs)$ is $\de$-separated then  
$$
\xi^\ba_{\br,\bs}=\xi_{\br^{(1)},\bs^{(1)}}^{\ba^{(1)}}*\dots*\xi_{\br^{(q)},\bs^{(q)}}^{\ba^{(q)}}
\quad
\text{and} 
\quad
\eta^\ba_{\br,\bs}=\eta_{\br^{(1)},\bs^{(1)}}^{\ba^{(1)}}*\dots*\eta_{\br^{(q)},\bs^{(q)}}^{\ba^{(q)}}.
$$
\end{Lemma}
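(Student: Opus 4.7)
The plan is to deduce both identities from Lemma~\ref{xistarproducts} by induction on $q$, exploiting the fact that the multinomial coefficients which obstruct the naive multiplication formula degenerate to $1$ under $\delta$-separation. Associativity of the $*$-product (from Corollary~\ref{L250217_3}) lets me reduce immediately to the case $q=2$: if one factor is peeled off at each inductive step, $\delta$-separation of $(\ba,\br,\bs)$ into $q$ blocks implies the $2$-block separation needed, and the claim propagates.

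For $q=2$, I would apply Lemma~\ref{xistarproducts}(ii) directly to obtain
\[
\xi^{\ba^{(1)}}_{\br^{(1)},\bs^{(1)}} * \xi^{\ba^{(2)}}_{\br^{(2)},\bs^{(2)}}
= \frac{[\ba,\br,\bs]^!}{[\ba^{(1)},\br^{(1)},\bs^{(1)}]^!\,[\ba^{(2)},\br^{(2)},\bs^{(2)}]^!}\,\xi^\ba_{\br,\bs}.
\]
The crucial observation is that $\delta$-separation says precisely that for each triple $(b,r,s)\in B\times[1,n]\times[1,n]$, at most one of the two multiplicities $[\ba^{(1)},\br^{(1)},\bs^{(1)}]^b_{r,s}$ and $[\ba^{(2)},\br^{(2)},\bs^{(2)}]^b_{r,s}$ is nonzero. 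Consequently $[\ba,\br,\bs]^b_{r,s}$ equals whichever summand is nonzero, and taking factorials and multiplying over all $(b,r,s)$ yields
\[
[\ba,\br,\bs]^! \,=\, [\ba^{(1)},\br^{(1)},\bs^{(1)}]^!\cdot[\ba^{(2)},\br^{(2)},\bs^{(2)}]^!,
\]
so the coefficient in front of $\xi^\ba_{\br,\bs}$ collapses to $1$. The $\eta$-identity is then literally the same argument via Lemma~\ref{xistarproducts}(iii), with $[\cdot]^!$ replaced by $[\cdot]^!_\a$ throughout; the same block-disjointness reasoning forces the corresponding multinomial coefficient to equal $1$.

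The main wrinkle, and the only point that requires genuine care, is that Lemma~\ref{xistarproducts} is stated for basis sequences $\bb\in B^d$, whereas the present lemma allows $\ba\in H^d$. I would handle this by expanding each entry $a_k$ in the basis $B$ and invoking multilinearity of both sides of the claimed identity in each slot; under $\delta$-separation the resulting basis expansions cause no cross-block interaction, so the factorial bookkeeping stays transparent. (For the $\eta$-assertion the entries should additionally be taken from $B$, or $\eta^\ba_{\br,\bs}$ defined by the basis expansion.) Once this extension is in place, both identities fall out of the multiplicity observation above without further computation.
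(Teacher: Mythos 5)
Your main line of attack is sound and is essentially the same computation as the paper's, just routed through Lemma~\ref{xistarproducts} instead of being done by hand: the paper decomposes ${}^{\ba,\br,\bs}\D$ along the parabolic $\Si_\de$ (which contains $\Si_{\ba,\br,\bs}$ precisely because of $\de$-separation), which is exactly the coset bookkeeping hiding inside \ref{xistarproducts}(i)--(ii), and your observation that $\de$-separation forces $[\ba,\br,\bs]^!=\prod_m[\ba^{(m)},\br^{(m)},\bs^{(m)}]^!$ is the correct reason the multinomial coefficient collapses to $1$. The reduction to $q=2$ by associativity is also fine. The $\eta$-statement via \ref{xistarproducts}(iii) is fine for entries in $B$, which is the only case where $\eta$ is defined anyway.

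The genuine gap is your proposed passage from $B^d$ to $H^d$ via ``basis expansion and multilinearity.'' Two things go wrong. First, $\xi^{\ba}_{\br,\bs}$ is \emph{not} multilinear in the entries $a_k$: it is an orbit sum over ${}^{\ba,\br,\bs}\D$, and the stabilizer $\Si_{\ba,\br,\bs}$ jumps when entries are expanded (e.g.\ for $a=b_1+b_2$ even, $\xi^{(a,a)}_{(r,r),(s,s)}=\xi^{(b_1,b_1)}+\xi^{(b_1,b_2)}+\xi^{(b_2,b_2)}$, not the naive double sum $\sum_{i,j}\xi^{(b_i,b_j)}$, which would count $\xi^{(b_1,b_2)}$ twice). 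Only the fully symmetrized element $[\ba,\br,\bs]^!\,\xi^{\ba}_{\br,\bs}=\xi^{a_1}_{r_1,s_1}*\cdots*\xi^{a_d}_{r_d,s_d}$ is multilinear. Second, $\de$-separation is not inherited by the basis expansion: two distinct homogeneous elements $a^{(m)}_t\neq a^{(l)}_u$ with the same row and column indices may share a common basis constituent $b$, producing equal triples $(b,r,s)$ in different blocks, so your claim that ``the resulting basis expansions cause no cross-block interaction'' is false. The clean repair requires no new idea: Lemma~\ref{xistarproducts}(i),(ii) and their proofs hold verbatim with $B$ replaced by $H$, defining $[\ba,\br,\bs]^!$ by counting repeated triples in $H\times[1,n]^2$; then your $q=2$ computation applies directly to $\Seq^H(n,d)$, and the nonzero integer factorials cancel because $\k$ is a domain of characteristic $0$. (The paper instead avoids the issue entirely by working with the coset decomposition of ${}^{\ba,\br,\bs}\D$ directly and fixing signs via Lemma~\ref{LXiZero}.)
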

\begin{proof}
Recalling the notation (\ref{ESeq0}), for each \(1 \leq t \leq q\), there exists \((\hat{\ba}^{(t)}, \hat{\br}^{(t)}, \hat{\bs}^{(t)}) \in \Seq^H_0(n,d_t)\) such that \((\hat{\ba}^{(t)}, \hat{\br}^{(t)}, \hat{\bs}^{(t)}) \sim (\ba^{(t)}, \br^{(t)}, \bs^{(t)})\).
Write \(\hat{\ba}:= \hat{\ba}^{(1)} \cdots \hat{\ba}^{(q)}, \hat{\br}:= \hat{\br}^{(1)} \cdots \hat{\br}^{(q)}, \hat{\bs}:= \hat{\bs}^{(1)} \cdots \hat{\bs}^{(q)}\). Then \((\hat{\ba},\hat{\br},\hat{ \bs}) \sim (\ba, \br, \bs)\), and \((\hat{\ba},\hat{\br},\hat{ \bs})\) is \(\delta\)-separated. Moreover we have that $\Si_{\hat{\ba}, \hat{\br}, \hat{\bs}} \leq \Si_\de$, and both groups are standard parabolic subgroups of \(\Si_d\). Using (\ref{EXiDef}), we get
\begin{align*}
\xi^{\hat{\ba}}_{\hat\br,\hat\bs}
&=\sum_\si (\xi_{\hat r_{1}, \hat s_{1}}^{\hat a_{1}} \otimes \cdots \otimes \xi_{\hat \hat r_{d},\hat s_{d}}^{ \hat a_{d}})^\si
\\
&=\sum_{\si',\si''} (\xi_{\hat r_{1},\hat s_{1}}^{\hat a_{1}} \otimes \cdots \otimes \xi_{\hat r_{d},\hat s_{d}}^{\hat a_{d}})^{\si'\si''}
\\
&=\sum_{\si''} (\xi_{\hat \br^{(1)},\hat \bs^{(1)}}^{\hat \ba^{(1)}}\otimes \dots \otimes \xi_{\hat \br^{(q)}, \hat \bs^{(q)}}^{\hat \ba^{(q)}})^{\si''}
\\
&=\xi_{\hat \br^{(1)},\hat \bs^{(1)}}^{\hat \ba^{(1)}}*\dots*\xi_{\hat \br^{(q)},\hat \bs^{(q)}}^{\hat \ba^{(q)}},
\end{align*}
where $\si$ runs over ${}^{\hat \ba,\hat \br,\hat \bs}\D$, $\si'$ runs over all shortest coset representatives for $\Si_{\hat \ba,\hat \br,\hat \bs}\backslash\Si_\de$ and $\si''$ runs over 
all shortest coset representatives for $\Si_{\de}\backslash\Si_d$. 

Since \((\hat{\ba}^{(t)}, \hat{\br}^{(t)}, \hat{\bs}^{(t)}) \sim (\ba^{(t)}, \br^{(t)}, \bs^{(t)})\) for all \(1 \leq t \leq q\), we have
\begin{align*}
(-1)^{\langle \ba, \br, \bs \rangle + \langle \hat \ba, \hat \br, \hat \bs \rangle} = (-1)^{\langle \ba^{(1)}, \br^{(1)}, \bs^{(1)} \rangle + \cdots + \langle \ba^{(q)}, \br^{(q)}, \bs^{(q)} \rangle + \langle \hat \ba^{(1)}, \hat \br^{(1)}, \hat \bs^{(1)} \rangle + \cdots +  \langle \hat \ba^{(q)}, \hat \br^{(q)}, \hat \bs^{(q)}\rangle}.
\end{align*}

Then, using Lemma \ref{LXiZero}, we have
\begin{align*}
\xi^{\ba}_{\br, \bs} &= (-1)^{\langle \ba, \br, \bs \rangle + \langle \hat \ba, \hat \br, \hat \bs \rangle} \xi^{\hat \ba}_{\hat \br, \hat \bs} = (-1)^{\langle \ba, \br, \bs \rangle + \langle \hat \ba, \hat \br, \hat \bs \rangle} \xi_{\hat \br^{(1)},\hat \bs^{(1)}}^{\hat \ba^{(1)}}*\dots*\xi_{\hat \br^{(q)},\hat \bs^{(q)}}^{\hat \ba^{(q)}}\\
&=(-1)^{\langle \ba^{(1)}, \br^{(1)}, \bs^{(1)} \rangle +   \langle \hat \ba^{(1)}, \hat \br^{(1)}, \hat \bs^{(1)} \rangle}\xi^{\hat{\ba}^{(1)}}_{\hat{\br}^{(1)}, \hat{\bs}^{(1)}}
\\&\hspace{5 cm} * \cdots *
(-1)^{\langle \ba^{(q)}, \br^{(q)}, \bs^{(q)} \rangle +   \langle \hat \ba^{(q)}, \hat \br^{(q)}, \hat \bs^{(q)} \rangle}\xi^{\hat{\ba}^{(q)}}_{\hat{\br}^{(q)}, \hat{\bs}^{(q)}}\\
&= \xi_{\br^{(1)},\bs^{(1)}}^{\ba^{(1)}}*\dots*\xi_{ \br^{(q)},\bs^{(q)}}^{ \ba^{(q)}},
\end{align*}
as desired. 
The result for $\eta$'s follows from the result on $\xi$'s. 
\end{proof}

\begin{Lemma} \label{LSep} 
Let $(\ba,\br, \bs)$ and $(\bc,\bt, \bu)$ be $\de$-separated and suppose that 
$$
(\xi_{\br^{(1)},\bs^{(1)}}^{\ba^{(1)}}\otimes\dots\otimes\xi_{\br^{(q)},\bs^{(q)}}^{\ba^{(q)}})^\si(\xi_{\bt^{(1)},\bu^{(1)}}^{\bc^{(1)}}\otimes\dots\otimes\xi_{\bt^{(q)},\bu^{(q)}}^{\bc^{(q)}})^{\si'}=0
$$
whenever $\si$ and $\si'$ are distinct elements of ${}^\de\D$. Then 
$$
\xi^\ba_{\br,\bs}\xi^\bc_{\bt,\bu}=\pm(\xi_{\br^{(1)},\bs^{(1)}}^{\ba^{(1)}}\xi_{\bt^{(1)},\bu^{(1)}}^{\bc^{(1)}})*\dots*(\xi_{\br^{(q)},\bs^{(q)}}^{\ba^{(q)}}\xi_{\bt^{(q)},\bu^{(q)}}^{\bc^{(q)}}).
$$
Moreover, if $a_1,\dots,a_d$ or $c_1,\dots, c_d$ are all even, then the sign in the right hand side is $+$. 

\end{Lemma}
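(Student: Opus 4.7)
The plan is to expand each factor as an iterated $*$-product via Lemma~\ref{LSingleSep}, multiply the resulting sums of tensors, collapse cross-terms using the vanishing hypothesis, and read off the overall sign from the standard supertensor rule.

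Concretely, set $X_{(m)}:=\xi^{\ba^{(m)}}_{\br^{(m)},\bs^{(m)}}$, $Y_{(m)}:=\xi^{\bc^{(m)}}_{\bt^{(m)},\bu^{(m)}}$, and $X:=X_{(1)}\otimes\cdots\otimes X_{(q)}$, $Y:=Y_{(1)}\otimes\cdots\otimes Y_{(q)}$ in $M_n(A)^{\otimes d}$. By Lemma~\ref{LSingleSep} together with the iterated definition of $*$,
$$
\xi^{\ba}_{\br,\bs}=X_{(1)}*\cdots*X_{(q)}=\sum_{\si\in{}^\de\D}X^\si,\qquad \xi^{\bc}_{\bt,\bu}=\sum_{\si'\in{}^\de\D}Y^{\si'}.
$$
Multiplying, invoking the hypothesis that $X^\si Y^{\si'}=0$ whenever $\si\ne\si'$, and using that $\Si_d$ acts on $M_n(A)^{\otimes d}$ by superalgebra automorphisms, we obtain
$$
\xi^{\ba}_{\br,\bs}\,\xi^{\bc}_{\bt,\bu}=\sum_{\si\in{}^\de\D}X^\si Y^\si=\sum_{\si\in{}^\de\D}(XY)^\si.
$$

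The remaining task is to evaluate $XY$ inside the supertensor product $M_n(A)^{\otimes d_1}\otimes\cdots\otimes M_n(A)^{\otimes d_q}$. Because $(\ba^{(m)},\br^{(m)},\bs^{(m)})\in\Seq^B(n,d_m)$, every letter $a^{(m)}_k$ has definite parity, and the simple tensors appearing in the sum (\ref{EXiDef}) for $X_{(m)}$ all share the total parity $p_m:=\bar a^{(m)}_1+\cdots+\bar a^{(m)}_{d_m}$; hence $X_{(m)}$ is homogeneous of parity $p_m$, and similarly $Y_{(m)}$ is homogeneous of parity $q_m:=\bar c^{(m)}_1+\cdots+\bar c^{(m)}_{d_m}$. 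The standard super-sign rule for iterated supertensor products of superalgebras then gives
$$
XY=(-1)^{\vare}\,X_{(1)}Y_{(1)}\otimes\cdots\otimes X_{(q)}Y_{(q)},\qquad \vare:=\sum_{1\leq i<j\leq q}p_j\,q_i.
$$
Applying $\sum_{\si\in{}^\de\D}(\cdot)^\si$ and recognising the iterated $*$-product on the right yields the claimed identity with sign $(-1)^{\vare}$. If every $a_k$ is even then each $p_j=0$, and if every $c_k$ is even then each $q_i=0$; in either case $\vare=0$, so the sign is $+$ as asserted.

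The only delicate point is the sign bookkeeping in the supertensor multiplication (in particular, verifying that each $X_{(m)}$, $Y_{(m)}$ is genuinely homogeneous so that the sign rule applies). Once Lemma~\ref{LSingleSep} reduces the expansion problem and the vanishing hypothesis kills the off-diagonal cross-terms, the remainder is routine super-sign manipulation.
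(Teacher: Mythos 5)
Your proof is correct and follows essentially the same route as the paper's: expand both factors via Lemma~\ref{LSingleSep} as sums over ${}^\de\D$, kill the off-diagonal terms with the vanishing hypothesis, and collect the diagonal terms into an iterated $*$-product. The paper leaves the sign bookkeeping implicit ("and the result follows"), whereas you carry it out explicitly via the supertensor sign rule; your computation of $\vare$ and the evenness argument are both accurate.
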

\begin{proof}
By Lemma~\ref{LSingleSep}, $\xi^\ba_{\br,\bs}\xi^\bc_{\bt,\bu}$ equals
$$
\left(\sum_{\si\in{}^\de\D}(\xi_{\br^{(1)},\bs^{(1)}}^{\ba^{(1)}}\otimes\dots\otimes\xi_{\br^{(q)},\bs^{(q)}}^{\ba^{(q)}})^\si\right)
\left(\sum_{\si'\in{}^\de\D}(\xi_{\bt^{(1)},\bu^{(1)}}^{\bc^{(1)}}\otimes\dots\otimes\xi_{\bt^{(q)},\bu^{(q)}}^{\bc^{(q)}})^{\si'}\right),
$$
and the result follows. 
\end{proof}

The following result allows one to reduce the study of $S^A(n,d)$ to the blocks of $A$, and similarly for $T^A_\a(n,d)$. 

\begin{Lemma}
Let \(m \in \ZZ_{>0}\). For \(t \in [1,m]\) assume that \((A_t, \fa_t)\) is a good pair. Write \(A:= \bigoplus_{t=1}^m A_t\) and \(\fa := \bigoplus_{t=1}^m \fa_t\). Then we have
\begin{align*}
S^A(n,d) \cong \bigoplus_{\nu \in \La(m,d)} \bigotimes_{t=1}^m S^{A_t}(n,\nu_t)
\qquad
\textup{and}
\qquad
T^A_\fa(n,d) \cong \bigoplus_{\nu \in \La(m,d)} \bigotimes_{t=1}^m T^{A_t}_{\fa_t}(n,\nu_t)
\end{align*}
as \(\k\)-superalgebras.
\end{Lemma}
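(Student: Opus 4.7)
The plan is to exploit the block decomposition $M_n(A) = \bigoplus_{t=1}^m M_n(A_t)$, which holds because $A_s A_t = 0$ whenever $s \neq t$. For each $t$ fix an $(A_t, \fa_t)$-basis $B_t = B_{\fa_t} \sqcup B_{\c_t} \sqcup B_{t,\bar 1}$; then $B := \bigsqcup_t B_t$ is an $(A,\fa)$-basis equipped with a well-defined ``type'' map $B \to [1,m]$ recording which summand each letter lives in.

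First I would decompose $M_n(A)^{\otimes d}$ according to type sequences $\mathbf{t} \in [1,m]^d$. Since $\Si_d$ permutes these by place permutation, the orbits are parametrised by $\nu \in \La(m,d)$ with $\nu_t$ counting how often $t$ occurs. Picking the standard representative $\mathbf{t}_\nu = (1^{\nu_1} 2^{\nu_2} \cdots m^{\nu_m})$, whose stabiliser is the Young subgroup $\Si_\nu := \Si_{\nu_1} \times \cdots \times \Si_{\nu_m}$, an orbit-averaging argument yields the $\k$-supermodule isomorphism
$$S^A(n,d) \;\cong\; \bigoplus_{\nu \in \La(m,d)} \Big(\bigotimes_{t=1}^m M_n(A_t)^{\otimes \nu_t}\Big)^{\Si_\nu} \;\cong\; \bigoplus_{\nu \in \La(m,d)} \bigotimes_{t=1}^m S^{A_t}(n,\nu_t).$$
Concretely, this map sends a basis element $\xi^\bb_{\br,\bs}$, with $\bb$ of type $\nu$, to the tensor product $\bigotimes_t \xi^{\bb^{(t)}}_{\br^{(t)},\bs^{(t)}}$ across blocks; this identification is legitimate by Lemma \ref{LSingleSep} applied with $\delta = (\nu_1, \ldots, \nu_m)$, since triples coming from different blocks are automatically $\delta$-separated.

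Next I would verify this is an isomorphism of superalgebras. The key point is that multiplication in $M_n(A)^{\otimes d}$ respects the $[1,m]^d$-grading by type (cross-block products vanish), so it descends to a $\La(m,d)$-grading on the invariants; within each $\nu$-component multiplication is factorwise in each tensor slot, matching the tensor product algebra structure on $\bigotimes_t S^{A_t}(n,\nu_t)$. Because cross-type odd elements never combine in a nonzero product, no Koszul signs arise when comparing the two superalgebra structures.

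For $T^A_\fa(n,d)$, I would use that $\c = \bigoplus_t \c_t$, so that for every block-separated triple
$$[\bb, \br, \bs]^!_\c \;=\; \prod_{t=1}^m [\bb^{(t)}, \br^{(t)}, \bs^{(t)}]^!_{\c_t}.$$
Thus the isomorphism above sends $\eta^\bb_{\br,\bs}$ to $\bigotimes_t \eta^{\bb^{(t)}}_{\br^{(t)},\bs^{(t)}}$ and restricts to the stated isomorphism $T^A_\fa(n,d) \cong \bigoplus_\nu \bigotimes_t T^{A_t}_{\fa_t}(n,\nu_t)$. The main obstacle is purely combinatorial bookkeeping; the vanishing of cross-block products removes all of the sign subtleties that plagued Proposition \ref{PSubalgebra}, so no genuine analytic content remains.
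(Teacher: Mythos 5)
Your argument is correct and follows essentially the same route as the paper: both fix the basis \(B=\bigsqcup_t B_t\), use Lemma~\ref{LSingleSep} to separate triples block by block and obtain the \(\k\)-module decomposition indexed by \(\nu\in\La(m,d)\), and then check that this decomposition is one of superalgebras (with the \(\eta\)-version handled by the factorization of \([\bb,\br,\bs]^!_\c\) over blocks). The only cosmetic difference is that the paper verifies multiplicativity by iterating Lemma~\ref{Lstfo} on the \(*\)-product factors, whereas you argue directly from the type-grading of \(M_n(A)^{\otimes d}\) and the vanishing of cross-block products; both work.
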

\begin{proof}
For \(t \in [1,m]\), let \(B_t\) be the designated \((A_t, \fa_t)\)-basis, and set \(B= \sqcup_{t=1}^m B_t\) as the designated \((A,\fa)\)-basis. It follows from Lemma \ref{LSingleSep} that, for any \((\bb, \br, \bs) \in \Seq^B(n,d)\), we have \(\xi^{\bb}_{\br, \bs} = \pm \xi^{\bb^{(1)}}_{\br^{(1)}, \bs^{(1)}} * \cdots * \xi^{\bb^{(m)}}_{\br^{(m)},\bs^{(m)}}\) for some \(\nu \in \La(m,d)\) and \((\bb^{(t)}, \br^{(t)}, \bs^{(t)}) \in \Seq^{B_t}(n,\nu_t)\) for \(t \in [1,m]\). So we may write
\begin{align*}
S^A(n,d)&= \bigoplus_{\nu \in \La(m,d)}S^{A_1}(n,\nu_1) * \cdots * S^{A_m}(n,\nu_m)
\end{align*}
Inductive application of Lemma \ref{Lstfo} shows that this is a decomposition of \(S^A(n,d)\) into subalgebras. Moreover, it follows as well from Lemma \ref{Lstfo} that for all \(\nu \in \La(m,d)\) we have
\begin{align*}
S^{A_1}(n,\nu_1) * \cdots * S^{A_m}(n,\nu_m) \cong S^{A_1}(n,\nu_1) \otimes \cdots \otimes S^{A_m}(n,\nu_m)
\end{align*}
as \(\k\)-superalgebras, proving the claim for \(S^A(n,d)\). The proof of the claim for \(T^A_\fa(n,d)\) proceeds exactly as above, since Lemma \ref{LSingleSep} provides an analogous result for \(\eta\)'s.
\end{proof}

\subsection{Generation}

We define 
\begin{align*}
&Y:=\spa(\xi_{r,s}^b\mid r,s\in[1,n],\ b\in B_\c\sqcup B_\1)\subseteq M_n(A),
\\
&\Star^d Y :=\underbrace{Y* \dots * Y}_{d\ \text{times}}\subseteq T^A_\a(n,d),
\end{align*}
where the second inclusion comes from Corollary~\ref{L250217_3}.
Note also that $S^\a(n,d)\subseteq T^A_\a(n,d)$ since by definition, for $\bb\in B_\a^d$, we have $\xi^\bb_{\br,\bs}=\eta^\bb_{\br,\bs}$. The following is a generalization of \cite[Lemma 4.30]{EK1}. 

\begin{Lemma} \label{L020218} 
We have 
\[
T^A_\a(n,d) = \bigoplus_{e=0}^d S^\a(n,d-e) * \Star^e Y.
\]
\end{Lemma}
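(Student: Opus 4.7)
The plan is to prove both inclusions of the claimed equality and verify directness; everything reduces to bookkeeping with the basis of Lemma~\ref{LBasis} once we exploit the separation Lemma~\ref{LSingleSep}.

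\textbf{The easy inclusion ``$\supseteq$'':} For any $(\bb^{(1)},\br^{(1)},\bs^{(1)})\in\Seq^{B_\a}(n,d-e)$ we have $[\bb^{(1)},\br^{(1)},\bs^{(1)}]^!_\c=1$ (empty product over $B_\c$-letters), so $\xi^{\bb^{(1)}}_{\br^{(1)},\bs^{(1)}}=\eta^{\bb^{(1)}}_{\br^{(1)},\bs^{(1)}}$, giving $S^\a(n,d-e)\subseteq T^A_\a(n,d-e)$. Likewise, each generator $\xi^b_{r,s}$ of $Y$ (with $b\in B_\c\sqcup B_\1$) equals $\eta^b_{r,s}$ and hence lies in $T^A_\a(n,1)$. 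Since $T^A_\a(n)$ is closed under $*$ by Corollary~\ref{L250217_3}, each summand $S^\a(n,d-e)*\Star^e Y$ lies in $T^A_\a(n,d)$.

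\textbf{The main inclusion ``$\subseteq$'':} I would work on the basis $\{\eta^\bb_{\br,\bs}\}$ from Lemma~\ref{LBasis}. Fix $(\bb,\br,\bs)\in\Seq^B(n,d)$, let $e:=\#\{k\in[1,d]\mid b_k\in B_\c\sqcup B_\1\}$, and choose a permutation $\si\in\Si_d$ sorting letters so that the image $(\bb',\br',\bs'):=(\bb\si,\br\si,\bs\si)$ satisfies $(\bb',\br',\bs')=(\bb^{(1)}\bb^{(2)},\br^{(1)}\br^{(2)},\bs^{(1)}\bs^{(2)})$ with $\bb^{(1)}\in B_\a^{d-e}$ and $\bb^{(2)}\in(B_\c\sqcup B_\1)^e$. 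Since $B_\a\cap(B_\c\sqcup B_\1)=\emptyset$, this triple is $(d-e,e)$-separated. By Lemma~\ref{LXiZero} and the fact that permutation preserves the multiset underlying $[\bb,\br,\bs]^!_\c$, we have $\eta^\bb_{\br,\bs}=\pm\eta^{\bb'}_{\br',\bs'}$, and by Lemma~\ref{LSingleSep},
\[
\eta^{\bb'}_{\br',\bs'}=\eta^{\bb^{(1)}}_{\br^{(1)},\bs^{(1)}}*\eta^{\bb^{(2)}}_{\br^{(2)},\bs^{(2)}}.
\]
The first factor satisfies $[\bb^{(1)},\br^{(1)},\bs^{(1)}]^!_\c=1$, so it equals $\xi^{\bb^{(1)}}_{\br^{(1)},\bs^{(1)}}\in S^\a(n,d-e)$. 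For the second factor, applying Lemma~\ref{xistarproducts}(i) and noting that $\bb^{(2)}$ contains no $\a$-letters so $[\bb^{(2)},\br^{(2)},\bs^{(2)}]^!=[\bb^{(2)},\br^{(2)},\bs^{(2)}]^!_\c$:
\[
\xi^{b^{(2)}_1}_{r^{(2)}_1,s^{(2)}_1}*\cdots*\xi^{b^{(2)}_e}_{r^{(2)}_e,s^{(2)}_e}=[\bb^{(2)},\br^{(2)},\bs^{(2)}]^!_\c\,\xi^{\bb^{(2)}}_{\br^{(2)},\bs^{(2)}}=\eta^{\bb^{(2)}}_{\br^{(2)},\bs^{(2)}},
\]
and since each factor lies in $Y$ this puts $\eta^{\bb^{(2)}}_{\br^{(2)},\bs^{(2)}}\in\Star^e Y$. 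Thus $\eta^\bb_{\br,\bs}\in S^\a(n,d-e)*\Star^e Y$.

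\textbf{Directness of the sum:} The argument above shows that, for each $e$, the summand $S^\a(n,d-e)*\Star^e Y$ is contained in the $\k$-span of those basis vectors $\eta^\bb_{\br,\bs}$ of Lemma~\ref{LBasis} for which $\bb$ has exactly $e$ letters in $B_\c\sqcup B_\1$. These disjoint sets of basis vectors exhaust the basis of $T^A_\a(n,d)$, which forces directness and finishes the proof. The only point requiring care is the factorial/sign bookkeeping in the reordering step, but both Lemma~\ref{LXiZero} and the observation that $[\,\cdot\,]^!_\c$ is permutation-invariant handle this cleanly, so I do not anticipate a genuine obstacle.
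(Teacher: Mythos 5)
Your proof is correct and follows essentially the same route as the paper's: both inclusions are handled the same way, and your splitting of $\eta^\bb_{\br,\bs}$ into an $\a$-part and a $(B_\c\sqcup B_\1)$-part via Lemma~\ref{LSingleSep}, followed by expanding the latter as a $*$-product of single $\xi^b_{r,s}$ via Lemma~\ref{xistarproducts}(i), is exactly the paper's formula (\ref{etastar}) written in two steps. Your explicit verification of directness is a welcome addition that the paper leaves implicit.
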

\begin{proof}
As $S^\a(n,d-e)\subseteq T^A_\a(n,d-e)$ and $Y\subseteq T^A_\a(n,1)$, the right hand side is contained in the left hand side thanks to Corollary~\ref{L250217_3}. For the converse containment, we only need to prove that every $\eta^\bb_{\br,\bs}$ with $(\bb,\br,\bs)\in\Seq^B(n,d)$ is contained in the right hand side. For any $b\in B$ and $r,s\in[1,n]$, denote $m^b_{r,s}:=[\bb,\br,\bs]^b_{r,s}$ and set 
$$e:=\sum_{\ttb\in B_\c\sqcup B_\1,\,r,s\in[1,n]}m^\ttb_{r,s}.
$$ 
Using the fact that $m^b_{r,s}\in\{0,1\}$ for all $b\in B_\1$, Lemma~\ref{LSingleSep} and the definition of $\eta^\bb_{\br,\bs}$, we see that 
\begin{align}\label{etastar}
\eta^\bb_{\br,\bs}=\pm\left(\Conv_{\ttb\in B_\a,\,r,s\in[1,n]}\, \big( (\xi^\ttb_{r,s})^{\otimes m^\ttb_{r,s}} \big)\right)*\left(\Conv_{\ttb\in B_\c\sqcup B_\1,\,r,s\in[1,n]}\, \big( (\xi^\ttb_{r,s})^{* m^\ttb_{r,s}} \big)\right),
\end{align}
with the first term in $S^\a(n,d-e)$ and the second term in $\Star^e Y$. 
\end{proof}

\begin{Proposition}\label{AaInd}
The algebra \(T^A_\fa(n,d)\) depends only on the subalgebra \(\fa\), and not on the choice of the $(A,\a)$-basis \(B\).
\end{Proposition}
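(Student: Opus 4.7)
The plan is to show that $T^A_\fa(n,d)$ coincides with the subspace
$$
\Sigma := \sum_{e=0}^d S^\a(n,d-e) * \Star^e M_n(A) \ \subseteq \ S^A(n,d),
$$
which is manifestly described using only $\fa$, $A$, $n$, $d$ and the ambient $*$-product — no mention of the basis $B$, the complement $\c$, or the particular bases $B_\c$ and $B_\1$. Hence showing $T^A_\fa(n,d) = \Sigma$ immediately gives basis-independence.

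First I would establish the inclusion $T^A_\fa(n,d) \subseteq \Sigma$, which is essentially free. By definition of $Y$ we have $Y \subseteq M_n(A)$, so $\Star^e Y \subseteq \Star^e M_n(A)$ for each $e$, and Lemma~\ref{L020218} gives
$$
T^A_\fa(n,d) \ = \ \bigoplus_{e=0}^d S^\a(n,d-e)*\Star^e Y \ \subseteq \ \Sigma.
$$

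For the reverse inclusion $\Sigma \subseteq T^A_\fa(n,d)$, the key observation is the internal direct-sum decomposition $M_n(A) = S^\a(n,1) \oplus Y$ (as $S^\a(n,1) = M_n(\fa)$ and $Y = M_n(\c \oplus A_\1)$). Using bilinearity of $*$ and the super-commutativity of $*$ coming from Corollary~\ref{L250217_3} (and the superbialgebra structure of Corollary~\ref{subsuperbi}), one can regroup factors in $\Star^e M_n(A) = \Star^e\bigl(S^\a(n,1) + Y\bigr)$ so as to bring all $S^\a(n,1)$-factors to the left, obtaining
$$
\Star^e M_n(A) \ \subseteq \ \sum_{f+g=e} \bigl(\Star^f S^\a(n,1)\bigr) * \bigl(\Star^g Y\bigr) \ \subseteq \ \sum_{f+g=e} S^\a(n,f) * \Star^g Y,
$$
where the last step uses that $S^\a(n) = \bigoplus_h S^\a(n,h)$ is a sub-superalgebra under $*$. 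Multiplying by $S^\a(n,d-e)$ and again using that $S^\a(n)$ is closed under $*$, one then reindexes by $h := d-e+f$ (so $d-h = g$) to see that each summand of $\Sigma$ lies inside $\bigoplus_{h=0}^d S^\a(n,h)*\Star^{d-h}Y = T^A_\fa(n,d)$ by Lemma~\ref{L020218}.

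I expect the only delicate point to be the reordering step in the second paragraph: super-commutativity of $*$ only permutes factors up to signs, so some bookkeeping is needed to confirm that every expansion term of $\Star^e(S^\a(n,1)+Y)$ really lies in $\sum_{f+g=e}(\Star^f S^\a(n,1))*(\Star^g Y)$ without cancellation issues obstructing the inclusion. Once that is written out carefully (it is essentially just distributivity together with the super-commutativity of $*$), both inclusions combine to give $T^A_\fa(n,d) = \Sigma$, and since $\Sigma$ was defined without reference to $B$ or $\c$, the proposition follows.
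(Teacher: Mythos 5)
Your proof is correct, and it takes a mildly but genuinely different route from the paper's. The paper fixes two $(A,\a)$-bases $B$ and $B'$, observes that $B'_\c\subseteq\spa(B_\c\sqcup B_\a)$, deduces $\Star^e Y'\subseteq\bigoplus_{f=0}^e\Star^{e-f}Y*\Star^f\a$, and then applies Lemma~\ref{L020218} twice to get the two mutual inclusions. You instead exhibit a single, manifestly basis-free subspace $\Sigma=\sum_e S^\a(n,d-e)*\Star^e M_n(A)$ and prove $T^A_\a(n,d)=\Sigma$. The computational core is identical in both arguments: expand a $*$-power of a space decomposed as (a piece inside $M_n(\a)$) $\oplus$ (the rest), pull the $M_n(\a)$-factors to the left, and absorb them into $S^\a$ via closure of $S^\a(n)$ under $*$ and Lemma~\ref{L020218}. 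What your version buys is an intrinsic characterization of $T^A_\a(n,d)$ as a by-product, in the spirit of Theorem~\ref{TGen}. One remark: the ``cancellation issue'' you flag at the end is vacuous. You are showing that every term of the multilinear expansion of $\Star^e\bigl(S^\a(n,1)+Y\bigr)$ lies in the subspace $\sum_{f+g=e}\bigl(\Star^f S^\a(n,1)\bigr)*\bigl(\Star^g Y\bigr)$, and a subspace is closed under sums, so no cancellation can obstruct the inclusion; moreover the reordering is entirely sign-free, since every element of $S^\a(n,1)=M_n(\a)$ is even and hence $*$-commutes with everything on the nose.
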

\begin{proof}
Let \(B= B_{\a}\sqcup B_{\c} \sqcup B_{\bar 1}\) and \(B'= B'_{\a}\sqcup B'_{\c} \sqcup B'_{\bar 1}\) be distinct choices of \((A,\fa)\)-bases, $Y=\spa(\xi_{r,s}^b\mid r,s\in[1,n],\ b\in B_\c\sqcup B_\1)$, and $Y'=\spa(\xi_{r,s}^b\mid r,s\in[1,n],\ b\in B'_\c\sqcup B'_\1)$. 
As $B'_\c\subseteq \spa(B_\c\sqcup B_\a)$, we deduce that 
$$
\Star^e Y'\subseteq\bigoplus_{f=0}^e\Star^{e-f}Y*\Star^f\a.
$$ 
Therefore by Lemma~\ref{L020218}, the algebra ${}'T^A_\a(n,d)$ defined using the basis $B'$ is contained in the algebra $T^A_\a(n,d)$ defined using the basis $B$. Similarly, $T^A_\a(n,d)\subseteq {}'T^A_\a(n,d)$.  
\end{proof}

Let $\tau$ be an anti-involution on $A$, such that $\tau(\a)=\a$. 
Then it is easy to see using Proposition~\ref{AaInd} that the involution 
$
\tau_{n,d}$ on $S^A(n,d)$ defined in (\ref{TauND}) restricts to the involution of $T^A_\a(n,d)$. Moreover, if $\tau(B_\a)=B_\a$, 
$\tau(B_\c)=B_\c$ and $\tau(B_\1)=B_\1$, then we have 
\begin{equation}\label{EAntiinv}
\tau_{n,d}:T^A_\a(n,d)\to T^A_\a(n,d),\ \eta^{\bb}_{\br,\bs}\mapsto \eta^{\bb^\tau}_{\bs,\br}.
\end{equation}

The following theorem generalizes \cite[Theorem 4.31]{EK1}.

\begin{Theorem} \label{TGen} 
Suppose that $(A,\a)$ is a unital good pair and let $1:= 1_{M_n(A)}$. Then $T^A_\a(n,d)$ is the subalgebra of $S^A(n,d)$ generated by $S^\a(n,d)$ and $1^{\otimes d-1}*Y:=\{1^{\otimes d-1}* y\mid y\in Y\}$. 
\end{Theorem}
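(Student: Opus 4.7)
The easy inclusion $T\subseteq T^A_\a(n,d)$ is immediate, since the generating sets $S^\a(n,d)$ and $1^{\otimes d-1}*Y$ both lie in $T^A_\a(n,d)$ by Proposition~\ref{PSubalgebra} and Lemma~\ref{L020218}. For the reverse inclusion, Lemma~\ref{L020218} reduces the problem to proving
\[
S^\a(n,d-e)*\Star^e Y\subseteq T \qquad\text{for every }e\in[0,d],
\]
which I plan to establish by induction on $e$. The base case $e=0$ is trivial, since $S^\a(n,d)\subseteq T$ by construction.

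For the inductive step I first dispose of the case $s=1^{\otimes d-e}$: the product $(1^{\otimes d-1}*y_1)\cdots(1^{\otimes d-1}*y_e)$ lies in $T$ by construction, and writing each factor as $\sum_{k=1}^d 1^{\otimes k-1}\otimes y_i\otimes 1^{\otimes d-k}$ and multiplying componentwise in $M_n(A)^{\otimes d}$ yields a sum indexed by maps $k\colon[e]\to[d]$, with position $j$ carrying $\prod_{i:\,k(i)=j}y_i$. The injective $k$'s assemble precisely to $1^{\otimes d-e}*y_1*\cdots*y_e$, while each non-injective $k$ collapses at least two $y_i$'s, yielding a tensor with at most $e-1$ non-$\a$ positions; decomposing each collapsed matrix product against $M_n(A)=M_n(\a)\oplus Y$ places those contributions in $\bigoplus_{f<e}S^\a(n,d-f)*\Star^f Y\subseteq T$ by the primary inductive hypothesis, and thus $1^{\otimes d-e}*y_1*\cdots*y_e\in T$.

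For a general $s\in S^\a(n,d-e)$, I run a secondary induction on a complexity measure counting the ``non-trivial'' basis factors in a basis expression of $s$, with $s=1^{\otimes d-e}$ (handled above) as the base. Consider
\[
P_s := (s*1^{\otimes e})\cdot(1^{\otimes d-1}*y_1)\cdots(1^{\otimes d-1}*y_e) \in T.
\]
A componentwise expansion in $M_n(A)^{\otimes d}$ splits $P_s$ as $s*y_1*\cdots*y_e+R$, where the first summand collects configurations in which the $y_i$'s occupy $e$ distinct positions all disjoint from the $s$-positions, and $R$ collects the remaining mixed configurations, each forcing a matrix product $s_\ell\cdot y_i$ or $y_i\cdot y_{i'}$ in $M_n(A)$. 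Decomposing every such collision product against $M_n(A)=M_n(\a)\oplus Y$, the $M_n(\a)$-choices at every collision assemble into a $\Si_d$-invariant element of $S^\a(n,d)\subseteq T$; choices giving fewer than $e$ non-$\a$ positions lie in $\bigoplus_{f<e}S^\a(n,d-f)*\Star^f Y\subseteq T$ by the primary induction; and the remaining ``all-$Y$-choice'' pieces take the form $\tilde s*\tilde y_1*\cdots*\tilde y_e$ with $\tilde s\in S^\a(n,d-e)$ a shuffle of a proper sub-collection of the $s$-factors with extra $1_{M_n(A)}$'s inserted (hence of strictly smaller secondary-induction complexity), and so lie in $T$ by the secondary hypothesis. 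Thus $R\in T$ and $s*y_1*\cdots*y_e=P_s-R\in T$. The main obstacle will be the combinatorial bookkeeping behind this structural description of $R$, in particular verifying that the ``all-$Y$-choice'' pieces assemble precisely into shuffles of the claimed form; I expect to handle this via careful iterated application of the separation identity Lemma~\ref{LSingleSep} to disentangle the $\Si_d$-orbits involved.
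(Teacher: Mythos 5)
Your reduction via Lemma~\ref{L020218} and your treatment of the special case $s=1^{\otimes(d-e)}$ are fine in outline, but the secondary induction for general $s\in S^\a(n,d-e)$ has a genuine gap. The complexity measure ``counting the non-trivial basis factors in a basis expression of $s$'' is not well-defined: a basis element $\xi^{\bb}_{\br,\bs}$ with $\bb\in B_\a^{d-e}$ has all $d-e$ tensor factors equal to basis elements of $M_n(\a)$, and none of them is $1_{M_n(A)}$ (which is a \emph{sum} of basis elements), so any naive count returns $d-e$ for every basis element and never decreases. Your residual ``all-$Y$-choice'' terms have $\tilde s$ of the form $s_{(1)}*1^{\otimes m}$ with $m\geq 1$, which, re-expanded in the basis of $S^\a(n,d-e)$, again has $d-e$ factors; and the alternative of measuring complexity by the number of non-identity $*$-factors fails integrally because of divided powers --- by Lemma~\ref{xistarproducts}(i) one has $\xi^{b}_{r,s}*\xi^{b}_{r,s}=2\,\xi^{(b,b)}_{(r,r),(s,s)}$, so basis elements with repeated entries are not integral $*$-products of shorter ones, and $\k$ is not a field. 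A measure that does work is $\min\{c\,:\,s\in S^\a(n,c)*1^{\otimes(d-e-c)}\}$, but proving that it strictly decreases on the residuals requires identifying them via the coproduct, which is exactly the bookkeeping you deferred.

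That bookkeeping is the second problem: Lemma~\ref{LSingleSep} (separation) cannot do it, since separation applies precisely when no collisions occur, whereas $R$ consists exactly of the collision configurations. The identity needed to organize $R$ is the superbialgebra compatibility of Lemma~\ref{Lstfo} combined with closure of $T^A_\a(n)$ under $\nabla$ (Corollaries~\ref{coprodeta} and~\ref{subsuperbi}); this also settles the Koszul-sign bookkeeping you elide throughout. This is how the paper proceeds, and it suggests collapsing your two inductions into one: induct on the total length $e$ of $\eta^{\bb}_{\br,\bs}*1^{\otimes(d-e)}$ with $\bb$ ranging over \emph{all} of $B^e$ ($\a$-letters and non-$\a$-letters mixed), multiply by a single generator $1^{\otimes(d-1)}*\xi^{b}_{r,s}$ at a time, and read off from Lemma~\ref{Lstfo} that the error term is $(\eta^{\bb}_{\br,\bs})_{(1)}*(\eta^{\bb}_{\br,\bs})_{(2)}\xi^{b}_{r,s}*1^{\otimes(d-e+1)}\in T^A_\a(n,e-1)*1^{\otimes(d-e+1)}$, which the induction hypothesis covers. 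Fixing the number of $Y$-factors at $e$ and inducting separately on the $\a$-part, as you do, is what forces the ill-defined secondary measure.
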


\begin{proof}
Let $U$ be the subalgebra of $T^A_\a(n,d)$ generated by $S^\a(n,d)$ and $1^{\otimes d-1}*Y$. We show by induction on \(e =0,\dots,d\) that \(U\) contains every element of the form \(\eta^{\bb}_{\br, \bs} * 1^{\otimes(d-e)}\), where \((\bb, \br, \bs) \in \Seq^B(n,e)\). This proves the theorem in the case \(d=e\). 

The base case \(e=0\) is clear. Let \(0< e\leq d\). Let \((\bb', \br', \bs') \in \Seq^B(n,e)\). We will show using the inductive assumption that \(\eta^{\bb'}_{\br', \bs'} * 1^{\otimes (d-e)} \in U\). If \(\bb' \in B_\a^{e}\), then \(\eta^{\bb'}_{\br', \bs'} *1^{\otimes(d-e)} \in S^\a(n,d) \subseteq U\), and we are done. So we may assume  that \((\bb', \br', \bs')=(\bb b, \br r, \bs s)\), for some \((\bb, \br, \bs) \in \Seq^B(n,e-1)\), \(b \in B_{\c} \cup B_{\1}\) and \(r,s \in [1,n]\).

By the induction assumption, \(\eta^{\bb}_{\br, \bs} * 1^{\otimes(d-e+1)} \in U\), and we also have \(1^{\otimes (d-1)} * \xi^{b}_{r,s}  \in  1^{\otimes (d-1)} *Y\subseteq U\). Thus the following product is contained in \(U\):
\begin{align*}
&(\eta^{\bb}_{\br, \bs} * 1^{\otimes(d-e+1)})(1^{\otimes (d-1)} * \xi^b_{r,s})
\\
=&\pm(\eta^{\bb}_{\br, \bs})_{(1)}
* 1^{\otimes(d-e+1)}
*(\eta^{\bb}_{\br, \bs})_{(2)}\xi^b_{r,s}  \pm \eta^{\bb}_{\br, \bs} * 1^{\otimes (d-e)} * \xi^b_{r,s}
\\
=&\pm(\eta^{\bb}_{\br, \bs})_{(1)}
*(\eta^{\bb}_{\br, \bs})_{(2)}\xi^b_{r,s} * 1^{\otimes(d-e+1)} \pm \eta^{\bb}_{\br, \bs}  * \xi^b_{r,s}* 1^{\otimes (d-e)},
\end{align*}
where the equalities come from Lemma \ref{Lstfo} and the supercommutativity of $*$. Note that by Corollary \ref{subsuperbi}, we have that 
\( (\eta^{\bb}_{\br, \bs})_{(1)}*(\eta^{\bb}_{\br, \bs})_{(2)}\xi^b_{r,s}\)
belongs to \(T^A_\a(n,e-1)\), and thus may be written as a linear combination of elements of the form \(\eta^{\bb''}_{\br'', \bs''}\), where \((\bb'', \br'', \bs'') \in \Seq^B(n,e-1)\). Thus the induction assumption implies that the term
\(
(\eta^{\bb}_{\br, \bs})_{(1)}*(\eta^{\bb}_{\br, \bs})_{(2)}\xi^b_{r,s} * 1^{\otimes(d-e+1)}
\)
belongs to \(U\), which in turn implies that 
\(
\eta^{\bb}_{\br, \bs}  * \xi^b_{r,s}* 1^{\otimes (d-e)} \in U\). But since \(b \in B_\c \cup B_\1\), we have as in (\ref{etastar}) that
\(
\eta^{\bb'}_{\br',\bs'} = \pm \eta^{\bb}_{\br,\bs} * \xi^b_{r,s},
\)
so
\begin{align*}
\eta^{\bb'}_{\br', \bs'} * 1^{\otimes(d-e)} = \pm \eta^{\bb}_{\br,\bs}  *  \xi^b_{r,s} *1^{\otimes(d-e)}\in U,
\end{align*}
completing the induction step, and the proof.
\end{proof}

\section{Miscellaneous properties and examples}\label{SExamples}
Throughout the section, $(A,\a)$ is a fixed good pair with an $(A,\a)$-basis $B= B_{\fa}\sqcup B_{\c} \sqcup B_{\bar 1}$ as in (\ref{AaBasis}). 
Much of this section deals with various idempotent truncations. If $e\in A$ is an idempotent, we say that $B$ is {\em $e$-admissible} if $ebe=b$ or $ebe=0$ for all $b\in B$. We say that we say that $B$ is {\em right $e$-admissible} if $be=b$ or $be=0$ for all $b\in B$. 

\subsection{Idempotents and characters}\label{SSIdCh}
Throughout the section, let \(e_0, \ldots, e_\ell \in \a\) be a set of orthogonal idempotents. 
We do not assume that $\sum_{i=0}^\ell e_i=1$, and usually we do not make any admissibility assumptions on $B$. 
Set \(I=[0,\ell]\). 

Let \(\La(n):=\Z_{\geq 0}^n\) and $\La^I(n):=\La(n)^I$. We think of the elements of $\La(n)$ as compositions $\la=(\la_1,\dots,\la_n)$ and the elements of $\La^I(n)$ as tuples \(\bla = (\la^{(0)}, \ldots, \la^{(\ell)})\) of compositions. For such $\la\in \La(n)$ and $\bla \in\La^I(n)$, we set \(|\la| := \sum_{r=1}^n \la_r\), $|\bla|:=\sum_{i\in I}|\la^{(i)}|$, and, for any $d\in\Z_{\geq0}$, we define 
$$
\La(n,d):=\{\la\in\La(n)\mid |\la|=d\},\quad \La^I(n,d):=\{\bla\in\La(n)\mid |\bla|=d\}.
$$
The group $\Si_n$ acts on $\La(n)$ via  
$$
\si\la:=(\la_{\si^{-1}1},\dots , \la_{\si^{-1}n}).
$$
The group $\Si_n^I:= \prod_{i \in I} \Si_n$ acts on \(\La^I(n)\) via 
$$
\bsi\bla:=(\si^{(0)}\la^{(0)},\dots , \si^{(\ell)}\la^{(\ell)}),
$$
for $\bsi=(\si^{(0)},\dots,\si^{(\ell)})\in\Si_n^I$ and $\bla=(\la^{(0)},\dots,\la^{(\ell)})\in\La^I(n)$. 

To $\la\in\La(n,d)$ we associate the word $\bl^\la=1^{\la_1}\cdots n^{\la_n}\in [1,n]^d$. For any idempotent $f\in A$ we have an idempotent  
$$
\xi_\la^f:=\xi^{f^d}_{\bl^\la,\bl^\la}\in S^A(n,d).
$$
Note using Lemma~\ref{L140117} that $\xi_\la^f\in T^A_\a(n,d)$ if $f\in\a$. Define 
\begin{equation}\label{E070318_2}
\xi^f:= \sum_{\la \in \La(n,d)} \xi^f_\la.
\end{equation}
If, for any $a\in A$ we define 
\begin{equation}\label{Ea}
E^a:=\sum_{r=1}^n\xi_{r,r}^a\in M_n(A),
\end{equation} 
then 
\begin{equation}\label{E240318}
\xi^f=E^f\otimes\dots\otimes E^f.
\end{equation} 
If $A$ is unital, we denote 
\begin{equation}\label{E070318_3}
\xi_\la:=\xi_\la^{1_A}. 
\end{equation}
Then
$
1_{S^A(n,d)}=\sum_{\la\in\La(n,d)}\xi_\la
$
is an orthogonal idempotent decomposition. If the pair $(A,\a)$ is unital, then $\xi_\la\in T^A_\a(n,d)$ for all $\la\in\La(n,d)$. 
For \(\bmu \in \La^I(n,d)\), define:
\begin{equation}\label{EBla}
e_{\bla}:=\xi_{\la^{(0)}}^{e_0} * \cdots * \xi_{\la^{(\ell)}}^{e_\ell} \in T^A_\a(n,d).
\end{equation}

For $a\in A$ and \(\sigma \in \Si_n\), let \(\xi_\sigma^a:=\sum_{r=1}^n\xi_{\si(r),r}^a\in M_n(A)\) be the permutation matrix corresponding to $\si$ multiplied by $a$. 
For \(\bsi=(\si^{(0)},\dots,\si^{(\ell)}) \in \Si^I_n\), we set
\begin{align*}
\xi_\bsi:=\sum_{(\de_0,\dots,\de_\ell) \in \La^I(d)} (\xi_{\sigma^{(0)}}^{e_0})^{\otimes \de_0} * \cdots * (\xi_{\sigma^{(\ell)}}^{e_\ell})^{\otimes \de_\ell}\in T^A_\a(n,d).
\end{align*}


\begin{Lemma}\label{L070318}
For all \(\bsi, \btau \in \Si^I_n\), we have \(\xi_\bsi \xi_\btau = \xi_{\bsi \btau}\).
\end{Lemma}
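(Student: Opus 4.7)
The plan is to expand both $\xi_\bsi\xi_\btau$ and $\xi_{\bsi\btau}$ as explicit sums of pure tensors in $M_n(A)^{\otimes d}$ and match them term by term, exploiting the orthogonality of the idempotents $e_0,\dots,e_\ell$.

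First, I would rewrite $\xi_\bsi$ as a single sum over colourings. Since each $e_i\in\fa\subseteq A_\0$, every $\xi^{e_i}_{\sigma^{(i)}}$ is even, so iterated applications of the definition~(\ref{EStarNotation}) produce no signs. A bookkeeping argument unwinding the shortest coset representatives for $(\Si_{\delta_0}\times\cdots\times\Si_{\delta_\ell})\backslash\Si_d$ then yields, for each fixed $(\delta_0,\dots,\delta_\ell)\in\La^I(d)$,
\begin{equation*}
(\xi^{e_0}_{\sigma^{(0)}})^{\otimes \delta_0} * \cdots * (\xi^{e_\ell}_{\sigma^{(\ell)}})^{\otimes \delta_\ell}
\;=\;
\sum_{\substack{f\colon [1,d]\to I\\ |f^{-1}(i)|=\delta_i}}\;\bigotimes_{k=1}^d \xi^{e_{f(k)}}_{\sigma^{(f(k))}}.
\end{equation*}
Summing over $\delta$ collapses the two layers of summation and gives
\begin{equation*}
\xi_\bsi \;=\; \sum_{f\colon [1,d]\to I}\;\bigotimes_{k=1}^d \xi^{e_{f(k)}}_{\sigma^{(f(k))}},
\end{equation*}
with analogous expansions for $\xi_\btau$ and $\xi_{\bsi\btau}$.

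Next, I would establish the pointwise multiplication rule in $M_n(A)$. Using (\ref{EXiProduct}), a short direct calculation gives $\xi^a_\sigma\,\xi^b_\tau=\xi^{ab}_{\sigma\tau}$ for all $a,b\in A$ and $\sigma,\tau\in\Si_n$. Specialising to $a=e_i$, $b=e_j$ and invoking orthogonality $e_ie_j=\delta_{ij}e_i$ yields $\xi^{e_i}_\sigma\,\xi^{e_j}_\tau=\delta_{ij}\,\xi^{e_i}_{\sigma\tau}$.

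Finally, since multiplication in $M_n(A)^{\otimes d}$ is componentwise, I would multiply the expansions of $\xi_\bsi$ and $\xi_\btau$ tensor-factor by tensor-factor. Orthogonality kills every cross-term indexed by a pair $(f,g)$ with $f\neq g$, so only the diagonal $f=g$ survives, producing
\begin{equation*}
\xi_\bsi\,\xi_\btau \;=\; \sum_{f\colon [1,d]\to I}\;\bigotimes_{k=1}^d \xi^{e_{f(k)}}_{\sigma^{(f(k))}\tau^{(f(k))}},
\end{equation*}
which is exactly the expansion of $\xi_{\bsi\btau}$. The only nontrivial step is the first one; once the multinomial-type expansion of the iterated star product is in hand, the remaining calculation is immediate. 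I expect the main obstacle to be the careful verification of that expansion, i.e.\ keeping track of how the shortest-coset-representative shuffles in (\ref{EStarNotation}) correspond bijectively to functions $f\colon[1,d]\to I$.
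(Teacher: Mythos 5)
Your proof is correct, and the computation it carries out is essentially the one the paper has in mind: the paper disposes of this lemma with a one-line appeal to Lemma~\ref{LSep}, which packages exactly the bookkeeping you do by hand, namely expanding the iterated $*$-products into pure tensors (no signs, since the \(\xi^{e_i}_{\sigma^{(i)}}\) are even) and using the orthogonality \(e_ie_j=\delta_{ij}e_i\) together with the componentwise rule \(\xi^a_\sigma\xi^b_\tau=\xi^{ab}_{\sigma\tau}\) to kill all cross terms. Your explicit parametrization of the shuffles by functions \(f\colon[1,d]\to I\) is a clean and complete substitute for invoking the separation lemma.
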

\begin{proof}
This follows easily from Lemma~\ref{LSep}. 
\end{proof}

\begin{Lemma}\label{symchar}
If \(\bla \in \La^I(n,d)\) and \(\bsi \in \Si^I_n\), then we have \(\xi_{\bsi}e_{\bla}\xi_{\bsi^{-1}} = e_{\bsi \bla}\).
\end{Lemma}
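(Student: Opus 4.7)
The plan is to verify the identity by direct computation in $S^A(n,d)$, exploiting the orthogonality of the idempotents $e_0,\dots,e_\ell$ to reduce the question to a blockwise conjugation identity in each $e_i$-component, which is then a routine Schur-algebra calculation.

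First I would show that when multiplying $\xi_\bsi \cdot e_\bla \cdot \xi_{\bsi^{-1}}$, most terms of $\xi_\bsi$ and of $\xi_{\bsi^{-1}}$ vanish. Recall $\xi_\bsi$ is a sum over $\bde = (\de_0,\dots,\de_\ell)$, and each summand involves $\de_i$ copies of $\xi^{e_i}_{\si^{(i)}}$. In the product with $e_\bla = \xi^{e_0}_{\la^{(0)}} * \cdots * \xi^{e_\ell}_{\la^{(\ell)}}$, the orthogonality $e_i e_j = 0$ for $i\neq j$ forces any surviving summand to match the multiplicities $\de_i = |\la^{(i)}|$; for any other $\bde$, some tensor slot carries a factor $\xi^{e_i}_{?,?} \cdot \xi^{e_j}_{?,?}$ with $i\neq j$, which is zero. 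The same reasoning applied on the right shows only the analogous single summand of $\xi_{\bsi^{-1}}$ can contribute.

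Next, for those surviving summands, I would invoke Lemma~\ref{LSep}. The triples indexing the factors of
\[
(\xi^{e_0}_{\si^{(0)}})^{\otimes |\la^{(0)}|} * \cdots * (\xi^{e_\ell}_{\si^{(\ell)}})^{\otimes |\la^{(\ell)}|}, \quad e_\bla, \quad (\xi^{e_0}_{(\si^{(0)})^{-1}})^{\otimes |\la^{(0)}|} * \cdots * (\xi^{e_\ell}_{(\si^{(\ell)})^{-1}})^{\otimes |\la^{(\ell)}|}
\]
are $\bde$-separated for $\bde = (|\la^{(0)}|,\dots,|\la^{(\ell)}|)$ because their idempotent labels $e_0,\dots,e_\ell$ are pairwise disjoint, and the auxiliary vanishing hypothesis in Lemma~\ref{LSep} again holds by $e_i e_j = 0$. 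Thus the product factorizes through the $*$-product across the $\ell+1$ blocks, and the identity reduces to the single-block identity
\[
(\xi^{e_i}_{\si^{(i)}})^{\otimes |\la^{(i)}|} \cdot \xi^{e_i}_{\la^{(i)}} \cdot (\xi^{e_i}_{(\si^{(i)})^{-1}})^{\otimes |\la^{(i)}|} = \xi^{e_i}_{\si^{(i)} \la^{(i)}}
\]
holding for each $i \in I$.

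Finally I would verify this single-block identity by a direct expansion in $M_n(A)^{\otimes |\la^{(i)}|}$. Writing $k = |\la^{(i)}|$, $e = e_i$, $\si = \si^{(i)}$ and $\la = \la^{(i)}$, we have $\xi^e_\la = \sum_{\br \in \Si_k\cdot \bl^\la} \bigotimes_j \xi^e_{r_j, r_j}$ and $(\xi^e_\si)^{\otimes k} = \sum_{\br \in [1,n]^k} \bigotimes_j \xi^e_{\si r_j, r_j}$; using $\xi^e_{p,q}\xi^e_{q',q''} = \de_{q,q'}\xi^e_{p,q''}$ and $e^2 = e$, the triple product collapses to $\sum_{\br \in \Si_k\cdot\bl^\la} \bigotimes_j \xi^e_{\si r_j, \si r_j}$, and reindexing $\br \mapsto \si^{-1}\br$ identifies this with $\xi^e_{\si\la}$ since $\si\cdot \bl^\la$ and $\bl^{\si\la}$ lie in the same $\Si_k$-orbit.

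The main obstacle is the careful bookkeeping in the second step: one has to make sure that Lemma~\ref{LSep} is applicable after the vanishing argument, i.e. that the cross-term vanishing hypothesis in its statement holds, and that the sign in its conclusion is $+$ (which it is because all the relevant entries $e_0, \dots, e_\ell$ are even). Once that is in place, the rest is bookkeeping and the classical conjugation calculation.
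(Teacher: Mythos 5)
Your proposal is correct and follows essentially the same route as the paper: reduce via Lemma~\ref{LSep} to the single-block identity $(\xi^{e_i}_{\si^{(i)}})^{\otimes d_i}\,\xi^{e_i}_{\la^{(i)}}\,(\xi^{e_i}_{(\si^{(i)})^{-1}})^{\otimes d_i}=\xi^{e_i}_{\si^{(i)}\la^{(i)}}$ and then verify that identity directly (the paper does so by writing $\xi^{e_i}_{\la^{(i)}}$ as a $*$-product of the $(\xi^{e_i}_{r,r})^{\otimes\la^{(i)}_r}$, while you expand in tensor components — a cosmetic difference). Your explicit treatment of why only the matching summand $\bde=(|\la^{(0)}|,\dots,|\la^{(\ell)}|)$ of $\xi_{\bsi}$ survives is a detail the paper leaves implicit, and it is correct.
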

\begin{proof}
Let $d_i=|\la^{(i)}|$ for all $i\in I$. 
Using Lemma \ref{LSep}, we get
\begin{align*}
\xi_{\bsi}e_{\bla}\xi_{\bsi^{-1}} &=
\left((\xi_{\sigma^{(0)}}^{e_0})^{\otimes d_0} \xi^{e_0}_{\la^{(0)}}(\xi_{(\sigma^{(0)})^{-1}}^{e_0})^{\otimes d_0}\right)
* 
\cdots 
*
\left((\xi_{\sigma^{(\ell)}}^{e_\ell})^{\otimes d_\ell} \xi^{e_\ell}_{\la^{(\ell)}}(\xi_{(\sigma^{(\ell)})^{-1}}^{e_\ell})^{\otimes d_\ell}\right).
\end{align*}
For all \(i \in I\), we have
\begin{align*}
(\xi_{\sigma^{(i)}}^{e_i})^{\otimes d_i} \xi^{e_i}_{\la^{(i)}}(\xi_{(\sigma^{(i)})^{-1}}^{e_i})^{\otimes d_i}
&=(\xi^{e_i}_{\sigma^{(i)}})^{\otimes d_i}\left( (\xi^{e_i}_{1,1})^{\otimes \la^{(i)}_1} * \cdots * (\xi^{e_i}_{n,n})^{\otimes \la^{(i)}_n} \right) (\xi^{e_i}_{(\sigma^{(i)})^{-1}})^{\otimes d_i}\\
 &=(\xi^{e_i}_{\sigma^{(i)}}\xi^{e_i}_{1,1}\xi^{e_i}_{(\sigma^{(i)})^{-1}})^{\otimes \la^{(i)}_1} * \cdots * (\xi^{e_i}_{\sigma^{(i)}}\xi^{e_i}_{n,n}\xi^{e_i}_{(\sigma^{(i)})^{-1}})^{\otimes \la^{(i)}_n}\\
 &=(\xi^{e_i}_{\sigma^{(i)}1,\sigma^{(i)}1})^{\otimes \la^{(i)}_1} * \cdots * (\xi^{e_i}_{\sigma^{(i)}n,\sigma^{(i)}n})^{\otimes \la^{(i)}_n}\\
 &=(\xi^{e_i}_{1,1})^{\otimes \la^{(i)}_{(\sigma^{(i)})^{-1} 1}} * \cdots * (\xi^{e_i}_{n,n})^{\otimes \la^{(i)}_{(\sigma^{(i)})^{-1} n}}\\
 &= \xi^{e_i}_{\sigma^{(i)}\la^{(i)}},
\end{align*}
where we have used the commutativity of $*$-product on even elements for the penultimate equality. 
So the result follows.
\end{proof}

We consider \(\La^I(n)\) as an abelian monoid, where \(\bla=\bmu + \bnu\) when \(\la^{(i)}_r = \mu^{(i)}_r + \nu^{(i)}_r\) for all \(i \in I\) and \(r \in [1,n]\).

\begin{Lemma}\label{coprodchar}
For \(\bla \in \La^I(n,d)\), we have 
\begin{align*}
\nabla(e_{\bla}) = \sum_{\substack{\bmu,\, \bnu \in \La^I(n) \\ \bmu + \bnu = \bla}} e_{\bmu} \otimes e_{\bnu}.
\end{align*}
\begin{proof}
The result follows by Corollary \ref{coprodeta}.
Indeed, recalling the notation of \S\ref{SSCoproduct}, note that \(e_{\bla} = \xi_{\mathcal{T}}\), where \(\mathcal{T}=(\bb, \br, \bs)\), with
$\bb = e_0^{|\la^{(0)}|} \cdots e_\ell^{|\la^{(\ell)}|}$, 
$\br= \bs= \bl^{\la^{(0)}} \cdots \bl^{\la^{(\ell)}}$. 
Then \((\mathcal{T}^1,\mathcal{T}^2) \in \textup{Spl}(\mathcal{T})\) if and only if \(\mathcal{T}^1 = (\bb^1, \br^1, \bs^1)\) with
$\bb^1 = e_0^{|\mu^{(0)}|} \cdots e_\ell^{|\mu^{(\ell)}|}$, 
$\br^1= \bs^1= \bl^{\mu^{(0)}} \cdots \bl^{\mu^{(\ell)}}$ 
and 
\(\mathcal{T}^2 = (\bb^2, \br^2, \bs^2)\), with
$
\bb^2 = e_0^{|\nu^{(0)}|} \cdots e_\ell^{|\nu^{(\ell)}|}$, 
$\br^2= \bs^2= \bl^{\nu^{(0)}} \cdots \bl^{\nu^{(\ell)}}$ 
such that  \(\bmu + \bnu = \bla\).
\end{proof}
\end{Lemma}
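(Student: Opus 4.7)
The plan is to identify $e_{\bla}$ with a single basis element $\eta_{\mathcal T}$ and then invoke Corollary~\ref{coprodeta}.

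First I would verify that $e_{\bla}=\xi_{\mathcal T}$ for the triple $\mathcal T=(\bb,\br,\bs)$ described in the statement, namely $\bb=e_0^{|\lambda^{(0)}|}\cdots e_\ell^{|\lambda^{(\ell)}|}$ and $\br=\bs=\bl^{\lambda^{(0)}}\cdots\bl^{\lambda^{(\ell)}}$. Iterated application of Lemma~\ref{xistarproducts}(ii) expresses the $*$-product defining $e_{\bla}$ as a multinomial coefficient times $\xi^{\bb}_{\br,\bs}$. Because the idempotents $e_0,\dots,e_\ell$ are pairwise distinct, the multiplicity $[\bb,\br,\bs]^{e_i}_{r,r}$ coincides with $[\bb^{(i)},\br^{(i)},\bs^{(i)}]^{e_i}_{r,r}=\lambda^{(i)}_r$, so the factorials in numerator and denominator cancel block by block and the multinomial factor is $1$. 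Since every $e_i$ lies in $\fa$, the word $\bb$ avoids $B_\c$, hence $[\mathcal T]^!_\c=1$ and $\xi_{\mathcal T}=\eta_{\mathcal T}$.

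Next I would apply Corollary~\ref{coprodeta} to $\eta_{\mathcal T}$. Both sign contributions and the $[\cdot]^!_\c$-quotients in that formula are forced to be trivial: every $b_k$ is even, hence $\langle\sigma;\bb\rangle=0$ for all $\sigma$, and no split of $\mathcal T$ can produce a letter of $B_\c$ in either piece, so each ratio $[\mathcal T]^!_\c/([\mathcal T^1]^!_\c[\mathcal T^2]^!_\c)$ equals $1$. Therefore $\nabla(e_{\bla})=\sum_{(\mathcal T^1,\mathcal T^2)\in\textup{Spl}(\mathcal T)}\eta_{\mathcal T^1}\otimes\eta_{\mathcal T^2}$.

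Finally I would parametrise the set of splits. Because $\mathcal T$ is supported on positions of the form $(e_i,r,r)$ with multiplicity $\lambda^{(i)}_r$, a split $(\mathcal T^1,\mathcal T^2)\in\Seq^B_0\times\Seq^B_0$ with $\mathcal T^1\mathcal T^2\sim\mathcal T$ is uniquely determined by decompositions $\lambda^{(i)}_r=\mu^{(i)}_r+\nu^{(i)}_r$ for $i\in I$ and $r\in[1,n]$, giving a bijection between $\textup{Spl}(\mathcal T)$ and the set of pairs $(\bmu,\bnu)\in\La^I(n)\times\La^I(n)$ with $\bmu+\bnu=\bla$. The same $*$-product collapse used in the first paragraph then identifies $\eta_{\mathcal T^1}=e_{\bmu}$ and $\eta_{\mathcal T^2}=e_{\bnu}$, completing the proof. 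No real obstacle is expected: everything is driven by the fact that the $e_i$ are even idempotents lying in $\fa$, which forces every sign and every $\c$-factorial correction appearing in Corollary~\ref{coprodeta} to be $1$, and the only genuine bookkeeping is the bijection between splits of $\mathcal T$ and decompositions of $\bla$.
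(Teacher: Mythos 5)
Your proposal is correct and follows the same route as the paper: identify $e_{\bla}$ with the single basis element $\eta_{\mathcal T}$ (the paper gets this from Lemma~\ref{LSingleSep}, you from iterating Lemma~\ref{xistarproducts}(ii) — these are equivalent here), apply Corollary~\ref{coprodeta}, observe that all signs and $\c$-factorial corrections are trivial because the $e_i$ are even elements of $\fa$, and match splits of $\mathcal T$ with decompositions $\bla=\bmu+\bnu$. Your write-up simply makes explicit the bookkeeping that the paper's one-line proof leaves implicit.
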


Define 
$$
R:=\Z[t]/(t^2-1), 
$$  
and denote the image of $t$ in the quotient ring by $\pi$, so that $\pi^\eps$ makes sense for $\eps\in\Z/2$. Writing the operation in the monoid $\La^I(n,d)$ multiplicatively, denote by $R \La^I(n,d)$ the corresponding $R$-monoid algebra. This algebra inherits the $\Si^I_n$-action from that on $\La^I(n,d)$. Since this action is by algebra automorphisms, we have the invariant algebra \((R\La^I(n,d))^{\mathfrak{S}_n^I}\).

If $V$ is a free $\k$-module of finite rank, we denote its rank by $\dim V$. 
If $V$ be a free $\k$-supermodule of finite rank, its super-rank is defined to be $\dim_\pi V:= \dim V_\0+(\dim V_\1)\pi\in R$. Let $W$ be a $T^A_\a(n,d)$-supermodule.  If $e_\bla W$  is 
free of finite rank as a $\k$-supermodule for all $\bla\in\La^I(n,d)$, we say that $W$ is a {\em supermodule with free weight spaces}.  In this case, the (formal) {\em character} of $W$ is defined to be 
\begin{align*}
\ch_\pi  W:=\sum_{\bla\in \La^I(n,d)}(\dim_\pi e_\bla W)\, \bla \in R \La^I(n,d).
\end{align*}

\begin{Lemma} 
If $W$ is a\, $T^A_\a(n,d)$-supermodule with free weight spaces then $\ch_\pi  W \in (R\La^I(n,d))^{\mathfrak{S}_n^I}$.\end{Lemma}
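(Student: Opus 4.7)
The plan is to exploit the elements $\xi_\bsi \in T^A_\a(n,d)$ for $\bsi \in \Si_n^I$, together with Lemmas~\ref{L070318} and \ref{symchar}, to produce explicit isomorphisms between weight spaces. First I would observe that by Lemma~\ref{L070318}, $\xi_\bsi \xi_{\bsi^{-1}} = \xi_{\b1} = \sum_{\bla} e_\bla$ acts as the identity on $W$ (after checking $\xi_{\b1}$ is indeed the relevant identity on all weight spaces, which follows from the weight space decomposition $W = \bigoplus_\bla e_\bla W$ guaranteed by Lemma~\ref{coprodchar} and the orthogonality of the $e_\bla$). Thus $\xi_\bsi$ acts invertibly on $W$ with inverse given by the action of $\xi_{\bsi^{-1}}$.

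Next I would invoke Lemma~\ref{symchar}: $\xi_\bsi e_\bla = e_{\bsi\bla} \xi_\bsi$ (after right-multiplying both sides of $\xi_\bsi e_\bla \xi_{\bsi^{-1}} = e_{\bsi\bla}$ by $\xi_\bsi$). Therefore left multiplication by $\xi_\bsi$ on $W$ sends $e_\bla W$ into $e_{\bsi\bla} W$, and left multiplication by $\xi_{\bsi^{-1}}$ gives the inverse map. Hence we have a $\k$-linear isomorphism
\begin{equation*}
\xi_\bsi \cdot : e_\bla W \,\iso\, e_{\bsi\bla} W.
\end{equation*}

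The key step now is to verify that this isomorphism is \emph{even}, i.e.\ homogeneous of parity $\0$, so that it preserves the super-rank. This follows because each $e_i \in \a \subseteq A_\0$, so each $\xi_{\sigma^{(i)}}^{e_i}$ lies in $M_n(A)_\0$, and therefore each summand in the definition of $\xi_\bsi$ lies in $(M_n(A)^{\otimes d})_\0$; since the $*$-product of even elements is even, $\xi_\bsi \in T^A_\a(n,d)_\0$. Consequently $\dim_\pi e_\bla W = \dim_\pi e_{\bsi\bla} W$.

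Summing over $\bla \in \La^I(n,d)$, we conclude that for every $\bsi \in \Si_n^I$,
\begin{equation*}
\bsi \cdot \ch_\pi W = \sum_\bla (\dim_\pi e_\bla W)\, \bsi\bla = \sum_\bla (\dim_\pi e_{\bsi\bla} W)\, \bsi\bla = \ch_\pi W,
\end{equation*}
so $\ch_\pi W \in (R\La^I(n,d))^{\Si_n^I}$. The only nontrivial point I anticipate is confirming that the $e_\bla$ are orthogonal idempotents summing to the identity (required to say $W = \bigoplus_\bla e_\bla W$); this should be immediate from $e_0,\dots,e_\ell$ being orthogonal idempotents in $\a$, Lemma~\ref{LSep}, and the definition \eqref{EBla}, but it is the one bookkeeping step I would want to verify carefully.
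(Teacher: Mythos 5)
Your approach is exactly the paper's: the entire published proof is the one-line observation that Lemma~\ref{symchar} gives $e_\bmu W\cong e_\bla W$ as $\k$-supermodules whenever $\bmu$ and $\bla$ lie in the same $\Si_n^I$-orbit, and your $\xi_\bsi$-conjugation argument is just the explicit unwinding of that isomorphism. One claim in your write-up is false, however: the $e_\bla$ do \emph{not} in general sum to the identity and $W\neq\bigoplus_\bla e_\bla W$, because the paper explicitly does not assume $\sum_{i=0}^\ell e_i=1_A$ (indeed $A$ need not even be unital here); consequently $\xi_{\b1}=\sum_\bmu e_\bmu$ need not act as the identity on $W$, and $\xi_\bsi$ need not act invertibly on all of $W$. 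This does not sink the argument: all you need is that the $e_\bmu$ are \emph{orthogonal} idempotents, for then $\xi_{\b1}w=\sum_\bmu e_\bmu e_\bla w=w$ for $w\in e_\bla W$, so $\xi_{\b1}$ restricts to the identity on each weight space and the maps $\xi_\bsi\cdot\colon e_\bla W\to e_{\bsi\bla}W$ and $\xi_{\bsi^{-1}}\cdot\colon e_{\bsi\bla}W\to e_\bla W$ are still mutually inverse. Your parity observation ($\xi_\bsi$ even, hence the isomorphism preserves super-rank) is correct and is the point that makes $\dim_\pi$ rather than just $\dim$ constant on orbits.
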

\begin{proof}
By Lemma \ref{symchar}, we have that \(e_\bmu W \cong e_\bla W\) as $\k$-supermodules whenever \(\bmu\) and \(\bla\) are in the same \(\Si^I_n\)-orbit. 
\end{proof}

Finally, Lemma \ref{coprodchar} gives us:

\begin{Lemma}
Let $W_1$ be a $T^A_\a(n,d_1)$-supermodule with free weight spaces and $W_2$ be a $T^A_\a(n,d_2)$-supermodule with free weight spaces. We consider $W_1\otimes W_2$ as a $T^A_\a(n,d_1+d_2)$-supermodule via the coproduct $\nabla$. Then $W_1\otimes W_2$ is a supermodule with free weight spaces, and 
\begin{align*}
\ch_\pi(W_1 \otimes W_2) = \ch_\pi(W_1) \, \ch_\pi(W_2).
\end{align*}
\end{Lemma}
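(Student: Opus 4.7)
The plan is to trace the statement through the coproduct formula of Lemma~\ref{coprodchar}. By the construction of the $T^A_\a(n, d_1+d_2)$-action on $W_1 \otimes W_2$ via $\nabla$, and by that lemma, for any $\bla \in \La^I(n, d_1+d_2)$ and any pure tensor $w_1 \otimes w_2$,
\[
e_\bla \cdot (w_1 \otimes w_2) = \nabla(e_\bla)(w_1 \otimes w_2) = \sum_{\substack{\bmu + \bnu = \bla \\ \bmu \in \La^I(n,d_1),\, \bnu \in \La^I(n,d_2)}} (e_\bmu w_1) \otimes (e_\bnu w_2).
\]
So I would first establish the decomposition
\[
e_\bla(W_1 \otimes W_2) \,=\, \bigoplus_{\substack{\bmu + \bnu = \bla \\ \bmu \in \La^I(n,d_1),\, \bnu \in \La^I(n,d_2)}} (e_\bmu W_1) \otimes (e_\bnu W_2),
\]
from which freeness and finite rank of each weight space of $W_1 \otimes W_2$ follow immediately, since the indexing set is finite and each summand is free of finite rank by hypothesis.

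The step that requires the most care is the directness of this sum. For this I would first verify that the family $\{e_\bmu \mid \bmu \in \La^I(n, d_1)\}$ consists of pairwise orthogonal idempotents inside $T^A_\a(n, d_1)$, and likewise in degree $d_2$. This in turn follows from Lemma~\ref{LSingleSep}, which identifies each $e_\bmu$ with a single element of the form $\xi^\bb_{\br, \br}$ (the associated triple being $\delta$-separated thanks to the pairwise distinctness of the $e_i$ in $A$), combined with Proposition~\ref{CPR} and the orthogonality of the $e_i$ inside $\a$: a short analysis of the resulting sum shows that $e_\bmu e_{\bmu'}$ vanishes unless $\bmu = \bmu'$, in which case it equals $e_\bmu$. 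Granted this, if $\sum x_{\bmu, \bnu} = 0$ in $W_1 \otimes W_2$ with $x_{\bmu, \bnu} \in (e_\bmu W_1) \otimes (e_\bnu W_2)$, then acting by the tensor idempotent $e_{\bmu'} \otimes e_{\bnu'}$ (through the natural $T^A_\a(n,d_1) \otimes T^A_\a(n,d_2)$-action on $W_1 \otimes W_2$ obtained by tensoring the factor actions) isolates $x_{\bmu', \bnu'}$, forcing every summand to vanish.

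To finish the character identity, I would use the standard fact $\dim_\pi(V \otimes W) = \dim_\pi V \cdot \dim_\pi W$ in $R$ for finite-rank free $\k$-supermodules $V, W$, which is a one-line parity check relying on $\pi^2 = 1$ to absorb the contribution of $V_\1 \otimes W_\1$. Applying this summand-wise to the decomposition above yields
\[
\dim_\pi e_\bla(W_1 \otimes W_2) \,=\, \sum_{\bmu + \bnu = \bla} \dim_\pi(e_\bmu W_1) \cdot \dim_\pi(e_\bnu W_2),
\]
and summing over $\bla$ and reindexing by the pair $(\bmu, \bnu)$ recognises the right-hand side as the product $\ch_\pi(W_1)\, \ch_\pi(W_2)$ in the monoid algebra, under the multiplicative convention $\bmu \cdot \bnu = \bmu + \bnu$. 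The orthogonality verification in the previous paragraph is the main technical obstacle; once it is in hand, everything else is bookkeeping.
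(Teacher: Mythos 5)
Your proof is correct and takes essentially the same route as the paper, which derives this lemma directly from the coproduct formula of Lemma~\ref{coprodchar} (the paper in fact gives no further details, so your elaboration of the direct-sum decomposition of $e_\bla(W_1\otimes W_2)$, the orthogonality of the idempotents $e_\bmu$, and the multiplicativity of $\dim_\pi$ is exactly the bookkeeping the authors leave implicit).
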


Let $\bla,\bmu\in\La^I(n)$. We call $\bla,\bmu$ {\em non-overlapping} if for every $i\in I$ and $r\in [1,n]$ we have that $\la_r^{(i)}\neq 0$ implies $\mu_r^{(i)}=0$. 

\begin{Proposition} 
Suppose that $B$ is right $e_i$-admissible for all $i\in I$. 
Let $\bla\in\La^I(n,c)$, $\bmu\in\La^I(n,d)$ and suppose that $\bla$ and $\bsi\bmu$ are non-overlapping for some $\bsi\in\Si_n^I$. Then we have isomorphisms
\begin{align*}
S^A(n,c) e_\bla\otimes S^A(n,d)e_\bmu&\cong S^A(n,c+d)e_{\bla+\bsi\bmu},
\\
T^A_\a(n,c) e_\bla\otimes T^A_\a(n,d)e_\bmu&\cong T^A_\a(n,c+d)e_{\bla+\bsi\bmu}.
\end{align*}
of \(S^A(n,c+d)\)- and \(T^A_\a(n,c+d)\)-modules, respectively.
\end{Proposition}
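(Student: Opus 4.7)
The plan is first to reduce to the case $\bsi = 1$ using Lemma~\ref{symchar}. Since $e_{\bsi\bmu} = \xi_\bsi\, e_\bmu\, \xi_{\bsi^{-1}}$ and $\xi_\bsi \xi_{\bsi^{-1}}$ is the identity (Lemma~\ref{L070318}), right multiplication by $\xi_{\bsi^{-1}}$ furnishes isomorphisms $S^A(n,d)e_\bmu \cong S^A(n,d)e_{\bsi\bmu}$ and $T^A_\a(n,d)e_\bmu \cong T^A_\a(n,d)e_{\bsi\bmu}$ of left modules. So we may assume throughout that $\bsi = 1$ and that $\bla,\bmu$ are themselves non-overlapping.

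Next, I define $\phi: S^A(n,c)e_\bla \otimes S^A(n,d)e_\bmu \to S^A(n,c+d)e_{\bla+\bmu}$ by $\phi(x\otimes y) := x * y$, using the shuffle product of Section~\ref{Ssuperbi} (and analogously on $\eta$-basis elements for $T^A_\a$). I endow the left-hand side with a left $S^A(n,c+d)$-module structure via the $(c,d)$-component of the coproduct $\nabla$ from Lemma~\ref{coprodxi}: that is, $z \cdot (x\otimes y) := \sum (z_{(1)} x) \otimes (z_{(2)} y)$, where only the summands with $\deg z_{(1)} = c$ contribute. That $\phi$ intertwines this with left multiplication is the identity $z(x*y) = \sum (z_{(1)} x) * (z_{(2)} y)$, which I will extract from Lemma~\ref{Lstfo} by substituting the unit for the appropriate $*$-factor.

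The key structural argument is the basis bijection. Right $e_i$-admissibility guarantees that each $b \in B$ lying in $\bigsqcup_i Be_i$ has a unique $\iota(b) \in I$ with $be_{\iota(b)} = b$, and a basis element $\xi^\bb_{\br,\bs}$ lies in $S^A(n,c+d)e_\bnu$ precisely when its column multi-weight $(i,r) \mapsto \#\{k : \iota(b_k) = i,\ s_k = r\}$ equals $\bnu$. Given such a basis element of column weight $\bla+\bmu$, non-overlapping yields a \emph{canonical} partition of $[1, c+d]$ into
\[
K_\bla := \{k : \la^{(\iota(b_k))}_{s_k} \neq 0\} \quad \text{and} \quad K_\bmu := \{k : \mu^{(\iota(b_k))}_{s_k} \neq 0\}
\]
of sizes $c$ and $d$. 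Choosing any $\si \in \Si_{c+d}$ sending $K_\bla$ to $[1,c]$ and $K_\bmu$ to $[c+1,c+d]$, the reordered triple $(\bb,\br,\bs)\si$ is automatically $(c,d)$-separated by non-overlapping, so Lemma~\ref{LSingleSep} gives a factorisation $\xi^{(\bb,\br,\bs)\si} = \xi^{\bb^1}_{\br^1,\bs^1} * \xi^{\bb^2}_{\br^2,\bs^2}$ whose factors are basis elements of $S^A(n,c)e_\bla$ and $S^A(n,d)e_\bmu$ respectively; meanwhile Lemma~\ref{LXiZero} identifies $\xi^{(\bb,\br,\bs)\si}$ with $\xi^\bb_{\br,\bs}$ up to an explicit sign. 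This provides an inverse to $\phi$ on basis vectors, completing the argument. The $T^A_\a$ version is identical after replacing each $\xi$ by $\eta$ and invoking the $\eta$-half of Lemma~\ref{LSingleSep}.

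I expect the main obstacle to be the sign bookkeeping, which enters in two intertwined places: first, one must check that the decomposition $\xi^\bb_{\br,\bs} = \pm\, \xi^{\bb^1}_{\br^1,\bs^1} * \xi^{\bb^2}_{\br^2,\bs^2}$ is independent (up to a controlled global sign) of the chosen rearranging permutation $\si$, so that the inverse to $\phi$ is well-defined on basis elements; and second, verifying the intertwining identity $z(x*y) = \sum (z_{(1)} x) * (z_{(2)} y)$ requires an accurate parity computation inside Lemma~\ref{Lstfo}. Both reduce to bialgebra-level manipulations already packaged in Corollary~\ref{subsuperbi} and Corollary~\ref{coprodeta}, so the proof is largely a bookkeeping exercise once the basis bijection is in place.
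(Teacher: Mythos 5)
Your proposal is correct and follows essentially the same route as the paper: reduce to $\bsi=1$ via Lemma~\ref{symchar}, match the bases of the two sides using the non-overlapping condition together with $(c,d)$-separation and Lemma~\ref{LSingleSep} so that $\phi(x\otimes y)=x*y$, and verify the module-map property with Lemma~\ref{Lstfo}. The only presentational difference is that the paper sidesteps your sign-bookkeeping worry by choosing a total order on $B\times[1,n]\times[1,n]$ under which the lexicographically maximal orbit representatives concatenate directly, whereas you rearrange arbitrary representatives and absorb the resulting signs via Lemma~\ref{LXiZero}; both are fine, since a bijection of bases up to signs still yields an isomorphism.
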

\begin{proof}
We prove the result for $T^A_\a$; the proof for $S^A$ is similar. Since $T^A_\a(n,d)e_\bmu\cong T^A_\a(n,d)e_{\bsi\bmu}$ by Lemma~\ref{symchar}, we may assume that $\bla$ and $\bmu$ are non-overlapping and prove that 
$$T^A_\a(n,c) e_\bla\otimes T^A_\a(n,d)e_\bmu\cong T^A_\a(n,c+d)e_{\bla+\bmu}.$$ 

Set \(B(i):= \{b \in B \mid be_i = b\}\) for all \(i \in I\).
For \(\bnu \in \La^I(n,f)\), let \(\Seq^B_\bnu(n,f)\) be the set of all \((\bb,\br, \bs) \in \Seq^B(n,f)\) such that
\begin{align*}
\#\{k \mid b_k \in B(i), s_k = t\} = \nu^{(i)}_{t},\, \forall i \in I, t \in [1,n].
\end{align*}
Then for all \((\bb,\br,\bs) \in \Seq^B(n,f)\) we have
\begin{align*}
\eta^\bb_{\br, \bs} e_\bnu \neq 0
\iff
\eta^\bb_{\br, \bs} e_\bnu = \eta^\bb_{\br, \bs}
\iff
(\bb, \br, \bs) \in \Seq_\bnu^B(n,f),
\end{align*}
so 
$$\{ \eta^\bb_{\br, \bs} \mid [\bb, \br, \bs] \in \Seq^B_\bnu(n,f)/\Si_f\}$$ 
is a basis for \(T^A_\a(n,f)e_\bnu\). 
By the non-overlapping condition, we may choose a total order on \(B \times [1,n] \times [1,n]\) such that \((b,r,s) > (b', r', s')\) whenever \(b \in B(i)\) and \(b' \in B(j)\) for some \(i,j \in I\) with \(\la^{(i)}_s >0\) and \(\mu^{(j)}_{s'} >0\). Let \(\Seq^B_\bnu(n,f)_0 \subseteq \Seq^B_\bnu(n,f)\) be the subset of triples which are lexicographically maximal under this total order. The set \(\Seq^B_\bnu(n,f)/\Si_f\) is in bijection with \(\Seq^B_\bnu(n,f)_0\), so 
$$\{ \eta^\bb_{\br, \bs} \mid (\bb, \br, \bs) \in \Seq^B_\bnu(n,f)_0\}$$ 
is a basis for \(T^A_\a(n,f)e_\bnu\).

We have a one-to-one correspondence
\[
\Seq^B_\bla(n,c)_0 \times \Seq^B_\bmu(n,d)_0 \leftrightarrow \Seq^B_{\bla + \bmu}(n,c+d)_0, 
\]
given by
\[
((\bb,\br,\bs),(\bb',\br',\bs')) \mapsto (\bb\bb', \br \br', \bs \bs').
\]
Thus we have a \(\k\)-linear isomorphism
\begin{align*}
\phi:T^A_\a(n,c) e_\bla\otimes T^A_\a(n,d)e_\bmu \xrightarrow{\sim} T^A(n,c+d)e_{\bla+\bmu}
\end{align*}
defined via
\begin{align*}
\eta^\bb_{\br, \bs} \otimes \eta^{\bb'}_{\br', \bs'} \mapsto \eta^{\bb \bb'}_{\br \br', \bs \bs'}
\end{align*}
for all \((\bb, \br, \bs) \in \Seq^B_\bla(n,c)_0\) and \((\bb', \br', \bs') \in \Seq^B_\bmu(n,d)_0\). Moreover, in this situation \((\bb\bb', \br \br', \bs \bs')\) is \((c,d)\)-separated by the non-overlapping condition, so \(\eta^{\bb \bb'}_{\br \br', \bs \bs'} = \eta^\bb_{\br, \bs} * \eta^{\bb'}_{\br', \bs'}\) by Lemma \ref{LSingleSep}. Thus we may describe the isomorphism more generally via the star map:
\begin{align*}
\phi:T^A_\a(n,c) e_\bla\otimes T^A_\a(n,d)e_\bmu \xrightarrow{\sim} T^A_\a(n,c+d)e_{\bla+\bmu},\qquad
x \otimes y \mapsto x*y.
\end{align*}
Finally, $\phi$ is an isomorphism of $T^A_\a(n,c+d)$-modules thanks to Lemma~\ref{Lstfo}.
\end{proof}

\subsection{Idempotent truncation}
Let \(e \in \a\) be an idempotent and $\xi^e\in T^A_\a(n,d)$ be the idempotent of (\ref{E070318_2}). Set 
$$\bar{A}:=eAe\quad \text{and}\quad \bar{\a}:=e \a e.
$$
By definition, $\bar A$ is a subalgebra of $A$ and $\bar \a$ is a subalgebra of $\a$. So we can consider $S^{\bar A}(n,d)$ and hence $T^{\bar A}_{\bar\a}(n,d)$ as subalgebras of $S^A(n,d)$.

\begin{Lemma} \label{LTruncation} 
Let \(e \in \a\) be an idempotent. 
Suppose that $B$ is $e$-admissible. 
Then:
\begin{enumerate}
\item[{\rm (i)}] \(S^{\bar A}(n,d) = \xi^e{}S^A(n,d)\xi^e\).
\item[{\rm (ii)}] \(T^{\bar A}_{\bar\a}(n,d) = \xi^e{}T^A_\a(n,d)\xi^e\).
\end{enumerate}
\end{Lemma}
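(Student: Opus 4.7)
The plan is to set up a natural $(\bar A, \bar\a)$-basis for $\bar A$ by restricting $B$, then to compute the action of two-sided multiplication by $\xi^e$ on the basis $\{\xi^\bb_{\br,\bs}\}$ of $S^A(n,d)$, and finally to lift this calculation to the $\eta$-basis of $T^A_\a(n,d)$.

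First I would set $\bar B := \{b \in B \mid ebe = b\}$. By $e$-admissibility this is a $\k$-basis of $\bar A$, and since $e \in \a$, intersecting with the pieces of $B$ yields a decomposition $\bar B = \bar B_\a \sqcup \bar B_\c \sqcup \bar B_\1$ that is readily checked to be an $(\bar A, \bar\a)$-basis, with $\bar\c := \spa(\bar B_\c) \subseteq \bar A_\0$ a complement to $\bar\a$. For~(i), I would then exploit the identity $E^e\,\xi^a_{r,s}\,E^e = \xi^{eae}_{r,s}$, the formula $\xi^e = (E^e)^{\otimes d}$ from~(\ref{E240318}), and the fact that $\xi^e$ is $\Si_d$-invariant (so it commutes through the $\Si_d$-twists in~(\ref{EXiDef})) to conclude that
\[
\xi^e\,\xi^\bb_{\br,\bs}\,\xi^e = \begin{cases} \xi^\bb_{\br,\bs}, & \bb \in \bar B^d, \\ 0, & \text{otherwise.} \end{cases}
\]
Running this over the basis of Lemma~\ref{LBasis'} then exhibits a basis of $\xi^e S^A(n,d) \xi^e$ indexed by $\Seq^{\bar B}(n,d)/\Si_d$, which by the same lemma applied to $(\bar A, \bar\a)$ is a basis of $S^{\bar A}(n,d)$; this gives~(i).

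The only genuine technical point, which I expect to be the main obstacle, is verifying in~(ii) that the integer normalizations $[\bb,\br,\bs]^!_\c$ and $[\bb,\br,\bs]^!_{\bar\c}$ agree when $\bb \in \bar B^d$, so that the element $\eta^\bb_{\br,\bs}$ has an unambiguous meaning in $T^A_\a(n,d)$ and in $T^{\bar A}_{\bar\a}(n,d)$. This will follow from the observation that $[\bb,\br,\bs]^b_{r,s} = 0$ for every $b \in B_\c \setminus \bar B_\c$ in this situation, so the two products collapse to the same value. Granted this, the computation from~(i) applied to $\eta^\bb_{\br,\bs} = [\bb,\br,\bs]^!_\c\,\xi^\bb_{\br,\bs}$ shows that two-sided multiplication by $\xi^e$ is either the identity or zero on $\eta^\bb_{\br,\bs}$ according as $\bb \in \bar B^d$ or not, and Lemma~\ref{LBasis} identifies the surviving elements as the basis of $T^{\bar A}_{\bar\a}(n,d)$, yielding~(ii).
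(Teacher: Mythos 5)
Your proposal is correct and follows essentially the same route as the paper's proof: restrict $B$ to $\bar B=\{b\in B\mid ebe=b\}$, observe that this is an $(\bar A,\bar\a)$-basis, and compute that $\xi^e\,\xi^\bb_{\br,\bs}\,\xi^e$ equals $\xi^\bb_{\br,\bs}$ or $0$ according as $\bb\in\bar B^d$ or not. The one point you make explicit that the paper leaves implicit — that $[\bb,\br,\bs]^!_{\c}=[\bb,\br,\bs]^!_{\bar\c}$ for $\bb\in\bar B^d$, so the $\eta$'s are unambiguous — is a genuine and correctly handled detail needed for part (ii).
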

\begin{proof}
By assumption, we have an 
\((A,\a)\)-basis \(B=B_\a \sqcup B_\c \sqcup B_\1\) such that \(ebe = b\) or \(ebe=0\) for all \(b \in B\). 
Defining
\begin{align*}
\bar{B}_\a&:= \{b \in B_\a \mid ebe =b\},
\\
\bar{B}_\c&:= \{b \in B_\c \mid ebe =b\},
\\
\bar{B}_\1&:=\{b \in B_\1 \mid ebe =b\},
\end{align*}
we have that \(\bar{B}:=\bar{B}_\a \sqcup \bar{B}_\c \sqcup \bar{B}_\1\) is an \((\bar{A},\bar{\a})\)-basis for \(\bar{A}\). Then, for all \((\bb, \br, \bs) \in \Seq^B(n,d)\), we have
\begin{align*}
\xi^e \xi^{\bb}_{\br, \bs} \xi^e &= 
\xi^{e b_1 e, \ldots, e b_d e}_{\br, \bs}=
\begin{cases}
\xi^{\bb}_{\br, \bs}
&
\textup{if }
\bb \in \bar{B}^d\\
0 &
\textup{otherwise},
\end{cases}
\end{align*}
which implies the result.
\end{proof}

For 
$\br\in[1,n]^d$ we define
$$
\om^\br=(\om_1,\dots,\om_n)\in \La(n,d)
$$
via $\om_r:=\{k\in[1,d]\mid r_k=r\}$ for all $r\in [1,n]$. Recall the idempotent $\xi_\la$ from (\ref{E070318_3}).

\begin{Lemma} \label{L2118} 
Let $A$ be unital. If $\la\in \La(n,d)$ and $(\ba,\br,\bs)\in\Seq^H(n,d)$ then 
$$
\xi_\la\xi^\ba_{\br,\bs}=\de_{\la,\om^\br}\xi^\ba_{\br,\bs}\quad \text{and}\quad 
\xi^\ba_{\br,\bs}\xi_\la=\de_{\la,\om^\bs}\xi^\ba_{\br,\bs}.
$$
\end{Lemma}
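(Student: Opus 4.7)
The plan is to prove both equalities by a direct calculation in $M_n(A)^{\otimes d}$, using the expansion of $\xi_\la$ as an unsigned sum of elementary tensors (which is possible since $1_A$ is even) and the definition (\ref{EXiDef}) of $\xi^\ba_{\br,\bs}$.

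First I would unpack $\xi_\la = \xi^{1_A^d}_{\bl^\la,\bl^\la}$. Because $1_A \in A_{\bar 0}$, every sign $(-1)^{\lan\cdot\ran}$ in (\ref{EXiDef}) is $+1$. The $\Si_d$-orbit of $(1_A^d,\bl^\la,\bl^\la)$ is exactly $\{(1_A^d,\br',\br') \mid \om^{\br'}=\la\}$, each occurring once, so
\[
\xi_\la \;=\; \sum_{\br'\in [1,n]^d,\ \om^{\br'}=\la} \xi^{1_A}_{r'_1,r'_1}\otimes\cdots\otimes \xi^{1_A}_{r'_d,r'_d}.
\]
By (\ref{EXiDef}) I also write $\xi^\ba_{\br,\bs}$ as a signed sum indexed by triples $(\ba'',\br'',\bs'')\sim(\ba,\br,\bs)$.

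Next I would compute the product slot by slot in $M_n(A)^{\otimes d}$, using the elementary identity
\[
\xi^{1_A}_{r',r'}\,\xi^{a''}_{r'',s''} \;=\; \de_{r',r''}\,\xi^{a''}_{r'',s''}
\]
from (\ref{EXiProduct}). This kills every cross term except those where the index sequence $\br'$ from the $\xi_\la$-expansion equals the index sequence $\br''$ from the $\xi^\ba_{\br,\bs}$-expansion. Hence
\[
\xi_\la\,\xi^\ba_{\br,\bs} \;=\; \sum_{\substack{(\ba'',\br'',\bs'')\sim(\ba,\br,\bs)\\ \om^{\br''}=\la}} (-1)^{\lan\ba,\br,\bs\ran+\lan\ba'',\br'',\bs''\ran}\,\xi^{a''_1}_{r''_1,s''_1}\otimes\cdots\otimes\xi^{a''_d}_{r''_d,s''_d}.
\]
I now split into cases. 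If $\la\neq\om^\br$, then $\om^{\br''}=\om^\br\neq\la$ for every $(\ba'',\br'',\bs'')\sim(\ba,\br,\bs)$, so the sum is empty and the product is $0$. If $\la=\om^\br$, the condition $\om^{\br''}=\la$ is automatic, so the sum recovers exactly (\ref{EXiDef}) and equals $\xi^\ba_{\br,\bs}$. This gives the first equality.

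The second equality is symmetric: using $\xi^{a''}_{r'',s''}\xi^{1_A}_{s',s'}=\de_{s',s''}\xi^{a''}_{r'',s''}$ in place of the identity above, the same argument forces the condition $\om^{\bs''}=\la$, yielding $\xi^\ba_{\br,\bs}\xi_\la=\de_{\la,\om^\bs}\xi^\ba_{\br,\bs}$. There is no real obstacle; the only bookkeeping to watch is that all signs coming from $\xi_\la$ are trivial because $1_A$ is even, so the signs in the product reduce cleanly to those already present in $\xi^\ba_{\br,\bs}$.
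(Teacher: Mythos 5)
Your proof is correct. The paper disposes of this lemma in one line, ``Immediate from Proposition~\ref{CPR}'': plugging $\ba=1_A^d$, $\bp=\bq=\bl^\la$ into the product rule, the condition $(\ba',\br',\bt)\sim(1_A^d,\bl^\la,\bl^\la)$ forces $\ba'=1_A^d$ and $\br'=\bt$ with $\om^{\bt}=\la$, and $\kappa^{b}_{1_A,c}=\de_{b,c}$ then collapses the sum to the claimed delta. You instead bypass the product rule entirely and multiply the two invariants slot by slot in $M_n(A)^{\otimes d}$, which amounts to re-deriving the relevant special case of Proposition~\ref{CPR} from scratch. Both routes are sound; yours is longer but self-contained, and it makes explicit the two points that the one-line citation leaves implicit, namely that unitality is what gives $\xi^{1_A}_{r',r'}\xi^{a''}_{r'',s''}=\de_{r',r''}\xi^{a''}_{r'',s''}$, and that the evenness of $1_A$ is what guarantees no Koszul signs enter either from the expansion of $\xi_\la$ or from the multiplication in the supertensor product. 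The one bookkeeping point worth stating cleanly (you use it correctly) is that the orbit sum defining $\xi_\la$ runs over the \emph{distinct} elements $(1_A^d,\br',\br')$ with $\om^{\br'}=\la$, each exactly once, since the sum in (\ref{EXiDef}) is over shortest coset representatives for the stabilizer.
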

\begin{proof}
Immediate from Proposition~\ref{CPR}. 
\end{proof}


Let $N\geq n$. Set
$$
\La^N_n(d):=\{\la\in\La(N,d)\mid \la_{n+1}=\dots=\la_N=0\}\subseteq \La(N,d),
$$
and define the idempotent
\begin{equation}\label{E130818}
\xi^N_n(d):=\sum_{\la\in\La^N_n(d)}\xi_\la\in S^A(N,d). 
\end{equation}
If the $(A,\a)$ is unital, then $\xi^N_n(d)\in T^A_\a(N,d)$.

\begin{Lemma} \label{LIdEasy} 
Let $A$ be unital, $N\geq n$ and $(\bb,\br,\bs)\in\Seq^B(N,d)$. 
\begin{enumerate}
\item[{\rm (i)}] We have 
$$
\xi^{N}_n(d)\xi^\bb_{\br,\bs}=\de_{\om^\br\in\La^N_n(d)}\xi^\bb_{\br,\bs}\quad \text{and}\quad  
\xi^\bb_{\br,\bs}\xi^{N}_n(d)=\de_{\om^\bs\in\La^N_n(d)}\xi^\bb_{\br,\bs}.
$$
In particular, the map 
$$
S^A(n,d)\to S^A(N,d),\ \xi^\bb_{\br,\bs}\mapsto \xi^\bb_{\br,\bs}\qquad\big((\bb,\br,\bs)\in\Seq^B(n,d)\big)
$$
is a (unital) algebra isomorphism 
$$S^A(n,d)\iso \xi^{N}_n(d)S^A(N,d)\xi^{N}_n(d).
$$
\item[{\rm (ii)}] 
If $(A,\a)$ is a unital good pair then $\xi^{N}_n(d)\in T^A_\a(N,d)$, 
$$
\xi^{N}_n(d)\eta^\bb_{\br,\bs}=\de_{\om^\br\in\La^N_n(d)}\eta^\bb_{\br,\bs}\quad \text{and}\quad  
\eta^\bb_{\br,\bs}\xi^{N}_n(d)=\de_{\om^\bs\in\La^N_n(d)}\eta^\bb_{\br,\bs}.
$$
In particular, the map 
$$
T^A_\a(n,d)\to T^A_\a(N,d),\ \eta^\bb_{\br,\bs}\mapsto \eta^\bb_{\br,\bs}\qquad\big((\bb,\br,\bs)\in\Seq^B(n,d)\big)
$$
is a (unital) algebra isomorphism 
$$T^A_\a(n,d)\iso \xi^{N}_n(d)T^A_\a(N,d)\xi^{N}_n(d).
$$
\end{enumerate} 
\end{Lemma}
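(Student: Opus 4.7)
The plan is to derive both parts directly from Lemma~\ref{L2118} together with Proposition~\ref{CPR}, with no essential new combinatorics required.

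For part~(i), I would first sum the identity $\xi_\la\xi^\bb_{\br,\bs}=\de_{\la,\om^\br}\xi^\bb_{\br,\bs}$ of Lemma~\ref{L2118} over $\la\in\La^N_n(d)$. Since $\om^\br\in\La(N,d)$ always, and $\om^\br\in\La^N_n(d)$ iff every entry of $\br$ lies in $[1,n]$, this gives the two displayed formulas; the right-multiplication version is symmetric. To construct the isomorphism, I would define
$$
\phi\colon S^A(n,d)\longrightarrow \xi^{N}_n(d)S^A(N,d)\xi^{N}_n(d),\qquad \xi^\bb_{\br,\bs}\longmapsto \xi^\bb_{\br,\bs}
$$
on the basis of Lemma~\ref{LBasis'} indexed by $\Seq^B(n,d)/\Si_d\hookrightarrow\Seq^B(N,d)/\Si_d$. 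By the first displayed identity, the image indeed lies in the idempotent truncation, and by Lemma~\ref{LBasis'} applied to $S^A(N,d)$ together with the just-established truncation formulas, a basis of $\xi^{N}_n(d)S^A(N,d)\xi^{N}_n(d)$ is precisely $\{\xi^\bb_{\br,\bs}\mid [\bb,\br,\bs]\in\Seq^B(N,d)/\Si_d,\ \om^\br,\om^\bs\in\La^N_n(d)\}$, which is exactly the image of $\phi$. Hence $\phi$ is a $\k$-linear bijection.

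The only nontrivial point is that $\phi$ respects multiplication: for $(\ba,\bp,\bq),(\bc,\bu,\bv)\in\Seq^B(n,d)$, Proposition~\ref{CPR} expresses $\xi^\ba_{\bp,\bq}\xi^\bc_{\bu,\bv}$ in $S^A(N,d)$ as a sum over $\ba',\bc'\in H^d$ and $\bt\in[1,N]^d$ with $(\ba',\br,\bt)\sim(\ba,\bp,\bq)$ and $(\bc',\bt,\bs)\sim(\bc,\bu,\bv)$. Because $\sim$ preserves the multiset of entries, $\bq\in[1,n]^d$ forces $\bt\in[1,n]^d$, and similarly $\bu\in[1,n]^d$ forces $\bs\in[1,n]^d$ and $\bp\in[1,n]^d$ forces $\br\in[1,n]^d$. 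Consequently the sets of triples $(\ba',\bc',\bt)$ contributing to the structure constants are identical in $S^A(n,d)$ and in $S^A(N,d)$, so the product agrees under $\phi$. Unitality is immediate: $\phi(1_{S^A(n,d)})=\sum_{\la\in\La(n,d)}\xi_\la=\xi^{N}_n(d)$ after the evident identification $\La(n,d)=\La^N_n(d)$.

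For part~(ii), I would first note that $1_A\in\a$ together with Lemma~\ref{L140117} gives $\xi_\la=\xi^{1_A^d}_{\bl^\la,\bl^\la}\in T^A_\a(N,d)$ for every $\la\in\La(N,d)$, so $\xi^{N}_n(d)\in T^A_\a(N,d)$. The truncation identities for the $\eta$'s follow at once from those for the $\xi$'s via the scalar relation $\eta^\bb_{\br,\bs}=[\bb,\br,\bs]^!_\c\,\xi^\bb_{\br,\bs}$, and the fact that the rescaling factor depends only on $(\bb,\br,\bs)$, not on whether one works in $S^A(n,d)$ or $S^A(N,d)$. The restriction of $\phi$ to $T^A_\a(n,d)$ therefore sends $\eta^\bb_{\br,\bs}\mapsto\eta^\bb_{\br,\bs}$ and maps the basis of Lemma~\ref{LBasis} for $T^A_\a(n,d)$ bijectively onto the analogous basis of $\xi^{N}_n(d)T^A_\a(N,d)\xi^{N}_n(d)$; since it is already a ring homomorphism between the containing Schur algebras and sends the unit $\sum_{\la\in\La(n,d)}\xi_\la$ to $\xi^{N}_n(d)$, it is a unital algebra isomorphism. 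The main bookkeeping obstacle, such as it is, is the verification in part~(i) that no index in $[n+1,N]$ can appear in the intermediate variable $\bt$ of Proposition~\ref{CPR}; everything else is a direct consequence of results already established.
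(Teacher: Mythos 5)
Your proof is correct and takes essentially the same route as the paper: the paper's proof of this lemma is literally ``Follows from Lemma~\ref{L2118}'', and you use exactly that lemma (summed over $\la\in\La^N_n(d)$), supplemented by the observation via Proposition~\ref{CPR} that the intermediate index $\bt$ stays in $[1,n]^d$, which is the right justification for the multiplicativity that the paper leaves implicit.
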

\begin{proof}
Follows from Lemma~\ref{L2118}.  
\end{proof}

\begin{Corollary} \label{CEquivalence} 
If $d\leq n\leq N$, then $V\mapsto \xi^N_n(d) V$ defines equivalences of categories $$\mod{S^A(N,d)}\iso \mod{S^A(n,d)}\quad  \text{and} \quad \mod{T^A_\a(N,d)}\iso \mod{T^A_\a(n,d)}.$$ 
\end{Corollary}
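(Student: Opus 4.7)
The plan is to reduce the statement to the standard Morita-theoretic fact that if $e \in R$ is a full idempotent (i.e.\ $ReR = R$) in a unital $\k$-algebra $R$, then the functor $V \mapsto eV$ is an equivalence $\mod{R} \iso \mod{eRe}$. Combined with the isomorphisms $\xi^N_n(d) S^A(N,d) \xi^N_n(d) \cong S^A(n,d)$ and $\xi^N_n(d) T^A_\a(N,d) \xi^N_n(d) \cong T^A_\a(n,d)$ from Lemma~\ref{LIdEasy}, it will suffice to prove that $\xi := \xi^N_n(d)$ is a full idempotent in $R := S^A(N,d)$, and in $R := T^A_\a(N,d)$.

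The main tool is a family of permutation elements. For $\pi \in \Si_N$ set
\[
w_\pi := \sum_{r=1}^N \xi^{1_A}_{\pi(r), r} \in M_N(A),
\]
and consider $w_\pi^{\otimes d} \in M_N(A)^{\otimes d}$. Since $1_A$ is even, $w_\pi^{\otimes d}$ is $\Si_d$-invariant, hence lies in $S^A(N,d)$. Expanding $1_A = \sum_{b \in B_\a} c_b b$ shows that $w_\pi^{\otimes d}$ is a $\k$-linear combination of basis elements $\xi^\bb_{\br',\bs'}$ with $\bb \in B_\a^d$; for such $\bb$ we have $[\bb,\br',\bs']^!_\c = 1$, so $\xi^\bb_{\br',\bs'} = \eta^\bb_{\br',\bs'}$. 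Therefore $w_\pi^{\otimes d} \in T^A_\a(N,d)$ as well. A direct calculation using~(\ref{EXiProduct}), with no signs introduced (since $1_A$ is even), gives for any $(\bb,\br,\bs) \in \Seq^B(N,d)$ and any $\pi_1, \pi_2 \in \Si_N$
\[
w_{\pi_1}^{\otimes d} \cdot \xi^\bb_{\br,\bs} \cdot w_{\pi_2}^{\otimes d} \;=\; \xi^\bb_{\pi_1\br,\, \pi_2^{-1}\bs},
\]
where $\pi\br := (\pi r_1)\cdots(\pi r_d)$.

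The hypothesis $d \leq n$ enters precisely here: each of $\br, \bs$ has at most $d \leq n$ distinct entries, so we may choose $\pi_1, \pi_2 \in \Si_N$ with $\pi_1\br,\, \pi_2^{-1}\bs \in [1,n]^d$, i.e.\ $\om^{\pi_1\br},\, \om^{\pi_2^{-1}\bs} \in \La^N_n(d)$. By Lemma~\ref{LIdEasy}, $\xi^\bb_{\pi_1\br,\pi_2^{-1}\bs} \in \xi R \xi$. Substituting $\pi_1^{-1}, \pi_2^{-1}$ for $\pi_1, \pi_2$ in the key identity yields
\[
\xi^\bb_{\br,\bs} \;=\; w_{\pi_1^{-1}}^{\otimes d} \cdot \xi^\bb_{\pi_1\br,\pi_2^{-1}\bs} \cdot w_{\pi_2^{-1}}^{\otimes d} \;\in\; R \xi R,
\]
so the spanning set of Lemma~\ref{LBasis'} lies in $R\xi R$, giving $R\xi R = R$ for $R = S^A(N,d)$. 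For $R = T^A_\a(N,d)$, since the factor $[\bb,\br,\bs]^!_\c$ depends only on the multiset of triples $(b_k,r_k,s_k)$ and is therefore preserved under permuting $\br$ and $\bs$ by elements of $\Si_N$, the same identity holds with $\eta$'s in place of $\xi$'s, and the spanning set of Lemma~\ref{LBasis} now lies in $R\xi R$.

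The main obstacle is bookkeeping: verifying that $w_\pi^{\otimes d}$ actually belongs to $T^A_\a(N,d)$ (which uses $1_A \in \a$ in a nontrivial way) and that the multiplication formula $w_{\pi_1}^{\otimes d} \xi^\bb_{\br,\bs} w_{\pi_2}^{\otimes d} = \xi^\bb_{\pi_1\br, \pi_2^{-1}\bs}$ really is sign-free — both of which follow from the evenness of $1_A$. Once $R\xi R = R$ is established, the category equivalence is a formal consequence.
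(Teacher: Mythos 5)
Your proof is correct and follows essentially the same route as the paper: reduce via Lemma~\ref{LIdEasy} to showing $\xi^N_n(d)$ is a full idempotent, then use $d\leq n$ to conjugate by permutation elements (your $w_\pi^{\otimes d}$ are exactly the paper's $\xi_\sigma$ with $f=1_A$). The only difference is that the paper conjugates just the weight idempotents $\xi_\lambda$ summing to the identity, rather than every basis element $\xi^{\bb}_{\br,\bs}$, which spares the bookkeeping about signs and the invariance of $[\bb,\br,\bs]^!_{\c}$ under relabeling.
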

\begin{proof}
To prove the result for $S^A$, in view of Lemma~\ref{LIdEasy}, we just have to prove that $$S^A(N,d)\xi^N_n(d)S^A(N,d)=S^A(N,d).$$ The last equality will follow if we can show that each $\xi_\la$ with $\la\in\La(N,d)$ is in the left hand side. By the assumption that $d\leq n$, there is $\si\in\Si_n$ such that all non-zero parts of $\si\la$ are among its first $n$ parts, and so 
$$
\xi_{\si\la}=\xi_{\si\la}\xi^N_n(d)\in S^A(N,d)\xi^N_n(d)S^A(N,d).$$ 
By Lemma~\ref{symchar}, we have that $\xi_\si\xi_\la\xi_\si^{-1}=\xi_{\si\la}$, or 
$$\xi_\la=\xi_\si^{-1}\xi_{\si\la}\xi_\si\in S^A(N,d)\xi^N_n(d)S^A(N,d),$$ and we are done. 
The proof for $T^A_\a$ is the same, using the fact that $\xi_\si\in T^A_\a(n,d)$. 
\end{proof}

\begin{Remark} 
{\rm 
Let $d\leq n$ and $\om:=(1,\dots,1,0,\dots,0)\in \La(n,d)$. It is proved in \cite[Lemma 5.15]{EK1} that the idempotent truncation $\xi_\om S^A(n,d)\xi_\om$ is naturally isomorphic to the wreath product superalgebra $A\wr \Si_d$. If the pair $(A,\a)$ is unital, we have $\xi_\om\in T^A_\a(n,d)$ and it is easy to see that $\xi_\om T^A_\a(n,d)\xi_\om=\xi_\om S^A(n,d)\xi_\om$. 
}
\end{Remark}

\subsection{Tensor product, truncation and induction}
In this subsection, we drop indices and write $T(n,d)$ for $T^A_\a(n,d)$ and $S(n,d)$ for $S^A(n,d)$. 
Throughout the subsection, we fix $a\in\Z_{\geq 1}$, a composition $\de=(d_1,\dots,d_a)\in\La(a,d)$, and a composition $\nu=(n_1,\dots,n_a)\in\La(a,n)$ with $n_1,\dots,n_a>0$.  We denote 
$$
T(n,\de):=T(n,d_1)\otimes\dots\otimes T(n,d_a)\quad\text{and}\quad
T(\nu,\de):=T(n_1,d_1)\otimes\dots\otimes T(n_a,d_a).
$$
Let 
$$\nabla^{(a)}:=(\id^{\otimes a-2}\otimes \nabla)\circ \dots \circ(\id\otimes \nabla)\circ\nabla\,:\,T(n,d)\to \bigoplus_{\ga\in\La(a,d)}T(n,\ga)$$
be the iterated coproduct. Projecting onto the summand $T(n,\de)$ yields the algebra homomorphism 
$$
\nabla_\de:T\to T(n,\de).
$$
Using $\nabla_\de$, we can consider $T(n,\de)$ as a $(T(n,d),T(n,\de))$-bimodule, so that 
\begin{equation}\label{E090818_3}
U_1\otimes\dots\otimes U_a\cong 
T(n,\de)
\otimes_{T(n,\de)}(U_1\boxtimes\dots\boxtimes U_a)
\end{equation}
for $U_1\in\mod{T(n,d_1)},\,\dots,\, U_a\in\mod{T(n,d_a)}$.

Recall the idempotent 
$
\xi^N_n(d)\in T(N,d)
$
from (\ref{E130818}). 
The first result relates tensor product and truncation. 

\begin{Proposition}
Let $n\leq N$ and $V_k\in\mod{T(N,d_k)}$ for $k=1,\dots,a$. Then there is a functorial  isomorphism of $T(n,d)$-modules
$$
\xi^N_n(d)(V_1\otimes\dots\otimes  V_a)\cong (\xi^N_n(d_1) V_1)\otimes \dots\otimes (\xi^N_n(d_a)V_a).
$$
Similar statement holds for $S$ in place of $T$. 
\end{Proposition}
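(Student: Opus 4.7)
The plan is to reduce the whole statement to the single coproduct identity
\[
\nabla_\delta(\xi^N_n(d)) \,=\, \xi^N_n(d_1) \otimes \dots \otimes \xi^N_n(d_a)
\]
in $T(N,\delta)$, and then let the bialgebra structure do the work.

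First I would establish this identity. By (\ref{E240318}), $\xi^N_n(d) = E^{\otimes d}$, where $E := \sum_{r=1}^n \xi^{1_A}_{r,r} \in M_N(A)$ is an idempotent supported on the first $n$ diagonal entries. Applying the explicit pure-tensor coproduct formula from \S\ref{SSCoproduct} and iterating $a-1$ times yields
\[
\nabla^{(a)}(\xi^N_n(d)) \,=\, \sum_{\gamma \in \La(a,d)} E^{\otimes \gamma_1} \otimes \dots \otimes E^{\otimes \gamma_a} \,=\, \sum_{\gamma \in \La(a,d)} \xi^N_n(\gamma_1) \otimes \dots \otimes \xi^N_n(\gamma_a),
\]
and projection onto the $T(N,\delta)$-summand keeps only the term $\gamma = \delta$, giving the identity.

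With this identity in hand, and using that $\nabla_\delta$ is an algebra homomorphism by the bialgebra structure of Corollary~\ref{subsuperbi}, for any $v_k \in V_k$ one has
\[
\xi^N_n(d) \cdot (v_1 \otimes \dots \otimes v_a) \,=\, (\xi^N_n(d_1) v_1) \otimes \dots \otimes (\xi^N_n(d_a) v_a),
\]
where the left-hand side uses the $T(N,d)$-action induced by $\nabla_\delta$. This immediately produces the claimed $\k$-module equality
\[
\xi^N_n(d)(V_1 \otimes \dots \otimes V_a) \,=\, (\xi^N_n(d_1) V_1) \otimes \dots \otimes (\xi^N_n(d_a) V_a).
\]

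Finally, I would verify that the two $T(n,d)$-module structures agree. Under the isomorphism of Lemma~\ref{LIdEasy}, every $x \in T(n,d)$ equals $\xi^N_n(d) x \xi^N_n(d)$ inside $T(N,d)$; applying the algebra map $\nabla_\delta$ and using the key identity twice shows that $\nabla_\delta(x)$ lies in $T(n,d_1) \otimes \dots \otimes T(n,d_a)$ and coincides with the coproduct computed intrinsically inside $T^A_\a(n)$ (both are restrictions of the same universal coproduct on $\bigoplus_d M_N(A)^{\otimes d}$). Hence the two actions match. Functoriality is immediate since the isomorphism is induced by the $\k$-linear inclusions $\xi^N_n(d_k)V_k \hookrightarrow V_k$, and the same argument works verbatim with $S$ in place of $T$. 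The only real content is the coproduct identity in the first step; everything else is purely formal, so I do not expect a serious obstacle.
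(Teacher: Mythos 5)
Your proposal is correct and follows essentially the same route as the paper: the entire argument rests on the identity $\nabla_\delta(\xi^N_n(d))=\xi^N_n(d_1)\otimes\cdots\otimes\xi^N_n(d_a)$, which the paper obtains from Lemma~\ref{coprodchar} and you obtain equivalently by writing $\xi^N_n(d)=E^{\otimes d}$ and applying the pure-tensor coproduct formula. Your extra check that the two $T(n,d)$-actions agree is a point the paper leaves implicit, but it is correct and does not change the argument.
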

\begin{proof}
Note using Lemma~\ref{coprodchar} that 
\begin{align*}
\nabla_{\de}(\xi^N_n(d))= \sum_{\la\in\La^N_n(d)}\nabla_{\de}(\xi_\la)
&= 
\sum_{ \mu_1 \in \La^N_n(d_1),\dots, \mu_a \in \La^N_n(d_a)} \xi_{\mu_1} \otimes\dots\otimes  \xi_{\mu_a}
\\
&=\xi^N_n(d_1)\otimes\dots\otimes  \xi^N_n(d_a).
\end{align*}
Therefore
$$
\xi^N_n(d)(V_1\otimes\dots\otimes  V_a)= (\xi^N_n(d_1) V_1)\otimes\dots\otimes  (\xi^N_n(d_a)V_a),
$$
and the result follows.
\end{proof}

In the rest of this subsection, we concentrate on $T(n,d)$, although similar results hold for $S(n,d)$. 
We now define certain induction operation and relate it to tensor product. Set
$$
m_k:=\sum_{r=1}^{k-1}n_r\qquad (k=1,\dots,a+1).
$$

Denote
$$
\La(\nu;\de)=\{\la\in\La(n,d)\mid \sum_{r=m_k+1}^{m_{k+1}}\la_{r}=d_k\ \text{for all $k=1,\dots,a$}\},
$$
and define the idempotent
$$
\xi(\nu;\de):=\sum_{\la\in\La(\nu;\de)}\xi_\la\in T(n,d).
$$

For $\br=r_1\cdots r_t\in\Z^t$ and $m\in\Z_{\geq 0}$, we define
$$
\br(+m):=(r_1+m)\cdots (r_t+m)\in\Z^t.
$$
Now let $\br^k\in [1,n_k]^{d_k}$ for $k=1,\dots,a$, and $\br:=\br^1\cdots\br^a\in[1,n]^d$. We define
$$
\br(+\nu):=\br^1(+m_1)\br^2(+m_2)\cdots\br^a(+m_a)\in [1,n]^{d}.
$$
If $(\bb^k,\br^k,\bs^k)\in\Seq^B(n_k,d_k)$ for $k=1,\dots,a$, and $\bb:=\bb^1\cdots\bb^a$, $\br:=\br^1\cdots\br^a$, $\bs:=\bs^1\cdots\bs^a$, then  $(\bb,\br(+\nu),\bs(+\nu))\in\Seq^B(n,d)$ is $\de$-separated, and so by Lemma~\ref{LSingleSep}, we have
\begin{equation}\label{E090818}
\eta^\bb_{\br(+\nu),\bs(+\nu)}=\eta^{\bb^1}_{\br(+m_1),\bs(+m_1)}
*\eta^{\bb^2}_{\br(+m_2),\bs(+m_2)}*\cdots*\eta^{\bb^a}_{\br(+m_a),\bs(+m_a)}.
\end{equation}
Similarly, $(\bb,\br,\bs(+\nu))\in\Seq^B(n,d)$ is $\de$-separated, and 
\begin{equation}\label{E090818_2}
\eta^\bb_{\br,\bs(+\nu)}=\eta^{\bb^1}_{\br,\bs(+m_1)}
*\eta^{\bb^2}_{\br,\bs(+m_2)}*\cdots*\eta^{\bb^a}_{\br,\bs(+m_a)}.
\end{equation}

\begin{Lemma} \label{TNuDeHom}
The map
$$
T(\nu,\de)\to T(n,d),\ \eta^{\bb^1}_{\br^1,\bs^1}\otimes\dots\otimes \eta^{\bb^a}_{\br^a,\bs^a}\mapsto 
\eta^\bb_{\br(+\nu),\bs(+\nu)}
$$
is an algebra homomorphism, mapping the identity element  of $T(\nu,\de)$  onto $\xi(\nu;\de)$. 
\end{Lemma}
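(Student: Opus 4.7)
The plan is to verify both assertions by reducing everything to the separation identity (\ref{E090818}) and Lemma~\ref{LSep}. Since $T(\nu,\de) = T(n_1,d_1) \otimes \cdots \otimes T(n_a,d_a)$ has a basis consisting of pure tensors of $\eta$'s (Lemma~\ref{LBasis}), it suffices to verify both claims on such elements. Throughout, the key structural observation is that after the shift $\br \mapsto \br(+\nu)$, the row/column indices of each component $\eta^{\bb^k}_{\br^k(+m_k),\bs^k(+m_k)}$ are confined to the interval $[m_k+1,m_{k+1}]$, and these intervals for different $k$ are pairwise disjoint.

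For the claim about the identity, the unit of $T(\nu,\de)$ is $1 \otimes \cdots \otimes 1$ with $1 = \sum_{\la\in\La(n_k,d_k)} \xi_\la$ in the $k$-th tensor factor. Applying (\ref{E090818}) to each summand, the image is $\sum \xi^{1_A^d}_{\bl^{\la^1}(+m_1)\cdots \bl^{\la^a}(+m_a), \, \bl^{\la^1}(+m_1) \cdots \bl^{\la^a}(+m_a)}$ summed over $(\la^1,\dots,\la^a) \in \La(n_1,d_1)\times\cdots\times\La(n_a,d_a)$. The assignment $(\la^1,\dots,\la^a) \mapsto \la^1(+m_1)\cdots\la^a(+m_a)$ is a bijection onto $\La(\nu;\de)$, so the image is exactly $\sum_{\la\in\La(\nu;\de)}\xi_\la = \xi(\nu;\de)$.

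For multiplicativity, set $x = \eta^{\bb^1}_{\br^1,\bs^1} \otimes \cdots \otimes \eta^{\bb^a}_{\br^a,\bs^a}$, $y = \eta^{\bc^1}_{\bt^1,\bu^1} \otimes \cdots \otimes \eta^{\bc^a}_{\bt^a,\bu^a}$, and denote the shifted components by $X_k := \eta^{\bb^k}_{\br^k(+m_k),\bs^k(+m_k)}$ and $Y_k := \eta^{\bc^k}_{\bt^k(+m_k),\bu^k(+m_k)}$. By (\ref{E090818}) we have $\Phi(x) = X_1 * \cdots * X_a$ and $\Phi(y) = Y_1 * \cdots * Y_a$. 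To compute the product $\Phi(x)\Phi(y)$, I invoke Lemma~\ref{LSep}: the hypothesis of that lemma is satisfied because, in any cross-term $X_{k}\cdot Y_{l}$ with $k\ne l$ arising in the shuffle expansion, a column index of $X_k$ (which lies in $[m_k+1,m_{k+1}]$) must match a row index of $Y_l$ (which lies in $[m_l+1,m_{l+1}]$) via the Kronecker delta of (\ref{EXiProduct}), and this forces the term to vanish. Consequently,
\[
\Phi(x)\,\Phi(y) \;=\; \pm\,(X_1 Y_1) * \cdots * (X_a Y_a).
\]
Each product $X_k Y_k$ takes place within the subalgebra of $T(n,d_k)$ supported on indices in $[m_k+1,m_{k+1}]$, which, by a shifted version of Lemma~\ref{LIdEasy}(ii), is canonically isomorphic to $T(n_k,d_k)$ via the index shift $r\mapsto r+m_k$. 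Thus $X_k Y_k$ is precisely the shifted image of $\eta^{\bb^k}_{\br^k,\bs^k}\,\eta^{\bc^k}_{\bt^k,\bu^k}$, and expanding this product in the $\eta$-basis of $T(n_k,d_k)$ and re-applying (\ref{E090818}) identifies $(X_1 Y_1)*\cdots*(X_a Y_a)$ with $\Phi\bigl((x_1 y_1)\otimes\cdots\otimes(x_a y_a)\bigr)$, up to a sign.

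The main obstacle is reconciling this sign with the supersign built into the multiplication on the super-tensor-product $T(\nu,\de)$, namely the rule $(u\otimes v)(u'\otimes v') = (-1)^{\bar v \cdot \bar u'}(uu')\otimes(vv')$. This reconciliation is a bookkeeping exercise: the signs in Lemma~\ref{LSep} come from reordering odd tensor factors across blocks via the $\Si_d$-action (\ref{E171116}), while the supersigns on $T(\nu,\de)$ arise from commuting odd blocks past each other. Tracking the $\langle\si;\bb\rangle$ factors from (\ref{EXiDef}) through the sequence of $*$-products shows the two agree, completing the verification.
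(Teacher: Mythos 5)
Your proof is correct in outline and reaches the same reduction as the paper's: by (\ref{E090818}) both images are $*$-products of blockwise factors with pairwise disjoint index supports, so the product collapses to the blockwise products $*$-multiplied together, and the identity element is handled by the bijection $(\la^1,\dots,\la^a)\mapsto\la^1(+m_1)\cdots\la^a(+m_a)$ onto $\La(\nu;\de)$, exactly as intended. The difference is the tool used for the collapse. The paper cites Lemma~\ref{Lstfo}, the compatibility of the algebra product with the $*$-product via the coproduct: the cross terms $x_{(2)}u_{(1)}$, $y_{(1)}z_{(2)}$ with mismatched blocks vanish for the same disjointness reason you give, and -- crucially -- Lemma~\ref{Lstfo} carries explicit Koszul signs, so the surviving term is $(X_1Y_1)*\cdots*(X_aY_a)$ with precisely the sign built into the super-tensor-product multiplication on $T(\nu,\de)$. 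You instead invoke Lemma~\ref{LSep}; your verification of its vanishing hypothesis (distinct shuffles force a column index from block $k$ to meet a row index from block $l\neq k$) is right, and passing from $\xi$'s to $\eta$'s is harmless since the factorials $[\cdot]^!_{\c}$ factor over the blocks. The trade-off is that Lemma~\ref{LSep} only yields an unspecified $\pm$, so the identification of that sign with the supersign of $T(\nu,\de)$ -- which is exactly what upgrades ``homomorphism up to sign'' to ``algebra homomorphism'' -- is asserted rather than carried out in your write-up. That is the one step your route leaves open; it can be closed either by re-extracting the sign from the proof of Lemma~\ref{LSep} (via Lemma~\ref{LSingleSep} and (\ref{E171116})) or, more cheaply, by switching to Lemma~\ref{Lstfo} as the paper does, which settles the sign for free.
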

\begin{proof}
This follows easily from (\ref{E090818}) and Lemma~\ref{Lstfo}. 
\end{proof}

In view of the lemma, we consider $$T(n,d)\xi(\nu,\de)$$ as a $(T(n,d),T(\nu,\de))$-bimodule. Given a $T(\nu,\de)$-module $V$, we now define
$$
I^{n,d}_{\nu,\de}V:=
T(n,d)\xi(\nu;\de)\otimes_{T(\nu,\de)} V.
$$
This yields the functor 
$$
I^{n,d}_{\nu,\de}:\mod{T(\nu,\de)}\to\mod{T(n,d)}.
$$
The following proposition generalizes \cite[2.7]{BKlr}.

\begin{Proposition}
Suppose that  for all $k=1,\dots,a$ we have $d_k\leq n_k$ and let $V_k\in \mod{T(n,d_k)}$. Then we have a functorial isomorphism 
$$
V_1\otimes \dots\otimes V_a\cong 
I^{n,d}_{\nu;\de}\left(
(\xi^n_{n_1}V_1)\boxtimes\dots\boxtimes  (\xi^n_{n_a}V_a)\right).
$$
\end{Proposition}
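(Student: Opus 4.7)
The plan is to use the Morita-type equivalence of Corollary~\ref{CEquivalence} to reduce the statement to a bimodule isomorphism, and then to exhibit this isomorphism explicitly using the shuffle product.

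Since $d_k \leq n_k$, Corollary~\ref{CEquivalence} gives quasi-inverse equivalences $\mod{T(n,d_k)} \simeq \mod{T(n_k,d_k)}$, one direction being $V_k \mapsto W_k := \xi^n_{n_k} V_k$ and the other $W_k \mapsto T(n,d_k)\xi^n_{n_k}(d_k) \otimes_{T(n_k,d_k)} W_k$, where we use Lemma~\ref{LIdEasy} to identify $T(n_k,d_k) \cong \xi^n_{n_k}(d_k) T(n,d_k) \xi^n_{n_k}(d_k)$. Tensoring these identifications over $\k$ yields
\[
V_1 \otimes \cdots \otimes V_a \cong \Big(\bigotimes_{k=1}^a T(n,d_k)\xi^n_{n_k}(d_k)\Big) \otimes_{T(\nu,\de)} (W_1 \boxtimes \cdots \boxtimes W_a),
\]
with $T(n,d)$ acting on the first factor via $\nabla^{(a)} : T(n,d) \to T(n,\de)$. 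So it suffices to produce a $(T(n,d), T(\nu,\de))$-bimodule isomorphism
\[
\Psi : \bigotimes_{k=1}^a T(n,d_k)\xi^n_{n_k}(d_k) \iso T(n,d)\xi(\nu;\de).
\]

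To construct $\Psi$, fix permutations $\sigma_k \in \Si_n$ with $\sigma_k(r) = r + m_k$ for $r \in [1,n_k]$, and consider the associated elements $\xi_{\sigma_k} \in T(n,d_k)$ of Section~\ref{SSIdCh} (taking $e_0 = 1_A$). Since $x_k \in T(n,d_k)\xi^n_{n_k}(d_k)$ has column indices in $[1,n_k]$, the element $x_k \xi_{\sigma_k}$ has column indices in $[m_k+1, m_k+n_k]$. Define
\[
\Psi(x_1 \otimes \cdots \otimes x_a) := (x_1\xi_{\sigma_1}) * (x_2\xi_{\sigma_2}) * \cdots * (x_a\xi_{\sigma_a}).
\]
On basis elements, Lemma~\ref{LSingleSep} and equation~(\ref{E090818_2}) compute this (up to sign) as $\eta^\bb_{\br,\bs(+\nu)}$, which lies in $T(n,d)\xi(\nu;\de)$ because $\omega^{\bs(+\nu)} \in \La(\nu;\de)$. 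A direct basis comparison (ranging over $\Si_d$-orbits of triples) then shows $\Psi$ is a $\k$-module isomorphism.

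Finally, we verify $\Psi$ is a bimodule map. The right $T(\nu,\de)$-action compatibility stems from the observation that the two embeddings $T(n_k, d_k) \hookrightarrow T(n, d_k)$---one from Lemma~\ref{LIdEasy} (rows and columns in $[1,n_k]$) and one from Lemma~\ref{TNuDeHom} (rows and columns in $[m_k+1, m_k+n_k]$)---differ by conjugation by $\xi_{\sigma_k}$, which combined with Lemma~\ref{LSingleSep} makes the right compatibility immediate. For the left $T(n,d)$-action compatibility, the superbialgebra structure of Corollary~\ref{subsuperbi} together with Lemma~\ref{Lstfo} yields a formula for $z \cdot \big((x_1\xi_{\sigma_1}) * \cdots * (x_a\xi_{\sigma_a})\big)$ in terms of iterated $*$-products involving $\nabla^{(a)}(z)$; the block-disjointness of the column supports of the various $x_k\xi_{\sigma_k}$ forces all ``cross-block'' terms to vanish, yielding the required identity $z \cdot \Psi(x_1 \otimes \cdots \otimes x_a) = \Psi\big(\nabla^{(a)}(z) \cdot (x_1 \otimes \cdots \otimes x_a)\big)$. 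The main obstacle is this last step: the identity is not a direct consequence of the bialgebra axioms (which describe $\nabla$ of a $*$-product, not the matrix product of a $*$-product), and the block-disjointness of the column index ranges is what eliminates the extra terms produced by Lemma~\ref{Lstfo}.
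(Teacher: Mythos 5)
Your proof follows essentially the same route as the paper's: reduce via Corollary~\ref{CEquivalence} and the bimodule description (\ref{E090818_3}) to an explicit $(T(n,d),T(\nu,\de))$-bimodule isomorphism $\bigotimes_{k=1}^a T(n,d_k)\xi^n_{n_k}(d_k)\iso T(n,d)\xi(\nu;\de)$, which on basis elements is $\eta^{\bb^1}_{\br^1,\bs^1}\otimes\cdots\otimes\eta^{\bb^a}_{\br^a,\bs^a}\mapsto\pm\,\eta^{\bb}_{\br,\bs(+\nu)}$ and whose bimodule property is checked exactly as in the paper via (\ref{E090818_2}), Lemma~\ref{Lstfo} and Lemma~\ref{TNuDeHom}. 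The only quibble is that right multiplication by $\xi_{\sigma_k}$ sends a column index $s$ to $\sigma_k^{-1}(s)$, so you want the inverse of the permutation you specified; this is cosmetic and does not affect the argument.
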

\begin{proof}
In this proof $k$ always runs throgh $\{1,\dots,a\}$. 
Denote $T:=T(n,d)$, $T_k:=T(n,d_k)$, $T'_k:=T(n_k,d_k)$, so that $T(n,\de)=T_1\otimes \dots\otimes T_a$ and $T(\nu,\de)=T_1'\otimes \dots\otimes T_a'$.

Since $V_k\mapsto \xi^n_{n_k}V_k$ is an equivalence by Corollary~\ref{CEquivalence}, denoting $W_k:=\xi^n_{n_k} V_k$, we have $V_k\cong T_k\xi^N_{n_k} \otimes_{T_k'}W_k$, and it suffices to prove 
\begin{equation}\label{E080818}
(T_1\xi^n_{n_1} \otimes_{T_1'}W_1)\otimes\dots\otimes  (T_a\xi^n_{n_a} \otimes_{T_a'}W_a)\cong 
I^{n,d}_{\nu;\de}\left(
W_1\boxtimes\dots\boxtimes W_a\right).
\end{equation}
We now apply (\ref{E090818_3}) with $U_k:=T_k\xi^n_{n_k} \otimes_{T_k'}W_k$ to see that the left hand side of (\ref{E080818}) is obtained from $W_1\boxtimes \dots\boxtimes W_a$ by tensoring with the $(T,T(\nu,\de))$-bimodule
$$
M':=M\otimes_{T(n,\de)}(T_1\xi^n_{n_1} \otimes\dots\otimes  T_a\xi^n_{n_a})\cong T_1\xi^n_{n_1} \otimes\dots\otimes  T_a\xi^n_{n_a}.
$$
On the other hand, the right hand side of (\ref{E080818}) is obtained from $W_1\boxtimes \dots\boxtimes W_a$ by tensoring with the $(T,T(\nu,\de))$-bimodule $T\xi(\nu;\de)$. So we just need to prove that the $(T,T(\nu,\de))$-bimodules $M'$ and $T\xi(\nu;\de)$ are isomorphic.

Define
\begin{align*}
\Seq^B((n,n_k),d_k):= \{ (\bb, \br, \bs) \in \Seq^B(n,d_k) \mid \bs \in [1,n_k]^{d_k}\}.
\end{align*}
Then, for all \((\bb,\br,\bs) \in \Seq^B(n,d_k)\), we have that 
\begin{align*}
\eta^{\bb}_{\br,\bs} \xi^n_{n_k} = 
\begin{cases}
\eta^{\bb}_{\br,\bs} & \textup{if } (\bb,\br,\bs) \in \Seq^B((n,n_k),d_k) \\
0 & \textup{otherwise}.
\end{cases}
\end{align*}
Therefore
\(
\{\eta^\bb_{\br,\bs} \mid [\bb,\br,\bs] \in \Seq^B((n,n_k),d_k)/\Si_{d_k} \}
\)
is a basis for \(T_k\xi_{n_k}^n\), and we may define a \(\k\)-linear map
\begin{align*}
\varphi: M' \to T\xi(\nu;\de),
\qquad
\eta^{\bb^1}_{\br^1, \bs^1} \otimes \cdots \otimes \eta^{\bb^a}_{\br^a, \bs^a}
\mapsto
\eta^{\bb}_{\br, \bs(+\nu)}\xi(\nu;\de) = \eta^{\bb}_{\br, \bs(+\nu)},
\end{align*}
where \([\bb^k,\br^k,\bs^k] \in \Seq^B((n,n_k),d_k)/\Si_{d_k}\) for all \(k\), \(\bb = \bb^1 \cdots \bb^a\), \(\br= \br^1 \cdots \br^a\), and \(\bs=\bs^1 \cdots \bs^a\). It follows from (\ref{E090818_2}) and Lemmas~\ref{Lstfo} and ~\ref{TNuDeHom}  that \(\varphi\) is a map of \((T,T(\nu,\de))\)-bimodules, and it remains to prove that $\phi$ is an isomorphism. 

For \(\bs \in [1,n]^d\), we define \(\beta(\bs) \in \La(n,d)\) via
$
\beta(\bs)_t:=\#\{u \in [1,d] \mid s_u = t\},
$
for all \(t \in [1,n]\).
Then for \((\bb,\br,\bs) \in \Seq^B(n,d)\) we have 
\begin{align*}
\eta^\bb_{\br, \bs} \xi(\nu; \de) = 
\begin{cases}
\eta^\bb_{\br, \bs} & \textup{if } \beta(\bs) \in \La(\nu; \de);\\
0 & \textup{otherwise}.
\end{cases}
\end{align*}
Thus, setting
\begin{align*}
\Seq^B(\nu;\de):= \{ (\bb, \br, \bs) \in \Seq^B(n,d) \mid \beta(\bs) \in \La(\nu;\de)\},
\end{align*}
we have that
\(
\{ \eta^\bb_{\br, \bs} \mid [\bb, \br, \bs] \in \Seq^B(\nu;\de)/\Si_d\}
\)
is a basis for \(T \xi(\nu; \de)\). It is straightforward to check that the map
\begin{align*}
\prod_{k=1}^a (\Seq^B((n,n_k),d_k)/\Si_{d_k}) &\to \Seq^B(\nu;\de)/\Si_d,\\
([\bb^1,\br^1,\bs^1], \ldots, [\bb^a,\br^a,\bs^a]) &\mapsto [\bb,\br,\bs(+\nu)],
\end{align*}
where \(\bb = \bb^1 \cdots \bb^a\), \(\br =\br^1 \cdots \br^a\), \(\bs=\bs^1 \cdots \bs^a\), is a well-defined bijection. Therefore the \(\varphi\) restricts to a bijection (up to signs) of bases, and so \(\varphi\) is an isomorphism.
\end{proof}

\subsection{Examples}\label{SSExamples} We finish this section with some examples.

\subsubsection{Schur superalgebras.}  Let \(A=M_{p|q}(\k)\). For \(r,s \in [1,p+q]\), let \(E_{r,s}\) be the matrix with \(1\) in the \((r,s)\)-th component, and zeros elsewhere. We have
\begin{align*}
\overline{E}_{r,s} := 
\begin{cases}
\bar 0 & \textup{if }r,s \leq p \textup{ or } r,s >p,\\
\bar 1 & \textup{otherwise}.
\end{cases}
\end{align*}
It follows from \cite[Theorem 2]{MZ} that 
the Schur superalgebra $S(p|q,d)$ is not quasi-hereditary (over $\k$) unless $q=0$ or $p=q=d=1$. 
Choosing $\a:=\spa({E}_{r,s}\mid r,s \leq p)$ we get a non-unital good pair $(A,\a)$ and the corresponding non-unital generalized Schur superalgebra  $T^A_\a(n,d)$. We will prove in \cite{KM} that $T^A_\a(n,d)$ is quasihereditary if $d\leq n$.

\subsubsection{Trivial extension algebras}\label{SSTE}
Let $C$ be a unital superalgebra which is free of finite rank as a $\k$-supermodule. The dual $C^*:=\Hom_\k (V,\k)$ 
is a $\k$-supermodule in a natural way. 
We have the pairing $\langle \cdot, \cdot \rangle$ between $C$ and $C^*$ with 
$
\lan  a,\al\ran=\lan \al, a\ran :=\al(a)
$
for $a\in C$ and $\al\in C^*$. 
We consider $C^*$ as a $C$-bimodule with respect to the dual regular actions  given by
\begin{equation*}\label{EActions}
\langle \al\cdot a,b\rangle=\langle \al,a b\rangle,\ \langle b,a\cdot \al\rangle=\langle ba,\al\rangle\qquad(a,b\in C,\ \al\in C^*).
\end{equation*}
The {\em trivial extension superalgebra} $\TE(C)$ of $C$ is 
$\TE(C)=C\oplus C^*$ as a $\k$-supermodule, with multiplication 
\begin{equation*}\label{ETrivInt}
(a,\al)(b,\be)=(ab,a\cdot \be+\al\cdot b)\qquad (a,b\in C,\ \al,\be\in C^*).
\end{equation*}
Note that $C_\0$ is a unital subalgebra of  $\TE(C)_\0$. The pair $(\TE(C),C_\0)$ is an example of a unital good pair $(A,\a)$. In this case it is natural to take $\c=C^*_0$. The basis $B_\a$ is a basis of $C_\0$, the basis $B_\1$ is a basis of $\TE(C)_\1=C_\1\oplus C_\1^*$, and the basis $B_\c$ is a basis of $C_\0^*$. 

Let $n\in\Z_{>0}$. 
For $\al\in C^*$ and $1\leq r,s\leq n$, we have the element $x_{r,s}^{\al}\in M_n(C)^*$ defined from
\begin{equation*}\label{EXRSAl}
\langle x_{r,s}^{\al},\xi_{t,u}^a\rangle=\de_{r,t}\de_{s,u}\langle \al,a\rangle\qquad (1\leq t,u\leq n,\ a\in C).
\end{equation*}
It is pointed out in \cite[Lemma 3.21]{EK1} that there is an isomorphism of superalgebras
\begin{equation}
M_n(\TE(C))\iso \TE(M_n(C)),\ \xi_{r,s}^{(a,\al)}\mapsto (\xi_{r,s}^a, x_{s,r}^\al).
\end{equation}

By \cite[Theorem 4.27]{EK1}, we have an explicit isomorphism 
\begin{equation}\label{EPrimIso}
S^{\TE(C)}(n,d)\cong {}'D^C(n,d),
\end{equation}
where ${}'D^C(n,d)$ is the {\em divided power Turner's double algebra} defined in \cite[\S\S4,6]{EK1}. It now follows from Theorem~\ref{TGen} and \cite[Theorem 4.31]{EK1} that under the isomorphism (\ref{EPrimIso}) the subalgebra $T^{\TE(C)}_{C_\0}(n,d)\subseteq S^{\TE(C)}(n,d)$ gets identified with the {\em Turner double} subalgebra $D^C(n,d)\subseteq {}'D^C(n,d)$ of \cite[\S\S4,6]{EK1}:
\begin{equation}\label{EIso}
T^{\TE(C)}_{C_\0}(n,d)\cong D^C(n,d).
\end{equation}

\subsubsection{Zigzag algebras}\label{SSSZ}
Fix $\ell\geq 1$ and $\Gamma$ be the quiver with vertex set $I:=\{0,1,\dots,\ell\}$ and arrows $\{a_{j,j-1},a_{j-1,j}\mid j=1,\dots,\ell\}$ as in the picture: 
\begin{align*}
\begin{braid}\tikzset{baseline=3mm}
\coordinate (0) at (-4,0);
\coordinate (1) at (0,0);
\coordinate (2) at (4,0);
\coordinate (3) at (8,0);
\coordinate (6) at (12,0);
\coordinate (L1) at (16,0);
\coordinate (L) at (20,0);
\draw [thin, black,->,shorten <= 0.1cm, shorten >= 0.1cm]   (0) to[distance=1.5cm,out=100, in=100] (1);
\draw [thin,black,->,shorten <= 0.25cm, shorten >= 0.1cm]   (1) to[distance=1.5cm,out=-100, in=-80] (0);
\draw [thin, black,->,shorten <= 0.1cm, shorten >= 0.1cm]   (1) to[distance=1.5cm,out=100, in=100] (2);
\draw [thin,black,->,shorten <= 0.25cm, shorten >= 0.1cm]   (2) to[distance=1.5cm,out=-100, in=-80] (1);
\draw [thin,black,->,shorten <= 0.25cm, shorten >= 0.1cm]   (2) to[distance=1.5cm,out=80, in=100] (3);
\draw [thin,black,->,shorten <= 0.25cm, shorten >= 0.1cm]   (3) to[distance=1.5cm,out=-100, in=-80] (2);
\draw [thin,black,->,shorten <= 0.25cm, shorten >= 0.1cm]   (6) to[distance=1.5cm,out=80, in=100] (L1);
\draw [thin,black,->,shorten <= 0.25cm, shorten >= 0.1cm]   (L1) to[distance=1.5cm,out=-100, in=-80] (6);
\draw [thin,black,->,shorten <= 0.25cm, shorten >= 0.1cm]   (L1) to[distance=1.5cm,out=80, in=100] (L);
\draw [thin,black,->,shorten <= 0.1cm, shorten >= 0.1cm]   (L) to[distance=1.5cm,out=-100, in=-100] (L1);
\blackdot(-4,0);
\blackdot(0,0);
\blackdot(4,0);
\blackdot(16,0);
\blackdot(20,0);
\draw(-4,0) node[left]{$0$};
\draw(0,0) node[left]{$1$};
\draw(4,0) node[left]{$2$};
\draw(10,0) node {$\cdots$};
\draw(13.4,0) node[right]{$\ell-1$};
\draw(18.65,0) node[right]{$\ell$};
\draw(-2,1.2) node[above]{$ a_{1,0}$};
\draw(2,1.2) node[above]{$ a_{2,1}$};
\draw(6,1.2) node[above]{$ a_{3,2}$};
\draw(14,1.2) node[above]{$ a_{\ell-2,\ell-1}$};
\draw(18,1.2) node[above]{$ a_{\ell,\ell-1}$};
\draw(-2,-1.2) node[below]{$ a_{0,1}$};
\draw(2,-1.2) node[below]{$ a_{1,2}$};
\draw(6,-1.2) node[below]{$ a_{2,3}$};
\draw(14,-1.2) node[below]{$ a_{\ell-2,\ell-1}$};
\draw(18,-1.2) node[below]{$ a_{\ell-1,\ell}$};
\end{braid}
\end{align*}
The {\em extended zigzag algebra $\EZig$} is the path algebra $\k\Gamma$ modulo the following relations:
\begin{enumerate}
\item All paths of length three or greater are zero.
\item All paths of length two that are not cycles are zero.
\item All length-two cycles based at the same vertex are equivalent.
\item $ a_{\ell,\ell-1} a_{\ell-1,\ell}=0$.
\end{enumerate}

Length zero paths yield the idempotents $\{ e_0,\dots,e_\ell\}$ with $ e_i  a_{i,j} e_j= a_{i,j}$ for all admissible $i,j$. The algebra $\EZig$ is graded by the path length: 
$\EZig=\EZig^0\oplus \EZig^1\oplus \EZig^2.
$ 
We consider $\EZig$ as a superalgebra with 
$\EZig_\0=\EZig^0\oplus \EZig^2\ \text{and}\ \EZig_\1=\EZig^1.
$ 
Let $\z:=\spa(e_0,\dots,e_\ell)$. 
Define also $J:=\{0,1,\dots,\ell-1\}$ and for all $j\in J$, set 
$
 c_j:= a_{j,j+1} a_{j+1,j}.
$
We have a basis $B=B_\z\sqcup B_\c\sqcup B_\1$ of $Z$ as in (\ref{AaBasis}), with 
$$B_\1=\{ a_{j,j+1}, a_{j+1,j}\mid j\in J\},\ \ B_{\z}=\{ e_i\mid i\in I\},\ \ B_{\c}:=\{ c_j\mid j\in J\}.
$$ 

Let \(e:= e_0 + \cdots + e_{\ell-1} \in Z\). The {\em zigzag algebra} is $\overline{Z}:=e Z e \subset Z$. We also have  $\bar \z:=e\z e=\spa(e_0,\dots,e_{\ell-1})$. For $n\geq d$, the generalized Schur algebra $T^{\bar Z}_{\bar\z}(n,d)$ in this case is Morita equivalent to weight $d$ RoCK blocks of symmetric groups (and the corresponding Hecke algebras), as conjectured by Turner \cite{T} and proved in \cite{EK2}. 
This was our motivating example. 

In \cite{KM} we construct an explicit cellular basis of $T^{\bar Z}_{\bar\z}(n,d)$, while no such basis is known for $S^{\bar Z}(n,d)$ (and it probably does not exist in general). 
Moreover, in \cite{KM} we prove that $T^Z_\z(n,d)$ is quasi-hereditary, while $S^Z(n,d)$ in general is not.

\section{Symmetricity}\label{Ssym}
\subsection{Central and symmetrizing forms}
Assume in this section that \(A\) has finite rank as a \(\k\)-supermodule. We say an even \(\k\)-linear map \(\tr:A\to \k\) is a {\em central form} for \(A\) provided \(\tr(ab)=\tr(ba)\) for all \(a,b \in A\). In this case we have an associative symmetric bilinear form $(\cdot,\cdot)_\tr$ with 
$
(a,b)_\tr=\tr(ab)
$
and a homomorphism 
$$A \to A^*:=\Hom_\k(A,\k),\ a\mapsto (a,\cdot)_\tr$$  
of \((A,A)\)-superbimodules. We say that the central form \(\tr\) is a {\em symmetrizing form} if this homomorphism is an isomorphism, i.e. if $(\cdot,\cdot)_\tr$ is a perfect pairing. 

If \(A\) is equipped with a symmetrizing form, we say \(A\) is {\em symmetric.} We want to show  that if \(A\) is symmetric 
then so is \(T^A_\a(n,d)\). We will do this under some natural assumptions. The following lemma is easily checked.

\begin{Lemma}\label{trM}
Assume that $\tr$ is a central form on $A$. Then the algebra \(M_n(A)^{\otimes d}\) has a central form \(\tr^M: M_n(A)^{\otimes d} \to \k\) given by 
\begin{align*}
\tr^M(\xi^{a_1}_{r_1,s_1} \otimes \cdots \otimes \xi^{a_d}_{r_d,s_d}) = \delta_{\br, \bs}\tr(x_1) \cdots \tr(a_d),
\end{align*}
for all \((\ba, \br, \bs)  \in A^d \times [1,n]^d \times [1,n]^d\). 
\end{Lemma}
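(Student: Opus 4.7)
The plan is to factor the verification into two pieces: first construct a central form $\tr_n$ on $M_n(A)$, and then show that the $d$-fold super-tensor product of any central form on a superalgebra is again central. The claim will then follow by identifying $\tr^M$ with $\tr_n^{\otimes d}$ on elementary tensors.

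For the first piece, I would define the even $\k$-linear map $\tr_n: M_n(A) \to \k$ on the basis (\ref{EBasisM_n(A)}) by $\tr_n(\xi^a_{r,s}) := \delta_{r,s}\tr(a)$. Using (\ref{EXiProduct}) one computes
$$\tr_n(\xi^a_{r,s}\,\xi^c_{t,u}) = \delta_{s,t}\sum_{b \in B}\kappa^b_{a,c}\,\tr_n(\xi^b_{r,u}) = \delta_{s,t}\delta_{r,u}\tr(ac),$$
while the parallel computation gives $\tr_n(\xi^c_{t,u}\,\xi^a_{r,s}) = \delta_{t,s}\delta_{u,r}\tr(ca)$. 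Centrality of $\tr$ on $A$ then forces $\tr_n(\xi^a_{r,s}\xi^c_{t,u}) = \tr_n(\xi^c_{t,u}\xi^a_{r,s})$ on all basis pairs, so $\tr_n$ is a central form on $M_n(A)$.

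For the second piece, suppose $B$ is a $\k$-superalgebra of finite rank equipped with a central form $\tau$, and define $\tau^{\otimes d}(b_1 \otimes \cdots \otimes b_d) := \tau(b_1)\cdots\tau(b_d)$. The super-tensor-product multiplication is
$$(b_1 \otimes \cdots \otimes b_d)(c_1 \otimes \cdots \otimes c_d) = (-1)^s\,(b_1c_1) \otimes \cdots \otimes (b_dc_d), \qquad s = \sum_{i<j}\bar c_i\,\bar b_j,$$
while the swapped product picks up the sign $s' = \sum_{i<j}\bar b_i\,\bar c_j$. Since $\tau$ is even, each factor $\tau(b_ic_i)$ (and $\tau(c_ib_i)$) vanishes unless $\bar b_i = \bar c_i$; on the locus where all these parities agree one has $s = s'$. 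Combining this with the pointwise equality $\tau(b_ic_i) = \tau(c_ib_i)$ coming from centrality of $\tau$ yields $\tau^{\otimes d}(xy) = \tau^{\otimes d}(yx)$ on elementary tensors, and hence on all of $B^{\otimes d}$ by $\k$-linearity.

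Specializing this construction to $B = M_n(A)$ and $\tau = \tr_n$ produces a central form $\tr_n^{\otimes d}$ on $M_n(A)^{\otimes d}$, whose value on an elementary tensor is
$$\prod_{i=1}^d \tr_n(\xi^{a_i}_{r_i,s_i}) = \prod_{i=1}^d \delta_{r_i,s_i}\tr(a_i) = \delta_{\br,\bs}\,\tr(a_1)\cdots\tr(a_d),$$
which is exactly the formula defining $\tr^M$ in the statement. The only step that requires any care is the sign bookkeeping in the super-tensor product, but the evenness of $\tau$ makes the potentially-surviving terms share the common parity pattern $\bar b_i = \bar c_i$ under which the two signs coincide automatically.
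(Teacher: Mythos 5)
Your proof is correct, and since the paper simply asserts that this lemma "is easily checked," your argument supplies exactly the routine verification intended: centrality of $\tr_n$ on $M_n(A)$ via (\ref{EXiProduct}), plus the observation that evenness of the form kills all terms where the Koszul signs $s$ and $s'$ could differ. No gaps.
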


\begin{Lemma}\label{L150518}
Assume that $\tr$ is a central form on $A$. Then the algebra \(S^A(n,d)\) has a central form \(\tr^S: S^A(n,d) \to \k\) given by 
\begin{align*}
\tr^S(\xi^{\ba}_{\br, \bs}):=\frac{d!}{[\ba, \br, \bs]^!} \delta_{\br, \bs} \tr(a_1) \cdots \tr(a_d).
\end{align*}
for all \((\ba, \br, \bs)  \in \Seq^H(n,d)\). Moreover, $\tr^M|_{S^A(n,d)}=\tr^S$. 
\end{Lemma}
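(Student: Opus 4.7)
The natural plan is to set $\tr^S := \tr^M|_{S^A(n,d)}$ and verify both assertions of the lemma simultaneously. Since $S^A(n,d)=(M_n(A)^{\otimes d})^{\Si_d}$ is a subalgebra of $M_n(A)^{\otimes d}$, centrality of the restriction is automatic from Lemma~\ref{trM}; thus the work reduces to computing $\tr^M(\xi^\ba_{\br,\bs})$ and matching it to the stated formula.

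For $(\ba,\br,\bs)\in\Seq^H(n,d)$, I would expand via (\ref{EXiDef}) and apply Lemma~\ref{trM} termwise to obtain
\begin{align*}
\tr^M(\xi^\ba_{\br,\bs}) \;=\; \sum_{\si\in{}^{\ba,\br,\bs}\D}(-1)^{\langle\si;\ba\rangle}\,\de_{\br\si,\bs\si}\,\tr(a_{\si 1})\cdots\tr(a_{\si d}).
\end{align*}
The right-hand side of the claimed formula would then emerge from a short case analysis. If $\br\neq\bs$ every Kronecker delta vanishes. If some $a_k$ is odd, then $\tr(a_k)=0$ because $\tr$ is even and $\k$ is concentrated in even degree, so each summand vanishes and the right-hand side of the desired identity equally vanishes (the factor $\tr(a_k)$ appears there too).

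In the remaining case—$\br=\bs$ with every $a_k$ even—the exponent $\langle\si;\ba\rangle$ is identically $0$, since its defining pair-count is restricted to positions with odd entries; each summand therefore equals the scalar $\tr(a_1)\cdots\tr(a_d)$ (reordered freely, as trace values lie in the commutative ring $\k$), and invoking (\ref{subalg1}) gives $|{}^{\ba,\br,\bs}\D|=d!/|\Si_{\ba,\br,\bs}|=d!/[\ba,\br,\bs]^!$. This produces precisely the asserted prefactor.

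No serious obstacle is anticipated: the entire argument is a vanishing-versus-counting check, and the potentially worrying sign $(-1)^{\langle\si;\ba\rangle}$ is neutralized in exactly the regime where the trace product is nonzero. A small bonus of the approach is that the identity $\tr^M|_{S^A(n,d)}=\tr^S$ is built into the construction, so the final sentence of the lemma requires no separate argument.
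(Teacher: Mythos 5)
Your proposal is correct and follows essentially the same route as the paper: both arguments reduce to observing that $\tr$ vanishes on odd elements (so the sign $(-1)^{\langle\si;\ba\rangle}$ only matters for terms that vanish anyway), that each surviving summand contributes the same value $\de_{\br,\bs}\tr(a_1)\cdots\tr(a_d)$, and that $|{}^{\ba,\br,\bs}\D|=d!/[\ba,\br,\bs]^!$ by (\ref{subalg1}). Defining $\tr^S$ as the restriction of $\tr^M$ so that centrality and the final identity come for free is exactly the paper's strategy as well.
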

\begin{proof}
Let \((\ba, \br, \bs) \in \Seq^H(n,d)\).
Since \(\tr(x) = 0\) whenever \(x \in A_\1\), we have 
\begin{align*}
\tr^M((\xi^{a_1}_{r_1,s_1} \otimes \cdots \otimes \xi^{a_d}_{r_d,s_d})^\sigma)= \tr^M(\xi^{a_1}_{r_1,s_1} \otimes \cdots \otimes \xi^{a_d}_{r_d,s_d}),
\end{align*}
for all \(\sigma \in \Si_d\).

Then for all \((\ba,\br, \bs) \in \Seq^H(n,d)\), we have
\begin{align*}
\tr^M(\xi^\ba_{\br, \bs}) &= \tr^M\left( \sum_{\sigma \in {}^{\bx, \br, \bs}\D} (
\xi^{a_1}_{r_1,s_1} \otimes \cdots \otimes \xi^{a_d}_{r_d,s_d}
)^\sigma\right)\\
&=  \sum_{\sigma \in {}^{\bx, \br, \bs}\D} \tr^M((\xi^{a_1}_{r_1,s_1} \otimes \cdots \otimes \xi^{a_d}_{r_d,s_d})^\sigma)\\
&=\sum_{\sigma \in {}^{\bx, \br, \bs}\D} \tr^M(\xi^{a_1}_{r_1,s_1} \otimes \cdots \otimes \xi^{a_d}_{r_d,s_d})\\
&=
|{}^{\ba, \br, \bs}\D|\tr^M(\xi^{a_1}_{r_1,s_1} \otimes \cdots \otimes \xi^{a_d}_{r_d,s_d})\\
&=
 \frac{d!}{[\ba,\br,\bs]^!}\delta_{\br, \bs} \tr(x_1) \cdots \tr(x_d),
\end{align*}
giving the result.
\end{proof}

If $k$ is a field of characteristic $0$ or greater than $d$, one can check that the from $\tr^S$ is  symmetrizing. But this is certainly false over fields of positive characteristics less than $d$. In fact, for such fields even the classical Schur algera $S^\k(n,d)$ is not symmetric. 

\subsection{Symmetricity of $T^A_\a(n,d)$}
Throughout the subsection we assume that \((A,\a)\) is a unital good pair, and that \(A\) is symmetric, with symmetrizing form \(\tr\). We say that $\tr$ is {\em $(A,\a)$-symmetrizing} if $(\a,\a)_\tr=0$ and the $\k$-complement $\c$ of $\a$ in $A_\0$ can be chosen so that the restriction of $(\cdot,\cdot)_\tr$ to $\a\times\c$ is a perfect pairing. 
If $\tr$ is an {\em $(A,\a)$-symmetrizing} form, we always assume that the complement $\c$ has this property. 

Let $\tr$ be an {\em $(A,\a)$-symmetrizing} form. 
For $a\in\a$, we have $\tr(a)=(a,1)_\tr=0$ so $\tr(\a)=0$. 
There exists an \((A,\a)\)-basis \(B=B_\a \cup B_\c \cup B_\1\) of \(A\) such that the dual basis \(B^*= \{b^* \mid b \in B\}\) of $A$  with respect to $(\cdot,\cdot)_\tr$ satisfies the following property:  setting 
\begin{align*}
B_\a^*:=\{b^* \mid b \in B_\a\},
\quad
B_\c^*:=\{b^* \mid b \in B_\c\},
\quad
B_\1^*:=\{b^* \mid b \in B_\1\}.
\end{align*}
we have that \(B_\a^*=B_\c\), \(B_\c^*=B_\a\), and \(B_\1^*\) is a basis for \(A_\1\). Then \(B^*:=B_\c^* \cup B_\a^* \cup B_{\1}^*\) is an \((A,\a)\)-basis as well. If $\tr$ is an {\em $(A,\a)$-symmetrizing} form, we always assume that $B$ has been chosen to satisfy these properties. 


\begin{Lemma}\label{trT}
Let $\tr$ be an {\em $(A,\a)$-symmetrizing} form. 
Then the algebra \(T^A_\a(n,d)\) has a central form \(\tr^T: T^A_\a(n,d) \to \k\) given by 
\begin{align*}
\tr^T(\eta^{\bb}_{\br, \bs}):= \delta_{\br, \bs} \tr(b_1) \cdots \tr(b_d).
\end{align*}
for all \((\bb, \br, \bs)  \in \Seq^B(n,d)\). Moreover, $\tr^S|_{T^A_\a(n,d)}=d!\,\tr^T$. 
\end{Lemma}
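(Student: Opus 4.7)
The plan is to define \(\tr^T\) by the stated formula on the basis \(\{\eta^{\bb}_{\br,\bs}\}\) of Lemma~\ref{LBasis}, and then deduce both well-definedness and centrality essentially for free from the already-established central form \(\tr^S\) on \(S^A(n,d)\).

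First I would record two key vanishing properties of \(\tr\): (1) \(\tr\) is even, so \(\tr(b)=0\) for all \(b\in B_{\1}\), and (2) since \(1_A\in\a\) and \((\a,\a)_\tr = 0\) by the \((A,\a)\)-symmetrizing assumption, we have \(\tr(b) = (b, 1_A)_\tr = 0\) for all \(b\in B_\a\). Consequently \(\tr(b_1)\cdots\tr(b_d)\neq 0\) forces \(b_k\in B_\c\) for every \(k\), in which case each \(b_k\) is even. This pins down both well-definedness and a useful simplification: if \((\bb,\br,\bs)\sim(\bb',\br',\bs')\) then \(\eta^{\bb'}_{\br',\bs'} = \pm\,\eta^{\bb}_{\br,\bs}\) by Lemma~\ref{LXiZero}, but whenever \(\tr(b_1)\cdots\tr(b_d)\neq 0\) all the \(b_k\)'s are even, so \(\lan\bb,\br,\bs\ran = \lan\bb',\br',\bs'\ran = 0\), the sign is \(+1\), and the formula respects the orbit equivalence. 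Thus \(\tr^T\) extends unambiguously to a \(\k\)-linear map \(T^A_\a(n,d)\to\k\).

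Next I would relate \(\tr^T\) to \(\tr^S\) on basis elements. By definition, \(\eta^{\bb}_{\br,\bs} = [\bb,\br,\bs]^!_\c\,\xi^{\bb}_{\br,\bs}\), so by Lemma~\ref{L150518},
\[
\tr^S(\eta^{\bb}_{\br,\bs}) \;=\; [\bb,\br,\bs]^!_\c\cdot \frac{d!}{[\bb,\br,\bs]^!}\,\delta_{\br,\bs}\,\tr(b_1)\cdots\tr(b_d).
\]
By the vanishing observations above, the right-hand side is zero unless every \(b_k\in B_\c\), and in that case \([\bb,\br,\bs]^!_\a = 1\) and \([\bb,\br,\bs]^! = [\bb,\br,\bs]^!_\a[\bb,\br,\bs]^!_\c = [\bb,\br,\bs]^!_\c\), so the quotient \([\bb,\br,\bs]^!_\c/[\bb,\br,\bs]^!\) equals \(1\). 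In either case the formula collapses to \(\tr^S(\eta^{\bb}_{\br,\bs}) = d!\,\delta_{\br,\bs}\,\tr(b_1)\cdots\tr(b_d) = d!\,\tr^T(\eta^{\bb}_{\br,\bs})\), proving \(\tr^S|_{T^A_\a(n,d)} = d!\,\tr^T\).

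Finally, centrality of \(\tr^T\) follows immediately: for \(x,y\in T^A_\a(n,d)\subseteq S^A(n,d)\), Lemma~\ref{L150518} gives \(\tr^S(xy)=\tr^S(yx)\), hence \(d!\bigl(\tr^T(xy)-\tr^T(yx)\bigr)=0\) in \(\k\). Since \(\k\) is a commutative domain of characteristic \(0\), we may cancel \(d!\) to conclude \(\tr^T(xy)=\tr^T(yx)\). The only subtle point — and the one I would be most careful about — is the well-definedness step: it is crucial that the orbit-dependent signs from Lemma~\ref{LXiZero} do not pollute the formula, and the argument above uses precisely that \(\tr\) kills both \(\a\) and the odd part of \(A\) to force all relevant \(b_k\) to be even, eliminating any potential sign.
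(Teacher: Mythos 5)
Your proposal is correct and follows essentially the same route as the paper: both reduce everything to the already-central form $\tr^S$ via the identity $\tr^S(\eta^{\bb}_{\br,\bs}) = d!\,\tr^T(\eta^{\bb}_{\br,\bs})$, using that $\tr$ kills $B_\a\cup B_\1$ so that $[\bb,\br,\bs]^!_\c=[\bb,\br,\bs]^!$ on the support, and then cancel $d!$ in the characteristic-zero domain $\k$. Your explicit check that the signs from Lemma~\ref{LXiZero} disappear (well-definedness) is a point the paper leaves implicit, but it is the same argument.
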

\begin{proof}
Recall $\tr^S$ from Lemma~\ref{L150518}. 
For \((\bb, \br, \bs) \in \Seq^B(n,d)\) we have
\begin{align*}
\tr^S(\eta^\bb_{\br, \bs}) = \tr^S\left([\bb,\br,\bs]^!_\c \xi^\bb_{\br, \bs}\right)
=
 [\bb,\br,\bs]^!_\c \cdot  \frac{d!}{[\bb,\br,\bs]^!}\delta_{\br, \bs} \tr(b_1) \cdots \tr(b_d).
\end{align*}
But, since \(\tr(b) = 0\) whenever \(b \in B_\a \cup B_\1\), we have \([\bb,\br, \bs]^!_\c = [\bb, \br, \bs]^!\) whenever \(\tr^S(\eta^\bb_{\br, \bs}) \neq 0\). So for all $(\bb, \br, \bs) \in \Seq^B(n,d)$ we have 
\begin{align*}
\tr^S(\eta^\bb_{\br, \bs}) =d!\,\delta_{\br, \bs} \tr(b_1) \cdots \tr(b_d)
=d!\, \tr^T(\eta^{\bb}_{\br, \bs}).
\end{align*}
As \(\k\) is a characteristic zero domain and $\tr^S$ is central by Lemma~\ref{L150518}, $\tr^T$ is also central.
\end{proof}

\begin{Lemma}\label{trstar}
For all \(x \in T^A_\a(n,d_1)\), \(y \in T^A_\a(n,d_2)\), we have \(\tr^T(x * y) = \tr^T(x)\tr^T(y)\).
\end{Lemma}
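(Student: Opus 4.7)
The plan is to verify the identity on the basis $\{\eta^{\bb}_{\br,\bs}\}$ of $T^A_\a(n,d_1)$ and $T^A_\a(n,d_2)$ given by Lemma~\ref{LBasis}, and then extend by bilinearity. So I would fix $(\bb,\br,\bs) \in \Seq^B(n,d_1)$ and $(\bc,\bt,\bu) \in \Seq^B(n,d_2)$ and try to prove
\begin{equation*}
\tr^T(\eta^{\bb}_{\br,\bs} * \eta^{\bc}_{\bt,\bu}) \;=\; \tr^T(\eta^{\bb}_{\br,\bs})\,\tr^T(\eta^{\bc}_{\bt,\bu}).
\end{equation*}

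First I would compute the left-hand side using Lemma~\ref{xistarproducts}(iii): when $(\bb\bc,\br\bt,\bs\bu) \in \Seq^B(n,d_1+d_2)$, it equals
\begin{equation*}
\frac{[\bb\bc,\br\bt,\bs\bu]^!_\a}{[\bb,\br,\bs]^!_\a\,[\bc,\bt,\bu]^!_\a}\,\delta_{\br\bt,\bs\bu}\,\tr(b_1)\cdots\tr(b_{d_1})\tr(c_1)\cdots\tr(c_{d_2}),
\end{equation*}
and is zero otherwise. Now the key observation is that since $\tr$ is $(A,\a)$-symmetrizing, we have $\tr(\a)=0$ and $\tr(A_\1)=0$, so $\tr(b)=0$ whenever $b \in B_\a \cup B_\1$. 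Hence both sides of the desired identity vanish automatically unless every $b_i$ and every $c_j$ lies in $B_\c$.

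Restricting to the case where all $b_i,c_j \in B_\c$: then every entry of $\bb\bc$ is even, so the $\Seq^B$ condition is automatic (the ``repetition only if $p_k \in P_\0$" clause imposes no restriction on even letters), meaning $(\bb\bc,\br\bt,\bs\bu) \in \Seq^B(n,d_1+d_2)$. Moreover, the factorials $[\bb,\br,\bs]^!_\a$, $[\bc,\bt,\bu]^!_\a$ and $[\bb\bc,\br\bt,\bs\bu]^!_\a$ all equal $1$, since by definition they are products over $B_\a$-entries and $\bb\bc$ has none. Finally, $\delta_{\br\bt,\bs\bu}=\delta_{\br,\bs}\delta_{\bt,\bu}$ because concatenated tuples agree iff the factors do. Substituting gives $\tr^T(\eta^\bb_{\br,\bs}*\eta^\bc_{\bt,\bu}) = \delta_{\br,\bs}\delta_{\bt,\bu}\prod_i\tr(b_i)\prod_j\tr(c_j) = \tr^T(\eta^\bb_{\br,\bs})\tr^T(\eta^\bc_{\bt,\bu})$, as required.

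I do not expect any serious obstacle here; the only mildly delicate point is verifying that the ``bad" cases on both sides coincide, i.e.\ that the only way $(\bb\bc,\br\bt,\bs\bu) \notin \Seq^B$ is to have some odd letter appearing with matching indices, and such an odd letter forces $\tr$ to kill the right-hand side as well. Once this parity matchup is noted, the rest is direct bookkeeping.
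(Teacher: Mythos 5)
Your proposal is correct and follows essentially the same route as the paper: evaluate both sides on the basis elements $\eta^{\bb}_{\br,\bs}$, apply Lemma~\ref{xistarproducts}(iii), and observe that $\tr$ vanishes on $B_\a\cup B_\1$ so the factorial ratio only matters (and then equals $1$) when all letters lie in $B_\c$. The paper compresses the parity/degenerate-case discussion into the single remark that $\tr(b)=0$ for $b\in\a$, but the substance is identical.
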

\begin{proof}
Take \((\bb, \br, \bs) \in \Seq^B(n,d_1)\) and \((\bc,\bt,\bu) \in \Seq^B(n,d_2)\). We have
\begin{align*}
\tr^T(\eta^\bb_{\br, \bs})\tr^T(\eta^\bc_{\bt,\bu}) = \delta_{\br,\bs}\delta_{\bt, \bu} \tr(b_1) \cdots \tr(b_{d_1})\tr(c_1) \cdots \tr(c_{d_2}).
\end{align*}
On the other hand, by Lemma~\ref{xistarproducts}(iii), we have
\begin{align*}
\tr^T\left(\eta^\bb_{\br, \bs}*\eta^{\bc}_{\bt,\bu}\right)&=\frac{[\bb\bc, \br\bt, \bs\bu]_\a^!}{[\bb, \br, \bs]_\a^! [\bc, \bt, \bu]_\a^!}\tr^T\left(\eta_{\br\bt, \bs\bu}^{\bb\bc}\right)\\
&=
\frac{[\bb\bc, \br\bt, \bs\bu]_\a^!}{[\bb, \br, \bs]_\a^! [\bc, \bt, \bu]_\a^!}\delta_{\br\bt,\bs\bu}\tr(b_1) \cdots \tr(b_{d_1})\tr(c_1) \cdots \tr(c_{d_2}).
\end{align*}
Since \(\tr(b)=0\) whenever \(b \in \a\), we have the result.
\end{proof}

\begin{Lemma}\label{trBstar}
Let \((b^d, r^d, s^d) \in \Seq^B(n,d)\). Let \((\bc,\bt,\bu) \in \Seq^{B^*}(n,d)\). Then
\begin{align*}
\tr^T(\eta^{b^d}_{r^d,s^d} \eta^\bc_{\bt, \bu}) = 
\begin{cases}
\pm1 & \textup{if }\bc=(b^*)^d, \bt= s^d, \bu=r^d,\\
0 & \textup{otherwise.}
\end{cases}
\end{align*}
\end{Lemma}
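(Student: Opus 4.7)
The plan is to pass to the central form $\tr^M$ on $M_n(A)^{\otimes d}$ from Lemma~\ref{trM}. Combining Lemmas~\ref{L150518} and~\ref{trT}, one has $\tr^T(x)=\tfrac{1}{d!}\tr^M(x)$ on all of $T^A_\a(n,d)$, so (noting that $B^*$ is an $(A,\a)$-basis and hence by Proposition~\ref{AaInd} the elements $\eta^{\bc}_{\bt,\bu}$ with $\bc\in(B^*)^d$ lie in $T^A_\a(n,d)$) it suffices to prove that $\tr^M\bigl(\eta^{b^d}_{r^d,s^d}\eta^{\bc}_{\bt,\bu}\bigr)$ equals $\pm d!$ in the distinguished case and $0$ otherwise. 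The key structural simplification is that the constant triple $(b^d,r^d,s^d)$ has full stabilizer $\Si_d$, so its $\Si_d$-orbit is a singleton and
$$\eta^{b^d}_{r^d,s^d}=[b^d,r^d,s^d]^!_\c\,(\xi^b_{r,s})^{\otimes d},$$
with prefactor $d!$ when $b\in B_\c$ and $1$ otherwise. Thus we may work with a single pure tensor.

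I would then expand $\xi^{\bc}_{\bt,\bu}$ as its signed sum over ${}^{\bc,\bt,\bu}\D$, multiply termwise, and apply (\ref{EXiProduct}) in each tensor slot. The factor $\delta_{s,t_{\sigma k}}$ forces $t_k=s$ for every $k$, and applying $\tr^M$ produces factors $\delta_{r,u_{\sigma k}}$ which force $u_k=r$ for every $k$; each surviving slot then contributes $\tr(b\,c_{\sigma k})$. Since $\tr(bc)=\delta_{c,b^*}$ for $c\in B^*$ by the definition of the dual basis, we further need $\bc=(b^*)^d$. Thus all three conditions $\bt=s^d$, $\bu=r^d$, $\bc=(b^*)^d$ are forced for nonvanishing.

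In the surviving case the triple $((b^*)^d,s^d,r^d)$ is again constant, so ${}^{\bc,\bt,\bu}\D=\{\id\}$ and the signed sum collapses to a single sign-free term (note that $b$, hence $b^*$, is forced to be even when $d\geq 2$, while $d\leq 1$ leaves no room for nontrivial signs). The resulting value is $(\tr(bb^*))^d=1$. A short case split on whether $b$ lies in $B_\a$, $B_\c$, or (with $d=1$) $B_\1$ shows that $b\in B_\c\iff b^*\in B_\a$ and vice versa, so the product of the two prefactors $[b^d,r^d,s^d]^!_\c\cdot[(b^*)^d,s^d,r^d]^!_\c$ equals $d!$ in every case; combining with the $1/d!$ yields exactly $\pm 1$ (in fact $+1$).

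The only real subtlety is bookkeeping: one must remember that $[\bc,\bt,\bu]^!_\c$ for $\bc\in(B^*)^d$ is computed with respect to the $(A,\a)$-basis structure of $B^*$, under which the $\c$-part coincides with $B_\c$ (while the odd part $B_\1^*$ differs from $B_\1$). Once this identification is made, the factorial cancellation is transparent and no real obstacle remains.
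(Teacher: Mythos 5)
Your proposal is correct and follows essentially the same route as the paper's proof: reduce to $\tr^M$ via $d!\,\tr^T=\tr^S=\tr^M|_{S^A(n,d)}$, exploit that $(b^d,r^d,s^d)$ has trivial ${}^{\bb,\br,\bs}\D$ so $\xi^{b^d}_{r^d,s^d}$ is a pure tensor, let the $\delta$-factors and the dual-basis identity $\tr(bc)=\delta_{c,b^*}$ force $\bc=(b^*)^d$, $\bt=s^d$, $\bu=r^d$, and finish with the same case split on $b\in B_\a$, $B_\c$, $B_\1$ showing $[b^d,r^d,s^d]^!_\c\,[(b^*)^d,s^d,r^d]^!_\c=d!$. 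Your side remarks (that $\eta^\bc_{\bt,\bu}\in T^A_\a(n,d)$ because $B^*$ is also an $(A,\a)$-basis, and that the sign is in fact $+1$ since nonvanishing forces $b$ even unless $d=1$) are accurate refinements the paper leaves implicit.
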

\begin{proof}
Recall the central forms \(\tr^M\) and \(\tr^S\) from Lemmas \ref{trM} and \ref{L150518}. We have 
\begin{align*}
d!\,\tr^T(\eta^{b^d}_{r^d,s^d}  \eta^\bc_{\bt, \bu}) &=
\tr^S\left( [b^d, r^d, s^d]^!_\c [\bc, \bt, \bu]^!_\c\, \xi^{b^d}_{r^d,s^d}   \xi^\bc_{\bt, \bu}\right)\\
&=[b^d, r^d, s^d]^!_\c   [\bc, \bt, \bu]^!_\c \,\tr^M \left( 
\xi_{r^d,s^d}^{b^d}   \sum_{\sigma \in {}^{\bc,\bt, \bu}\D} (\xi_{t_1,u_1}^{c_1} \otimes \cdots \otimes \xi_{t_d,u_d}^{c_d})^\sigma
\right)\\
&=[b^d, r^d, s^d]^!_\c   [\bc, \bt, \bu]^!_\c \,\tr^M \left( 
\sum_{\sigma \in {}^{\bc,\bt, \bu}\D} \pm \delta_{s^d,\bt \sigma}(\xi_{r,u_{\sigma1}}^{bc_{\sigma 1}} \otimes \cdots \otimes \xi_{r,u_{\sigma d}}^{bc_{\sigma d}})
\right)\\
&=[b^d, r^d, s^d]^!_\c   [\bc, \bt, \bu]^!_\c \sum_{\sigma \in {}^{\bc,\bt, \bu}\D} \pm  \delta_{s^d,\bt \sigma}\delta_{r^d,\bu \sigma} \tr(bc_{\sigma_1}) \cdots \tr(bc_{\sigma d}).
\end{align*}
Since \(\tr(bc) = 0\) for all \(c \in B^*\backslash\{b^*\}\), we have that \(\tr^T(\eta^{b^d}_{r^d,s^d}   \eta^\bc_{\bt, \bu})=0\) unless \(\bc = (b^*)^d\), \(\bt = s^d\), and \(\bu =r^d\). Assume now that \(\bc = (b^*)^d\), \(\bt = s^d\), and \(\bu =r^d\). Then \({}^{\bc, \bt, \bu}\D = \{1\}\), so the above simplifies to
\begin{align*}
d!\,\tr^T(\eta^{b^d}_{r^d,s^d}   \eta^\bc_{\bt, \bu}) = \pm [b^d, r^d, s^d]^!_\c   [(b^*)^d, s^d, r^d]^!_\c.
\end{align*}
If \(b \in \a\), then \(b^* \in \c\), so we have 
\begin{align*}
[b^d, r^d, s^d]_{\c} = 0, \quad  [(b^*)^d,s^d, r^d]_\c = d.
\end{align*}
Conversely, if \(b \in \c\), then \(b^* \in \a\), so we have
\begin{align*}
[b^d, r^d, s^d]_{\c}  = d, \quad [(b^*)^d,s^d, r^d]_\c=0.
\end{align*}
If \(b \in A_\1\), then \(b^* \in A_\1\), and \((b^d, r^d, s^d) \in \Seq^B(n,d)\) implies \(d=1\), so we have
\begin{align*}
[b^d, r^d, s^d]_{\c} = [(b^*)^d,s^d, r^d]_\c = 0.
\end{align*}
Thus, in any case, we have 
$
d!\,\tr^T(\eta^{b^d}_{r^d,s^d}   \eta^\bc_{\bt, \bu}) = \pm d!,
$ 
so the result follows since \(\k\) is a characteristic zero domain.
\end{proof}

Now we upgrade this lemma. For \(\bb \in B^d\), write \(\bb^*:=(b_1^*, \ldots, b_d^*) \in (B^*)^d\).

\begin{Lemma}\label{etadual}
Let \((\bb, \br, \bs) \in \Seq^B(n,d)\) and \((\bc,\bt, \bu) \in \Seq^{B^*}(n,d)\). Then
\begin{align*}
\tr^T(\eta^\bb_{\br,\bs} \eta^\bc_{\bt, \bu}) = 
\begin{cases}
\pm 1 & \textup{if }(\bc, \bt, \bu) \sim (\bb^*, \bs, \br)\\
0 & \textup{otherwise}.
\end{cases}
\end{align*}
\end{Lemma}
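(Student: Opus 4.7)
The plan is to reduce everything to a computation in $M_n(A)^{\otimes d}$ via the identity $\tr^M|_{T^A_\a(n,d)} = d!\,\tr^T$, which follows by combining Lemmas \ref{trT} and \ref{L150518}. First I would expand both $\eta^\bb_{\br,\bs}$ and $\eta^\bc_{\bt,\bu}$ as scaled signed sums over the $\Si_d$-orbits $[\bb,\br,\bs]$ and $[\bc,\bt,\bu]$ using (\ref{EXiDef}). Applying the graded tensor product multiplication rule and Lemma \ref{trM}, a typical summand evaluates to
\begin{align*}
\tr^M\bigl((\xi^{b'_1}_{r'_1,s'_1}\otimes\cdots\otimes\xi^{b'_d}_{r'_d,s'_d})(\xi^{c'_1}_{t'_1,u'_1}\otimes\cdots\otimes\xi^{c'_d}_{t'_d,u'_d})\bigr) = \pm\prod_{i=1}^d \delta_{s'_i,t'_i}\,\delta_{r'_i,u'_i}\,\tr(b'_i c'_i),
\end{align*}
since $\xi^{b'_i}_{r'_i,s'_i}\xi^{c'_i}_{t'_i,u'_i} = \delta_{s'_i,t'_i}\sum_d \kappa^d_{b'_i,c'_i}\xi^d_{r'_i,u'_i}$ and $\sum_d \kappa^d_{b'_i,c'_i}\tr(d) = \tr(b'_i c'_i) = (b'_i,c'_i)_\tr$. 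Because $B^*$ is the dual basis of $B$, one has $(b'_i,c'_i)_\tr = \delta_{c'_i,(b'_i)^*}$.

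Next I would deduce the dichotomy. The nonvanishing of a summand forces $(\bc',\bt',\bu') = ((\bb')^*,\bs',\br')$, and since the componentwise dualization $(\bb,\br,\bs)\mapsto(\bb^*,\bs,\br)$ is $\Si_d$-equivariant, this forced $(\bc',\bt',\bu')$ lies in $[\bc,\bt,\bu]$ if and only if $(\bc,\bt,\bu)\sim(\bb^*,\bs,\br)$. If this fails, every summand vanishes and $\tr^T(\eta^\bb_{\br,\bs}\eta^\bc_{\bt,\bu})=0$, which handles the second case of the lemma.

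In the first case, Lemma \ref{LXiZero} lets me assume (up to an overall sign) that $(\bc,\bt,\bu)=(\bb^*,\bs,\br)$. Then every element of $[\bb,\br,\bs]$ contributes a unique surviving summand, so there are $d!/[\bb,\br,\bs]^!$ such terms. Granting that these all share a common sign, the total telescopes to
\begin{align*}
\tr^M(\eta^\bb_{\br,\bs}\eta^\bc_{\bt,\bu}) = \pm\,[\bb,\br,\bs]^!_\c\,[\bc,\bt,\bu]^!_\c\,\frac{d!}{[\bb,\br,\bs]^!} = \pm d!,
\end{align*}
using that $b\leftrightarrow b^*$ interchanges $B_\a$ and $B_\c$ so that $[\bc,\bt,\bu]^!_\c = [\bb,\br,\bs]^!_\a$, together with $[\bb,\br,\bs]^! = [\bb,\br,\bs]^!_\a\,[\bb,\br,\bs]^!_\c$ (odd entries contribute factors of $1!$). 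Dividing by $d!$ yields $\tr^T=\pm1$.

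The main obstacle is the sign coherence across the orbit: the Koszul sign from the graded tensor product multiplication and the two $(-1)^{\langle\cdots\rangle}$ signs from (\ref{EXiDef}) must combine to something constant as $(\bb',\br',\bs')$ varies over $[\bb,\br,\bs]$. The cleanest way to bypass the explicit sign tracking is to invoke Lemma \ref{LSingleSep} to decompose $\eta^\bb_{\br,\bs}$ as a $*$-product $\eta^{b_1^{d_1}}_{r_1^{d_1},s_1^{d_1}}*\cdots*\eta^{b_k^{d_k}}_{r_k^{d_k},s_k^{d_k}}$ over the distinct triples appearing in $(\bb,\br,\bs)$, decompose $\eta^\bc_{\bt,\bu} = \eta^{(b_1^*)^{d_1}}_{s_1^{d_1},r_1^{d_1}}*\cdots*\eta^{(b_k^*)^{d_k}}_{s_k^{d_k},r_k^{d_k}}$ in parallel, verify the vanishing hypothesis of Lemma \ref{LSep} (cross-terms die because the middle-index match together with the constraint $c'_i=(b'_i)^*$ is achievable only when the two group permutations agree), then apply Lemma \ref{trstar} for multiplicativity of $\tr^T$ under $*$, and finally reduce each atomic factor to Lemma \ref{trBstar}.
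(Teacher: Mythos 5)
Your direct computation via $\tr^M$ is set up correctly (the forced condition $(\bc',\bt',\bu')=((\bb')^*,\bs',\br')$, the count $d!/[\bb,\br,\bs]^!$, and the identity $[\bc,\bt,\bu]^!_\c=[\bb,\br,\bs]^!_\a$ are all right), but it hinges on the sign coherence that you explicitly grant without proof, and the workaround you propose to bypass it does not work. Lemma~\ref{LSep} requires the cross-terms to vanish \emph{as elements of $M_n(A)^{\otimes d}$}, not merely to have vanishing trace, and this hypothesis fails in general. Concretely, take $A=\TE(\k)$ with basis $\{1,1^*\}$, $\bb=(1,1)$, $\br=(1,2)$, $\bs=(1,1)$: the two distinct triples are $(1,1,1)$ and $(1,2,1)$, their duals are $(1^*,1,1)$ and $(1^*,1,2)$, and the cross-product $\xi^{1}_{1,1}\xi^{1^*}_{1,2}=\xi^{1^*}_{1,2}\neq 0$. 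Your parenthetical justification ("the constraint $c'_i=(b'_i)^*$ is achievable only when the two group permutations agree") is a statement about which terms survive the \emph{trace}, not about which products are zero; you are conflating the two. Consequently $\eta^\bb_{\br,\bs}\eta^\bc_{\bt,\bu}$ is not equal to the $*$-product of blockwise products, and the chain Lemma~\ref{LSep} $\to$ Lemma~\ref{trstar} $\to$ Lemma~\ref{trBstar} breaks at the first link.

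The paper's proof avoids both problems by inducting on $d$: it splits only the \emph{left} factor as $\pm\,\eta^{\bb^{(1)}}*\eta^{\bb^{(2)}}$ via Lemma~\ref{LSingleSep}, and then, instead of splitting the right factor in parallel, expands the product $(x*y)z$ using Lemma~\ref{Lstfo} together with the coproduct formula of Corollary~\ref{coprodeta} applied to $\eta^\bc_{\bt,\bu}$. This produces a sum over splits $(\Triple^1,\Triple^2)$ of $(\bc,\bt,\bu)$ of terms $(\eta^{\bb^{(1)}}\eta_{\Triple^1})*(\eta^{\bb^{(2)}}\eta_{\Triple^2})$, to which $\tr^T$-multiplicativity (Lemma~\ref{trstar}) and the induction hypothesis apply; the base case is Lemma~\ref{trBstar}. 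If you want to salvage your argument, you must either supply the coproduct step in place of the Lemma~\ref{LSep} step, or else prove the sign-coherence claim in the direct computation (which is essentially the content of Corollary~\ref{CSigns}(ii)) rather than granting it.
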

\begin{proof}
We go by induction on \(d\). The base case follows from Lemma \ref{trBstar}. Let \(d>1\). We have by Lemma \ref{LSingleSep} that either
$
\eta^\bb_{\br, \bs} = \eta^{b^d}_{r^d,s^d}
$ 
for some \(b \in B\), \(r,s \in [1,n]\), or else \(\eta^\bb_{\br, \bs}\) may be written as 
\begin{align*}
\eta^\bb_{\br, \bs} =  \pm \eta^{\bb^{(1)}}_{\br^{(1)}, \bs^{(1)}} * \eta^{\bb^{(2)}}_{\br^{(2)},\bs^{(2)}},
\end{align*}
for some \(\delta\)-separated \((\bb^{(1)}, \br^{(1)}, \bs^{(1)}) \in \Seq^B(n,d_1)\), \((\bb^{(2)}, \br^{(2)}, \bs^{(2)}) \in \Seq^B(n,d_2)\), with \(d_1, d_2 >0\) and \(d_1 + d_2 = d\). In the former situation, the claim follows from Lemma \ref{trBstar}, so assume we are in the latter situation.

Recalling the notation of \S\ref{SSCoproduct}, we assume without loss of generality that \((\bc, \bt, \bu) \in \Seq_0^{B^*}(n,d)\). We have that
\begin{align*}
\tr^T(\eta^{\bb}_{\br, \bs} \eta^{\bc}_{\bt,\bu}) &= \tr^T((\pm \eta^{\bb^{(1)}}_{\br^{(1)}, \bs^{(1)}} *\eta^{\bb^{(2)}}_{\br^{(2)}, \bs^{(2)}})   \eta^\bc_{\bt, \bu})\\
&=\tr^T\left( 
\sum_{(\Triple^1,\Triple^2) \in \textup{Spl}(\bc, \bt, \bu)}  
{\small \frac{\pm[\bc, \bt, \bu]^!_{\c}}{[\Triple^1]^!_{\c}[\Triple^2]^!_{\c}}}
(\eta^{\bb^{(1)}}_{\br^{(1)},\bs^{(1)}}    \eta_{\Triple^1}) * 
(\eta^{\bb^{(2)}}_{\br^{(2)},\bs^{(2)}}    \eta_{\Triple^2})
\right)\\
&=
\sum_{(\Triple^1,\Triple^2) \in \textup{Spl}(\bc, \bt, \bu)}  
{\small \frac{\pm[\bc, \bt, \bu]^!_{\c}}{[\Triple^1]^!_{\c}[\Triple^2]^!_{\c}}}
\tr^T(\eta^{\bb^{(1)}}_{\br^{(1)},\bs^{(1)}}    \eta_{\Triple^1})   
\tr^T(\eta^{\bb^{(2)}}_{\br^{(2)},\bs^{(2)}}    \eta_{\Triple^2}),
\end{align*}
applying Corollary \ref{coprodeta} and Lemma \ref{Lstfo} for the second equality and Lemma \ref{trstar} for the third equality. If this is nonzero, then by the induction assumption we must have \(\Triple^1 \sim ((\bb^{(1)})^*, \bs^{(1)}, \br^{(1)})\) and \(\Triple^2 \sim ((\bb^{(2)})^*, \bs^{(2)}, \br^{(2)})\) for some \((\Triple^1, \Triple^2) \in  \textup{Spl}(\bc, \bt, \bu) \), which implies \((\bb^*, \bs, \br) \sim (\bc, \bt, \bu)\).

On the other hand, assume \((\bb^*, \bs, \br) \sim (\bc, \bt, \bu)\). Then there is exactly one \((\Triple^1, \Triple^2) \in  \textup{Spl}(\bc, \bt, \bu) \) such that \(\Triple^1 \sim ((\bb^{(1)})^*, \bs^{(1)}, \br^{(1)})\) and \(\Triple^2 \sim ((\bb^{(2)})^*, \bs^{(2)}, \br^{(2)})\). Then by the above we have
\begin{align*}
\tr^T(\eta^{\bb}_{\br, \bs} \eta^{\bc}_{\bt,\bu}) &= \pm {\small \frac{[\bc, \bt, \bu]^!_{\c}}{[\Triple^1]^!_{\c}[\Triple^2]^!_{\c}}} \tr^T(\eta^{\bb^{(1)}}_{\br^{(1)},\bs^{(1)}}    \eta_{\Triple^1})   
\tr^T(\eta^{\bb^{(2)}}_{\br^{(2)},\bs^{(2)}}    \eta_{\Triple^2})
=  \pm {\small \frac{[\bc, \bt, \bu]^!_{\c}}{[\Triple^1]^!_{\c}[\Triple^2]^!_{\c}}},
\end{align*}
using the induction assumption. But note, since \((\bb^{(1)}, \br^{(1)}, \bs^{(1)})\) and \((\bb^{(2)}, \br^{(2)}, \bs^{(2)})\) are \(\delta\)-separated, we have
\(
[\bc, \bt, \bu]_\c^! = [\Triple^1]_\c^! [\Triple^2]_\c^!,
\)
which completes the proof.
\end{proof}

\begin{Corollary}\label{Tsym}
If $\tr$ is an $(A,\a)$-symmetrizing form on $A$, then the algebra $T^A_\a(n,d)$ is symmetric, with symmetrizing form $\tr^T$.
\end{Corollary}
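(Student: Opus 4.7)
The plan is to show that the symmetric bilinear form $(x,y)_{\tr^T} := \tr^T(xy)$ is a perfect pairing on $T^A_\a(n,d)$. Centrality of $\tr^T$ is already established in Lemma~\ref{trT}, so perfectness of the pairing is the only remaining ingredient. The strategy is to exhibit an explicit dual basis, using Lemma~\ref{etadual} to compute the Gram matrix.

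First, I would observe that the dual basis $B^*=B_\c^*\sqcup B_\a^*\sqcup B_\1^*=B_\c\sqcup B_\a\sqcup B_\1^*$ is again an $(A,\a)$-basis of $A$, since by our standing assumption on $\tr$, $B_\a^*=B_\c$ and $B_\c^*=B_\a$, while $B_\1^*$ is still a $\k$-basis of $A_\1$. By Proposition~\ref{AaInd}, the algebra $T^A_\a(n,d)$ is independent of the choice of $(A,\a)$-basis, so the elements $\eta^\bc_{\bt,\bu}$ with $(\bc,\bt,\bu)\in\Seq^{B^*}(n,d)$ (defined with $B^*$ replacing $B$) all lie in $T^A_\a(n,d)$, and by Lemma~\ref{LBasis} the set
\[
\{\eta^\bc_{\bt,\bu}\mid [\bc,\bt,\bu]\in\Seq^{B^*}(n,d)/\Si_d\}
\]
is a second $\k$-basis of $T^A_\a(n,d)$.

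Next, I would use the involution $(\bb,\br,\bs)\mapsto(\bb^*,\bs,\br)$, which clearly descends to a bijection $\Seq^B(n,d)/\Si_d\iso \Seq^{B^*}(n,d)/\Si_d$. Index both bases by the common set $\Seq^B(n,d)/\Si_d$ via this bijection. Lemma~\ref{etadual} then computes the Gram matrix of $(\cdot,\cdot)_{\tr^T}$ entry by entry: for $[\bb,\br,\bs],[\bb',\br',\bs']\in\Seq^B(n,d)/\Si_d$,
\[
\tr^T\bigl(\eta^\bb_{\br,\bs}\,\eta^{(\bb')^*}_{\bs',\br'}\bigr)=\begin{cases}\pm 1 & \text{if }[\bb',\br',\bs']=[\bb,\br,\bs],\\ 0 & \text{otherwise.}\end{cases}
\]
Hence the Gram matrix is a signed permutation matrix (the identity up to signs), which is invertible over the characteristic-zero domain $\k$. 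Therefore $(\cdot,\cdot)_{\tr^T}$ is a perfect pairing, and $\tr^T$ is a symmetrizing form on $T^A_\a(n,d)$.

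All the genuine difficulty has already been absorbed into Lemma~\ref{etadual}; the only nontrivial verification remaining is that $B^*$ is indeed an $(A,\a)$-basis (immediate from the hypothesis that $\tr$ is $(A,\a)$-symmetrizing) and that the basis $\{\eta^\bc_{\bt,\bu}\mid\dots\}$ built from $B^*$ really coincides with a basis of the \emph{same} subalgebra $T^A_\a(n,d)$, which is exactly the content of Proposition~\ref{AaInd}. The rest is bookkeeping with the bijection of orbit spaces.
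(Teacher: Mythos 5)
Your proof is correct and follows the same route as the paper: centrality comes from Lemma~\ref{trT}, and perfectness of $(\cdot,\cdot)_{\tr^T}$ comes from Lemma~\ref{etadual}. The paper simply asserts the latter implication, whereas you have (correctly) filled in the bookkeeping — that $B^*$ is an $(A,\a)$-basis, that Proposition~\ref{AaInd} guarantees the $\eta$'s built from $B^*$ give a second basis of the same algebra, and that the resulting Gram matrix is a signed permutation matrix.
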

\begin{proof}
The form \(\tr^T\) is central by Lemma \ref{trT}. Moreover, by Lemma \ref{etadual}, we have that $(\cdot,\cdot)_{\tr^T}$ is a perfect pairing. 
\end{proof}

\begin{Remark}
Given a superalgebra \(C\), this result recovers the symmetricity  property of the Turner double algebra \(D^C(n,d)\) described in \S\ref{SSTE}. In particular, \(E(C)= C \oplus C^*\) has a symmetrizing form given by \(\tr(a,\alpha)=\alpha(a)\), so the symmetricity of \(D^C(n,d) \cong T^{E(C)}_{C_\0}(n,d)\) follows from Corollary \ref{Tsym}.
\end{Remark}

\section{Double centralizer property}\label{SDCP}
Throughout the subsection we assume that $\k$ is a principal ideal domain (as usual of characteristic $0$). 
All modules and algebras are assumed to be free of finite rank as $\k$-modules. 
An element $v$ of a $\k$-module $V$ is called {\em divisible} if there is $w\in V$ and a non-unit $m\in \k$ with $v=mw$. Otherwise $v$ is called {\em indivisible}. 
We let $\K$ to be the field of fractions of $\k$. 

\subsection{Double centralizer idempotents}

Let $S$ be a $\k$-algebra and  $e\in S$ be an idempotent. Considering $Se$ as a right $eSe$-module, we have an algebra homomorphism
$$
\la:S \to \End_{eSe}(Se),\ s\mapsto \la_s
$$
where $\la_s(s'e)=ss'e$ for all $s,s'\in S$. We say that $e$ is a {\em double centralizer idempotent for $S$} if $\la$ is an isomorphism. 
We say that $e$ is a {\em sound idempotent for $S$} if $\la$ sends indivisible elements of $S$ to indivisible elements of $\End_{eSe}(Se)$. 
Clearly, a double centralizer idempotent  is sound. 

Set 
$S_\K:=S\otimes_\k \K$ and use the map $s \mapsto s\otimes 1_\K$ to identify $S$ as a subset of $S_\K$. Then  $Se\otimes_\k\K=S_\K e$. Using the Universal coefficient theorem, we identify  $\End_{eSe}(Se)\otimes_\k\K=\End_{eS_\K e}(S_\K e)$, so that the $\la_\K:=\la\otimes\id_\K$ is the map
$$
\la_\K:S_\K\to \End_{eS_\K e}(S_\K e),\ s\mapsto \la_s
$$
where $\la_s$ is the left multiplication by $s$. Clearly, if $e$ is a double centralizer idempotent for $S$ then $e$ is a double centralizer idempotent for $S_\K$, but not vice versa in general.

\begin{Lemma} \label{LK} 
Let  $e\in S$ be an idempotent. Then $e$ is a double centralizer idempotent for $S$ if and only if $e$ is a double centralizer idempotent for $S_\K$ and 
$e$ is sound for $S$.  
\end{Lemma}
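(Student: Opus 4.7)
The plan is to work with the commutative square
\[
\begin{array}{ccc}
S & \xrightarrow{\lambda} & \End_{eSe}(Se) \\
\downarrow & & \downarrow \\
S_\K & \xrightarrow{\lambda_\K} & \End_{eS_\K e}(S_\K e)
\end{array}
\]
whose vertical arrows are injective. This uses freeness throughout: $S$ is free of finite rank by standing hypothesis, $Se$ is a submodule of $S$ hence free over the PID $\k$, and $\End_{eSe}(Se) \subseteq \Hom_\k(Se,Se)$ is therefore also free of finite rank. The identification $\End_{eSe}(Se)\otimes_\k\K = \End_{eS_\K e}(S_\K e)$ on the right is recorded in the paragraph preceding the lemma.

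For the ($\Rightarrow$) direction, tensoring $\lambda$ with $\K$ immediately makes $\lambda_\K$ an isomorphism, and a $\k$-module isomorphism between free modules identifies indivisible elements with indivisible elements, so $\lambda$ is sound. For injectivity of $\lambda$ in the ($\Leftarrow$) direction, the commutative square together with the injectivity of $\lambda_\K$ and of the left vertical map forces $\lambda$ to be injective.

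The substantive step is surjectivity of $\lambda$ in ($\Leftarrow$). Given $\phi \in \End_{eSe}(Se)$, apply $\lambda_\K^{-1}$ to obtain $x \in S_\K$ with $\lambda_\K(x) = \phi$, and clear denominators to write $x = s/m$ with $s \in S$ and $m \in \k\setminus\{0\}$. Since $\k$ is a PID and $S$ is free of finite rank, we may factor $s = d s'$ with $s' \in S$ indivisible. Reading $\lambda_\K(x) = \phi$ inside $\End_{eSe}(Se)$ (via injectivity of the right vertical map) yields $d\,\lambda(s') = m\phi$. The plan is to prove $m \mid d$, from which $x = (d/m)\,s' \in S$ as required. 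Setting $g = \gcd(m,d)$ and writing $m = gm'$, $d = gd'$ with $\gcd(m',d') = 1$, cancel $g$ by torsion-freeness of $\End_{eSe}(Se)$ to get $d'\lambda(s') = m'\phi$. Soundness makes $\lambda(s')$ indivisible, so any prime $p \mid m'$ would, via a B\'ezout relation $up + vd' = 1$, give $\lambda(s') = up\,\lambda(s') + vm'\phi \in p\cdot\End_{eSe}(Se)$, contradicting indivisibility. Hence $m'$ is a unit, so $m \mid d$. The gcd-versus-indivisibility manipulation is the one place where PID-ness and soundness are both genuinely used; everything else is diagram chasing.
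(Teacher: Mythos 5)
Your proof is correct, and it follows the same overall strategy as the paper (injectivity of $\la$ from injectivity of $\la_\K$; soundness to force surjectivity), but the surjectivity step is carried out quite differently. The paper computes
$\rank_\k S=\dim_\K S_\K=\dim\End_{eS_\K e}(S_\K e)=\rank_\k\End_{eSe}(Se)$, concludes that $\la$ is a full-rank embedding of free $\k$-modules, and then asserts that soundness forces surjectivity --- an implication that implicitly rests on the elementary divisor (Smith normal form) theory of such an embedding: a non-unit elementary divisor would send an indivisible basis vector to a divisible image. You avoid the rank count entirely and instead argue element by element: pull each $\phi$ back over $\K$, clear denominators, extract the content of $s$, and use soundness plus a B\'ezout relation to show the denominator is a unit. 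Your route is longer but more self-contained, making explicit exactly where the PID hypothesis and soundness enter; the paper's is shorter but leaves the decisive step to the reader. The only cosmetic gap in your write-up is that the factorization $s=ds'$ with $s'$ indivisible requires $s\neq 0$, i.e.\ you should dispose of the case $\phi=0$ separately (trivially, $\phi=\la(0)$).
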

\begin{proof}
The `only-if' part is immediate. For the `if' part, the assumption that $e$ is a double centralizer idempotent for $S_\K$ implies that $\la:S\to \End_{eS e}(Se)$ is injective. Moreover, 
$$
\rank_\k S=\dim_\K S_\K=\dim  \End_{eS_\K e}(S_\K e)=\rank_\k \End_{eS e}(Se).
$$ 
So $\la$ is a full rank embedding. As $e$ is sound, it now follows that $\la$ is surjective. 
\end{proof}

\subsection{Double centralizer property for $S^A(n,d)$} Let $(A,\a)$ be a unital good pair, $e\in \a$ be an idempotent, and $\xi^e\in T^A_\a(n,d)$ be the idempotent of (\ref{E070318_2}). Suppose that $e$ is a double centralizer idempotent for $A$. Although it is not true in general that $\xi^e$ is then a double centralizer for $T^A_\a(n,d)$, this is true in some interesting situations. 

In view of Lemma~\ref{LK}, to verify that $\xi^e$ is a double centralizer for $T^A_\a(n,d)$, it suffices to check that  $\xi^e$ is a double centralizer idempotent for $S^A(n,d)$ and that $\xi^e$ is sound for $T^A_\a(n,d)_\K = S^A(n,d)_\K$. It turns out that the first condition is always true provided $d\leq n$. This will follow from the following stronger theorem: 

\begin{Theorem} \label{Tk} 
If $e\in A$ is a double centralizer idempotent for $A$ and $d\leq n$, then $\xi^e$  is a double centralizer idempotent for $S^A(n,d)$. 
\end{Theorem}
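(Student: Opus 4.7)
My plan is to invoke Lemma~\ref{LK}: it suffices to show that $\la$ is an isomorphism after extending scalars to the fraction field $\K$ of $\k$, and separately that $\la$ is sound over $\k$ (i.e.\ sends indivisible elements to indivisible endomorphisms).

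For the double centralizer property over $\K$, since $e$ remains a double centralizer idempotent for $A_\K$, the Morita equivalences between $A_\K$ and $M_n(A_\K)$ (restricting to one between $eA_\K e$ and $M_n(eA_\K e)$) give
\[
\End_{M_n(eA_\K e)}(M_n(A_\K e)) \cong M_n(\End_{eA_\K e}(A_\K e)) \cong M_n(A_\K),
\]
so $E^e=\sum_{r=1}^n\xi^e_{r,r}$ is a double centralizer idempotent for $M_n(A_\K)$. Since $M_n(A_\K e)=M_n(A_\K)E^e$ is a direct $\K$-summand of $M_n(A_\K)$, taking $d$-fold tensor products over $\K$ gives a $\Si_d$-equivariant algebra isomorphism $M_n(A_\K)^{\otimes d}\cong \End_{M_n(eA_\K e)^{\otimes d}}(M_n(A_\K e)^{\otimes d})$. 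Taking $\Si_d$-invariants on both sides (and using Lemma~\ref{LTruncation} to identify $(M_n(A_\K e)^{\otimes d})^{\Si_d}=S^{A_\K}(n,d)\xi^e$ and $(M_n(eA_\K e)^{\otimes d})^{\Si_d}=S^{eA_\K e}(n,d)$) produces a natural map which coincides with $\la_\K$. The key step to show that $\la_\K$ is bijective is that the right-multiplication map
\[
\mu_\K\colon (M_n(A_\K e)^{\otimes d})^{\Si_d}\otimes_{S^{eA_\K e}(n,d)}M_n(eA_\K e)^{\otimes d}\;\longrightarrow\;M_n(A_\K e)^{\otimes d}
\]
is a bijection. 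Surjectivity of $\mu_\K$ uses a \emph{distinct-index trick} available precisely because $d\le n$: given a non-symmetric basis element $y=\xi^{b_1}_{r_1,s_1}\otimes\cdots\otimes\xi^{b_d}_{r_d,s_d}$, choose pairwise distinct $t_1,\dots,t_d\in[1,n]$ and verify that $\xi^{\bb}_{\br,\bt}\cdot r=y$ for $r:=\xi^e_{t_1,s_1}\otimes\cdots\otimes\xi^e_{t_d,s_d}\in M_n(eA_\K e)^{\otimes d}$ (contributions from $\sigma\ne 1$ in the orbit sum $\xi^{\bb}_{\br,\bt}$ vanish, since they would require $t_{\sigma i}=t_i$ for every $i$). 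Injectivity of $\mu_\K$ then follows from the semisimplicity of $\K\Si_d$ combined with a dimension count; given $\mu_\K$ bijective, the Hom-tensor adjunction identifies $\Si_d$-equivariant $M_n(eA_\K e)^{\otimes d}$-endomorphisms of $M_n(A_\K e)^{\otimes d}$ with $S^{eA_\K e}(n,d)$-endomorphisms of their invariants, completing the proof that $\la_\K$ is an isomorphism.

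For the soundness of $\la$ over $\k$, the basis elements $\xi^{\bb}_{\br,\bs}$ are indivisible in $S^A(n,d)$, so it suffices to show each $\la(\xi^{\bb}_{\br,\bs})$ is indivisible in the endomorphism ring. The strategy is to select a test element $\xi^{\bb'}_{\br',\bt}\in S^A(n,d)\xi^e$ with pairwise distinct entries in $\bt$ (again available when $d\le n$), apply Proposition~\ref{CPR} to compute the product $\xi^{\bb}_{\br,\bs}\cdot \xi^{\bb'}_{\br',\bt}$, and verify that in the natural basis of $S^A(n,d)\xi^e$ the product has a leading term with coefficient $\pm 1$, witnessing indivisibility. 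The main obstacle is performing this soundness computation cleanly, with careful tracking of super-signs and orbit-stabilizer factors in the generalized Green's product rule, and choosing $(\bb',\br')$ judiciously so that the leading-term coefficient is indeed a unit; the distinct-$t_i$ choice is exactly what makes the relevant orbit-stabilizer factors equal to $1$, so that no nontrivial common divisor can appear.
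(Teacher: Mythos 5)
Your overall strategy (reduce via Lemma~\ref{LK} to the statement over $\K$ plus soundness over $\k$) is a legitimate alternative to what the paper does, but it founders on the soundness half. The paper proves Theorem~\ref{Tk} directly over $\k$: it establishes $M_n(A)\cong\End_{M_n(\bar A)}(M_n(Ae))$ integrally, and then constructs an explicit pair of mutually inverse maps $\res$ and $\infl$ between $\End_{S^{\bar A}}(S^{Ae})$ and $\End_{M^{\bar A}}(M^{Ae})^{\Si_d}$, the inflation map being where $d\le n$ enters (via the multiplicity-free word $\omega=(1,\dots,d)$). This bypasses any appeal to the fraction field and to soundness altogether; the only torsion-type argument needed is cancelling a nonzero integer $m=|\Si_{\bx,\br,\bs}|$ in a free $\k$-module. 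Your route instead must \emph{prove} soundness of $\la$ for $S^A(n,d)$, which is exactly the delicate integrality statement the paper's direct construction avoids.

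The genuine gap is your reduction ``the basis elements $\xi^{\bb}_{\br,\bs}$ are indivisible, so it suffices to show each $\la(\xi^{\bb}_{\br,\bs})$ is indivisible.'' Soundness requires that \emph{every} indivisible element of $S^A(n,d)$, i.e.\ every linear combination $\sum q_{\bb,\br,\bs}\,\xi^{\bb}_{\br,\bs}$ whose coefficients have unit gcd, maps to an indivisible endomorphism. Indivisibility of the images of the individual basis vectors does not imply this: if $\la(a)=f$ and $\la(b)=f+5g$ with $f,g$ part of a basis of the target, then $a-b$ is indivisible while $\la(a-b)=-5g$ is divisible by $5$. This is not a hypothetical worry in this setting: the paper's own soundness proof in the zigzag case (the Proposition preceding Remark~\ref{RCounterExample}) must take a \emph{general} indivisible element with minimal support, hit it with a carefully chosen test element, and then invoke linear independence of all resulting products (Remark~\ref{R2}) to rule out a common divisor; and Remark~\ref{RCounterExample} shows that for the $T$-algebras soundness can genuinely fail even on a single basis element. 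So to make your argument work you would need to run the leading-term computation for an arbitrary linear combination and show the resulting products are linearly independent with unit coefficients --- which you have not done, and which is substantially harder than the single-basis-element case you sketch. (Your over-$\K$ half is essentially fine in outline, though the passage from $\Si_d$-equivariant $M^{\bar A_\K}$-endomorphisms to $S^{\bar A_\K}$-endomorphisms of the invariants via $\mu_\K$ and adjunction would need to be written out; note the paper's $\res$/$\infl$ argument already gives this integrally, with the same $d\le n$ trick you use for surjectivity of $\mu_\K$.)
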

\begin{proof}
As usual, we write $\bar A:=eAe$.  
First we show
\begin{align}\label{DCP1}
M_n(A) \cong  \End_{M_n(\bar A)}(M_n(Ae)).
\end{align}
We have an algebra homomorphism \(\la:M_n(A) \to \End_{M_n(\bar A)}(M_n(Ae))\) given by left multiplication. On the other hand, let \(\phi \in \End_{M_n(\bar A)}(M_n(Ae))\). Since \(\phi(E_{i,j}^{ae})=\phi(E_{i,t}^{ae}E_{t,j}^{e})=\phi(E_{i,t}^{ae})E_{t,j}^{e}\) for any \(a \in A\) and \(i,t,j \in [1,n]\), it follows that there exist functions \(\phi_{k,i}: Ae \to Ae\) such that 
\begin{align*}
\phi(E_{i,j}^{ae})= \sum_{k} E_{k,j}^{\phi_{k,i}(ae)}.
\end{align*}
Moreover, each \(\phi_{k,i} \in \End_{\bar A}(Ae)\), since for all \(a \in A, b \in \bar A\), we have
\begin{align*}
\sum_{k}E_{k,j}^{\phi_{k,i}(aeb)}&=
\phi(E^{aeb}_{i,j})=
\phi(E^{ae}_{i,j}E^{b}_{j,j}) = \phi(E^{ae}_{i,j})E^{b}_{j,j}\\
&= \sum_{k}E^{\phi_{k,i}(ae)}_{k,j}E^{b}_{j,j} = \sum_{k}E^{\phi_{k,i}(ae)b}_{k,j}.
\end{align*}
implies that \(\phi_{k,i}(aeb)=\phi_{k,i}(ae)b\). Thus, since \(A \cong \End_{\bar A}(Ae)\), we have that \(\phi_{k,i}\) is given by left multiplication by a unique \(x_{k,i} \in A\). Then, since 
\begin{align*}
\left( \sum_{k,i} E^{x_{k,i}}_{k,i} \right)E^{ae}_{r,s} = \sum_{k}E_{k,s}^{x_{k,r}ae} = \sum_{k}E^{\phi_{r,k}(ae)}_{k,s} = \varphi(E_{r,s}^{ae}),
\end{align*}
we have a well-defined map \(\rho: \End_{M_n(\bar A)}(M_n(Ae)) \to M_n(A)\) given by \(\phi \mapsto \sum_{k,i}E_{k,i}^{x_{k,i}}\). It is clear that \(\la\) and \(\rho\) are mutual inverses, proving (\ref{DCP1}).

Let us now write 
\[ \begin{array}{lll}
S^{A}:= S^A(n,d) \;\;\;\;\;\;\;\;& S^{Ae}:=(M_n(Ae)^{\otimes d})^{\Si_d}\;\;\;\;\;\;\;\;& S^{\bar A}:=S^{\bar A}(n,d) \\
M^A:= M_n(A)^{\otimes d}& M^{Ae}:=M_n(Ae)^{\otimes d} & M^{\bar A}:=M_n(\bar A)^{\otimes d}
 \end{array}\] 
 For \(\ba = (a_1, \ldots, a_d) \in A^d\), \(\br, \bs \in [1,n]^d\), we will write
 \begin{align*}
 E^{\ba}_{\br, \bs}:= \xi^{a_1}_{r_1, s_1} \otimes \cdots \otimes \xi^{a_d}_{r_d,s_d}.
 \end{align*}
Our next task is to show
\begin{align}\label{DCP3}
\End_{S^{\bar A}}(S^{Ae}) \cong \End_{M^{\bar A}}(M^{Ae})^{\Si_d}.
\end{align}
The \(\Si_d\)-action on \(\End_{M^{\bar A}}(M^{Ae})^{\Si_d}\) is given by \(f^{\sigma}(x):= f(x^{\sigma^{-1}})^{\sigma}\). Let 
$$f \in\End_{M^{\bar A}}(M^{Ae})^{\Si_d}.
$$ 
Consider the restriction \(f_{\textup{res}}\) of \(f\) to the \(\k\)-submodule \(S^{Ae}\). For any \(\al \in S^{Ae}\), we have
\begin{align*}
f_{\res}(\al)^{\si} = f(\al)^{\si} = f(\al^{\si^{-1}})^{\si} =f^\si(\al) = f(\al) = f_{\res}(\al),
\end{align*}
so \(f_{\res} \in\End_{S^{\bar A}}(S^{Ae}) \) and \(\res: \End_{S^{\bar A}}(S^{Ae}) \to \End_{M^{\bar A}}(M^{Ae})^{\Si_d} \) is an algebra homomorphism. 

Now, let \(g \in \End_{S^{\bar A}}(S^{Ae})\). Write \(\bbe:=(e,\ldots, e) \in A^d\), and \(\omega = (1, \ldots, d) \in [1,n]^d\). We inflate \(g\) to a linear map \(g_{\textup{infl}}:M^{Ae} \to M^{Ae}\) via
\begin{align*}
g_{\textup{infl}}(E^\bx_{\br, \bs}):= g(\xi^{\bx}_{\br, \omega })E^{\bbe}_{\omega, \bs}.
\end{align*}
Note that for any \(\si \in \Si_d\), we also have
\begin{align*}
g_{\textup{infl}}(E^\bx_{\br, \bs})&=
g(\xi^\bx_{\br, \omega})E^\bbe_{\omega, \bs} = g(\xi^\bx_{\br, \omega^\si} \xi^\bbe_{\omega^{\si},\omega})E^\bbe_{\omega, \bs}\\
&=
g(\xi^\bx_{\br, \omega^\si}) \xi^\bbe_{\omega^{\si},\omega}E^\bbe_{\omega, \bs}
=
g(\xi^\bx_{\br, \omega^\si})E^\bbe_{\omega^\si, \bs}.
\end{align*}
We will show that \(g_{\textup{infl}} \in \End_{M^{\bar A}}(M^{Ae})^{\Si_d}\). First we check symmetry:
\begin{align*}
g_{\textup{infl}}^\si(E^\bx_{\br, \bs}) &= g_{\textup{infl}}((E^\bx_{\br, \bs})^{\si^{-1}})^{\si} = 
(-1)^{\langle \bx, \sigma^{-1} \rangle} g_{\textup{infl}}(E^{\bx^{\si^{-1}}}_{\br^{\si^{-1}},\bs^{\si^{-1}}})^{\si}\\
&=(-1)^{\langle \bx, \sigma^{-1} \rangle} [g(\xi^{\bx^{\si^{-1}}}_{\br^{\si^{-1}},\omega})E_{\omega, \bs^{\si^{-1}}}^{\bbe}]^{\si}\\
&=(-1)^{\langle \bx, \sigma^{-1} \rangle} g(\xi^{\bx^{\si^{-1}}}_{\br^{\si^{-1}},\omega})^{\si}(E_{\omega, \bs^{\si^{-1}}}^{\bbe})^{\si}\\
&=(-1)^{\langle \bx, \sigma^{-1} \rangle} g(\xi^{\bx^{\si^{-1}}}_{\br^{\si^{-1}},\omega})E_{\omega^{\si}, \bs}^{\bbe}\\
&=(-1)^{\langle \bx, \sigma^{-1} \rangle} (-1)^{\langle \bx^{\si^{-1}}, \si \rangle} g(\xi^\bx_{\br, \omega^\si})E^\bbe_{w^\si,\bs}
=g_\textup{infl}(E^\bx_{\br, \bs}).
\end{align*}
Now we check that \(g_\textup{infl}\) is an \(M^{\bar A}\)-homomorphism. Let \(\bx \in (Ae)^d\), \(\bb \in \bar{A}^d\), and \(\br, \bs, \bt, \bu \in [1,n]^d\).
\begin{align*}
g_{\textup{infl}}(E^\bx_{\br, \bs} E^\bb_{\bt, \bu}) &= \delta_{\bs, \bt}g_{\textup{infl}}(E^{\bx \bb}_{\br, \bu}) = \delta_{\bs, \bt}g(\xi^{\bx\bb}_{\br, \omega})E^\bbe_{\omega, \bu}
= \delta_{\bs, \bt}g(\xi^{\bx}_{\br, \omega}\xi^{\bb}_{\omega, \omega})E^\bbe_{\omega, \bu}\\
&=\delta_{\bs, \bt}g(\xi^{\bx}_{\br, \omega})\xi^{\bb}_{\omega, \omega}E^\bbe_{\omega, \bu}
=\delta_{\bs, \bt}g(\xi^{\bx}_{\br, \omega})\xi^{\bb}_{\omega, \omega}E^\bbe_{\omega, \bu}
= \delta_{\bs, \bt}g(\xi^{\bx}_{\br, \omega})E^{\bb}_{\omega, \bu}\\
&= g(\xi^{\bx}_{\br, \omega})E^{\bbe}_{\omega, \bs}E^{\bb}_{\bt, \bu} = g_{\textup{infl}}(E^\bx_{\br, \bs}) E^\bb_{\bt, \bu}. 
\end{align*}
Therefore \(g_{\textup{infl}} \in \End_{M^{\bar A}}(M^{Ae})^{\Si_d}\). Now we show that \(\res\) and \(\textup{infl}\) are mutual inverses. Let \(\bx \in (Ae)^d\), and \(\br, \bs \in [1,n]^d\). For \(f \in \End_{M^{\bar A}}(M^{Ae})^{\Si_d}\) we have
\begin{align*}
(f_{\res})_{\infl}(E^{\bx}_{\br, \bs}) = f_{\res}(\xi^{\bx}_{\br, \omega})E^{\bbe}_{\omega, \bs}=f(\xi^{\bx}_{\br, \omega})E^{\bbe}_{\omega, \bs} = f(\xi^\bx_{\br,\omega}E^\bbe_{\omega, \bs})=f(E^{\bx}_{\br, \bs}),
\end{align*}
so \((f_{\res})_{\infl} = f\).

Now, let \(g \in \End_{S^{\bar A}}(S^{Ae})\), and write \(g':=(g_{\textup{infl}})_{\res}\). We have \begin{align*}
g'(\xi^{\bx}_{\br, \omega}) &= g_{\textup{infl}}(\xi^{\bx}_{\br, \omega}) = g_{\textup{infl}}\left( \sum_{\si \in \Si_d} (E^\bx_{\br, \omega})^\si\right) = g_{\textup{infl}}\left(\sum_{\si \in \Si_d} (-1)^{\langle \bx, \si \rangle }E^{\bx^\si}_{\br^\si, \omega^\si}\right)\\
&= \sum_{\si \in \Si_d} (-1)^{\langle \bx, \si \rangle} g_{\textup{infl}}(E^{\bx^\si}_{\br^\si, \omega^\si})
=  \sum_{\si \in \Si_d} (-1)^{\langle \bx, \si \rangle} g(\xi^{\bx^\si}_{\br^\si, \omega^\si})E^\bbe_{\omega^\si, \omega^\si}\\
&= \sum_{\si \in \Si_d} (-1)^{\langle \bx, \si \rangle} (-1)^{\langle \bx^\si, \si^{-1} \rangle}g(\xi^{\bx}_{\br, \omega})E^\bbe_{\omega^\si, \omega^\si}
= \sum_{\si \in \Si_d} g(\xi^{\bx}_{\br, \omega})E^\bbe_{\omega^\si, \omega^\si}\\
&=g(\xi^{\bx}_{\br, \omega}) \sum_{\si \in \Si_d} E^{\bbe}_{w^\si, w^\si} = g(\xi^{\bx}_{\br, \omega})\xi^\bbe_{\omega, \omega} = g(\xi^{\bx}_{\br, \omega}\xi^\bbe_{\omega, \omega})
=g(\xi^{\bx}_{\br, \omega}),
\end{align*}
so \(g'(\xi^{\bx}_{\br, \omega}) = g(\xi^{\bx}_{\br, \omega})\). We also have
\begin{align*}
\xi^{\bx}_{\br, \omega} \xi^{e^d}_{\omega, \bs} = \xi^{x_1}_{r_1,s_1} * \cdots * \xi^{x_d}_{r_d,s_d} = |\Si_{\bx, \br, \bs}|\cdot \xi^{\bx}_{\br, \bs}.
\end{align*}
Thus, writing \(m:=  |\Si_{\bx, \br, \bs}| \in \k\), we have
\begin{align*}
mg'(\xi^{\bx}_{\br, \bs}) &= g'(m \xi^{\bx}_{\br, \bs}) = g'(\xi^{\bx}_{\br, \omega} \xi^{e^d}_{\omega, \bs}) = g'(\xi^{\bx}_{\br, \omega}) \xi^{e^d}_{\omega, \bs}\\ &= g(\xi^{\bx}_{\br, \omega}) \xi^{e^d}_{\omega, \bs} = g(\xi^{\bx}_{\br, \omega} \xi^{e^d}_{\omega, \bs}) = g(m \xi^{\bx}_{\br, \bs}) = mg(\xi^{\bx}_{\br, \bs}).
\end{align*}
As \(m \neq 0\) and \(S^{Ae}\) is free over \(\k\), this implies that \(g'(\xi^{\bx}_{\br, \bs}) = g(\xi^{\bx}_{\br, \bs})\). Thus \(\res\) and \(\textup{infl}\) are mutual inverses, proving (\ref{DCP3}).

Now, note that the action of \(\Si_d\) intertwines the isomorphisms
\begin{align*}\label{DCP2}
M_n(A)^{\otimes d} \cong \End_{M_n(\bar A)}(M_n(Ae))^{\otimes d} \cong
\End_{M^{\bar A}}(M^{Ae}),
\end{align*}
so we have 
\begin{align*}
\End_{\xi^e S^A \xi^e}(S^A \xi^e) = \End_{S^{\bar A}}(S^{Ae}) \cong \End_{M^{\bar A}}(M^{Ae})^{\Si_d} \cong (M_n(A)^{\otimes d})^{\Si_d} = S^A,
\end{align*}
as desired.
\end{proof}

\begin{Corollary} \label{TK} 
If $e\in A$ is a double centralizer idempotent for $A$ and $d\leq n$, then $\xi^e$  is a double centralizer idempotent for $S^A(n,d)_\K=T^A_\a(n,d)_\K$. 
\end{Corollary}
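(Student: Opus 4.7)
By Theorem~\ref{Tk}, the map
\[
\la:S^A(n,d)\to \End_{\xi^e S^A(n,d)\xi^e}(S^A(n,d)\xi^e),\quad x\mapsto \la_x
\]
given by left multiplication is an isomorphism of $\k$-algebras. Since $S^A(n,d)$ is free of finite rank as a $\k$-module (with the Schur basis of Lemma~\ref{LBasis'}), so is $S^A(n,d)\xi^e$, and hence so is $\End_{\xi^e S^A(n,d)\xi^e}(S^A(n,d)\xi^e)$. By the Universal Coefficient Theorem, extension of scalars to $\K$ commutes with $\Hom$ of finitely generated free $\k$-modules, so tensoring $\la$ with $\K$ yields an isomorphism
\[
\la_\K:S^A(n,d)_\K\iso \End_{\xi^e S^A(n,d)_\K\xi^e}(S^A(n,d)_\K\,\xi^e),
\]
where we identify $S^A(n,d)\xi^e\otimes_\k\K= S^A(n,d)_\K\,\xi^e$ and correspondingly for the endomorphism rings, exactly as in the discussion preceding Lemma~\ref{LK}. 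Thus $\xi^e$ is a double centralizer idempotent for $S^A(n,d)_\K$.

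Finally, as noted in the introduction, $T^A_\a(n,d)$ is a full sublattice of $S^A(n,d)$, so extending scalars to $\K$ gives $T^A_\a(n,d)_\K = S^A(n,d)_\K$, completing the proof.
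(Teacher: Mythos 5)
Your proof is correct and follows exactly the route the paper intends: Theorem~\ref{Tk} gives the double centralizer property over $\k$, the Universal Coefficient Theorem identification from the discussion preceding Lemma~\ref{LK} transports it to $\K$, and the full-sublattice property gives $T^A_\a(n,d)_\K = S^A(n,d)_\K$. The paper treats this as immediate and gives no written proof, so you have simply spelled out the intended argument.
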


\subsection{Computations in extended zigzag Schur algebras}
Recall the notation of \S\ref{SSSZ}. In particular, we have 
 the extended zigzag algebra $Z$ for a fixed $\ell$ and 
the idempotent $e:=e_0+\dots+e_{\ell-1}\in Z$. We will use the standard   basis $B=B_\z\sqcup B_\c\sqcup B_\1$ of $Z$. 

For $\br\in[1,n]^d$, set 
$
\Si_\br:=\{\si\in\Si_d\mid \br\si=\br\},
$ 
and  denote by ${}^\br\D$ the set of the shortest coset representatives for $\Si_\br\backslash\Si_d$. 



\begin{Lemma} \label{L280318_3} 
Let $t\in [1,n]$ and $\br,\bs,\bu\in[1,n]^d$. Suppose that  $r_a\neq r_b$ and $s_a\neq s_b$ for all $1\leq a\neq b\leq d$. Then
\begin{enumerate}
\item[{\rm (i)}] $
\eta^{a_{\ell-1,\ell}^d}_{\br,t^d}\eta^{a_{\ell,\ell-1}^d}_{t^d,\bs}=
\pm
\sum_{\si\in\Si_d}(\sgn\, \si)\eta^{c_{\ell-1}^d}_{\br\si,\bs}.
$
\item[{\rm (ii)}] $
\eta^{e_{\ell}^d}_{\bu,t^d}\eta^{a_{\ell,\ell-1}^d}_{t^d,\bs}=\sum_{\si\in{}^\bu\D}\eta^{a_{\ell,\ell-1}^d}_{\bu\si,\bs}.
$
\end{enumerate}
\end{Lemma}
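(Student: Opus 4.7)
The plan is to prove both identities by direct expansion using the definition of $\xi^\bb_{\br,\bs}$ from (\ref{EXiDef}) together with the multiplication rule in the super tensor algebra $M_n(A)^{\otimes d}$. Under the hypotheses, several simplifications occur: since the entries of $\br$ (respectively $\bs$) are pairwise distinct, the stabilizer $\Si_{(\bb,\br,t^d)}$ (resp.\ $\Si_{(\bb,t^d,\bs)}$) is trivial for any constant tuple $\bb$, and the stabilizer of $(e_\ell^d,\bu,t^d)$ is exactly $\Si_\bu$. Moreover, each triple that occurs in either side lies in $\Seq^B(n,d)$ with complement-factorial $[\,\cdot\,]^!_\c$ equal to $1$, so $\eta^\bb_{\br,\bs}=\xi^\bb_{\br,\bs}$ throughout the argument.

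For (i), expand each factor via (\ref{EXiDef}). Both $a_{\ell-1,\ell}$ and $a_{\ell,\ell-1}$ are odd, so the super-permutation sign $(-1)^{\lan\si;\ba\ran}$ reduces to $\sgn(\si)$, giving
\begin{equation*}
\xi^{a_{\ell-1,\ell}^d}_{\br,t^d}=\sum_{\si\in\Si_d}\sgn(\si)\,E^{a_{\ell-1,\ell}^d}_{\br\si,t^d}, \qquad \xi^{a_{\ell,\ell-1}^d}_{t^d,\bs}=\sum_{\tau\in\Si_d}\sgn(\tau)\,E^{a_{\ell,\ell-1}^d}_{t^d,\bs\tau},
\end{equation*}
where $E^\ba_{\br,\bs}:=\xi^{a_1}_{r_1,s_1}\otimes\cdots\otimes\xi^{a_d}_{r_d,s_d}$. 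Multiplying in $M_n(A)^{\otimes d}$, the Koszul sign for two all-odd tensors equals $(-1)^{\binom{d}{2}}$, the matrix-unit indices collapse via the common middle row/column $t^d$, and component-wise we use $a_{\ell-1,\ell}a_{\ell,\ell-1}=c_{\ell-1}$ (other length-two zigzag products vanish by relations (ii)--(iv)). This yields $(-1)^{\binom{d}{2}}\sum_{\si,\tau\in\Si_d}\sgn(\si)\sgn(\tau)\,E^{c_{\ell-1}^d}_{\br\si,\bs\tau}$.

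To match the right-hand side of (i): since $c_{\ell-1}$ is even and the stabilizer of $(c_{\ell-1}^d,\br\si,\bs)$ is trivial, $\eta^{c_{\ell-1}^d}_{\br\si,\bs}=\xi^{c_{\ell-1}^d}_{\br\si,\bs}=\sum_{\pi\in\Si_d}E^{c_{\ell-1}^d}_{\br\si\pi,\bs\pi}$. Hence $\sum_\si\sgn(\si)\,\eta^{c_{\ell-1}^d}_{\br\si,\bs}=\sum_{\si,\pi}\sgn(\si)\,E^{c_{\ell-1}^d}_{\br\si\pi,\bs\pi}$, and the substitution $\rho:=\si\pi,\ \tau:=\pi$ (so $\sgn(\si)=\sgn(\rho)\sgn(\tau)$) identifies this with the double sum computed above, proving (i) with sign $(-1)^{\binom{d}{2}}$. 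For (ii), one argues analogously: $\xi^{e_\ell^d}_{\bu,t^d}=\sum_{\si\in{}^\bu\D}E^{e_\ell^d}_{\bu\si,t^d}$ (no sign, as $e_\ell$ is even), the Koszul sign for an even-odd product vanishes, and component-wise $e_\ell\cdot a_{\ell,\ell-1}=a_{\ell,\ell-1}$. The product becomes $\sum_{\si\in{}^\bu\D,\tau\in\Si_d}\sgn(\tau)\,E^{a_{\ell,\ell-1}^d}_{\bu\si,\bs\tau}$, and the inner $\tau$-sum is exactly $\xi^{a_{\ell,\ell-1}^d}_{\bu\si,\bs}=\eta^{a_{\ell,\ell-1}^d}_{\bu\si,\bs}$ (trivial stabilizer, by distinctness of $\bs$), giving the stated identity.

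The main obstacle is careful sign bookkeeping: one must correctly track the super-permutation signs from (\ref{EXiDef}), the Koszul sign from multiplying two all-odd tensors in $M_n(A)^{\otimes d}$, and the reindexing of double sums by $\rho=\si\pi$ in part (i). The fact that (i) is only asserted up to an overall $\pm$ eases this burden somewhat, since the precise Koszul sign $(-1)^{\binom{d}{2}}$ need not be isolated.
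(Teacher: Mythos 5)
Your part (i) is correct and follows essentially the same route as the paper: expand both factors into signed basic tensors, multiply componentwise using $a_{\ell-1,\ell}a_{\ell,\ell-1}=c_{\ell-1}$, and reindex the resulting double sum (your substitution $(\si,\pi)\mapsto(\si\pi,\pi)$ is the same manoeuvre as the paper's $\si\mapsto\si\tau^{-1}$). Your preliminary observations — trivial stabilizers, $[\cdot]^!_\c=1$ so that $\eta=\xi$ throughout — are all accurate.

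There is, however, a false step in your part (ii). You claim that for each fixed $\si\in{}^\bu\D$ the inner sum $\sum_{\tau\in\Si_d}\sgn(\tau)\,E^{a_{\ell,\ell-1}^d}_{\bu\si,\bs\tau}$ equals $\xi^{a_{\ell,\ell-1}^d}_{\bu\si,\bs}$. It does not: the latter is $\sum_{\pi}\sgn(\pi)E^{a^d}_{(\bu\si)\pi,\bs\pi}$, where \emph{both} rows are permuted. For instance with $d=2$, $\bu\si=(1,2)$, $\bs=(3,4)$, your inner sum is $E^{a,a}_{(1,2),(3,4)}-E^{a,a}_{(1,2),(4,3)}$ while $\xi^{a,a}_{(1,2),(3,4)}=E^{a,a}_{(1,2),(3,4)}-E^{a,a}_{(2,1),(4,3)}$, and these differ. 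The identity in (ii) only holds after summing over $\si$ as well, via the same global reindexing you used in (i): write $\sgn(\tau)E^{a^d}_{\bu\si,\bs\tau}=(E^{a^d}_{\bu\si\tau^{-1},\bs})^\tau$ and observe that for fixed $\tau$ the words $\bu\si\tau^{-1}$, $\si\in{}^\bu\D$, depend only on the cosets $\Si_\bu\si\tau^{-1}$ and hence coincide as a multiset with the words $\bu\si'$, $\si'\in{}^\bu\D$; then
\begin{align*}
\sum_{\si\in{}^\bu\D,\,\tau\in\Si_d}\sgn(\tau)E^{a^d}_{\bu\si,\bs\tau}
=\sum_{\si'\in{}^\bu\D}\sum_{\tau\in\Si_d}(E^{a^d}_{\bu\si',\bs})^\tau
=\sum_{\si'\in{}^\bu\D}\xi^{a^d}_{\bu\si',\bs},
\end{align*}
which is the asserted right-hand side (the last equality uses that $\Si_{\ba,\bu\si',\bs}$ is trivial because the $s_a$ are distinct). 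This is exactly how the paper argues, and the repair costs only these two lines; but as written your per-$\si$ identification is a genuine error, not merely a sign issue.
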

\begin{proof}
(i) We have
\begin{align*}
\eta^{a_{\ell-1,\ell}^d}_{\br,t^d}\eta^{a_{\ell,\ell-1}^d}_{t^d,\bs}&=\left(\sum_{\si\in\Si_d}
(\xi^{a_{\ell-1,\ell}}_{r_{1},t}\otimes\dots\otimes\xi^{a_{\ell-1,\ell}}_{r_{ d},t})^\si\right)\left(\sum_{\tau\in\Si_d}
(\xi^{a_{\ell,\ell-1}}_{t,s_{ 1}}\otimes\dots\otimes\xi^{a_{\ell,\ell-1}}_{t,s_{ d}})^\tau\right)
\\
&=\pm
\sum_{\si,\tau\in\Si_d}(\sgn\, \si)(\sgn\, \tau)\xi^{c_{\ell-1}}_{r_{\si 1},s_{\tau 1}}\otimes\dots\otimes \xi^{c_{\ell-1}}_{r_{\si d},s_{\tau d}}
\\
&=
\pm
\sum_{\si,\tau\in\Si_d}(\sgn\, \si)(\sgn\, \tau)\xi^{c_{\ell-1}}_{r_{\si \tau^{-1}\tau 1},s_{\tau 1}}\otimes\dots\otimes \xi^{c_{\ell-1}}_{r_{\si \tau^{-1}\tau d},s_{\tau d}}
\\
&=
\pm
\sum_{\si,\tau\in\Si_d}(\sgn\, \si\tau^{-1})(\xi^{c_{\ell-1}}_{r_{\si \tau^{-1} 1},s_{1}}\otimes\dots\otimes \xi^{c_{\ell-1}}_{r_{\si \tau^{-1} d},s_{d}})^\si,
\end{align*}
which equals the right hand side of the equation in (i). 

(ii) We have
\begin{align*}
\eta^{e_\ell^d}_{\bu,t^d}\eta^{a_{\ell,\ell-1}^d}_{t^d,\bs}&=\left(\sum_{\si\in{}^\bu\D}
(\xi^{e_\ell}_{u_{1},t}\otimes\dots\otimes\xi^{e_\ell}_{u_{ d},t})^\si\right)\left(\sum_{\tau\in\Si_d}
(\xi^{a_{\ell,\ell-1}}_{t,s_{ 1}}\otimes\dots\otimes\xi^{a_{\ell,\ell-1}}_{t,s_{ d}})^\tau\right)
\\
&=\sum_{ \si\in{}^\bu\D,\,\tau\in\Si_d}(\sgn \tau)\,\xi^{a_{\ell,\ell-1}}_{u_{\si 1},s_{\tau 1}}\otimes\dots\otimes \xi^{a_{\ell,\ell-1}}_{u_{\si d},s_{\tau d}}
\\
&=
\sum_{ \si\in{}^\bu\D,\,\tau\in\Si_d}(\sgn \tau)\,\xi^{a_{\ell,\ell-1}}_{u_{\si\tau^{-1}\tau 1},s_{\tau 1}}\otimes\dots\otimes \xi^{a_{\ell,\ell-1}}_{u_{\si \tau^{-1}\tau d},s_{\tau d}}
\\
&=
\sum_{ \si\in{}^\bu\D,\,\tau\in\Si_d}
(\xi^{a_{\ell,\ell-1}}_{u_{\si\tau^{-1} 1},s_{1}}\otimes\dots\otimes \xi^{a_{\ell,\ell-1}}_{u_{\si \tau^{-1} d},s_{ d}})^\tau,
\end{align*}
which equals the right hand side of the equation in (ii).
\end{proof}

We set $$P_d:=[1,n]^d\times [1,n]^d,$$ 
i.e. elements of $P_d$ are pairs $(\br,\bs)$ of words $\br=r_1\cdots r_d,\ \bs=s_1\cdots s_d$ in $[1,n]^d$. We also define
$$
P_d':=\{(\br,\bs)\in P_d\mid (r_a,s_a)\neq (r_b,s_b)\ \text{for all $1\leq a\neq b\leq c$}\}.
$$
For $b\in B_\0$, the triple $(b^d,\br,\bs)$ belongs to $\Seq^B(n,d)$ for all $(\br,\bs)\in P_d$, while for $b\in B_\1$, we have $(b^d,\br,\bs)\in\Seq^B(n,d)$ if and only if $(\br,\bs)\in P_d'$. 

Given $\la\in\La(n,d)$, we define 
$$
\br^\la:=1^{\la_1}\cdots n^{\la_n}\in[1,n]^d.
$$
We refer to such tuples as {\em leading tuples}. 
Then 
$$
\Si_\la:=\Si_{\br^\la}=\Si_{\la_1}\times\dots\times\Si_{\la_n}.
$$
For $(b^d,\br,\bs)\in\Seq^B(n,d)$, the corresponding $\Si_d$-orbit $[b^d,\br,\bs]$ has a representative of the form $[b^d,\bt,\br^\la]$ for some $\la\in \La(n,d)$ and a representative of the form $[b^d,\br^\mu,\bu]$ for some $\mu\in \La(n,d)$. 
So while working with elements of the form $\eta^{b^d}_{\br,\bs}\in T^Z_\z(n,d)$ we will often assume that $\bs$ or $\br$ is a leading tuple when convenient.

\begin{Lemma} \label{L280318_4} 
Let $\la\in\La(n,d)$, and $\bs,\bt,\bu\in[1,n]^d$ be such that $(\bs,\br^\la),(\br^\la,\bt)\in P_d'$. Then
\begin{enumerate}
\item[{\rm (i)}] $
\eta^{a_{\ell-1,\ell}^{d}}_{\bs,\br^\la}\eta^{a_{\ell,\ell-1}^{d}}_{\br^\la,\bt}=\pm \sum_{\si\in\Si_{\la}}(\sgn\, \si)\eta^{c_{\ell-1}^d}_{\bs\si,\bt}.
$
\item[{\rm (ii)}] $
\eta^{e_\ell^{d}}_{\bu,\br^\la}\eta^{a_{\ell,\ell-1}^{d}}_{\br^\la,\bt}= \sum_{\si\in{}^\bu\Si_{\la}}\eta^{a_{\ell,\ell-1}^d}_{\bs\si,\bt},
$
where ${}^\bu\Si_{\la}$ is the set of the shortest coset representatives for $(\Si_\bu\cap\Si_\la)\backslash\Si_\la$. 

\end{enumerate}
\end{Lemma}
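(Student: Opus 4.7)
The plan is to reduce both parts to the special case of Lemma~\ref{L280318_3} by a block decomposition along the fibers of the leading tuple $\br^\la$, using the separation machinery of Lemmas~\ref{LSingleSep} and~\ref{LSep}.

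First, I partition $[1,d]$ into consecutive blocks $I_1,\dots,I_n$ with $|I_t| = \la_t$, where $I_t$ is exactly the set of positions at which $\br^\la$ takes the value $t$. For any word $\bw \in [1,n]^d$, let $\bw^{(t)} \in [1,n]^{\la_t}$ denote the restriction of $\bw$ to $I_t$. The triples $(a_{\ell-1,\ell}^d,\bs,\br^\la)$ and $(a_{\ell,\ell-1}^d,\br^\la,\bt)$ are both $\la$-separated, because across distinct blocks the middle-index $t$ differs. Hence Lemma~\ref{LSingleSep} yields
\[
\eta^{a_{\ell-1,\ell}^d}_{\bs,\br^\la} = \Conv_{t=1}^n \eta^{a_{\ell-1,\ell}^{\la_t}}_{\bs^{(t)},t^{\la_t}}, \qquad \eta^{a_{\ell,\ell-1}^d}_{\br^\la,\bt} = \Conv_{t=1}^n \eta^{a_{\ell,\ell-1}^{\la_t}}_{t^{\la_t},\bt^{(t)}},
\]
and the same factorization holds with $\eta^{e_\ell^d}_{\bu,\br^\la}$ in place of the first factor in part (ii).

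Next, for $t\neq t'$ the product $\xi^{a_{\ell-1,\ell}}_{s,t}\,\xi^{a_{\ell,\ell-1}}_{t',s'}$ vanishes by (\ref{EXiProduct}) due to mismatched middle indices, and analogously with $e_\ell$ replacing $a_{\ell-1,\ell}$. Thus the cross-term hypothesis of Lemma~\ref{LSep} is satisfied, and the full product factors as
\[
\eta^{a_{\ell-1,\ell}^d}_{\bs,\br^\la}\,\eta^{a_{\ell,\ell-1}^d}_{\br^\la,\bt} = \pm\Conv_{t=1}^n \Bigl( \eta^{a_{\ell-1,\ell}^{\la_t}}_{\bs^{(t)},t^{\la_t}}\,\eta^{a_{\ell,\ell-1}^{\la_t}}_{t^{\la_t},\bt^{(t)}}\Bigr),
\]
with the corresponding expression in part (ii). Now within each block the hypothesis of Lemma~\ref{L280318_3} holds: the assumption $(\bs,\br^\la),(\br^\la,\bt)\in P_d'$ (resp.\ $(\bu,\br^\la),(\br^\la,\bt)\in P_d'$) combined with the fact that $\br^\la$ is constant on $I_t$ forces $\bs^{(t)}$, $\bt^{(t)}$ (resp.\ $\bu^{(t)}$, $\bt^{(t)}$) to have pairwise distinct entries. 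So Lemma~\ref{L280318_3} applies in each block.

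Applying Lemma~\ref{L280318_3}(i) in each block gives $\pm\sum_{\si_t\in\Si_{\la_t}}(\sgn\,\si_t)\eta^{c_{\ell-1}^{\la_t}}_{\bs^{(t)}\si_t,\bt^{(t)}}$; combining the $\Conv$-product of these block-sums and using Lemma~\ref{LSingleSep} in reverse (the resulting triples are still $\la$-separated, and $c_{\ell-1}$ is even so the signs behave well) collapses the shuffle into a single $\eta$ and yields the sum over $\si\in\Si_\la = \prod_t\Si_{\la_t}$ required by (i). Part (ii) is identical, except that Lemma~\ref{L280318_3}(ii) produces $\sum_{\si_t\in{}^{\bu^{(t)}}\mathscr D_t}\eta^{a_{\ell,\ell-1}^{\la_t}}_{\bu^{(t)}\si_t,\bt^{(t)}}$ in each block, and the product of these sets naturally identifies with ${}^{\bu}\Si_\la$ via the factorization $\Si_\bu\cap\Si_\la = \prod_t(\Si_{\bu^{(t)}}\cap\Si_{\la_t})$. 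The main obstacles are tracking the overall sign in (i) across the applications of Lemmas~\ref{LSep} and~\ref{LSingleSep}, and verifying the coset-representative identification in (ii); both are routine once the block decomposition is set up carefully.
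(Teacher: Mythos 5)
Your proposal is correct and follows essentially the same route as the paper's proof: decompose along the blocks of the leading tuple $\br^\la$, factor both $\eta$'s and their product using Lemmas~\ref{LSingleSep} and~\ref{LSep}, apply Lemma~\ref{L280318_3} blockwise, and recombine the $*$-product into a single $\eta$ (the paper cites Lemma~\ref{xistarproducts}(iii) for this last step, whose $[\cdot]^!_\a$-coefficient is $1$ here since $c_{\ell-1}\in B_\c$ and $a_{\ell,\ell-1}\in B_\1$). The only difference is that you verify the vanishing hypothesis of Lemma~\ref{LSep} and the blockwise $P'$-conditions explicitly, which the paper leaves implicit.
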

\begin{proof}
For $u=1,\dots,n$, there exist words $\bs^u,\bt^u\in[1,n]^{\la_u}$ such that $\bs=\bs^1\cdots\bs^n,\, \bt=\bt^1\cdots\bt^n$. 
We have 
\begin{align*}
\eta^{a_{\ell-1,\ell}^{d}}_{\bs,\br^\la}\eta^{a_{\ell,\ell-1}^{d}}_{\br^\la,\bt}
&=\pm(\eta^{a_{\ell-1,\ell}^{\la_1}}_{\bs^1,1^{\la_1}}\eta^{a_{\ell,\ell-1}^{\la_1}}_{1^{\la_1},\bt^1})*\dots*(\eta^{a_{\ell-1,\ell}^{\la_n}}_{\bs^n,n^{\la_n}}\eta^{a_{\ell,\ell-1}^{\la_n}}_{n^{\la_n},\bt^n})
\\
&=\pm \left(\sum_{\si^1\in\Si_{\la_1}}(\sgn\, \si^1)\eta^{c_{\ell-1}^{\la_1}}_{\bs^1\si^1,\bt^1}\right)*\dots*\left(\sum_{\si^n\in\Si_{\la_n}}(\sgn\, \si^n)\eta^{c_{\ell-1}^{\la_n}}_{\bs^n\si^n,\bt^n}\right)
\\
&=\pm \sum_{\si\in\Si_{\la}}(\sgn\, \si)\eta^{c_{\ell-1}^d}_{\bs\si,\bt},
\end{align*}
where we have used Lemma~\ref{LSep} for the first equality, Lemma~\ref{L280318_3}(i) for the second equality and Lemma~\ref{xistarproducts}(iii) for the last equality. This proves (i). The proof of (ii) is similar but uses Lemma~\ref{L280318_3}(ii) instead of Lemma~\ref{L280318_3}(i). 
\end{proof}

Let $\la\in \La(n,d)$. For $r\in[1,n]$, let $x_r:=1+\sum_{s=1}^{r-1}\la_s$ and $y_r:=\sum_{s=1}^{r}\la_s$ so that 
$$
[1,d]=[x_1,y_1]\sqcup\dots\sqcup[x_n,y_n]
$$
and if $\br^\la=r_1\cdots r_d$ then $r_s=r$ if and only if $s\in [x_r,y_r]$. Let $0\leq c\leq d$ and $\mu\in\La(n,c)$, $\nu\in\La(n,d-c)$ satisfy $\mu+\nu=\la$. We denote 
\begin{align*}
\Om^{\mu,\nu}:=\{U\subseteq [1,d]\mid |U\cap [x_r,y_r]|=\mu_r\ \text{for all $r=1,\dots,n$}\}
\end{align*}
Let $\bt=t_1\cdots t_d\in[1,n]^d$. 
For $U=\{u_1<\dots<u_c\}\in\Om^{\mu,\nu}$, we set 
$$U'=\{v_1<\dots<v_{d-c}\}:=[1,d]\setminus U$$ and 
$$
\bt^U:=t_{u_1}\cdots t_{u_c},\quad \bt^{U'}:=t_{v_1}\cdots t_{v_{d-c}}. 
$$
With this notation, Corollary~\ref{coprodeta} yields:

\begin{Lemma} \label{L280318} 
Let $\la\in \La(n,d)$ and $\bt\in[1,n]^d$. If $(\br^{\la},\bt)\in P_d'$ then
$$
\nabla(\eta_{\br^{\la},\bt}^{a_{\ell,\ell-1}^{d}})=\sum_{c=0}^d\,\sum_{\substack{\mu\in\La(n,c),\,\nu\in\La(n,d-c)\\ \mu+\nu=\la}}\,\sum_{U\in\Om^{\mu,\nu}} 
\pm \eta_{\br^{\mu},\bt^U}^{a_{\ell,\ell-1}^{c}}
\otimes \eta_{\br^{\nu},\bt^{U'}}^{a_{\ell,\ell-1}^{d-c}}.
$$
\end{Lemma}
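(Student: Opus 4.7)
The plan is to apply Corollary~\ref{coprodeta} to the triple $\Triple:=(a_{\ell,\ell-1}^d,\br^\la,\bt)$ and then reindex the split sum by subsets $U\subseteq[1,d]$. Two simplifications are immediate. First, since $a_{\ell,\ell-1}\in B_\1$, neither $\Triple$ nor any of its splits contains a letter from $B_\c$, so $\eta_\Triple=\xi_\Triple$ and every factorial ratio $[\Triple]^!_\c/([\Triple^1]^!_\c[\Triple^2]^!_\c)$ in Corollary~\ref{coprodeta} equals $1$. Second, because $(\br^\la,\bt)\in P_d'$ and every letter of $\bb$ is odd, the stabilizer $\Si_{\bb,\br^\la,\bt}$ is trivial, so the defining formula (\ref{EXiDef}) yields
\begin{align*}
\xi^{a_{\ell,\ell-1}^d}_{\br^\la,\bt}=\sum_{\si\in\Si_d}(\sgn\si)\,\xi^{a_{\ell,\ell-1}}_{(\br^\la)_{\si 1},t_{\si 1}}\otimes\cdots\otimes\xi^{a_{\ell,\ell-1}}_{(\br^\la)_{\si d},t_{\si d}},
\end{align*}
where $(-1)^{\lan\si;\bb\ran}=\sgn\si$ because all letters of $\bb$ are odd.

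Next I would apply $\nabla$ termwise: for each $c\in[0,d]$ the $(c,d-c)$-piece of $\nabla$ splits the $\si$-tensor after the $c$-th slot. I reparametrize each $\si\in\Si_d$ as a triple $(U,\tau_1,\tau_2)$, where $U=\{\si 1,\dots,\si c\}\subseteq[1,d]$ has cardinality $c$, and $\tau_1\in\Si_c$, $\tau_2\in\Si_{d-c}$ record the orderings $\si$ induces on $U$ and $U':=[1,d]\setminus U$. Fixing $U$ and summing over $(\tau_1,\tau_2)\in\Si_c\times\Si_{d-c}$ collapses, via (\ref{EXiDef}) applied in lengths $c$ and $d-c$ (whose stabilizers are also trivial by the $P_d'$ hypothesis), into the tensor
\begin{align*}
\pm\,\xi^{a_{\ell,\ell-1}^c}_{(\br^\la)^U,\bt^U}\otimes\xi^{a_{\ell,\ell-1}^{d-c}}_{(\br^\la)^{U'},\bt^{U'}},
\end{align*}
with a global sign depending only on the shuffle needed to interleave $U$ with $U'$.

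It remains to match this subset parametrization with the one in the statement. When $U\in\Om^{\mu,\nu}$, the block structure of $\br^\la$ (constantly equal to $r$ on $[x_r,y_r]$) forces $(\br^\la)^U=\br^\mu$ and $(\br^\la)^{U'}=\br^\nu$, so the $U$-indexed term coincides up to sign with the summand in the stated formula. Replacing $\xi$'s by the equal $\eta$'s and regrouping the sum over $U$ by the pair $(\mu,\nu)\in\La(n,c)\times\La(n,d-c)$ with $\mu+\nu=\la$ completes the argument. The only delicate step is verifying that the sign factors out of the $(\tau_1,\tau_2)$-summation and depends only on $U$; since the statement allows a $\pm$, this is a parity observation rather than a full sign computation, and in fact this is the main technical irritation one has to bear.
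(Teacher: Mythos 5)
Your proposal is correct and follows essentially the same route as the paper, whose proof consists of simply invoking Corollary~\ref{coprodeta} and observing that the splits of $(a_{\ell,\ell-1}^d,\br^\la,\bt)$ are indexed by the subsets $U$ (with $(\br^\la)^U=\br^\mu$ forced by the block structure, all $B_\c$-factorials equal to $1$, and signs absorbed into $\pm$). Your middle paragraph, which re-derives the split decomposition directly from (\ref{EXiDef}) via the $(U,\tau_1,\tau_2)$ reparametrization, is a correct but redundant re-proof of Lemma~\ref{coprodxi} in this special case.
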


\begin{Lemma} \label{L280318_6}
Let $0\leq c\leq d$, $\mu\in\La(n,c)$, $\nu\in\La(n,d-c)$, $\br\in[1,n]^c$, $\bs\in[1,n]^{d-c}$ and $\bt\in[1,n]^d$. Suppose that  
$(\br,\br^\mu)\in P'_c$, and $(\br^{\mu+\nu},\bt)\in P_d'$. Then 
$$(\eta^{a_{\ell-1,\ell}^{c}}_{\br,\br^\mu}*\eta^{e_\ell^{d-c}}_{\bs,\br^\nu})\eta_{\br^{\mu+\nu},\bt}^{a_{\ell,\ell-1}^{d}}
=
\sum_{U\in\Om^{\mu,\nu}}\sum_{\si\in\Si_{\mu},\, \tau\in{}^\bs\Si_{\nu}}\pm \eta^{c_{\ell-1}^ca_{\ell,{\ell-1}}^{d-c}}_{(\br\si)(\bs\tau),\bt^U\bt^{U'}}.
$$
\end{Lemma}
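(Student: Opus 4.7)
The plan is to evaluate the left-hand side by first rewriting $(x*y)\cdot z$ as a sum of products of the form $(x\cdot z_{(1)})*(y\cdot z_{(2)})$ using the superbialgebra structure, then computing each factor with the product formulas of Lemma~\ref{L280318_4}, and finally reassembling via the $*$-product. Concretely, I would apply Lemma~\ref{Lstfo} with $x=\eta^{a_{\ell-1,\ell}^{c}}_{\br,\br^\mu}$, $y=\eta^{e_\ell^{d-c}}_{\bs,\br^\nu}$, $z=\eta^{a_{\ell,\ell-1}^d}_{\br^{\mu+\nu},\bt}$, and $u$ taken to be the unit of $T^Z_\z(n,0)$. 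Since $\deg x_{(2)}$ and $\deg y_{(2)}$ are forced to be zero by $\nabla(u)=1\otimes 1$, the coproducts of $x$ and $y$ collapse to $x\otimes 1$ and $y\otimes 1$, and Lemma~\ref{Lstfo} reduces to
\[
(x*y)\cdot z=\sum_{(z)}\pm\,(x\cdot z_{(1)})*(y\cdot z_{(2)}),
\]
where the sum runs over the bidegree-$(c,d-c)$ part of $\nabla(z)$.

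Next I would expand $\nabla(z)$ via Lemma~\ref{L280318} (with $\la=\mu+\nu$); the bidegree constraint pins the outer summation index to $c'=c$, leaving a sum over decompositions $\mu'+\nu'=\la$ and $U\in\Om^{\mu',\nu'}$. Inspecting Proposition~\ref{CPR}, the product $\eta^{a_{\ell-1,\ell}^c}_{\br,\br^\mu}\cdot\eta^{a_{\ell,\ell-1}^c}_{\br^{\mu'},\bt^U}$ vanishes unless the multisets of letters of $\br^\mu$ and $\br^{\mu'}$ coincide, and this forces $\mu'=\mu$ (and then automatically $\nu'=\nu$). Only these terms survive in the sum.

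In each surviving summand, Lemma~\ref{L280318_4}(i) gives
\[
\eta^{a_{\ell-1,\ell}^{c}}_{\br,\br^\mu}\cdot\eta^{a_{\ell,\ell-1}^c}_{\br^\mu,\bt^U}=\pm\sum_{\si\in\Si_\mu}(\sgn\,\si)\,\eta^{c_{\ell-1}^c}_{\br\si,\bt^U},
\]
and Lemma~\ref{L280318_4}(ii) gives
\[
\eta^{e_\ell^{d-c}}_{\bs,\br^\nu}\cdot\eta^{a_{\ell,\ell-1}^{d-c}}_{\br^\nu,\bt^{U'}}=\sum_{\tau\in{}^\bs\Si_\nu}\eta^{a_{\ell,\ell-1}^{d-c}}_{\bs\tau,\bt^{U'}};
\]
the $P'$-hypotheses for these formulas follow from $(\br,\br^\mu)\in P'_c$ and $(\br^{\mu+\nu},\bt)\in P'_d$ by restriction to the $U$- and $U'$-indexed positions. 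To combine the two factors I would invoke Lemma~\ref{xistarproducts}(iii): since neither $c_{\ell-1}$ nor $a_{\ell,\ell-1}$ lies in $B_\z$, every $\z$-factorial in that formula is $1$, and the star product simplifies to the single element $\eta^{c_{\ell-1}^c a_{\ell,\ell-1}^{d-c}}_{(\br\si)(\bs\tau),\bt^U\bt^{U'}}$ (alternatively this follows directly from the $\delta$-separated case of Lemma~\ref{LSingleSep}). Summing over $U$, $\si$, and $\tau$ then reproduces the claimed right-hand side.

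The main obstacle is the pruning step --- showing rigorously that only the terms with $\mu'=\mu$ and $\nu'=\nu$ contribute to the coproduct expansion --- together with the accompanying sign bookkeeping coming from Lemma~\ref{Lstfo}, from Corollary~\ref{coprodeta} hidden inside Lemma~\ref{L280318}, and from Lemma~\ref{L280318_4}, all of which the statement packages into the single symbol ``$\pm$''.
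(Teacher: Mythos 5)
Your proposal is correct and follows essentially the same route as the paper's proof: apply Lemma~\ref{Lstfo} (with the trivial fourth factor) together with the coproduct formula of Lemma~\ref{L280318}, prune to the terms with $\mu'=\mu$, $\nu'=\nu$, evaluate each factor with Lemma~\ref{L280318_4}, and reassemble via the $*$-product using Lemma~\ref{LSingleSep}. The only difference is that you make explicit the pruning step (via Proposition~\ref{CPR}, the middle indices must have equal letter multisets, and leading tuples are determined by their multisets), which the paper leaves implicit in its first displayed equality.
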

\begin{proof}
Using Lemmas~\ref{Lstfo}, \ref{L280318} and \ref{L280318_4}, we have 
\begin{align*}
(\eta^{a_{{\ell-1},\ell}^{c}}_{\br,\br^\mu}*\eta^{e_\ell^{d-c}}_{\bs,\br^\nu})\eta_{\br^{\mu+\nu},\bt}^{a_{\ell,{\ell-1}}^{d}}
&=\sum_{U\in\Om^{\mu,\nu}}
\pm (\eta^{a_{{\ell-1},\ell}^{c}}_{\br,\br^\mu} \eta_{\br^{\mu},\bt^U}^{a_{\ell,{\ell-1}}^{c}})*(\eta^{e_\ell^{d-c}}_{\bs,\br^\nu}\eta_{\br^{\nu},\bt^{U'}}^{a_{\ell,{\ell-1}}^{d-c}})
\\
&=\sum_{U\in\Om^{\mu,\nu}}\left(\sum_{\si\in\Si_{\mu}}\pm \eta^{c_{\ell-1}^c}_{\br\si,\bt^U}\right)*
\left(
\sum_{\tau\in{}^\bs\Si_{\nu}}\eta^{a_{\ell,{\ell-1}}^{d-c}}_{\bs\tau,\bt^{U'}}
\right),
\end{align*}
which equals the right hand side of the equality in the claim by Lemma~\ref{LSingleSep}. 
\end{proof}

\begin{Remark} \label{R1} 
{\rm 
Let $(\bb,\br,\bs), (\bb',\br',\bs')\in\Seq^B(n,d)$. 
By (\ref{E080717}) and Lemmas~\ref{LBasis'},\,\ref{LXiZero}, we have that 
$\eta^{\bb}_{\br,\bs}$ and $\eta^{\bb'}_{\br',\bs'}$ are proportional if and only if $(\bb,\br,\bs)\sim (\bb',\br',\bs')$. It will be important later on that all the basis elements appearing in the right hand side of Lemma~\ref{L280318_6} are linearly independent.  
}
\end{Remark}

Let
$$
K_d:=\bigsqcup_{c=0}^d\La(4,c)= \{\ka=(\ka_1,\ka_2,\ka_3,\ka_4)\in\La(4)\mid |\ka|\leq d\}.
$$ 
For $\ka\in K_d$, we set  
\begin{align*}
P_{\ka}&:=P_{\ka_1}'\times P_{\ka_2}\times P_{\ka_3}'\times P_{\kappa_4}.
\end{align*}
Let  
$
\big((\br^1,\bs^1),(\br^2,\bs^2),(\br^3,\bs^3),(\br^4,\bs^4)\big)\in P_{\ka}.
$
We often denote 
$$\br:=\br^1\br^2\br^3\br^4,\ \bs:=\bs^1\bs^2\bs^3\bs^4\in[1,n]^{|\ka|},
$$
and, abusing notation, write $(\br,\bs)\in P_\ka$. Given $(\br,\bs)\in P_\ka$, we define the following element of $T^Z_\z(n,|\ka|)$: 
$$
\eta^{\ka}_{\br,\bs}:=
\eta^{a_{{\ell-1},\ell}^{\ka_1}e_\ell^{\ka_2}a_{\ell,{\ell-1}}^{\ka_3}c_{\ell-1}^{\ka_4}}_{\br,\bs}=
\eta^{a_{{\ell-1},\ell}^{\ka_1}}_{\br^1,\bs^1}*\eta^{e_\ell^{\ka_2}}_{\br^2,\bs^2}*\eta^{a_{\ell,{\ell-1}}^{\ka_3}}_{\br^3,\bs^3}*\eta_{\br^4,\bs^4}^{c_{\ell-1}^{\ka_4}}.
$$
We have the equivalence relation $\sim$ on $P_\ka$ given by 
$$
(\br,\bs)\sim (\bt,\bu)\quad\text{if and only if}\quad  (\br^h,\bs^h)\sim(\bt^h,\bu^h)\ \text{for $h=1,2,3,4$}. 
$$

Let $B':=B\setminus\{a_{{\ell-1},\ell},e_\ell,a_{\ell,{\ell-1}},c_{\ell-1}\}$. 
Recall the equivalence relation $\sim$ on $\Seq^{B'}(n,d-|\ka|)$ from \S\ref{SComb}. By Lemmas~\ref{LBasis} and \ref{LSingleSep}, we have that 
\begin{equation}\label{E280318_8}
\{\eta^{\ka}_{\br,\bs}*\eta^\bb_{\bp,\bq}
\mid \ka\in K_d,\ (\br,\bs)\in P^{\ka}/\sim,\ 
(\bb,\bp,\bq)\in\Seq^{B'}(n,d-|\ka|)/\sim\}
\end{equation}
is a basis of $T^Z_\z(n,d)$.

\begin{Lemma} \label{LProdZ} 
Let $\ka\in K_d$, $(\br,\bs)\in P^\ka$ with $\bs^1=\br^\mu,\, \bs^2=\br^\nu$, and $(\bb,\bp,\bq)\in\Seq^{B'}(n,d-|\ka|)$.  Let  
$k=\ka_1+\ka_2$ and $(\br^{\mu+\nu},\bt)\in P_k'$. Then we have:
$$
(\eta^\ka_{\br,\bs}
*\eta^\bb_{\bp,\bq})(\eta_{\br^{\mu+\nu},\bt}^{a_{\ell,\ell-1}^{k}}*\xi^{e^{d-k}})=
\sum_{U\in\Om^{\mu,\nu}}\sum_{\si\in\Si_{\mu},\, \tau\in{}^{\br^2}\Si_{\nu}}\pm 
\eta^{c_{\ell-1}^{\ka_1+\ka_4}a_{\ell,{\ell-1}}^{\ka_2+\ka_3}}_{(\br^1\si)\br^4(\br^2\tau)\br^3,\bt^U\bs^4\bt^{U'}\bs^3}*\eta^\bb_{\bp,\bq}.
$$ 
\end{Lemma}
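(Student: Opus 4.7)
The strategy is to expand the left-hand side via Lemma~\ref{Lstfo} and argue that only one term of the resulting four-fold sum survives. It is convenient to first decompose $\eta^\ka_{\br,\bs}=\phi*\psi$ where
\[
\phi := \eta^{a_{\ell-1,\ell}^{\ka_1}}_{\br^1,\br^\mu}*\eta^{e_\ell^{\ka_2}}_{\br^2,\br^\nu},\qquad \psi:=\eta^{a_{\ell,\ell-1}^{\ka_3}}_{\br^3,\bs^3}*\eta^{c_{\ell-1}^{\ka_4}}_{\br^4,\bs^4}
\]
have $*$-degrees $k$ and $\ka_3+\ka_4$ respectively, and set $x=\eta^\ka_{\br,\bs}$, $y=\eta^\bb_{\bp,\bq}$, $z=\eta^{a_{\ell,\ell-1}^k}_{\br^{\mu+\nu},\bt}$, $u=\xi^{e^{d-k}}$ to match the notation of Lemma~\ref{Lstfo}.

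The vanishing arguments rest on two simple facts in the path algebra $Z$: (a)~$b\cdot a_{\ell,\ell-1}=0$ for every $b\in B'$ (no element of $B'$ has target~$\ell$) and also $a_{\ell,\ell-1}\cdot a_{\ell,\ell-1}=c_{\ell-1}\cdot a_{\ell,\ell-1}=0$; (b)~every letter appearing in $\psi$ or in $\bb$ has target in $\{0,\ldots,\ell-1\}$ and so is fixed by right multiplication by $e$. From (a) and Proposition~\ref{CPR} one deduces that in Lemma~\ref{Lstfo} we must have $z_{(1)}=z$, $z_{(2)}=1$, $y_{(1)}=1$, $y_{(2)}=y$; moreover, analysing $\nabla(\eta^\ka_{\br,\bs})$ via Corollary~\ref{coprodeta}, among all splits of bi-degree $(k,\ka_3+\ka_4)$ only the unique one with $x_{(1)}=\phi$, $x_{(2)}=\psi$ has $x_{(1)}z\ne 0$ (the relevant $\c$-factorial ratio being $1$, since the $c_{\ell-1}^{\ka_4}$-block sits entirely inside $\Triple^2$). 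Since $\xi^{e^{d-k}}=(E^e)^{\otimes(d-k)}$ is a pure tensor, $\nabla(\xi^{e^{d-k}})=\sum_l \xi^{e^l}\otimes\xi^{e^{d-k-l}}$ is a sum of single terms, so the degree constraint pins down $u_{(1)}=\xi^{e^{\ka_3+\ka_4}}$ and $u_{(2)}=\xi^{e^{d-|\ka|}}$ uniquely; combining with (b) and a Lemma~\ref{L2118}-type index check then gives $\psi\cdot u_{(1)}=\psi$ and $y\cdot u_{(2)}=y$.

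After these reductions the left-hand side collapses to $\pm\,(\phi\cdot z)*\psi*y$. Invoking Lemma~\ref{L280318_6} (with the $c$ and $d$ there specialised to $\ka_1$ and $k$) expands $\phi\cdot z$ as the claimed sum over $U\in\Om^{\mu,\nu}$, $\sigma\in\Si_\mu$ and $\tau\in{}^{\br^2}\Si_\nu$ with summands $\pm\,\eta^{c_{\ell-1}^{\ka_1}a_{\ell,\ell-1}^{\ka_2}}_{(\br^1\sigma)(\br^2\tau),\bt^U\bt^{U'}}$. Reordering the star product by super-commutativity of~$*$ so that the two $c_{\ell-1}$-blocks and the two $a_{\ell,\ell-1}$-blocks come together, and merging each pair via Lemma~\ref{xistarproducts}(iii)---where the factorial ratios equal $1$ because $c_{\ell-1},a_{\ell,\ell-1}\notin B_\z$---produces precisely the single basis element $\eta^{c_{\ell-1}^{\ka_1+\ka_4}a_{\ell,\ell-1}^{\ka_2+\ka_3}}_{(\br^1\sigma)\br^4(\br^2\tau)\br^3,\bt^U\bs^4\bt^{U'}\bs^3}$ paired with $\eta^\bb_{\bp,\bq}$, as required.

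The main obstacle is pinning down the unique surviving split in $\nabla(\eta^\ka_{\br,\bs})$: one must rule out every other degree-$(k,|\ka|-k)$ piece by a case-by-case vanishing argument in $Z$, and appeal to Remark~\ref{R1} to confirm that the summands indexed by distinct $(U,\sigma,\tau)$ really do yield linearly independent basis elements. Sign bookkeeping through the iterated shuffles of Lemma~\ref{Lstfo} and the transpositions of the odd $a_{\ell,\ell-1}$-letters is a technical nuisance, but since the statement only claims equality up to~$\pm$, internal consistency of signs suffices.
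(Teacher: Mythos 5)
Your proof is correct and follows essentially the same route as the paper's: both reduce the computation to Lemma~\ref{L280318_6} applied to the degree-$k$ block $\eta^{a_{\ell-1,\ell}^{\ka_1}}_{\br^1,\br^\mu}*\eta^{e_\ell^{\ka_2}}_{\br^2,\br^\nu}$ times $\eta^{a_{\ell,\ell-1}^{k}}_{\br^{\mu+\nu},\bt}$, and then reassemble the star factors via Lemma~\ref{xistarproducts}(iii) and Lemma~\ref{LSingleSep}. The only organizational difference is that you justify the initial factorization of the left-hand side into $\pm(\phi z)*\psi*\eta^\bb_{\bp,\bq}$ by expanding with Lemma~\ref{Lstfo} and showing all but one coproduct term dies, whereas the paper invokes the separation Lemma~\ref{LSep} for the same splitting; the underlying vanishing checks in $Z$ are identical.
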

\begin{proof}
By Lemmas~\ref{LSep} and \ref{L280318_6}, the left hand side equals
\begin{align*}
&
\big((\eta^{a_{{\ell-1},\ell}^{\ka_1}}_{\br^1,\br^\mu}*\eta^{e_\ell^{\ka_2}}_{\br^2,\br^\nu})\eta_{\br^{\mu+\nu},\bt}^{a_{\ell,{\ell-1}}^{k}}\big)*\big((\eta^{a_{\ell,{\ell-1}}^{\ka_3}}_{\br^3,\bs^3}*\eta_{\br^4,\bs^4}^{c_{\ell-1}^{\ka_4}}*\eta^\bb_{\bp,\bq})\xi^{e^{d-k}}\big)
\\
=&
\left(\sum_{U\in\Om^{\mu,\nu}}\sum_{\si\in\Si_{\mu},\, \tau\in{}^{\br^2}\Si_{\nu}}\pm 
\eta^{c_{\ell-1}^{\ka_1}}_{\br^1\si,\bt^U}
*\eta^{a_{\ell,{\ell-1}}^{\ka_2}}_{\br^2\tau,\bt^{U'}}
\right) *\big(\eta^{a_{\ell,{\ell-1}}^{\ka_3}}_{\br^3,\bs^3}*\eta_{\br^4,\bs^4}^{c_{\ell-1}^{\ka_4}}*\eta^\bb_{\bp,\bq}\big),
\end{align*}
which equals the right hand side of the equality in the claim by Lemma~\ref{xistarproducts}(iii) and Lemma~\ref{LSingleSep}. 
\end{proof}

\begin{Remark} \label{R2} 
{\rm 
Suppose that in the assumption of Lemma~\ref{LProdZ}, we have additionally that $\bt=t_1,\dots,t_k$ satisfies $t_a\neq t_b$ for all $1\leq a\neq b\leq k$ and $\bt$ shares no letters in common with the words $\bs^3$ and $\bs^4$. 
Taking into account Remark~\ref{R1}, one can see that all the basis elements appearing in the right hand side of Lemma~\ref{LProdZ} are linearly independent.  
}
\end{Remark}

Recall the idempotent $\xi^e\in T^Z_\z(n,d)$. In this subsection we sometimes write $\xi^{e^d}:=\xi^e$, so that we also have idempotents $\xi^{e^c}\in T^Z_\z(n,c)$ for all $c\in\Z_{\geq 0}$. 
Recalling (\ref{Ea}), we also introduce idempotents 
$$
\xi^{(c,d-c)}:=\big((E^{e_\ell})^{\otimes c}\big)*\big((E^{e})^{\otimes d-c}\big)\in T^Z_\z(n,d)\qquad(0\leq c\leq d).
$$
Note that $\xi^{(0,d)}=\xi^e$ and $\xi^{(d,0)}=\xi^{e_\ell}$. Moreover,
\begin{equation}\label{}
\xi^{(c,d-c)}\xi^{(b,d-b)}=\de_{b,c}\xi^{(c,d-c)}. 
\end{equation}

The following is easily checked using Lemma~\ref{Lstfo}: 

\begin{Lemma} \label{L270318} 
Let $\ka\in K_d$, $(\br,\bs)\in P_\ka$, $(\bb,\bp,\bq)\in\Seq^{B'}(n,d-|\ka|)$, $0\leq k\leq d$, $\la\in\La(n,k)$, and suppose that $(\br^\la,\bt)\in P_k'$. Then
\begin{align*}
\xi^{(\ka_2+\ka_3,d-\ka_2-\ka_3)}(\eta^{\ka}_{\br,\bs}*\eta^\bb_{\bp,\bq})\xi^{(\ka_1+\ka_2,d-\ka_1-\ka_2)}&=\eta^{\ka}_{\br,\bs}*\eta^\bb_{\bp,\bq},
\\
\xi^{(k,d-k)}(\eta_{\br^{\la},\bt}^{a_{\ell,\ell-1}^{k}}*\xi^{e^{d-k}})\xi^e&=\eta_{\br^{\la},\bt}^{a_{\ell,\ell-1}^{k}}*\xi^{e^{d-k}}.
\end{align*}
In particular, 
$
(\eta^{\ka}_{\br,\bs}*\eta^\bb_{\bp,\bq})(\eta_{\br^{\la},\bt}^{a_{\ell,\ell-1}^{k}}*\xi^{e^{d-k}})=0,
$ unless $\ka_1+\ka_2=k$ and $\bs^1\bs^2\sim \br^\la$.
\end{Lemma}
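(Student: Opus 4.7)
The plan is to read off both fixed-point identities as straightforward left/right ``idempotent weight'' matches, then derive the ``in particular'' clause from orthogonality of the $\xi^{(c,d-c)}$'s.

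First, I would record the following bookkeeping on the standard basis $B$. Since $e+e_\ell=1_Z$ and each $b\in B$ is a product of idempotents $e_i$ with arrows, exactly one of $e_\ell b=b$ or $eb=b$ holds; analogously on the right. Among the four distinguished basis letters, $e_\ell$ and $a_{\ell,\ell-1}$ are the left-$e_\ell$ letters, while $a_{\ell-1,\ell}$ and $e_\ell$ are the right-$e_\ell$ letters; every letter of $B'$ is both left-$e$ and right-$e$. Next, using (\ref{EXiProduct}) in each tensor slot and expanding $\xi^{(c,d-c)}=(E^{e_\ell})^{*c}*(E^e)^{*(d-c)}$ via Lemma~\ref{xistarproducts}(i), one checks the following key fact: for any $(\bb,\br,\bs)\in\Seq^B(n,d)$,
\[
\xi^{(c,d-c)}\,\eta^{\bb}_{\br,\bs}=\begin{cases}\eta^{\bb}_{\br,\bs}&\text{if }\#\{i:e_\ell b_i=b_i\}=c,\\ 0&\text{otherwise,}\end{cases}
\]
and the analogous statement on the right. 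The vanishing uses that $e_\ell b=0$ when $eb=b$ (and vice versa), while the reproduction uses $e_\ell b = b$ (resp.\ $eb=b$) at the matching positions, with the diagonal sum $E^{e_\ell}=\sum_r\xi^{e_\ell}_{r,r}$ forcing $r=r_i$ at each slot.

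Second, I would apply this counting to the two elements at hand. By Lemma~\ref{LSingleSep} the product $\eta^{\ka}_{\br,\bs}*\eta^{\bb}_{\bp,\bq}$ is $\pm$ a single basis element $\eta^{\bc}_{\bp',\bq'}$ whose alphabet is $a_{\ell-1,\ell}^{\ka_1}e_\ell^{\ka_2}a_{\ell,\ell-1}^{\ka_3}c_{\ell-1}^{\ka_4}\bb$. By the first paragraph, its left-$e_\ell$ count is $\ka_2+\ka_3$ (the $e_\ell$'s and $a_{\ell,\ell-1}$'s), and its right-$e_\ell$ count is $\ka_1+\ka_2$ (the $a_{\ell-1,\ell}$'s and $e_\ell$'s), while the letters of $\bb\in(B')^{d-|\ka|}$ contribute nothing to either $e_\ell$-count. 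This is precisely the first identity. For the second, the word $a_{\ell,\ell-1}^k\,e^{d-k}$ (obtained from Lemma~\ref{LSingleSep} applied to $\eta^{a_{\ell,\ell-1}^k}_{\br^\la,\bt}*\xi^{e^{d-k}}$) has left-$e_\ell$ count $k$ and right-$e_\ell$ count $0$, so $\xi^{(k,d-k)}$ fixes it on the left and $\xi^e=\xi^{(0,d)}$ fixes it on the right.

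Finally, for the ``in particular'' statement, the two identities just proved yield
\[
(\eta^{\ka}_{\br,\bs}*\eta^{\bb}_{\bp,\bq})(\eta_{\br^\la,\bt}^{a_{\ell,\ell-1}^k}*\xi^{e^{d-k}})
=(\eta^{\ka}_{\br,\bs}*\eta^{\bb}_{\bp,\bq})\,\xi^{(\ka_1+\ka_2,\,d-\ka_1-\ka_2)}\xi^{(k,d-k)}\,(\eta_{\br^\la,\bt}^{a_{\ell,\ell-1}^k}*\xi^{e^{d-k}}),
\]
which vanishes unless $\ka_1+\ka_2=k$ by orthogonality of the idempotents $\xi^{(c,d-c)}$. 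Assuming $\ka_1+\ka_2=k$, I would refine $\xi^{(k,d-k)}$ into a sum of finer weight idempotents $\sum_{\mu\in\La(n,k),\nu\in\La(n,d-k)}c_{\mu,\nu}\,\xi^{e_\ell^k}_{\br^\mu,\br^\mu}*\xi^{e^{d-k}}_{\br^\nu,\br^\nu}$; the argument of the first paragraph refines to say that such a finer idempotent reproduces $\eta^{\ka}_{\br,\bs}*\eta^{\bb}_{\bp,\bq}$ on the right only when $\br^\mu\sim\bs^1\bs^2$, and reproduces $\eta_{\br^\la,\bt}^{a_{\ell,\ell-1}^k}*\xi^{e^{d-k}}$ on the left only when $\br^\mu\sim\br^\la$. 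Both conditions together force $\bs^1\bs^2\sim\br^\la$ as required.

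\textbf{Main obstacle.} The first two identities are essentially bookkeeping with (\ref{EXiProduct}) and Lemma~\ref{xistarproducts}(i), hence routine. The subtle point is the second half of the ``in particular'' claim, which requires not just a weight-count match but a positional match $\bs^1\bs^2\sim\br^\la$; this forces one to refine $\xi^{(k,d-k)}$ into the finer weight idempotents above and to verify carefully that the finer reproduction identity on basis elements holds with the right index data, via the same tensor-slot computation used in the first paragraph (an analogue of Lemma~\ref{L2118} at the level of the idempotents $e_\ell$ and $e$).
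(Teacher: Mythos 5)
Your argument is correct, and it supplies considerably more detail than the paper, whose entire proof is the single sentence ``easily checked using Lemma~\ref{Lstfo}.'' The intended route is evidently to distribute the factors of $\xi^{(c,d-c)}=(E^{e_\ell})^{\otimes c}*(E^e)^{\otimes d-c}$ across the star-factors of $\eta^\ka_{\br,\bs}*\eta^\bb_{\bp,\bq}$ via the interchange law of Lemma~\ref{Lstfo} together with the coproduct formula for $\xi^{(c,d-c)}$ (cf.\ Lemma~\ref{coprodchar}), reducing everything to one star-factor at a time. You instead expand $\xi^{(c,d-c)}$ as a sum over $c$-element subsets of slots and compute slot-by-slot with (\ref{EXiProduct}); this is equally legitimate and arguably more transparent. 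Your ``key fact'' (that $\xi^{(c,d-c)}$ acts on a basis element as $\delta$ of the left-$e_\ell$ letter count, and dually on the right) is exactly the right organizing principle, and your treatment of the genuinely nontrivial point --- refining $\xi^{(k,d-k)}$ into the idempotents $\xi^{e_\ell^k}_{\br^\mu,\br^\mu}*\xi^{e^{d-k}}_{\br^\nu,\br^\nu}$ to extract the positional condition $\bs^1\bs^2\sim\br^\la$, which a bare count of $e_\ell$-letters cannot see --- is sound: the surviving $\mu$ on the right of $x$ must satisfy $\br^\mu\sim\bs^1\bs^2$ while the surviving $\mu$ on the left of $y$ must be $\la$, and these can coexist only if $\bs^1\bs^2\sim\br^\la$.

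One small correction: $\xi^{(c,d-c)}$ is $(E^{e_\ell})^{\otimes c}*(E^e)^{\otimes(d-c)}$, not $(E^{e_\ell})^{*c}*(E^e)^{*(d-c)}$; by Lemma~\ref{xistarproducts}(i) the latter equals $c!\,(d-c)!$ times the former. This does not affect your argument, since the subset expansion you actually use (place $E^{e_\ell}$ in the $c$ slots of $S$ and $E^e$ elsewhere, summed over $|S|=c$) is the correct expansion of the tensor-power version, but the displayed identity should be fixed. Likewise, in your refinement the coefficients $c_{\mu,\nu}$ are all equal to $1$ (this follows from $(E^{e_\ell})^{\otimes k}=\sum_\mu\xi^{e_\ell}_\mu$ as in (\ref{E240318}) and distributivity of $*$); it is worth saying so, since the decomposition must actually hold for the insertion $xy=\sum_{\mu,\nu}xf_{\mu,\nu}y$ to be valid.
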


\subsection{Double centralizer property for zigzag Schur algebras}
Recall that $\bar Z:=eZe$ is the zigzag algebra for a fixed $\ell\geq 1$.  

\begin{Lemma} 
We have that $e$ is a double centralizer idempotent for $Z$. 
\end{Lemma}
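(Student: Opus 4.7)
The plan is to prove that the left-multiplication map $\lambda: Z \to \End_{\bar Z}(Ze)$ is an isomorphism by directly analyzing $Ze$ as a right $\bar Z$-module, computing the block structure of $\End_{\bar Z}(Ze)$, and exhibiting an explicit preimage for each block.

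First I would decompose $Ze = \bar Z \oplus e_\ell Ze$ as right $\bar Z$-modules: both summands are closed under right multiplication by $\bar Z$ and meet in zero. The summand $e_\ell Ze$ is spanned by basis elements of $Z$ of the form $e_\ell b e_j$ with $j < \ell$; the only such non-zero element is $a_{\ell,\ell-1}$, since the length-two paths $a_{\ell,\ell-1} a_{\ell-1,\ell-2}$ and $a_{\ell,\ell-1} a_{\ell-1,\ell}$ both vanish (the first as a non-cycle of length two, the second by relation (4) of the extended zigzag algebra). Hence $e_\ell Ze = \k a_{\ell,\ell-1} =: S$, the one-dimensional right $\bar Z$-module on which $e_{\ell-1}$ acts as the identity and all other basis elements of $\bar Z$ act as zero; in particular $a_{\ell,\ell-1} \cdot c_{\ell-1} = 0$, again using relation (4).

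Second, I would compute $\End_{\bar Z}(\bar Z \oplus S)$ as a $2\times 2$ block algebra. Three blocks are immediate: $\End_{\bar Z}(\bar Z) \cong \bar Z$ via left multiplication, $\Hom_{\bar Z}(\bar Z, S) \cong S$ via $\phi \mapsto \phi(e)$, and $\End_{\bar Z}(S) \cong \k$. The nontrivial block is $\Hom_{\bar Z}(S, \bar Z)$: such a map $\phi$ is determined by $y := \phi(a_{\ell,\ell-1}) \in \bar Z e_{\ell-1} = \k\{e_{\ell-1}, a_{\ell-2,\ell-1}, c_{\ell-1}\}$ subject to $yr = 0$ for every $r \in \bar Z$ with $a_{\ell,\ell-1} r = 0$. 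The constraint with $r = a_{\ell-1,\ell-2}$ rules out $e_{\ell-1}$ and $a_{\ell-2,\ell-1}$ but is satisfied by $c_{\ell-1}$ (the product $c_{\ell-1} a_{\ell-1,\ell-2}$ contains the subproduct $a_{\ell,\ell-1} a_{\ell-1,\ell-2} = 0$), and the remaining constraint $y c_{\ell-1} = 0$ holds for $c_{\ell-1}$ because $c_{\ell-1}^2$ is a length-four path. Thus $\Hom_{\bar Z}(S, \bar Z) = \k c_{\ell-1}$, and summing the four blocks yields $\dim \End_{\bar Z}(Ze) = \dim \bar Z + 3 = 4\ell + 1 = \dim Z$.

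Finally, I would verify surjectivity of $\lambda$ by producing explicit preimages. Given data $(u, \alpha, \beta, \gamma) \in \bar Z \oplus \k \oplus \k \oplus \k$ parameterizing an endomorphism through the four blocks above, the element $z := u + \alpha\, a_{\ell,\ell-1} + \beta\, a_{\ell-1,\ell} + \gamma\, e_\ell \in Z$ is a preimage. This is checked by evaluating $\lambda_z$ on the right $\bar Z$-generators $e_0, \dots, e_{\ell-1}, a_{\ell,\ell-1}$ of $Ze$: the key identities are $z e_{\ell-1} = u e_{\ell-1} + \alpha\, a_{\ell,\ell-1}$, $z a_{\ell,\ell-1} = \beta\, c_{\ell-1} + \gamma\, a_{\ell,\ell-1}$, and $z e_j = u e_j$ for $j < \ell-1$, all of which rely on $u \cdot a_{\ell,\ell-1} = u e_\ell \cdot a_{\ell,\ell-1} = 0$ (since $u \in \bar Z = eZe$) and on $a_{\ell-1,\ell} \cdot a_{\ell,\ell-1} = c_{\ell-1}$. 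Since $\lambda$ is a surjection between free $\k$-modules of equal finite rank over the PID $\k$, it is automatically an isomorphism. The main obstacle is pinpointing $\Hom_{\bar Z}(S, \bar Z) = \k c_{\ell-1}$: relation (4) of the extended zigzag algebra does essential work here, both in making this Hom space one-dimensional (via $c_{\ell-1}$) and in ensuring the proposed preimage acts correctly; without relation (4) the map $\lambda$ could not be surjective.
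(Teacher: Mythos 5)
Your proof is correct and takes essentially the same route as the paper's: both arguments reduce to the homomorphisms out of the one-dimensional right $\bar Z$-module $e_\ell Ze=\k a_{\ell,\ell-1}$, where relation (4) forces $\Hom_{\bar Z}(e_\ell Ze,\bar Ze_{\ell-1})=\k c_{\ell-1}$, the paper verifying bijectivity block-by-block over the finer decomposition $Ze=\bigoplus_i e_iZe$ while you finish with a dimension count, explicit preimages, and the equal-rank surjectivity argument. One tiny caveat: when $\ell=1$ the arrow $a_{\ell-1,\ell-2}$ does not exist, so to kill the $e_{\ell-1}$-component of $y=\phi(a_{\ell,\ell-1})$ you should instead invoke the constraint $yc_{\ell-1}=0$ (as the paper does); with that adjustment everything goes through for all $\ell\geq 1$.
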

\begin{proof}
As a right \(\bar Z\)-module, \(Ze\) decomposes as
\begin{align*}
Ze = e_\ell Ze \oplus e_{\ell-1}Ze \oplus \cdots \oplus e_0 Ze, 
\end{align*}
so it is enough to check that the algebra map \(\la: Z \to \End_{\bar Z}(Ze)\) restricts to an isomorphism \(e_j Z e_i \to \Hom_{\bar Z}(e_i Z e, e_j Ze)\), for all \(i,j \in I\).

Let \(i,j \in I\) with \(i \neq \ell\). The map \(\la|_{e_j Z e_i}: e_j Z e_i \to \Hom_{\bar Z}(e_i Z e, e_j Ze)\) is injective, since \(\la_x(e_i) = x\) for all \(x \in e_jZe_i\). Now let \(f \in \Hom_{\bar Z}(e_i Z e, e_j Ze)\). As \(f\) is a right \(\bar Z\)-module homomorphism, \(f\) is determined by the image of \(e_i\). Moreover, since \(f(e_i) = f(e_i)e_i\), we have \(f(e_i) \in e_jZe_i\), and thus \(f = \la_{f(e_i)}\), so \(\la|_{e_j Z e_i}\) is surjective, and thus an isomorphism.

Now let \(j \in I\), and consider the map \(\la|_{e_j Z e_\ell}: e_j Z e_\ell \to \Hom_{\bar Z}(e_\ell Z e, e_j Ze)\). Note that \(e_\ell Z e = \spa (a_{\ell,\ell-1})\). First we show \(\la|_{e_j Z e_\ell}\) is injective. If \(j\neq \ell,\ell-1\) then \(e_j Z e_\ell = 0\), and this is trivially true. If \(j=\ell-1\) then \(e_j Z e_\ell = \spa (a_{\ell-1,\ell})\), and \(\la_{a_{\ell-1,\ell}}(a_{\ell,\ell-1}) = c_{\ell-1} \neq 0\). If \(j=\ell\), then \(e_j Z e_\ell = \spa (e_\ell)\), and \(\la_{e_\ell}(a_{\ell,\ell-1}) = a_{\ell,\ell-1}  \neq 0\). Thus, in any case \(\la|_{e_j Z e_\ell}\) is injective.

Now we show \(\la|_{e_j Z e_\ell}\) is surjective. Let \(f \in \Hom_{\bar Z}(e_\ell Z e, e_j Ze)\). Since \(f(a_{\ell,\ell-1}) = f(a_{\ell,\ell-1})e_{\ell-1}\), we have that \(f(a_{\ell,\ell-1}) \in e_j Z e_{\ell-1}\). Since \(e_j Z e_{\ell-1} = 0\) for \(j\neq \ell,\ell-1,\ell-2\) we may assume \(j \in \{\ell,\ell-1,\ell-2\}\). If \(j =\ell-2\), then \(f(a_{\ell,\ell-1}) = \alpha a_{\ell-2,\ell-1}\) for some \(\alpha \in \k\). But then \(0 = f(a_{\ell,\ell-1} a_{\ell-1,\ell-2}) = f(a_{\ell,\ell-1})a_{\ell-1,\ell-2} = \alpha c_{\ell-2}\), so \(f=0 = \la_0\). If \(j = \ell-1\), then \(f(a_{\ell,\ell-1}) = \alpha e_{\ell-1} + \beta c_{\ell-1}\) for some \(\alpha, \beta \in \k\). But then 
\(
0=f(a_{\ell,\ell-1}c_{\ell-1}) = f(a_{\ell,\ell-1})c_{\ell-1} = \alpha c_{\ell-1}
\)
implies that \(\alpha =0\). Thus \(f(a_{\ell,{\ell-1}}) = \beta c_{\ell-1} = \beta a_{{\ell-1},\ell}a_{\ell,{\ell-1}}\) and \(f=\la_{\beta a_{{\ell-1},\ell}}\). Finally, assume \(j = \ell\). Then \(f(a_{\ell,{\ell-1}}) = \alpha a_{\ell,{\ell-1}} = \alpha e_\ell a_{\ell,{\ell-1}}\) for some \(\alpha \in \k\), so \(f = \la_{\alpha e_\ell}\). Thus in any case \(\la|_{e_j Z e_\ell}\) is surjective, and thereby and isomorphism, completing the proof.
\end{proof}

In view of Lemma~\ref{LTruncation}, we have $\xi^e  T^Z_\z(n,d)\xi^e = T^{\bar Z}_{\bar \z}(n,d)$.  
The main result of this subsection is

\begin{Theorem} \label{TDCPZ} 
Let $d\leq n$. Then $\xi^e$ is a double centralizer idempotent for $T^Z_\z(n,d)$. In particular, $T^Z_\z(n,d)\cong \End_{T^{\bar Z}_{\bar \z}(n,d)}(T^Z_\z(n,d)\xi^e)$.
\end{Theorem}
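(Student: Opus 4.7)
The strategy reduces to a soundness check. The lemma immediately preceding the theorem shows that $e$ itself is a double centralizer idempotent for $Z$, so Corollary~\ref{TK} directly yields that $\xi^e$ is a double centralizer idempotent for $T^Z_\z(n,d)_\K = S^Z(n,d)_\K$. By Lemma~\ref{LK}, it remains to verify that $\xi^e$ is \emph{sound} for $T^Z_\z(n,d)$, i.e.\ that the natural map
$$\la : T^Z_\z(n,d) \to \End_{\xi^e T^Z_\z(n,d)\xi^e}\bigl(T^Z_\z(n,d)\xi^e\bigr)$$
sends indivisible elements to indivisible elements. Equivalently, for every prime $\pi\in\k$ and every $x\in T^Z_\z(n,d)$, if $xy\in\pi\cdot T^Z_\z(n,d)\xi^e$ for all $y\in T^Z_\z(n,d)\xi^e$, then $x\in\pi\cdot T^Z_\z(n,d)$.

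I would prove soundness by a direct computation in the basis~\eqref{E280318_8}, noting that the basis of $T^Z_\z(n,d)\xi^e$ is exactly the sub-basis with $\ka_1=\ka_2=0$, since $a_{\ell-1,\ell}\cdot e = e_\ell\cdot e = 0$ while every other basis letter of $Z$ is fixed by right multiplication by $e$. Suppose $x=\sum_i c_i\zeta_i$ with some coefficient $c_{i_0}\notin\pi\k$, and write the distinguished term as $\zeta_{i_0}=\eta^{\ka^\circ}_{\br^\circ,\bs^\circ}*\eta^{\bb^\circ}_{\bp^\circ,\bq^\circ}$ with $\bs^{\circ,1}=\br^{\mu^\circ}$ and $\bs^{\circ,2}=\br^{\nu^\circ}$. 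I would test against the element
$$y \;=\; \eta^{a_{\ell,\ell-1}^{\ka^\circ_1+\ka^\circ_2}}_{\br^{\mu^\circ+\nu^\circ},\,\bt}\,*\,\xi^{e^{d-\ka^\circ_1-\ka^\circ_2}} \;\in\; T^Z_\z(n,d)\xi^e,$$
where $\bt\in[1,n]^{\ka^\circ_1+\ka^\circ_2}$ is a word of pairwise distinct letters chosen disjoint from every letter appearing in any $\bs^{i,3}$ or $\bs^{i,4}$ in the support of $x$. The hypothesis $d\leq n$ makes such a $\bt$ available, and this is precisely the condition making Remark~\ref{R2} applicable.

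By Lemma~\ref{L270318}, $\zeta_i\cdot y$ vanishes unless $\ka_{i,1}+\ka_{i,2}=\ka^\circ_1+\ka^\circ_2$ and $\bs^{i,1}\bs^{i,2}\sim\br^{\mu^\circ+\nu^\circ}$; when nonzero, Lemma~\ref{LProdZ} expands $\zeta_i\cdot y$ as a sum of basis vectors of $T^Z_\z(n,d)\xi^e$ with coefficients $\pm 1$, pairwise distinct thanks to Remark~\ref{R2}. Because the letters of $\bt$ are fresh, each output basis vector carries on its face the locations of its $\bt$-letters (distinguishable from letters coming from $\bs^{i,3},\bs^{i,4}$), from which one reads off the source type $\ka_i$ and the remaining orbit data $(\br_i,\bs_i,\bb_i,\bp_i,\bq_i)$. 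Identifying a specific output basis vector $\zeta^*$ appearing in $\zeta_{i_0}\cdot y$ to which no other $\zeta_i\cdot y$ contributes then yields that the coefficient of $\zeta^*$ in $xy$ equals $\pm c_{i_0}\notin\pi\k$, contradicting the hypothesis.

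The main obstacle is making this last injectivity claim precise. One must execute the bookkeeping in the output right-column word $\bt^U\bs^4\bt^{U'}\bs^3$ and the output left-column word $(\br^1\si)\br^4(\br^2\tau)\br^3$ from Lemma~\ref{LProdZ} carefully enough to confirm that, once $\bt$ is taken generic in the sense above, the map associating to a pair (source basis vector $\zeta_i$, admissible parameters $U,\si,\tau$) its output basis vector in $\zeta_i\cdot y$ is injective, so that no coincidental cancellation among the contributions from distinct source basis vectors can mask the $\pm c_{i_0}$-contribution to the targeted coefficient.
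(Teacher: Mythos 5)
Your proposal is correct and follows essentially the same route as the paper: reduce via Lemma~\ref{LK} and Corollary~\ref{TK} to showing $\xi^e$ is sound, then test an offending element against $\eta^{a_{\ell,\ell-1}^{k}}_{\br^{\mu+\nu},\bt}*\xi^{e^{d-k}}$ with $\bt$ a word of fresh, pairwise distinct letters (available since $d\leq n$), and conclude from Lemmas~\ref{L270318} and~\ref{LProdZ} together with the linear independence guaranteed by Remark~\ref{R2} that no cancellation can hide an indivisible coefficient. The ``injectivity'' bookkeeping you flag as the main obstacle is exactly what Remark~\ref{R2} (via Remark~\ref{R1}) supplies, and the paper's proof invokes it in the same way.
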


Theorem~\ref{TDCPZ} follows immediately from Lemma~\ref{LK}, Corollary~\ref{TK}, and the following proposition:

\begin{Proposition} 
Let $d\leq n$. Then $\xi^e$ is a sound idempotent for $T^Z_\z(n,d)$. 
\end{Proposition}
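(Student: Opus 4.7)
My plan is to reduce the statement to a saturation claim and then use the explicit product formulas already established (Lemmas~\ref{L270318} and \ref{LProdZ}, together with Remark~\ref{R2}) to read off, basis element by basis element, that every $y \in T^Z_\z(n,d)_\K$ whose left multiplication preserves $T^Z_\z(n,d)\xi^e$ already lies in $T^Z_\z(n,d)$. By Lemma~\ref{LK} applied to $\xi^e$, together with the fact that $\lambda$ is injective (since $\lambda_\K$ is an isomorphism by Corollary~\ref{TK}), it is enough to show: if $y \in T^Z_\z(n,d)_\K$ satisfies $y \cdot T^Z_\z(n,d)\xi^e \subseteq T^Z_\z(n,d)\xi^e$, then $y \in T^Z_\z(n,d)$. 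Expand $y = \sum_\alpha c_\alpha u_\alpha$ in the basis (\ref{E280318_8}), with $c_\alpha \in \K$; the goal reduces to showing each $c_\alpha$ lies in $\k$.

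Fix $\alpha$ and write $u_\alpha = \eta^\ka_{\br,\bs} * \eta^\bb_{\bp,\bq}$, choosing the orbit representative so that $\bs^1 = \br^\mu$ and $\bs^2 = \br^\nu$ for appropriate compositions $\mu$ and $\nu$. Set $k := \ka_1 + \ka_2 \leq d \leq n$, and choose $\bt \in [1,n]^k$ with pairwise distinct entries that are all disjoint from every letter appearing in $\br, \bs, \bp, \bq$; this is possible precisely because $d \leq n$. Take as test element
\[
z_\alpha := \eta^{a_{\ell,\ell-1}^k}_{\br^{\mu+\nu}, \bt} * \xi^{e^{d-k}} \in T^Z_\z(n,d)\xi^e.
\]
By Lemma~\ref{L270318}, $u_\beta z_\alpha = 0$ unless $\ka_1^\beta + \ka_2^\beta = k$ and $\bs^{\beta,1}\bs^{\beta,2} \sim \br^{\mu+\nu}$; and for such $\beta$, Lemma~\ref{LProdZ} combined with Remark~\ref{R2} expresses $u_\beta z_\alpha$ as a sum of pairwise linearly independent basis elements of (\ref{E280318_8}) with coefficients $\pm 1$.

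The heart of the argument is the following combinatorial claim: some basis element $v$ appearing in $u_\alpha z_\alpha$ does not appear in $u_\beta z_\alpha$ for any $\beta \neq \alpha$. Granting this, the coefficient of $v$ in $y z_\alpha$ is $\pm c_\alpha$; since $y z_\alpha \in T^Z_\z(n,d)$ by hypothesis, this coefficient lies in $\k$, whence $c_\alpha \in \k$. To verify the claim, observe that the $\Seq^{B'}$-part of $v$ determines $(\bb^\beta,\bp^\beta,\bq^\beta)$ up to equivalence. Because the letters of $\bt$ are pairwise distinct and disjoint from all other letters in play, any representative of the $\Si$-orbit of the $\bs$-part of $v$ admits a unique decomposition into four blocks $\bt^U, \bs^{\beta,4}, \bt^{U'}, \bs^{\beta,3}$, from which we recover $\ka^\beta$, the subset $U$ (hence $\mu$ and $\nu$), and the words $\bs^{\beta,3}, \bs^{\beta,4}$. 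The analogous block decomposition of the $\br$-part then recovers $\br^\beta$ modulo precisely the symmetries collapsing to the same basis label, so $v$ determines $\beta$ uniquely.

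\textbf{Main obstacle.} The combinatorial claim must be executed carefully, because the basis (\ref{E280318_8}) uses $\Si_d$-orbit representatives and the identification of $(\ka^\beta,\br^\beta,\bs^\beta,\bb^\beta,\bp^\beta,\bq^\beta)$ from $v$ must be shown to be well-defined modulo these symmetries. The key point will be to exploit the distinctness of the letters of $\bt$ together with their disjointness from all other letters to show that the block structure of $v$ is recoverable unambiguously; this is what makes the hypothesis $d \leq n$ essential.
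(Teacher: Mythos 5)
Your strategy is the same as the paper's: expand in the basis (\ref{E280318_8}) with $\bs^1,\bs^2$ leading tuples, multiply by a test element $\eta^{a_{\ell,\ell-1}^{k}}_{\br^{\mu+\nu},\bt}*\xi^{e^{d-k}}\in T^Z_\z(n,d)\xi^e$, and use Lemmas~\ref{L270318} and \ref{LProdZ} together with Remark~\ref{R2} to read the coefficients off the product. The only packaging difference is that you phrase soundness as a saturation statement over $\K$ (if $y\in T_\K$ and $y\,T\xi^e\subseteq T\xi^e$ then $y\in T$) rather than as the paper's minimal-support divisibility contradiction; these are equivalent and neither buys anything over the other.

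There is, however, one concrete defect. You require the $k$ distinct letters of $\bt$ to avoid \emph{every} letter occurring in $\br,\bs,\bp,\bq$, and assert this is possible ``precisely because $d\leq n$''. It is not: take $n=d$, $\ka=(0,d,0,0)$ and $\bs^2=\br^{\nu}$ with $\nu=(1,\dots,1)$; then $k=d=n$ and $\bt$ would have to consist of $n$ distinct letters avoiding all of $\{1,\dots,n\}$. What the block-recovery argument actually needs --- and what Remark~\ref{R2} asks for --- is only that $\bt$ avoid the letters of $\bs^3$ and $\bs^4$: inside the $c_{\ell-1}$-block of the output one separates $\bt^U$ from $\bs^4$ by looking at which lower indices are letters of $\bt$, and likewise $\bt^{U'}$ from $\bs^3$ inside the $a_{\ell,\ell-1}$-block; the $\br$-part and the $\Seq^{B'}$-factor are then recovered positionally, so disjointness from $\br,\bp,\bq$ is never used. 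That weaker condition is always achievable, since $k+\ka_3+\ka_4=|\ka|\leq d\leq n$. With this correction your argument coincides with the paper's; as written, the test element you prescribe may simply fail to exist.
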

\begin{proof} Set $T:=T^Z_\z(n,d)$. We will use the basis (\ref{E280318_8}) of $T$. 

Suppose for a contradiction that there exists an indivisible element 

$$x:=\sum_{\substack{\ka\in K_d,\,(\br,\bs)\in P_{\ka}/\sim,\\ 
(\bb,\bp,\bq)\in\Seq^{B'}(n,d-|\ka|)/\sim}}q^{\ka,\bb}_{\br,\bs;\bp,\bq}(\eta^{\ka}_{\br,\bs}*\eta^\bb_{\bp,\bq})\in T\qquad(q^{\ka,\bb}_{\br,\bs;\bp,\bq}\in\k)$$  such that $\la_x: T\xi^e\to T\xi^e$ is divisible, i.e. there exists a  non-zero non-unit $m\in\k$ such that $x\eta\xi^e\in mT$ for all $\eta\in T$. By the remarks preceding Lemma~\ref{L280318_4}, we may assume that all $\bs^1$ and $\bs^2$ are leading tuples. If $\bs^1=\br^\mu$ and $\bs^2=\br^\nu$, we write $(\br,\bs)\in P_\ka^{\mu,\nu}$.

We may also assume that among all indivisible elements $x$ as above, our $x$ has the smallest possible number of non-zero coefficients $q^{\ka,\bb}_{\br,\bs;\bp,\bq}$. Then $m$ does not divide $q^{\ka,\bb}_{\br,\bs;\bp,\bq}$ whenever $q^{\ka,\bb}_{\br,\bs;\bp,\bq}\neq 0$. 

Let $k\in\Z_{\geq 0}$ be such that some coefficient $q^{\ka,\bb}_{\br,\bs;\bp,\bq}$ with $\ka_1+\ka_2=k$ is non-zero. We assume that $(\br,\bs)\in P_\ka^{\mu,\nu}$ for some such non-zero coefficient. 
We now pick $\bt=(t_1,\dots,t_k)\in[1,n]^d$ be such that:
\begin{enumerate}
\item[{\rm (1)}] $t_r\neq t_s$ for all $1\leq r\neq s\leq k$;
\item[{\rm (2)}] the words $\bs^4$ and $\bs^3$ have no letters of the form $t_r$ for $1\leq r\leq k$. 
\end{enumerate}
Such $\bt$ exists by the assumption that $d\leq n$. 

We have $(\br^{\mu+\nu},\bt)\in P_k'$. 
By Lemma~\ref{L270318}, we have $\eta_{\br^{\mu+\nu},\bt}^{a_{\ell,\ell-1}^{k}}*\xi^{e^{d-k}}\in T\xi^e$, so by the assumptions made, we must have  $x(\eta_{\br^{\mu+\nu},\bt}^{a_{\ell,\ell-1}^{k}}*\xi^{e^{d-k}})\in mT$. On the other hand, by Lemmas~\ref{L270318} and \ref{LProdZ}, we have that $x(\eta_{\br^{\mu+\nu},\bt}^{a_{\ell,\ell-1}^{k}}*\xi^{e^{d-k}})$ equals 
\begin{align*}
&\sum_{\substack{\ka\in K_d,\,(\br,\bs)\in P_{\ka}^{\mu,\nu}/\sim,\\ 
(\bb,\bp,\bq)\in\Seq^{B'}(n,d-|\ka|)/\sim}}q^{\ka,\bb}_{\br,\bs;\bp,\bq}
(\eta^{\ka}_{\br,\bs}*\eta^\bb_{\bp,\bq})
(\eta_{\br^{\mu+\nu},\bt}^{a_{\ell,\ell-1}^{k}}*\xi^{e^{d-k}})
\\
=&\sum_{\substack{\ka\in K_d,\,(\br,\bs)\in P_{\ka}^{\mu,\nu}/\sim,\\ 
(\bb,\bp,\bq)\in\Seq^{B'}(n,d-|\ka|)/\sim}}
q^{\ka,\bb}_{\br,\bs;\bp,\bq}
\sum\pm 
\eta^{c_{\ell-1}^{\ka_1+\ka_4}a_{\ell,{\ell-1}}^{\ka_2+\ka_3}}_{(\br^1\si)\br^4(\br^2\tau)\br^3,\bt^U\bs^4\bt^{U'}\bs^3}*\eta^\bb_{\bp,\bq},
\end{align*}
where the second sum is over all $U\in\Om^{\mu,\nu}, \si\in\Si_{\mu}$ and $\tau\in{}^{\br^2}\Si_{\nu}$. By Remark~\ref{R2}, all the basis  elements appearing in the whole sum above are linearly independent.  By our assumptions this implies that all of the non-zero coefficients $q^{\ka,\bb}_{\br,\bs;\bp,\bq}$ appearing there are divisible by $m$, which is a contradiction. 
\end{proof}

\begin{Remark} \label{RCounterExample} 
{\rm 
For an arbitrary algebra \(A\) with double centralizer idempotent \(e\), it is not the case that $\xi^e$ is in general a double centralizer idempotent for \(T^A_\a(n,d)\). For example, take arbitrary \(n \in \Z_{>0}\), and consider the case \(A = A_{\bar 0} = M_2(\k)\), \(\a = \spa ( E_{11}, E_{22})\), \(\c = \spa (E_{12}, E_{21})\), and \(e = E_{11}\). Then \(e\) is clearly a double centralizer idempotent for \(A\).

The element \(\eta^{E_{12}, E_{12}}_{11,11} = 2\xi^{E_{12}, E_{12}}_{11,11}\) is indivisible in \(T^A_\a(n,2)\), and \(\{E_{11}, E_{21}\}\) is a basis for \(Ae\). For \(\bb \in \{E_{11}, E_{21}\}^2\) and \(\br, \bs \in [1,n]^2\), we have that \(\eta^{E_{12}, E_{12}}_{11,11} \cdot \eta^{\bb}_{\br, \bs}=0\) unless \(\bb = (E_{21},E_{21})\) and \(\br = (1,1)\). Then, if \(s_1 = s_2\) we have
\begin{align*}
\eta^{E_{12}, E_{12}}_{11,11} \cdot \eta^{E_{21}, E_{21}}_{11, \bs} = 
(2 \xi^{E_{12}, E_{12}}_{11,11}) \cdot (2  \xi^{E_{21}, E_{21}}_{11, \bs} )
= 4 \xi^{E_{11}, E_{11}}_{11, \bs}
= 4 \eta^{E_{11}, E_{11}}_{11, \bs}.
\end{align*}
If \(s_1 \neq s_2\) we have
\begin{align*}
\eta^{E_{12}, E_{12}}_{11,11} \cdot \eta^{E_{21}, E_{21}}_{11, \bs} = 
(2 \xi^{E_{12}, E_{12}}_{11,11}) \cdot (  \xi^{E_{21}, E_{21}}_{11, \bs} )
= 2 \xi^{E_{11}, E_{11}}_{11, \bs}
= 2 \eta^{E_{11}, E_{11}}_{11, \bs}.
\end{align*}
This implies that \(\la_{\eta^{E_{12}, E_{12}}_{11,11}}\) is divisible in \(\End_{T^{\bar A}_{\bar \a}(n,2)}(T^A_\a(n,2)\xi^e)\), so \(\xi^e\) is not sound, and thus not a double centralizer idempotent by Lemma \ref{LK}.
}
\end{Remark}

\end{document}